\documentclass[12pt]{amsart}
\usepackage[margin=1in]{geometry}
\usepackage{libertine}
\usepackage[vvarbb,cmintegrals,libertine]{newtxmath}
\usepackage{esint}
\usepackage{graphicx}
\usepackage{setspace}
\usepackage{amsthm,amscd,centernot}
\usepackage{hyperref}
\usepackage{color}
\usepackage{booktabs}
\usepackage{tabularx}
\usepackage{enumitem}
\usepackage[retainorgcmds]{IEEEtrantools}
\usepackage[notref,notcite,final]{showkeys}
\usepackage[final]{pdfpages}
\usepackage{fancyhdr}
\usepackage{ytableau}
\usepackage{stmaryrd}
\usepackage{cleveref}
\usepackage[T1]{fontenc}
\usepackage{verbatim}
\usepackage{bbm}

\makeatletter
\newcommand{\fixed@sra}{$\vrule height 2\fontdimen22\textfont2 width 0pt\rightarrow$}
\newcommand{\diagarrow}{%
\mathrel{\text{\rotatebox[origin=c]{\numexpr45}{\fixed@sra}}}
}
\makeatother

\usepackage{tikz}
\usepackage{tikz-cd}
\usetikzlibrary{cd}



\newtheorem{theorem}{Theorem}[section]
\newtheorem*{theorem*}{Theorem}
\newtheorem{lemma}[theorem]{Lemma}
\newtheorem*{lemma*}{Lemma}

\newtheorem{corollary}[theorem]{Corollary}

\newtheorem{Definition}[theorem]{Definition}
\newenvironment{definition}
{\begin{Definition}\rm}{\end{Definition}}
\newtheorem{Example}[theorem]{Example}

\newtheorem{Question}[theorem]{Question}

\theoremstyle{definition}
\newtheorem{remark}[theorem]{\textbf{Remark}}


\newcommand{\E}{\mathbb{E}}
\newcommand{\F}{\mathbb{F}}
\newcommand{\N}{\mathbb{N}}
\newcommand{\Q}{\mathbb{Q}}
\newcommand{\R}{\mathbb{R}}

\newcommand{\Z}{\mathbb{Z}}

\newcommand{\calc}{\mathcal{C}}
\newcommand{\cald}{\mathcal{D}}
\newcommand{\cale}{\mathcal{E}}
\newcommand{\calf}{\mathcal{F}}
\newcommand{\calg}{\mathcal{G}}
\newcommand{\calh}{\mathcal{H}}

\newcommand{\calm}{\mathcal{M}}

\newcommand{\ev}{\mathfrak{e}}


\DeclareMathOperator{\im}{\mathrm{im}}

\DeclareMathOperator{\ann}{\mathrm{Ann}}

\DeclareMathOperator{\Hom}{\mathrm{Hom}}
\DeclareMathOperator{\Aut}{\mathrm{Aut}}
\DeclareMathOperator{\Sym}{\mathrm{Sym}}

\DeclareMathOperator{\Sur}{\mathrm{Sur}}

\DeclareMathOperator{\Prob}{\mathbb{P}}
\DeclareMathOperator{\col}{\mathrm{col}}


\newcommand{\ip}[1]{\langle#1\rangle}

\newcommand{\inverse}{^{-1}}

\newcommand{\bpm}[1]{\begin{pmatrix}#1\end{pmatrix}}

\newcommand{\vphi}{\varphi}

\newcommand{\cok}{\mathrm{cok}}
\newcommand{\coker}{\mathrm{coker}}
\newcommand{\rank}{\mathrm{rank}}
\newcommand{\hG}{\widehat{G}}

\newcommand{\h}[1]{\widehat{#1}}
\newcommand{\p}{\ip{\, ,}}
\newcommand{\phG}{\ip{\, ,}_{\widehat{G}}}

\newcommand{\hcok}{\mathrm{c}\widehat{\mathrm{ok}}}
\newcommand{\htcok}{\mathrm{tc}\widehat{\mathrm{ok}}}
\newcommand{\hF}{\widehat{F}}
\newcommand{\Div}{\mathrm{Div}}
\newcommand{\Prin}{\mathrm{Prin}}
\newcommand{\ord}{\mathrm{ord}}
\newcommand{\di}{\mathrm{div}}
\newcommand{\finab}{\mathrm{FinAb}}
\newcommand{\tcok}{\mathrm{tcok}}

\setlist[enumerate,1]{leftmargin=1.8em, label=\textup{(}\arabic*\textup{)}}

%

\parindent=20pt

\begin{document}
\title{The Distribution of Sandpile Groups of Random Graphs with their Pairings}

\author{Eliot Hodges}
\address{Department of Mathematics, Harvard University, Cambridge, Massachusetts 02138}
\email{eliothodges@college.harvard.edu}
\begin{abstract}
	We determine the distribution of the sandpile group (also known as the Jacobian) of the Erd\H{o}s-R\'{e}nyi random graph $ G(n,q) $ along with its canonical duality pairing as $ n $ tends to infinity, fully resolving a conjecture from 2015 due to Clancy, Leake, and Payne and generalizing the result by Wood on the groups. In particular, we show that a finite abelian $ p $-group $ G $ equipped with a perfect symmetric pairing $ \delta $ appears as the Sylow $ p $-part of the sandpile group and its pairing with frequency inversely proportional to $ |G||\Aut(G,\delta)| $, where $ \Aut(G,\delta) $ is the set of automorphisms of $ G $ preserving the pairing $ \delta $. While this distribution is related to the Cohen-Lenstra distribution, the two distributions are not the same on account of the additional algebraic data of the pairing. The proof utilizes the moment method: we first compute a complete set of moments for our random variable (the average number of epimorphisms from our random object to a fixed object in the category of interest) and then show the moments determine the distribution. To obtain the moments, we prove a universality result for the moments of cokernels of random symmetric integral matrices whose dual groups are equipped with symmetric pairings that is strong enough to handle both the dependence in the diagonal entries and the additional data of the pairing. We then apply results due to Sawin and Wood to show that these moments determine a unique distribution. 
\end{abstract}
\maketitle
\section{Introduction}\label{Introduction}
To any graph $ \Gamma $ we may associate a natural abelian group $ S_\Gamma $, which is commonly referred to as the sandpile group, the Jacobian, the critical group, or the Picard group of the graph. The sandpile group is an important object of interest in several subjects, including but not limited to statistical mechanics, combinatorics, and arithmetic geometry (hence its several names; see \cite{NW11} for an overview of these various connections). If $ \Gamma $ is connected, then $ S_\Gamma $ is finite, and $ |S_\Gamma| $ is the number of spanning trees of $ \Gamma $. The sandpile group of a graph comes naturally equipped with an additional important piece of algebraic data: a canonical \emph{duality pairing} (i.e., a perfect symmetric pairing) $ S_\Gamma\times S_\Gamma\to\Q/\Z $, which was first introduced by Bosch and Lorenzini in \cite{BL02}. In \cite{Lor08}, Lorenzini conjectured the frequency that $ S_\Gamma $ is cyclic and posed several other questions about statistics of the distribution of sandpile groups of random graphs. Initially, some suspected that the distribution of sandpile groups of random graphs obeyed a Cohen-Lenstra \cite{CL84} heuristic---a natural first guess for the distribution of a random finite abelian group. However, Clancy, Leake, and Payne collected empirical data in \cite{CLP} suggesting that this was not the case. In particular, Clancy, Leake, and Payne noticed that a finite abelian group with duality pairing $ (G,\p_G) $ seems to appear as the sandpile group and pairing of a random graph with frequency proportional to \[\frac{1}{|G||\Aut(G,\p_G)|},\] where $ \Aut(G,\p_G) $ denotes the automorphisms of $ G $ preserving the pairing. In other words, they observed that the natural duality pairing on $ S_\Gamma $ seemed to be affecting its distribution. In \cite{CLK+}, Clancy, Kaplan, Leake, Payne, and Wood proved an analogous result where the sandpile group is replaced by the cokernel of a Haar-random symmetric matrix with entries in $ \Z_p $ (such groups also have duality pairings). Later, in \cite{Wood}, Wood proved the distribution of sandpile groups of Erd\H{o}s-R\'{e}nyi random graphs, almost fully resolving the conjecture of Clancy, Leake, and Payne. However, Wood's approach was unable to determine the distribution of the pairings, and the conjecture regarding the distribution of the pairings has since remained open (this conjecture is restated as Open Problem 3.13 in \cite{WoodICM}). In this paper, we extend Wood's result to show that the sandpile group of an Erd\H{o}s-R\'{e}nyi random graph with its pairing is distributed as Clancy, Leake, and Payne conjectured. 

Let $ \Gamma\in G(n,q) $ be an Erd\H{o}s-R\'{e}nyi random graph on $ n $ vertices with edges occurring independently with probability $ q $, where $ 0<q<1 $. Let $ S_\Gamma $ denote the sandpile group of $ \Gamma $, and for a prime $ p $ let $ S_{\Gamma,p} $ denote the Sylow $ p $-subgroup of $ S_\Gamma $. Also, let $ \p_{S_\Gamma} $ denote the duality pairing on the torsion subgroup of $ S_\Gamma $ and $ \p_{S_\Gamma,p} $ its restriction to $ S_{\Gamma,p} $.\footnote{As we will see in \Cref{sec: Pairings}, it is actually the torsion subgroup of the sandpile group that naturally carries a duality pairing. When $ \Gamma $ is connected, the sandpile group is finite and the torsion subgroup is all of $ S_\Gamma $. However, since the graphs in question are Erd\H{o}s-R\'{e}nyi random, we must be more careful, since Erd\H{o}s-R\'{e}nyi random graphs are not necessarily connected (though they are asymptotically almost surely connected).} It follows from Wood's work in \cite{Wood} that, for a given finite abelian group $ G $, we have $ \lim_{n\to\infty}\Prob(S_\Gamma\simeq G)=0 $. Recalling that a finite abelian group can be decomposed into a direct sum of its Sylow $ p $-subgroups, we instead ask: for $ G $ a finite abelian $ p $-group with duality pairing $ \p_G $, what is the probability as $ n\to\infty $ that $ (S_{\Gamma,p},\p_{S_\Gamma,p})\simeq(G,\p_G) $? \begin{theorem}\label{thm: distribution of gps and pairings}
	Let $ p $ be a prime and $ G $ a finite abelian $ p $-group with duality pairing $ \p_G $. For a random graph $ \Gamma\in G(n,q) $, let $ S $ denote its sandpile group and $ \p_S $ the duality pairing on its torsion subgroup. Then \begin{equation}\label{distribution of sandpile group and pairing}
		\lim_{n\to\infty}\Prob((S_{p},\p_{S,p})\simeq(G,\p_G))=\frac{\prod_{k=1}^\infty(1-p^{1-2k})}{|G||\Aut(G,\p_G)|}.
	\end{equation}
\end{theorem}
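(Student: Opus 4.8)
The plan is to run the moment method in the category $\mathcal C$ of finite abelian $p$-groups equipped with a symmetric bilinear pairing into $\Q/\Z$, with the notion of generalized moment as set up by Sawin and Wood for random objects in a category; the limiting measure will be supported on those objects whose pairing is perfect. The first step is the algebraic reduction. For connected $\Gamma$ the sandpile group is $\cok(\tilde L_\Gamma)$ where $\tilde L_\Gamma$ is the reduced Laplacian (delete one row and column from $L_\Gamma=D_\Gamma-A_\Gamma$), and the duality pairing discussed in \Cref{sec: Pairings} is precisely the pairing $(u,v)\mapsto u^\top \tilde L_\Gamma^{-1}v\bmod\Z$ on the torsion of the cokernel; tensoring with $\Z_p$ identifies $(\cok(\tilde L_\Gamma\otimes\Z_p),\langle\,,\,\rangle)$ with $(S_{\Gamma,p},\p_{S_\Gamma,p})$. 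Since $G(n,q)$ need not be connected, one also records that disconnectedness occurs with probability tending to $0$ and that, even on that event, passing to the torsion subgroup of the cokernel of the full Laplacian is harmless, so it suffices to study the cokernel-with-pairing of the random symmetric matrix $M=\tilde L_\Gamma\otimes\Z_p$, whose off-diagonal entries are i.i.d.\ ($q$-Bernoulli up to sign) and whose diagonal entries are a fixed linear function of the off-diagonal entries of their row plus one further independent entry.

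With this reduction, the objective is to compute, for every object $(G,\gamma)$ of $\mathcal C$, the mixed moment $\E\bigl[\#\Sur_{\mathcal C}\bigl((\cok M,\langle\,,\,\rangle),(G,\gamma)\bigr)\bigr]$, i.e.\ the expected number of surjections $\cok M\to G$ that are compatible with the pairing data. The target values are the corresponding moments of the Haar-random symmetric ensemble over $\Z_p$: Clancy, Kaplan, Leake, Payne, and Wood showed in \cite{CLK+} that the cokernel of a Haar-random symmetric matrix together with its pairing already has the limiting distribution conjectured for the sandpile group, so what remains is to prove that the discrete, dependent ensemble $M$ has the same limiting moments — a universality statement.

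Proving this universality is the heart of the matter, and I expect it to be the main obstacle. I would expand $\#\Sur_{\mathcal C}$ as a sum over tuples $F\in G^{n-1}$ (candidate maps $\Z_p^{n-1}\to G$), subject to the linear constraint $MF=0$ in $G^{n-1}$, which forces $F$ to descend to $\cok M$, together with an additional constraint pinning down the induced pairing — concretely, prescribing $F$ composed with a $p$-adic preimage of $F$ under $M$, i.e.\ the relevant reduction of $M^{-1}$. One then evaluates the resulting probabilities by Fourier analysis on $G$ and on the spaces of symmetric matrices over $\Z/p^k$. The dependence among the entries of $\tilde L_\Gamma$ is dealt with as in Wood's analysis of cokernels of random symmetric matrices in \cite{Wood}: the off-diagonal entries, although only $q$-Bernoulli, are equidistributed enough modulo $p^k$ to absorb the diagonal relations. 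The genuinely new difficulty is that tracking the pairing requires controlling $M^{-1}\bmod p^k$, not merely $M\bmod p^k$; this demands an equidistribution/anti-concentration estimate for $\tilde L_\Gamma$ on symmetric matrices over $\Z/p^k$ that is sharp enough to see the pairing and at the same time robust to the diagonal constraint — exactly the kind of universality result ``strong enough to handle both the dependence in the diagonal entries and the additional data of the pairing'' mentioned in the abstract.

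Finally, with the moments computed, I would invoke the results of Sawin and Wood on the moment problem in a category: verify the moments grow slowly enough (the relevant bound for $\mathcal C$), conclude they are the moments of a unique probability measure, and identify that measure as $\nu_p$, with $\nu_p(G,\delta)\propto \frac{1}{|G|\,|\Aut(G,\delta)|}$ on objects with perfect pairings. Summing the refined statement over the perfect pairings $\delta$ on a fixed $G$ must reproduce Wood's formula from \cite{Wood} for the groups alone, a useful consistency check. The normalizing constant is then forced by total mass $1$, i.e.\ by the identity \[\sum_{(G,\delta)}\frac{1}{|G|\,|\Aut(G,\delta)|}=\prod_{k\ge1}\bigl(1-p^{1-2k}\bigr)^{-1},\] the sum running over isomorphism classes of finite abelian $p$-groups with perfect symmetric pairing; organizing this sum by the partition describing $G$ and counting perfect pairings together with their automorphism groups establishes the identity and yields the factor $\prod_{k\ge1}(1-p^{1-2k})$ in \eqref{distribution of sandpile group and pairing}.
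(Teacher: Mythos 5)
Your outline has the right shape---moment method in a category of groups with pairing, compare the discrete ensemble to the Haar ensemble of \cite{CLK+}, invoke Sawin--Wood to pass from moments to the distribution---but it is missing the single technical idea that makes the moment computation possible, and the gap sits exactly at the point you flag as ``the genuinely new difficulty.'' You put the pairing on $S_\Gamma$ itself, expressed via $u^\top M^{-1}v$, and you propose to encode the pairing constraint on a candidate surjection $F$ by ``prescribing $F$ composed with a $p$-adic preimage of $F$ under $M$.'' That is a nonlinear condition in the entries of $M$; the Fourier machinery from \cite{Wood} handles linear constraints, and no anti-concentration estimate for $M^{-1}\bmod p^k$ is in sight that would control both the pairing and the dependent diagonal. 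The paper avoids the problem by working instead with symmetric pairings on the \emph{dual} group $\hG=\Hom(G,\Q/\Z)$ (\Cref{Introduction}, \Cref{Background}). The key observations (\Cref{sec: Pairings}) are that $\hcok(M)$ carries a pairing $\p_M$ given directly by $M$, that $\p_M$ descends to the Bosch--Lorenzini duality pairing on $\htcok(M)$ (\Cref{pairing on dual of cok is symmetric}, \Cref{pairing same as Bosch-Lorenzini}), and---crucially---that for a surjection $F$ with $FM=0$ the condition ``$F^t$ pushes $\p_M$ forward to $\phG$'' is equivalent to the \emph{linear} condition $FMF^t=A$ for a fixed symmetric $A\in\Hom(\hG,G)$ determined by $\phG$ (\Cref{lemma: cok eqns}). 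That linearity is what makes the Fourier expansion, the special/weak/robust classification of the dual variables $(C,D)$, and the code/depth analysis of $F$ (\Cref{sec: Moments II}--\Cref{sec: Moments IV}) all run; without it your plan has no foothold for the universality step.

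A secondary gap: $FMF^t=A$ is only well-defined after conditioning on $FM=0$, which interferes with the Fourier decoupling. The paper handles this by lifting $F$ to a map $\calf$ into a larger free module $H$ and introducing auxiliary maps $\Lambda,\Omega$ so that $\Lambda\calf M=0$ and $\Omega\calf M\calf^t\Omega^t=A$ are simultaneously well-posed linear conditions (\Cref{sec: Moments I}); any execution of your plan would need an analogous device. Your reduction to the Laplacian, your handling of disconnected graphs via the torsion subgroup, your treatment of the diagonal dependence, and your use of Sawin--Wood plus the normalization from \cite{CLK+} are all consistent with what the paper does.
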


\Cref{thm: distribution of gps and pairings} has several interpretations: we may regard it both as a result in combinatorics on random graphs and as a result in arithmetic statistics (there are analogies between sandpile groups and Jacobians of curves over finite fields \cite{BL02,Lopez}). In addition to proving \Cref{thm: distribution of gps and pairings}, we show a stronger result---\Cref{distribution random matrices}---along the way. \Cref{distribution random matrices} is a universality theorem on cokernels of random matrices with duality pairings which implies the analogous result due to Wood (Theorem 1.3 of \cite{Wood}) by an application of the Orbit-Stabilizer Theorem. 

We remark that the numerator on the right-hand side of \eqref{distribution of sandpile group and pairing} does not depend on $ G $ and that this product is essentially a normalization constant. Moreover, $ \prod_{k=1}^\infty(1-p^{1-2k}) $ increases with $ p $, and for $ p>1000 $ we have $ \prod_{k=1}^\infty(1-p^{1-2k})>0.999 $. Note that the right-hand side of \eqref{distribution of sandpile group and pairing} is independent of the edge probability $ q $. Further, in \Cref{distribution of sandpile groups multiple primes}, we are able to show a much stronger result than \Cref{thm: distribution of gps and pairings}: for any finite set of primes, we give the asymptotic probabilities of particular Sylow subgroups with their pairings at these primes.

The proof of \Cref{thm: distribution of gps and pairings} uses a result due to Sawin and Wood on the moment problem for random objects in diamond categories, where a moment of a random object is defined to be the average number of epimorphisms to a fixed object in the category. In their paper \cite{SW22}, Sawin and Wood show that as long as the moments do not grow too fast, a unique measure with these moments exists, and a sequence of measures whose moments approach these particular moments in the limit converge weakly to the aforementioned measure.\footnote{In our case, the moments grow too quickly to use classical probabilistic methods to show the moments determine a unique distribution.} Because $ \p_{S_\Gamma} $ is a duality pairing, there is a corresponding duality pairing on $ \h{S_\Gamma}=\Hom(S_\Gamma,\Q/\Z) $, the Pontryagin dual group of $ S_\Gamma $, which we denote by $ \p_{\h{S_\Gamma}} $. To prove \Cref{thm: distribution of gps and pairings}, we first compute a complete set of moments for $ (S_\Gamma,\p_{\h{S_\Gamma}}) $.

It is not immediately clear which category we should be working in or why we are considering pairings on $ \h{S_\Gamma} $ rather than $ S_\Gamma $. We define a category $ \calc $ whose objects are finite abelian groups whose dual groups are equipped with symmetric pairings (i.e., the objects of $ \calc $ are finite abelian groups $ G $ such that $ \hG=\Hom(G,\Q/\Z) $ has a symmetric pairing $ \phG:\hG\otimes\hG\to\Q/\Z $). Note that if $ (A,\p_{\h{A}}) $ and $ (B,\p_{\h{B}}) $ are objects in $ \calc $, and if $ F:A\to B $ is a group homomorphism, then the transpose of $ F $, the map $ F^t:\h{B}\to\h{A} $, pushes the pairing on $ \h{A} $ forward to a pairing on $ \h{B} $ by precomposition with $ F^t\otimes F^t $. A morphism between $ (A,\p_{\h{A}}) $ and $ (B,\p_{\h{B}}) $ in $ \calc $ is a group homomorphism between $ A $ and $ B $ that pushes the pairing $ \p_{\h{A}} $ on $ \h{A} $ forward to the pairing $ \p_{\h{B}} $ on $ \h{B} $. The fact that $ F^t $ pushes the pairing on $ \h{A} $ forward to a pairing on $ \h{B} $ is the primary reason for considering pairings on the dual groups. Suppressing the symbols for the pairings on $ \h{A} $ and $ \h{B} $ from our notation, let $ \Sur^*(A,B) $ denote the set of surjective homomorphisms from $ A $ to $ B $ taking $ \p_{\h{A}} $ to $ \p_{\h{B}} $.

The following gives the moments of random sandpile groups with their pairings.
\begin{theorem}\label{sandpile moments intro}
	Let $ G $ be a finite abelian group, and let $ \phG $ be a symmetric pairing on its dual, $ \hG=\Hom(G,\Q/\Z) $. Then for a random graph $ \Gamma\in G(n,q) $ with $ (S_\Gamma,\p_{\h{S_\Gamma}}) $ its sandpile group and symmetric pairing on $ \h{S_\Gamma} $, we have \[\lim_{n\to\infty}\E(\#\Sur^*(S_\Gamma,G))=\frac{1}{|G|}.\]
\end{theorem}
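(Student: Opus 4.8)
The plan is to rewrite $\#\Sur^*(S_\Gamma,G)$ as a count attached to the reduced Laplacian of $\Gamma$ and then to feed the resulting average into the universality theorem \Cref{distribution random matrices}. Fix a vertex and let $L_0$ be the $(n-1)\times(n-1)$ reduced Laplacian obtained from the Laplacian of $\Gamma$ by deleting that vertex's row and column. As established in \Cref{sec: Pairings}, $S_\Gamma\cong\coker(L_0)$ when $\Gamma$ is connected, and in general $(S_\Gamma,\p_{\h{S_\Gamma}})$ agrees, away from the free part, with $\coker(L_0)$ together with the symmetric pairing that the symmetry of $L_0$ induces on $\h{\coker(L_0)}$. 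Since $G$ is finite, every homomorphism $S_\Gamma\to G$ (resp. $\coker(L_0)\to G$) kills the free part and, together with its pushed-forward pairing, depends only on the common torsion subgroup; hence $\Sur^*(S_\Gamma,G)=\Sur^*(\coker(L_0),G)$, and it suffices to compute $\lim_{n\to\infty}\E(\#\Sur^*(\coker(L_0),G))$.

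Next I would unwind this average into a statement about the random matrix $L_0$. A homomorphism $F\colon\Z^{n-1}\to G$ is a tuple $(v_1,\dots,v_{n-1})\in G^{n-1}$, surjective exactly when the $v_i$ generate $G$ and factoring through $\coker(L_0)$ exactly when $FL_0=0$. Writing $a_{ij}\in\{0,1\}$ for the edge indicators, so $(L_0)_{ij}=-a_{ij}$ for $i\ne j$ and $(L_0)_{jj}=\deg_\Gamma(j)=a_{jn}+\sum_{i\ne j,\,i\le n-1}a_{ij}$, the condition $FL_0=0$ becomes $\sum_{i\ne j}a_{ij}(v_j-v_i)=0$ in $G$ for each $j\le n-1$, under the convention $v_n:=0$. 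When $F$ factors through $\coker(L_0)$, identifying $\h{\coker(L_0)}$ with $\{w\in(\Q/\Z)^{n-1}:L_0 w=0\}$, the pairing $\p_{\h{S_\Gamma}}$ pushes forward along $F$ to the symmetric pairing on $\h G$ sending $(\chi,\chi')$ to $\tilde w^{t}L_0\tilde w'\bmod\Z$ for lifts $\tilde w,\tilde w'\in\Q^{n-1}$ of $w=F^t\chi$ and $w'=F^t\chi'$, i.e.\ to the pairing induced on $\h G$ by $L_0$ via $F^t$; membership in $\Sur^*(\coker(L_0),G)$ requires this to equal $\phG$. By linearity of expectation, $\E(\#\Sur^*(\coker(L_0),G))$ is then a sum over generating tuples of the probability that these group relations and this pairing identity all hold for the random symmetric matrix $L_0$ — precisely the kind of average that \Cref{distribution random matrices} evaluates.

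It then remains to check that $L_0$ meets the hypotheses of that theorem and to read off the value $1/|G|$. The off-diagonal entries $-a_{ij}$ with $i<j\le n-1$ are independent and identically distributed, and because $0<q<1$ they satisfy the requisite mild non-degeneracy (modulo any prime $p$ they attain two residues, one of them a unit). The essential structural point is the diagonal: $(L_0)_{jj}=a_{jn}+\sum_{i\ne j,\,i\le n-1}a_{ij}$ is a deterministic function of the off-diagonal entries of $L_0$ shifted by the single fresh Bernoulli variable $a_{jn}$ — the indicator of the edge from $j$ to the deleted vertex — and these $a_{jn}$ are independent of one another and of every off-diagonal entry of $L_0$. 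This independent component in each diagonal entry, which the full Laplacian does not have, is exactly what lets the universality argument absorb the dependence forced by the vanishing of the Laplacian's row sums while still controlling the induced pairing on the dual. Granting \Cref{distribution random matrices}, one concludes $\lim_{n\to\infty}\E(\#\Sur^*(\coker(L_0),G))=1/|G|$; when that theorem is formulated one prime at a time, the case of a general finite abelian $G$ follows from $|G|=\prod_p|G_p|$ together with the compatibility of $\Sur^*$ and of the pairings with the Sylow decomposition.

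The step I expect to be the main obstacle is the verification just described: the diagonal of $L_0$ is a function of its remaining entries up to one Bernoulli shift per row, so $L_0$ cannot be treated as a matrix with independent entries, and one must moreover keep track not of the bare group $\coker(L_0)$ but of the object $(\coker(L_0),\p_{\h{\coker(L_0)}})$ of the category $\calc$ throughout the limit. Handling both of these simultaneously is exactly what \Cref{distribution random matrices} is designed to do, so conditional on that result the present theorem reduces to the reduction and bookkeeping above; the only care needed here is to confirm that the dependence structure of the reduced Laplacian genuinely falls within its hypotheses and that the pairing identification of \Cref{sec: Pairings} is the one its conclusion applies to.
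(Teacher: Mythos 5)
There is a genuine gap, and it sits exactly where you flagged it. First, the result you propose to invoke, \Cref{distribution random matrices}, is a statement about the limiting \emph{distribution} of $(\cok(M)_p,\p_{\tcok(M),p})$, not about the moments $\E(\#\Sur^*(\cok(M),G))$; in the paper's logic it is \emph{deduced from} the moments (via the Sawin--Wood machinery of \Cref{moments determine distribution}), so using it to prove \Cref{sandpile moments intro} is circular, and even taken as a black box it does not yield the moment without interchanging a limit with the infinite sum $\sum_H\#\Sur^*(H,G)\Prob(\cok\simeq H)$ --- an interchange whose justification is precisely the issue the moment method is designed to sidestep. The correct matrix-side analogue is the moment statement \Cref{random matrix moment}, but this brings you to the second, more serious problem: both \Cref{distribution random matrices} and \Cref{random matrix moment} require the entries $m_{ij}$, $i\le j$, to be \emph{independent}, and the reduced Laplacian $L_0$ does not satisfy this. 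The diagonal entry $(L_0)_{jj}=a_{jn}+\sum_{i\ne j,\,i\le n-1}a_{ij}$ is, conditionally on the off-diagonal entries, supported on only two values; a fresh Bernoulli shift does not make it independent of the off-diagonals, and nothing in the universality theorem is "designed to absorb" this dependence. If you tried to run the Fourier argument directly on $L_0$, each independent variable $a_{ij}$ would appear in the linear form with a coefficient coming from three positions (one off-diagonal and two diagonal entries), and the entire special/weak/robust analysis would have to be redone.

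The paper's actual proof (\Cref{sandpile moments}) takes a genuinely different route around exactly this obstacle: it keeps the full Laplacian, introduces an auxiliary symmetric matrix $M$ whose diagonal entries are uniform and independent, recovers the law of $L_{\mathfrak R}$ by conditioning on the vanishing of the column sums $F_0M=0$, replaces $F$ by the enlarged surjection $\tilde F=F\oplus F_0\colon V\to G\oplus\Z/b\Z$, and then sums over all symmetric extensions of $\phG$ to $\hG\oplus\h{\mathfrak R}$ before applying the code/depth estimates of \Cref{bound for codes,depth D estimate,sandpile depth}. None of this conditioning-and-extension apparatus appears in your proposal, and without it (or a worked-out substitute handling the dependent diagonal of $L_0$ and verifying that the $L_0$-pairing on $\h{\coker(L_0)}$ agrees with $\p_{\h{S_\Gamma}}$), the reduction you describe does not close.
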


We call $ \E(\#\Sur^*(S_\Gamma,G)) $ the $ (G,\phG) $-moment of $ (S_\Gamma,\p_{\h{S_\Gamma}}) $. The error term for \Cref{sandpile moments intro} decreases exponentially in $ n $ (see \Cref{sandpile moments}).

\subsection{Sandpile groups and their pairings}
We refer the reader to ``What is\ldots\ a sandpile?'' \cite{LP} for an overview of sandpile groups. The article \cite{NW11} also gives an outline of the ways in which sandpile groups are connected to various disparate subjects. While there are many equivalent definitions of the sandpile group of a graph $ \Gamma $, perhaps the easiest is the following. Suppose $ \Gamma $ is a simple graph with $ n $ vertices labeled by the first $ n $ integers. The \emph{Laplacian} of $ \Gamma $, denoted $ L_\Gamma $, is the $ n\times n $ matrix whose $ (i,j) $ entry is given by \[(L_\Gamma)_{ij}=\begin{cases}
	1 & \mathrm{if\ }i\neq j\ \mathrm{and\ }\{i,j\}\ \mathrm{is\ an\ edge\ of\ }\Gamma;\\
	0 &\mathrm{if\ }i\neq j\ \mathrm{and\ }\{i,j\}\ \mathrm{is\ not\ an\ edge\ of\ }\Gamma;\\
	-\deg(i) & \mathrm{if\ }i=j.
\end{cases}\] The \emph{reduced Laplacian} of $ \Gamma $, denoted $ \Delta_\Gamma $, is the $ (n-1)\times(n-1) $ matrix given by deleting the final row and column from $ L_\Gamma $. We define the sandpile group of $ \Gamma $, denoted $ S_\Gamma $, to be the cokernel of the reduced Laplacian $ \Delta_\Gamma $ of $ \Gamma $. We remark here that the sandpile group does not depend on the row or column removed from $ L_\Gamma $---the cokernels of the matrices given by deleting any row or column of $ L_\Gamma $ are all isomorphic (see \cite{Biggs,CP}). From this definition, it follows immediately that if $ \Gamma $ is connected, then $ |S_\Gamma|=|\det(\Delta_\Gamma)| $; Kirchhoff's matrix tree theorem implies that $ |S_\Gamma| $ is the number of spanning trees of $ \Gamma $. Alternatively, if $ Z\subset\Z^n $ denotes the subspace of vectors whose coordinates sum to 0, then it is equivalent to define $ S_\Gamma $ as $ S_\Gamma=Z/L_\Gamma\Z $. 

Sandpile groups were first studied in 1988 by statistical physicists interested in the dynamics of a sandpile \cite{BTW}; thus originating the name ``sandpile.'' The sandpile model is an example of a chip-firing game (see \cite{BLS}): given a graph $ \Gamma $, we imagine that on each vertex is some number of chips; if a vertex has at least as many chips as its degree, the chips topple, distributing a chip to each neighboring vertex. Supposing that chips fall randomly onto the vertices of $ \Gamma $, toppling whenever they can, the sandpile group of $ \Gamma $ is given by the recurrent states of the resulting Markov chain. For more in depth treatments, we refer the reader to \cite{Dhar,Gab93a,Gab93b,Biggs99}. There is also a nice connection between sandpile groups and the Tutte polynomial of a graph; see \cite{Lopez,Gab93a,Gab93b}. An overview of these connections can be found in \cite{HLM+}.

The sandpile group of a graph has also been referred to in the literature as the Jacobian (also the Picard group or critical group) of the graph. This moniker originates from a beautiful analogy between Riemann surfaces and graphs, in which the sandpile group corresponds to the Jacobian of a Riemann surface \cite{Biggs,BLHPdN}. This analogy is not purely philosophical and is spelled out in detail in \cite{Lor89,BL02}, where the group of components of the N\'{e}ron model of the Jacobian of a curve over a local field is realized as the sandpile group of a graph. Moreover, the cardinality of the sandpile group occurs in the ``analytic class number formula'' for graphs \cite{HST}, and there also exist analogs of the Riemann-Roch and Riemann-Hurwitz formulas for Jacobians of graphs \cite{BN07,BN09}.

Sandpile groups of connected graphs are canonically equipped with a perfect, symmetric bilinear pairing (see \cite{Lor00,BL02}), which is given by any generalized inverse of the Laplacian (see \cite{Shokrieh}). If the graph is not connected, then only the torsion subgroup of the sandpile group carries such a duality pairing. We will show in \Cref{sec: Pairings} that the duality pairing on $ \h{S_\Gamma} $ induced by the pairing on $ S_\Gamma $ can be described directly using the Laplacian of the graph. In the Riemann surface analogy, the pairing on $ S_\Gamma $ has been considered as an analog of the Weil pairing \cite{Shokrieh} and also agrees with a pairing defined by Grothendieck \cite{Gr1,GR} giving an obstruction to extending the Poincar\'{e} bundle to the product of an abelian variety and its dual \cite{BL02}.

\subsection{The Cohen-Lenstra Heuristics}
For a family of random finite abelian groups, a natural first guess for how the groups should be distributed is the Cohen-Lenstra  heuristics \cite{CL84}, which conjecture the distribution of ideal class groups of quadratic number fields. Ideal class groups are finite abelian groups measuring the failure of unique factorization in rings of integers of number fields (e.g., $ \Z[\sqrt{-5}] $). The underlying philosophy behind the Cohen-Lenstra heuristics is that objects ``in nature'' often occur with frequency that is inversely proportional to their number of automorphisms. Hence, in the case of finite abelian groups, given a random finite abelian group $ X $ and a fixed finite abelian group $ G $, \emph{a priori} we expect $ \Prob(X\simeq G) $ to be inversely proportional to $ |\Aut(G)| $.

However, Wood showed in \cite{Wood} that sandpile groups of random graphs do not satisfy a Cohen-Lenstra heuristic. Instead, Wood proved that they obey a ``Cohen-Lenstra-like'' distribution, in which the canonical duality pairing on the group influences the way in which they are distributed. Notably, Wood's result is what we obtain by summing the conjectural distribution of random sandpile groups with their pairings due to Clancy, Leake, and Payne \cite{CLP} over all possible pairings on a given group. Similarly, Tate-Shafarevich groups of elliptic curves are a class of conjecturally finite and abelian groups that, if finite, also come equipped with a nondegenerate, bilinear pairing. Unlike sandpile groups, the pairing on Tate-Shafarevich groups is alternating rather than symmetric. Delaunay developed heuristics \cite{Del01,BKL} similar to those of Clancy, Leake, and Payne in which the automorphisms of $ G $ preserving the pairing take the place of $ \Aut(G) $. 

While the distribution of sandpile groups of random graphs with their pairings is intriguing in its own right, our methods may give insight into how to approach other similar problems in both random group theory and arithmetic statistics. Recently, Lipnowski, Sawin, and Tsimerman showed in \cite{LST20} that, in some cases, class groups of quadratic number fields come equipped with a bilinear structure closely related to the Weil pairing on abelian varieties and the aforementioned Cassels-Tate pairing on Tate-Shafarevich groups. It may be possible that the methods developed in this paper to deal with the pairing on the sandpile group can be used to determine the distribution of other families of random groups with pairings.
\subsection{Connections to random matrices}
The Laplacian is a matrix with integral entries, so if $ \Gamma $ is an Erd\H{o}s-R\'{e}nyi random graph, we inevitably find ourselves working in the realm of random matrix theory, a field in which significant effort has been devoted to finding (asymptotically) universal statistics of random matrices. In other words, it is often the case that some statistics are asymptotically equal for a broad class of random matrices, which often contain matrices from especially nice (e.g., symmetric) distributions for which we can actually explicitly compute these statistics. For example, a considerable amount of effort has been dedicated to finding such universality results for the eigenvalue distributions of random matrices: see \cite{Meh67,Girko04,Bai97,BS10,PZ10,TV06,TV07}.

In our scenario, we would like to find the distribution of the Sylow $ p $-subgroups of the cokernels of random integral matrices with their natural duality pairings on the torsion subgroups of these cokernels. In \Cref{sec: Pairings}, we will recall a theorem of Bosch and Lorenzini stating that the torsion subgroup of the cokernel of any integral symmetric matrix is naturally equipped with a duality pairing. For a random symmetric matrix with $ p $-adic integral entries, drawn with respect to Haar measure on $ \Z_p $, Clancy, Kaplan, Leake, Payne, and Wood showed in \cite{CLK+} that the cokernel of this matrix with its duality pairing are distributed as in \Cref{thm: distribution of gps and pairings}. These Haar-random matrices should be viewed as the matrices that are ``nicely distributed''---because Haar measure on $ \Z_p $ is well-understood, we can actually compute the distribution of the cokernels with their pairings. A similar computation is done in \cite{FW} for the cokernels (sans pairings) of random (not necessarily symmetric) $ p $-adic integral matrices and in \cite{BKL} for Haar-random alternating matrices.

Because we can compute the aforementioned distribution in the nice Haar case, a natural next line of inquiry is to see whether these asymptotic cokernel and pairing distributions are {universal}---whether other large families of random symmetric matrices have the same cokernel and pairing distributions. In particular, \Cref{distribution random matrices} is a strong universality result for distributions of cokernels of random symmetric integral matrices with their pairings. The proof of \Cref{distribution random matrices} is essentially the same as \Cref{thm: distribution of gps and pairings}. 
\begin{theorem}\label{distribution random matrices}
	Let $ 0<\alpha<1 $ be a real number, and let $ p $ be a prime. Let $ M $ be a symmetric random matrix in $ M_n(\Z) $ with independent entries $ m_{ij} $ for $ 1\leq i\leq j\leq n $. Suppose also that for any $ t\in\Z/p\Z $ the probability $ \Prob(m_{ij}\equiv t\ \mathrm{mod\ }p)\leq 1-\alpha $. Let $ \col(M) $ denote the column space of $ M $ in $ \Z^n $, let $ \cok(M)_p $ denote the Sylow $ p $-subgroup of $ \cok(M)=\Z^n/\col(M) $, and let $ \p_{\tcok(M)} $ denote the canonical duality pairing on the torsion subgroup of $ \cok(M) $ (see \Cref{sec: Pairings}). For any finite abelian $ p $-group $ G $ equipped with a duality pairing $ \p_G $, we have \[\lim_{n\to\infty}\Prob((\cok(M)_p,\p_{\tcok(M),p})\simeq(G,\p_G))=\frac{\prod_{k\geq1}(1-p^{1-2k})}{|G||\Aut(G,\p_G)|},\] where $ \p_{\tcok(M),p} $ denotes the restriction of $ \p_{\tcok(M)} $ to $ \cok(M)_p $. 
\end{theorem}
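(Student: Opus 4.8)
The plan is to run the moment method in the category $\calc$ of the introduction --- or rather in its full subcategory $\calc_p$ of finite abelian $p$-groups --- in two stages. First I would establish the matrix analogue of \Cref{sandpile moments intro}: for every finite abelian $p$-group $G$ and every symmetric pairing $\phG$ on $\hG$,
\[
\lim_{n\to\infty}\E\bigl(\#\Sur^*(\cok(M),G)\bigr)=\frac{1}{|G|},
\]
with an error exponentially small in $n$. Second I would invoke the work of Sawin and Wood \cite{SW22}: once $\calc_p$ is checked to be a diamond category and the moments $1/|G|$ are checked to satisfy their growth hypotheses, their theorem yields a unique probability measure on $\calc_p$ with these moments and forces any sequence of distributions whose moments tend to $1/|G|$ to converge weakly to it. Granting both stages, the theorem follows, since the Haar-random symmetric $p$-adic matrices treated by Clancy, Kaplan, Leake, Payne, and Wood \cite{CLK+} have exactly these limiting moments and their cokernel-with-pairing distribution is computed there to be $\prod_{k\ge1}(1-p^{1-2k})/\bigl(|G||\Aut(G,\p_G)|\bigr)$; by the uniqueness in the second stage this is also the limit of the law of $(\cok(M)_p,\p_{\tcok(M),p})$. (One also uses that $\cok(M)$ is finite asymptotically almost surely --- the non-concentration hypothesis makes $\Prob(\det M=0)\to0$ --- so that in the limit $\#\Sur^*(\cok(M),G)$ only sees the Sylow $p$-part and the reference to $\p_{\tcok(M),p}$ is unambiguous.)

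The substance is the first stage. Identifying a homomorphism $F\colon\Z^n\to G$ with the tuple $(v_1,\dots,v_n)$, $v_i=F(e_i)$, the map $F$ descends to a surjection $\cok(M)\to G$ exactly when the $v_i$ generate $G$ and $\sum_i m_{ij}v_i=0$ in $G$ for each $j$. Using the description from \Cref{sec: Pairings} of the pairing $M$ induces on $\h{\cok(M)}=\{y\in(\Q/\Z)^n:My=0\}$ --- namely $\ip{y,z}=\tilde y^{\,t}M\tilde z\bmod\Z$ for rational lifts $\tilde y,\tilde z$, which is well defined because $M\tilde z\in\Z^n$ --- the condition $F\in\Sur^*(\cok(M),G)$ adds the congruences $\tilde{\mathbf a}^{\,t}M\tilde{\mathbf b}\equiv\ip{a,b}_{\hG}\pmod\Z$ for $a,b$ ranging over a generating set of $\hG$, where $\tilde{\mathbf a}$ is any rational lift of $\bigl(a(v_1),\dots,a(v_n)\bigr)=F^{t}a$. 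Thus
\[
\E\bigl(\#\Sur^*(\cok(M),G)\bigr)=\sum_{\substack{(v_1,\dots,v_n)\\ \text{generating }G}}\Prob\Bigl(\textstyle\sum_i m_{ij}v_i=0\ \forall j\ \text{ and the pairing congruences hold}\Bigr).
\]
I would evaluate the inner probability by Fourier analysis on $G^n$ combined with the finite family of pairing congruences: expanding the indicators of both families of conditions into additive characters applied to the independent entries $m_{ij}$, the expectation factors over $i\le j$ into characteristic-function values. The hypothesis $\Prob(m_{ij}\equiv t\bmod p)\le1-\alpha$ forces every character that is nontrivial modulo $p$ on some entry to contribute a factor bounded by $1-c\alpha$, and since any nontrivial configuration touches linearly many entries, all but the trivial term decays exponentially in $n$; this is what produces universality from a hypothesis that only controls the entries modulo $p$. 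It then remains to compute the surviving main term and to sum over generating tuples --- the non-generating ones being absorbed by the standard degenerate-tuple (``code'') estimates as in Wood \cite{Wood} --- which evaluates to $1/|G|$.

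The main obstacle is the joint treatment of the linear conditions $\sum_i m_{ij}v_i=0$ and the bilinear pairing conditions $\tilde{\mathbf a}^{\,t}M\tilde{\mathbf b}\equiv\ip{a,b}_{\hG}$: the latter are quadratic in $M$, they couple distinct entries $m_{ij}$, and one must show they are sufficiently independent of the linear conditions for the count of admissible tuples-with-pairing to be the expected product, all with error bounds uniform in $n$ and in $G$. Setting up the right Fourier-dual index set for the two families of constraints and verifying that the resulting exponential sums are dominated using only the mod-$p$ non-concentration bound is the technical core; the reduction to generating tuples, the verification that $\calc_p$ satisfies the axioms of \cite{SW22}, and the check on the growth of the moments then follow established templates.
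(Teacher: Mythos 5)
Your three-stage architecture (compute the moments $\E(\#\Sur^*(\cok(M),G))\to 1/|G|$, invoke Sawin--Wood to show these moments determine the distribution, and compare to the uniform case computed by Clancy--Kaplan--Leake--Payne--Wood) is exactly the paper's. But your description of the moment computation contains a misstatement and glosses over the central technical obstacle the paper had to solve.

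You write that the pairing conditions $\tilde{\mathbf a}^{\,t}M\tilde{\mathbf b}\equiv\ip{a,b}_{\hG}\pmod\Z$ ``are quadratic in $M$.'' They are not: for fixed $F$ (hence fixed $\tilde{\mathbf a},\tilde{\mathbf b}$) the expression $\sum_{i,j}\tilde a_i m_{ij}\tilde b_j$ is \emph{linear} in the entries of $M$. This is not a pedantic point --- linearity in $M$ is precisely what allows the additive-character expansion you invoke to factor over the independent entries $m_{ij}$; a genuinely quadratic constraint would break that step. The bilinearity is in the pair $(a,b)$, i.e.\ in $F$, not in $M$.

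The harder issue, which you wave past with ``any rational lift,'' is that the congruence $\tilde{\mathbf a}^{\,t}M\tilde{\mathbf b}\bmod\Z$ is not \emph{a priori} a well-defined function of $M$: changing the integer part of $\tilde{\mathbf a}$ changes the value unless $M\tilde{\mathbf b}\in\Z^n$, which is exactly the linear condition $FM=0$. So the event you want to Fourier-expand is not determined by $M$ alone until you condition on $FM=0$ --- but conditioning first would destroy the factorization over entries. The paper's resolution (\Cref{sec: Moments I}) is to lift $F$ to a surjection $\calf\colon V\to H$ onto a larger free $R$-module with $R=\Z/p^{2\lambda_1}\Z$ and to replace the two families of conditions by $\Lambda\calf M=0$ and $\Omega\calf M\calf^t\Omega^t=A$, which are simultaneous $R$-linear equations in $M$ that are always meaningful, and which hold jointly if and only if the original two families hold. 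Without some version of this trick your joint Fourier expansion is not set up on a well-posed event, and \Cref{sec: Moments II,sec: Moments III} (the depth/code/robustness analysis of the dual index set $(C,D)\in\Hom(V,G^*)\times\Sym^2H^*$, the count of ``special'' pairs in $\ker m_\calf$, and the exponential bounds) are all built on this lifted formulation. You should also be aware that the a.a.s.\ finiteness of $\cok(M)$ used to pass from $\cok(M)\otimes\Z/b\Z$ to the Sylow $p$-part is not literally $\Prob(\det M=0)\to 0$ quoted from standard random-matrix results; the paper derives it from the moments themselves via Fatou's lemma (\Cref{rmk: sandpile finite}, \Cref{sandpile is almost surely finite}), which is the cleaner route given that the hypotheses control the entries only mod $p$.
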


As in the case with the sandpile group, we are able to show a stronger result than \Cref{distribution random matrices}---for any finite set of primes, we give the asymptotic probabilities of particular Sylow subgroups with their pairings at these primes.

Recently, there has been significant effort devoted to finding universal distributions of various classes of random groups. In 2019, Wood showed that random integral matrices with independent entries are distributed according to a Cohen-Lenstra heuristic \cite{Wood19}, and in \cite{Eric}, Yan extended this result to random matrices with independent entries taking values in Dedekind domains with finite quotients. In 2020, M\'{e}z\'{a}ros gave the distribution of the sandpile groups of random regular graphs in \cite{Meszaros}. Koplewitz studied the $ p $-rank distribution of sandpile groups of random bipartite graphs in \cite{Koplewitz}, and Bhargava, DePascale, and Koenig considered the directed bipartite case in \cite{BDK}. Nguyen and Wood wrote several papers \cite{NW22,NW22nt} in which they study several other related problems on the distribution of cokernels of certain classes of random matrices. In particular, they found in \cite{NW22nt} the asymptotic probability that the sandpile group of an Erd\H{o}s-R\'{e}nyi random graph is cyclic, answering a long-open conjecture due to Lorenzini \cite{Lor08}. Lee found the distribution of cokernels of random Hermitian matrices with entries in the ring of integers of a quadratic extension of $ \Q_p $ \cite{Lee} and proved results of similar flavor in \cite{Lee2,Lee3}. In 2022, Nguyen and Van Peski found in \cite{NVP} the distribution of the cokernel of the product of random integral square matrices, and in 2023 Cheong and Yu proved a similar result in which they found the distribution of the cokernel of a polynomial evaluated at a random $ p $-adic square matrix \cite{CY}. While much of the work in this area has been devoted to studying universal distributions of abelian groups, there has also been interest in the nonabelian case. For example, Gorokhovsky proved analogous universality results for random quotients of the free group on $ n $ generators in \cite{Gor}, and Lee studied the distributions of similar random nonabelian groups in \cite{Lee3}.

\subsection{Our methods} The overall structure of our argument is based off of the seminal methods developed by Wood in \cite{Wood}. For a more detailed outline of the overall procedure, we refer the reader to the introduction of \cite{Wood} and instead focus on the new techniques developed to deal with the pairing. The most important of these techniques is our usage of duality. As mentioned previously, we will primarily be working with finitely generated abelian groups with symmetric pairings attached to their \emph{dual groups}. We will convert results about pairings on duals into statements about pairings on the groups themselves in \Cref{sec: Comparison to Haar}. In \Cref{sec: Pairings}, we attach a natural symmetric pairing to the Pontryagin dual of the cokernel of a symmetric integral matrix and show that, when the matrix is $ L_\Gamma $, the pairing agrees with the duality pairing on $ \h{S_\Gamma} $. To show \Cref{sandpile moments intro}, we compute the moments in a far more general setting where a random matrix as in \Cref{distribution random matrices} replaces the Laplacian of a random graph (see \Cref{random matrix moment}). Then, using results due to Sawin and Wood in \cite{SW22}, we prove (in \Cref{moments determine distribution}) that these moments in fact determine a unique distribution. Thus, we may use the nice case of cokernels of uniform random symmetric matrices over $ \Z/a\Z $ with their pairings, along with our universality results, to prove the distribution in the more general case (the distribution in the nice case was computed in \cite{CLK+}). 

While the method of proof is inspired by \cite{Wood}, the extra data of the pairing complicates some of the proofs significantly. Indeed, it is not immediately clear that we should consider pairings on the duals of our groups or what the moments of the groups with pairings should be. Thus, to compute the averages $ \Sur^*(S_\Gamma,G) $, we first translate the event that the pairing on $ \h{S_\Gamma} $ pushes forward onto the pairing on $ \hG $ into a piece of algebraic data which is easier to work with. In particular, if $ F:S_\Gamma\to G $ is a surjection, the event that $ F^t $ takes the pairing on $ \h{S_\Gamma} $ to the pairing on $ \hG $ occurs if and only if $ FL_\Gamma F^t $ is equal to some fixed symmetric homomorphism from $ \hG\to G $. Then since we are trying to compute the average number of surjections from $ S_\Gamma\to G $ satisfying this property, and since the sandpile group is $ Z/L_\Gamma Z $, it suffices to determine the probability that a surjection $ F:Z\to G $ descends to $ S_\Gamma $ and pushes the pairing on $ \h{S_\Gamma} $ forward to the pairing on $ \hG $. Equivalently, we determine the probability that $ FL_\Gamma=0 $ and that $ FL_\Gamma F^t $ is equal to some fixed symmetric element of $ \Hom(\hG,G) $. 

We regard this as a system of linear equations in the entries of $ L_\Gamma $, and we note that there are on the order of $ n $ equations in $ {{n}\choose{2}} $ variables. While in \cite{Wood} it was enough to check whether $ FL_\Gamma=0 $, the pairing data makes this system particularly complex not only because the equations are more complicated, but because the equations given by $ FL_\Gamma F^t $ are not even well-defined unless $ FL_\Gamma=0 $. To work around this inherent dependence between our equations, we choose a lift of $ F:Z\to G $ to a larger group $ H $ and modify our equations slightly so that we may work exclusively in this new group. Denote the lift by $ \calf $. Although this construction seems artificial, it turns out that all of our arguments depend only on $ F $ itself rather than the specific choice of lift. 

Our system of equations is parameterized by $ \Hom(Z,\hG)\times\Sym^2\h{H}$. Some pairs $ (C,D)\in\Hom(Z,\hG)\times\Sym^2\h{H}$, give equations in which all of the coefficients are 0; we refer to these as \emph{special} pairs. The set of special pairs $ (C,D) $ depends implicitly on $ F $ and the chosen lift $ \calf $. Ultimately, we are interested in the structural properties of $ \calf $ and $ C $ that influence the number of nonzero coefficients (see \Cref{sec: Moments II}). We adapt two concepts from Wood's work in \cite{Wood}, \emph{depth} and \emph{robustness}, to describe this structure. Finally, we count the number of special $ (C,D) $ for each $ F $. The special $ (C,D) $ give the main term in \Cref{sandpile moments} (the limit in \Cref{sandpile moments intro}), while the remaining cases contribute to the error term, which we bound by adapting techniques from \cite{Wood}. 

Lastly, because the diagonal entries of $ L_\Gamma $ depend on the off-diagonal ones, we run the program described above for a matrix with independent diagonal entries. Then we enlarge $ F $ by taking its direct sum with another map that detects whether the column sums of a matrix are 0. This allows us to condition on what we require of the diagonal. However, in order to apply our methods to the enlarged group we must also extend the pairing on $ \hG $ to the bigger group and sum over all possible extensions.

\subsection{Outline of the paper}
In \Cref{sec: Pairings}, for a symmetric integral matrix $ M $, we define a symmetric pairing on $ \hcok(M) $ and show that this induces a duality pairing on the torsion subgroup of $ \cok(M) $. The induced duality pairing is equal to the duality pairing on the torsion part of $ \cok(M) $ introduced by Bosch and Lorenzini in \cite{BL02}. \Cref{sandpile moments intro} (along with an analog of this theorem for cokernels of random symmetric matrices with their pairings) is proved in \Cref{sec: Moments I,sec: Moments II,sec: Moments III,sec: Moments IV,sec: Moments V}. In \Cref{moments determine distribution}, we state several results from \cite{SW22} and use these to prove that the moments of \Cref{sandpile moments intro} in fact determine the distribution in question. Finally, in \Cref{sec: Comparison to Haar}, we compare to the case of uniform random symmetric matrices over $ \Z/a\Z $; \Cref{thm: distribution of gps and pairings} follows from \Cref{distribution of sandpile groups multiple primes}.

\section{Background}\label{Background}
\subsection{Finite abelian groups} Let $ p $ be a prime. A finite abelian $ p $-group is isomorphic to \[\bigoplus_{i=1}^r\Z/p^{\lambda_i}\Z\] for some positive integers $ \lambda_1\geq\lambda_2\geq\cdots\geq\lambda_r $. If $ \lambda $ is the partition given by the $ \lambda_i $'s, we call $ \lambda $ the \emph{type} of our finite abelian $ p $-group. When $ p $ is understood, let $ G_\lambda $ denote the $ p $-group of type $ \lambda $. 

We may view any two abelian groups $ G $ and $ H $ as $ \Z $-modules, and we may consider their tensor product $ G\otimes_\Z H $. When $ G=\bigoplus_i\Z/a_i\Z $ is finite and $ H=\Z/a\Z $, \[G\otimes_\Z H=\left(\bigoplus_i\Z/a_i\Z\right)\otimes_\Z\Z/a\Z=\bigoplus_i\Z/(a,a_i)\Z,\] where $ (a,a_i) $ denotes the greatest common divisor of $ a $ and $ a_i $. The fact that tensor products distribute over direct sums in conjunction with the classification of finite abelian groups allows us to compute $ G\otimes H $ for arbitrary finite abelian groups $ G $ and $ H $. In particular, if $ G_p $ denotes the Sylow $ p $-subgroup of $ G $, then $ G\otimes G\simeq\bigoplus_pG_p\otimes G_p. $ Moreover, if $ G_p $ is of type $ \lambda $ and is generated by $ g_i $ with relations $ p^{\lambda_i}g_i=0 $, then $ G_p\otimes G_p $ is generated as an abelian group by $ g_i\otimes g_j $ with relations $ p^{\min(\lambda_i,\lambda_j)}g_i\otimes g_j=0 $. Thus, \[G_p\otimes G_p\simeq\bigoplus_i\left(\Z/p^{\lambda_i}\Z\right)^{2i-1}.\]

Now, $ G\otimes G $ comes equipped with a natural involution taking $ g\otimes h\mapsto h\otimes g $. Denote the subgroup of elements fixed under this involution by $ \Sym_2 G $. For $ G_p $ as above, we have that $ \Sym_2G\simeq\bigoplus_p\Sym_2G_p $ and that $ \Sym_2G_p $ is the abelian group generated by the elements $ g_i\otimes g_i $ and $ g_i\otimes g_j+g_j\otimes g_i $ for $ i< j $ with relations $ p^{\lambda_j}g_i\otimes g_j=p^{\lambda_j}g_j\otimes g_i=0 $ for each $ i<j $. Thus, $ |\Sym_2G_p|=p^{\sum_ii\lambda_i} $. 

The \emph{exterior power} $ \wedge^2G $ is the quotient of $ G\otimes G $ by the subgroup generated by elements of the form $ g\otimes g $. For $ G_p $ as above, we have that $ \wedge^2G\simeq\bigoplus_p\wedge^2G_p $ and that $ \wedge^2 G_p $ is generated as an abelian group by $ g_i\wedge g_j $ for $ i<j $ with relations $ p^{\lambda_j}g_i\wedge g_j=0 $ for each $ i<j $. Thus, \[\wedge^2G_p\simeq\bigoplus_i\left(\Z/p^{\lambda_i}\Z\right)^{i-1}.\]

Likewise, the \emph{symmetric power} $ \Sym^2G $ is the quotient of $ G\otimes G $ by the subgroup generated by elements of the form $ g\otimes h-h\otimes g $. For $ G_p $ as above, we have that $ \Sym^2G\simeq\bigoplus_p\Sym^2G_p $ and that $ \Sym^2 G_p $ is generated as an abelian group by $ g_ig_j $ for $ i\leq j $ with relations $ p^{\lambda_j}g_ig_j=0 $ for $ i\leq j $. Therefore, \[\Sym^2G_p\simeq\bigoplus_i\left(\Z/p^{\lambda_i}\Z\right)^i.\]

The \emph{exponent} of a finite abelian group is the smallest positive integer $ e $ such that $ eG=0 $. If $ R=\Z/a\Z $, then a finite abelian group $ G $ with exponent dividing $ a $ is also an $ R $-module. If $ H $ is another such group, then $ \Hom_R(G,H)=\Hom_\Z(G,H) $ (i.e., the $ R $-module homomorphisms $ G\to H $ are the same as the group homomorphisms $ G\to H $).

For any finite abelian group $ G $, its \emph{Pontryagin dual} is the group $ \Hom_\Z(G,\Q/\Z) $, which we denote by $ \hG $. 

\subsection{Pairings}\label{subsec: pairings}
Let $ G $ be an abelian group and consider a map $ \phi:G\times G\to\Q/\Z $. We say that $ \phi $ is \emph{symmetric} if $ \phi(g,h)=\phi(h,g) $ and \emph{bilinear} if $ \phi(g_1+g_2,h)=\phi(g_1,h)+\phi(g_2,h) $ and $ \phi(g,h_1+h_2)=\phi(g,h_1)+\phi(g,h_2) $. If $ \phi $ is bilinear, then it is called a \emph{pairing} on $ G $. Equivalently, a pairing on $ G $ can be thought of as a homomorphism $ G\otimes G\to\Q/\Z $. Given a pairing $ \phi $ and generators $ g_1,\ldots,g_r $ of $ G $, note that $\phi$ is entirely determined by the values $ \phi(g_i,g_j) $ for all pairs $ (i,j) $. 

A symmetric pairing $ \phi $ on $ G $ induces a map from $ G\to\Hom(G,\Q/\Z) $ taking $ g\mapsto\phi(g,\cdotp) $ (if the pairing were not symmetric we would have two maps---one for each factor). Such a pairing is said to be \emph{nondegenerate} if the induced map $ G\to\Hom(G,\Q/\Z) $ is injective. If the induced map is not injective, the pairing is said to be \emph{degenerate}, and the kernel of the map is called the \emph{kernel} of the pairing. The \emph{cokernel} of the pairing $ \phi $, denoted $ \coker(\phi) $, is defined to be $ G/\ker(\phi) $. Note that there is a natural nondegenerate symmetric pairing on $ \coker(\phi) $ induced by $ \phi $. 

If the map $ G\to\hG $ induced by $ \phi $ is an isomorphism, then the pairing is said to be \emph{perfect}. Recall that a perfect symmetric pairing on a group is called a \emph{duality pairing}. If $ G $ is finite, then a nondegenerate pairing on $ G $ is automatically perfect (this fact follows from counting). Given two groups $ G $ and $ H $ equipped with pairings $ \p_G $ and $ \p_H $, there is a notion of isomorphism between $ (G,\p_G) $ and $ (H,\p_H) $: an isomorphism between $ (G,\p_G) $ and $ (H,\p_H) $ is an isomorphism of groups $ f:G\to H $ such that $ \ip{f\inverse(h),f\inverse(h')}_G=\ip{h,h'}_H $ for all pairs $ (h,h')\in H\times H $. If $ (G,\p_G) $ is a group equipped with a pairing, an automorphism of $ G $ is said to \emph{respect the pairing} if it is an isomorphism of groups and pairings between $ (G,\p_G) $ and itself. Denote the set of automorphisms of $ G $ that respect the pairing $ \p_G $ by $ \Aut(G,\p_G) $.

\subsection{Duality}
Let $ R=\Z/b\Z $. When the ring $ R $ is understood, for an $ R $-module $ G $, we may consider its \emph{dual}, $ G^*=\Hom_R(G,R) $. If $ G $ is a finite abelian group, there is a canonical isomorphism between $ G^* $ and $ \hG $. For finite $ R $-modules $ G $ and $ H $, we have $ (G\oplus H)^*\simeq G^*\oplus H^* $. While $ G $ and $ G^* $ are not naturally isomorphic, when $ G $ is finite, there is a canonical isomorphism between $ G $ and $ (G^*)^* $ given by evaluation.

For a finite abelian group $ G $, the following are equivalent data: \begin{enumerate}
	\item a perfect symmetric pairing on $ G $;
	\item an isomorphism from $ G $ to $ \h{G} $;
	\item a perfect symmetric pairing on $ \hG $. 
\end{enumerate} Given a duality pairing $ \p_G $ on $ G $, let $ \phi $ denote the induced isomorphism from $ G\to\hG $. Then the duality pairing on $ \hG $ is given by $ \ip{x,y}=\ip{\phi\inverse(x),\phi\inverse(y)}_G $.

Let $ G $ and $ H $ be two finite abelian groups. Let $ f:G\to H $ be a group homomorphism. The \emph{transpose} of $ f $ is the map $ f^t:\h{H}\to \h{G} $ given by precomposing $ \ell\in\Hom(H,\Q/\Z) $ with $ f $ (a similar definition can be made if $ G $ and $ H $ are $ R $-modules for $ R=\Z/b\Z $ and $ f $ an $ R $-module homomorphism). If $ \hG $ is equipped with a pairing $ \phG $, then $ f^t $ pushes $ \phG $ forward to a pairing $ \p_{\h{H}} $ on $ \h{H} $ given by \[\ip{a,b}_{\h{H}}:=\ip{f^t(a),f^t(b)}_{\hG}.\] In other words, $ f^t $ induces a map from $ \Hom(\hG\otimes \hG,\Q/\Z)\to\Hom(\h{H}\otimes\h{H},\Q/\Z) $; let $ f^t(\p_{\h{G}}) $ denote the image of $ \p_{\h{G}} $ under this map (denoted $ \p_{\h{H}} $ above). Note that we do not require $ \phG $ to be perfect. 

Let $ (G,\p_{\h{G}}) $ and $ (H,\p_{\h{H}}) $ be groups whose duals are equipped with pairings. Define \[\Hom((G,\p_{\hG}),(H,\p_{\h{H}}))\] to be the set of group homomorphisms $ f:G\to H $ such that $ f^t(\p_{\hG})=\p_{\h{H}} $. In the category $ \calc $ defined in \Cref{Introduction}, $ \Hom((G,\p_{\hG}),(H,\p_{\h{H}})) $ is the set of morphisms between $ (G,\p_{\hG}) $ and $ (H,\p_{\h{H}}) $. When the pairings on both groups are understood, we denote $ \Hom((G,\p_{\h{G}}),(H,\p_{\h{H}})) $ by $ \Hom^*(G,H) $ for the sake of brevity. Note that this generalizes the notion of isomorphism between groups with duality pairings defined in the previous subsection. Moreover, given finite abelian groups $ G $ and $ H $ with duality pairings $ \p_{G} $ and $ \p_H $ and induced duality pairings $ \p_{\h{G}} $ and $ \p_{\h{H}} $ on the dual groups, note that $ (G,\p_G)\simeq(H,\p_H) $ if and only if $ (G,\p_{\h{G}})\simeq(H,\p_{\h{H}}) $.

\subsection{Cokernels of matrices}
Let $ M_n(R) $ denote the space of $ n\times n $ matrices with entries in a commutative ring $ R $. For $ M\in M_n(R) $, let $ \col(M) $ denote the column space of $ M $ (this is the image of the map $ R^n\to R^n $ given by $ M $). Define the \emph{cokernel} of $ M $ to be \[\cok(M)=R^n/\col(M).\] 

For $ M\in M_n(\Z) $, regard $ M $ as a linear map $ M:\Z^n\to \Z^n $. Then $ \cok(M) $ is a finitely generated abelian group; let $ \tcok(M) $ denote its torsion subgroup. For any subgroup $ Y\subset \Z^n $, let $ Y^\perp $ denote its orthogonal with respect to the standard scalar product on $ \Z^n $. 

If $ M $ is symmetric, $ \col(M)\subset\ker(M)^\perp $. Because $ \rank(\col(M))=\rank(\ker(M)^\perp) $ and because $ \ker(M)^\perp $ is a saturated submodule of $ \Z^n $ (i.e., if $ rx\in\ker(M)^\perp $ for $ r\in \Z $ and $ x\in \Z^n $, then $ x\in\ker(M)^\perp $), it follows that \[\tcok(M)=\ker(M)^\perp/\col(M).\] Since $ \tcok(M) $ is pure torsion and $ \cok(M) $ is finitely generated, $ \tcok(M) $ is finite.

\subsection{Sandpile groups} Recall the definition of the graph Laplacian. Let $ [n]=\{1,\ldots,n\} $, and let $ \Gamma $ be a graph on $ n $ vertices labeled by $ [n] $. The \emph{Laplacian} of $ \Gamma $, denoted $ L_\Gamma $, is the $ n\times n $ matrix whose $ (i,j) $ entry is given by \[(L_\Gamma)_{ij}=\begin{cases}
	1 & \mathrm{if\ }i\neq j\ \mathrm{and\ }\{i,j\}\ \mathrm{is\ an\ edge\ of\ }\Gamma;\\
	0 &\mathrm{if\ }i\neq j\ \mathrm{and\ }\{i,j\}\ \mathrm{is\ not\ an\ edge\ of\ }\Gamma;\\
	-\deg(i) & \mathrm{if\ }i=j.
\end{cases}\] Note that $ L_\Gamma $ is a symmetric matrix in $ M_n(\Z) $. If $ \Gamma $ is connected, then $ L_\Gamma $ has rank $ n-1 $ and kernel spanned by the all-ones vector (see \cite{Shokrieh}). If $ Z\subset\Z^n $ is the subgroup of elements whose coordinates sum to 0, then we see that $ \col(L_\Gamma)\subset Z $. Define the sandpile group of $ \Gamma $, denoted $ S_\Gamma $, to be $ Z/\col(L_\Gamma) $. We see immediately that $ S_\Gamma $ must be finitely generated. Moreover, it is finite if and only if $ \Gamma $ is connected.

\subsection{Random graphs} Let $ \Gamma\in G(n,q) $ denote that $ \Gamma $ is an Erd\H{o}s-R\'{e}nyi random graph on $ n $ labeled vertices with edges occurring independently with probability $ q $.
\subsection{Notation} We use $ |\cdot| $ or $ \# $ to denote the cardinality of sets (we use $ \# $'s to avoid confusion with the notation $ | $ for ``divides''; while absolute value signs take up less space). Throughout, $ \simeq $ denotes ``is isomorphic to,'' and $ \Prob $ and $ \E $ denote probability and expected value, respectively. Also, $ p $ will always denote a prime. For any integers $ i $ and $ j $, let $ \delta_{ij} $ be the Kronecker delta---if $ i=j $, then $ \delta_{ij}=1 $ and if $ i\neq j $, then $ \delta_{ij}=0 $. 

\section{Pairings associated to symmetric matrices}\label{sec: Pairings}
It is a well-known fact (see, e.g., \cite{Dauns}) that the cokernel of an invertible symmetric integral matrix $ M $ carries a canonical duality pairing. If $ x,y\in\cok(M) $, and $ X,Y\in\Z^n $ are lifts of $ x,y $ respectively, then the pairing is given by \[(x,y)\mapsto X^tM\inverse Y\mod\Z.\] In \cite{BL02}, Bosch and Lorenzini showed that the torsion subgroup of $ \cok(M) $ is canonically equipped with a duality pairing. When the matrix is invertible, this duality pairing coincides with the one defined above. 

We define this canonical duality pairing on $ \tcok(M) $ in the following way (we refer the reader to Section 1 of \cite{BL02} for a more general and detailed setup). Let $ \tau,\tau'\in\tcok(M) $, and let $ T $ and $ T' $ be lifts of $ \tau $ and $ \tau' $ to $ \ker(M)^\perp $, respectively. Since $ \tcok(M) $ is pure torsion, there exist $ k,k'\in\N $ such that $ kT,k'T'\in\col(M) $, i.e., there exist $ S,S'\in\Z^n $ such that $ MS=kT $ and $ MS'=k'T' $. Define a map $ \p_{\tcok(M)}:\tcok(M)\times\tcok(M)\to\Q/\Z $ by \[(\tau,\tau')\mapsto\frac{S^tMS'}{kk'}\mod\Z.\] If $ M $ is nonsingular, then $ \cok(M)=\tcok(M) $, and this map is the classical pairing on $ \cok(M) $ discussed above. The map $ \p_{\tcok(M)} $ is a duality pairing on $ \tcok(M) $: \begin{lemma}[Lemma 1.1 and Theorem 1.3 of \cite{BL02}]\label{pairing on torsion}
	The map $ \p_{\tcok(M)} $ is well-defined, bilinear, symmetric, and perfect. 
\end{lemma}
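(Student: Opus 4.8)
The plan is to verify the four properties directly from the defining formula, ordering the steps so that symmetry can be reused. Throughout write $W=\ker(M)^\perp$, so that $\tcok(M)=W/\col(M)$ is the finite group established above; for a torsion class $\tau$ with lift $T\in W$ fix $k\in\N$ and $S\in\Z^n$ with $MS=kT$, and similarly $k',S'$ for $\tau'$. The first thing I would record is that, for \emph{fixed} such choices, the quantity $S^tMS'/(kk')$ is already symmetric in the two arguments: since $M=M^t$ we have $S^tMS'=(S^tMS')^t=(S')^tMS$. Consequently, once the formula is shown to be well defined and additive in its first argument, the same facts for the second argument follow for free, as does symmetry of $\p_{\tcok(M)}$.

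Next I would dispose of well-definedness one choice at a time. (i) If $MS_1=MS_2=kT$ then $u:=S_1-S_2\in\ker M$, and because $M$ is symmetric $u^tM=(Mu)^t=0$, so $S_1^tMS'=S_2^tMS'$. (ii) Replacing $k$ by a multiple $mk$ forces $S$ to be replaced by $mS$ up to an element of $\ker M$ (handled by (i)), and $(mS)^tMS'/(mk\cdot k')=S^tMS'/(kk')$; since any two admissible denominators have a common multiple, the value is independent of $k$. (iii) If $T_1,T_2\in W$ both lift $\tau$, then $T_1-T_2=MR$ with $R\in\Z^n$; one may keep the same $k$ and take $S_1=S_2+kR$, whereupon $S_1^tMS'-S_2^tMS'=kR^tMS'=kR^t(k'T')=kk'\,R^tT'$, so the two candidate values differ by $R^tT'\in\Z$ and therefore agree modulo $\Z$. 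Additivity in the first argument is the same kind of computation: given $MS_i=k_iT_i$, the class $\tau_1+\tau_2$ is lifted by $T_1+T_2\in W$, and $M(k_2S_1+k_1S_2)=k_1k_2(T_1+T_2)$, so expanding $(k_2S_1+k_1S_2)^tMS'/(k_1k_2k')$ yields $S_1^tMS'/(k_1k')+S_2^tMS'/(k_2k')$. Combined with the symmetry observation, the pairing is $\Z$-bilinear and symmetric.

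It remains to prove perfectness, and since $\tcok(M)$ is finite it suffices to show nondegeneracy. Using $MS'=k'T'$ I would first rewrite the value of the pairing at $(\tau,\tau')$ as $S^t(MS')/(kk')=S^tT'/k\bmod\Z$. The projection $W\to\tcok(M)$ is surjective, so every vector of $W$ is a lift of some torsion class; hence the pairing vanishes on $\{\tau\}\times\tcok(M)$ exactly when $S^tT'\equiv 0\pmod k$ for all $T'\in W$, i.e.\ when the functional $W\to\Z/k\Z$, $w\mapsto S^tw$, is zero. Because $\ker M$ is a saturated sublattice of $\Z^n$ we have $(\ker M^\perp)^\perp=\ker M$, and the standard scalar product induces a surjection $\Z^n\to\Hom(W,\Z)$ with kernel $W^\perp=\ker M$, hence an isomorphism $\Z^n/\ker M\to\Hom(W,\Z)$. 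Under this identification the vanishing condition says precisely that the image of $S$ in $\Z^n/\ker M$ is divisible by $k$, so $S=kS_0+u$ with $S_0\in\Z^n$ and $u\in\ker M$. Then $kT=MS=kMS_0$, so $T=MS_0\in\col M$ and $\tau=0$. Thus the pairing is nondegenerate, and by finiteness it is perfect.

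I expect the nondegeneracy step to be the only real obstacle: the well-definedness and bilinearity verifications are bookkeeping resting solely on $M=M^t$ and on integrality of terms like $R^tT'$, whereas nondegeneracy requires the lattice-duality input — saturatedness of $\ker M$, the identity $(\ker M^\perp)^\perp=\ker M$, and the resulting perfect pairing between $\Z^n/\ker M$ and $\Hom(\ker M^\perp,\Z)$ — together with the observation that every vector of $\ker(M)^\perp$ represents a torsion class. (Alternatively, one can simply invoke the more general development in Section 1 of \cite{BL02}.)
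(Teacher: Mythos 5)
The paper does not prove this lemma; it simply cites Bosch--Lorenzini (Lemma~1.1 and Theorem~1.3 of \cite{BL02}), so there is no ``paper proof'' to compare against. Your proposal is a genuine self-contained alternative, and it is correct: the symmetry-first bookkeeping for well-definedness and bilinearity is clean, and the key rewriting $\ip{\tau,\tau'}=S^tT'/k\bmod\Z$ correctly reduces nondegeneracy to a divisibility statement about the functional $w\mapsto S^tw$ on $W=\ker(M)^\perp$. The lattice-duality input you isolate is exactly the right one: $\ker M$ is automatically saturated (torsion-freeness of $\Z^n$), hence $(\ker M)^{\perp\perp}=\ker M$, the restriction map $\Z^n\to\Hom(W,\Z)$ is surjective with kernel $W^\perp=\ker M$, and since $\Hom(W,\Z)$ is free, ``image lies in $k\Z$'' upgrades to ``the functional is $k$ times another,'' giving $S=kS_0+u$ with $u\in\ker M$ and thus $T=MS_0\in\col(M)$. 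This is the same circle of ideas one finds in Section~1 of \cite{BL02}, but your write-up makes the argument elementary and visibly dependent only on $M=M^t$ plus standard facts about saturated sublattices; what it buys, compared to the paper's citation, is that the reader sees exactly which structural inputs (symmetry of $M$, saturation, finiteness of $\tcok(M)$) drive each of the four claimed properties.

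One small stylistic point: in step (ii) it would be slightly cleaner to observe directly that if $k_1,k_2$ are both admissible with witnesses $S_1,S_2$, then $k_2S_1-k_1S_2\in\ker M$, and to compare $S_1^tMS'/(k_1k')$ with $S_2^tMS'/(k_2k')$ via that single kernel element, rather than routing through a common multiple; but the common-multiple argument you give is also fine.
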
 

When $ M $ is singular, $ \cok(M) $ has some free part to which $ \p_{\tcok(M)} $ does not naturally extend. The duality pairing on $ \tcok(M) $ implies the existence of a corresponding duality pairing on $ \htcok(M) $. Now, $ \htcok(M) $ is a quotient of $ \hcok(M) $, so the pairing on $ \htcok(M) $ does induce a natural pairing on $ \hcok(M) $. However, it can be quite difficult to compute this induced pairing on $ \hcok(M) $ using the definition given above. So, we will define a symmetric pairing on $ \hcok(M) $ given by $ M $; show that this pairing induces a perfect symmetric pairing on $ \htcok(M) $; and prove that the induced duality pairing on $ \tcok(M) $ coincides with the pairing of Bosch and Lorenzini. 

Recall $ M $ is a linear map $ \Z^n\to\Z^n $. Any $ \phi\in\hcok(M) $ can be extended to $ \phi:\Z^n\to\Q/\Z $ via the quotient $ \Z^n\to\cok(M) $. Note that $ \hcok(M)=\Hom(\cok(M),\Q/\Z) $ is the subset of maps $ \phi\in\Hom(\Z^n,\Q/\Z) $ such that $ \phi M=0 $ (i.e., for any lift $ \Phi $ of $ \phi $ to $ \Hom(\Z^n,\Q) $, we have $ (\Phi M)^t\in\Z^n $). For $ \tau\in\cok(M) $, let $ T $ be a lift of $ \tau $ to $ \Z^n $. Then for $ \phi\in\hcok(M) $, we have $ \phi(\tau)=\phi(T) $. We define a pairing on $ \hcok(M) $ given by the matrix $ M $ in the following way. For $ x,y\in\hcok(M) $, let $ X,Y\in\Hom(\Z^n,\Q) $ be lifts of $ x,y $, respectively. We define a map $ \p_M:\hcok(M)\times\hcok(M)\to\Q/\Z $ by \[(x,y)\mapsto XMY^t\mod\Z.\]

\begin{lemma}
	The map $ \p_M $ on $ \hcok(M) $ is well-defined, bilinear, and symmetric. 
\end{lemma}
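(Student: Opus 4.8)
The plan is to verify the three properties—well-definedness, bilinearity, and symmetry—directly from the formula $(x,y)\mapsto XMY^t\bmod\Z$, where $X,Y\in\Hom(\Z^n,\Q)$ are chosen lifts of $x,y\in\hcok(M)$. Bilinearity is immediate: since the operations of choosing lifts, matrix multiplication, and reduction modulo $\Z$ are all additive in each argument, the map $(X,Y)\mapsto XMY^t$ is $\Z$-bilinear on $\Hom(\Z^n,\Q)\times\Hom(\Z^n,\Q)$, and this descends. Symmetry follows from the symmetry of $M$: we have $XMY^t=(XMY^t)^t=YM^tX^t=YMX^t$ since $XMY^t$ is a $1\times 1$ matrix (a scalar) and $M=M^t$.

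The main point to check is well-definedness, i.e., that the class $XMY^t\bmod\Z$ does not depend on the choice of lifts $X$ and $Y$. Suppose $X'$ is another lift of $x$; then $X'-X$ descends to the zero map $\cok(M)\to\Q/\Z$, which means $X'-X$ is a map $\Z^n\to\Q$ whose composition with the quotient $\Z^n\to\cok(M)$ is zero, hence $X'-X$ factors through $\col(M)$, i.e., $(X'-X)|_{\col(M)}$ takes values in $\Z$ and moreover $X'-X$ vanishes on $\col(M)$ modulo... more precisely, $(X'-X)(v)\in\Z$ for all $v\in\Z^n$ is \emph{not} what we want; rather we want $(X'-X)MY^t\in\Z$. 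Here is the clean way: write $X'=X+E$ where $E\in\Hom(\Z^n,\Q)$ satisfies $E(\col(M))\subseteq\Z$... actually the correct statement is that $E$, viewed as a row vector in $\Q^n$, satisfies $EM\in\Z^n$ (this is exactly the condition recalled in the excerpt: $x\in\hcok(M)$ iff for any lift $\Phi$, $(\Phi M)^t\in\Z^n$, applied to the difference of two lifts of the same element, which represents $0$). Wait—more carefully: if $X$ and $X'$ both lift $x$, then $X$ and $X'$ differ by a lift of $0\in\hcok(M)$, and a lift of $0$ is precisely a homomorphism $\Z^n\to\Q$ that is integer-valued on all of $\Z^n$ composed with... no. Let me restate: $x\in\hcok(M)\subseteq\Hom(\cok(M),\Q/\Z)$, and a lift of $x$ is a map $X:\Z^n\to\Q$ reducing to $x$; two such lifts differ by a map $E:\Z^n\to\Q$ that reduces to the zero map $\Z^n\to\Q/\Z$, so $E$ has integer values, i.e., $E\in\Hom(\Z^n,\Z)$. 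Then $(X'MY^t)-(XMY^t)=EMY^t$. Now $EM\in\Z^{1\times n}$ since $E\in\Z^{1\times n}$ and $M\in M_n(\Z)$, but $Y^t\in\Q^n$, so $EMY^t$ need not be an integer unless we use that $Y$ is a lift of $y\in\hcok(M)$: the condition $yM=0$ means $(YM)^t\in\Z^n$, equivalently $YM\in\Z^{1\times n}$, equivalently... hmm, we need $EMY^t\in\Z$. Since $E\in\Z^{1\times n}$ and $(MY^t)\in?$—we have $M$ symmetric so $MY^t=(YM^t)^t=(YM)^t$, and $YM\in\Z^{1\times n}$ because $Y$ lifts an element of $\hcok(M)$; therefore $MY^t=(YM)^t\in\Z^n$, so $EMY^t\in\Z$. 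Independence in the second variable is identical using symmetry of $M$ (or the same argument with roles swapped). Thus $\p_M$ is well-defined.

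I expect the only genuine subtlety—and hence the step to write out with care—is the well-definedness argument, specifically keeping straight the two distinct conditions in play: (i) the difference of two lifts of the same element of $\hcok(M)$ is integer-valued, and (ii) membership in $\hcok(M)=\{\phi:\phi M=0\}$ forces $MY^t\in\Z^n$. The interplay of (i) for one variable with (ii) for the other is what makes the product land in $\Z$, and using symmetry of $M$ lets the same computation handle both variables. Everything else (bilinearity, symmetry) is a one-line formal check. The proof would conclude by noting that these computations show $\p_M$ descends from $\Hom(\Z^n,\Q)\times\Hom(\Z^n,\Q)$ to a well-defined symmetric bilinear map on $\hcok(M)\times\hcok(M)$, as claimed.
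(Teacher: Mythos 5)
Your argument is correct and is essentially the paper's own proof: both rest on the observations that two lifts of the same element of $\hcok(M)$ differ by an element of $\Hom(\Z^n,\Z)$ and that $MY^t\in\Z^n$ whenever $Y$ lifts an element of $\hcok(M)$, so the cross terms vanish modulo $\Z$. The paper simply perturbs both lifts at once and expands $(X+A)M(Y+B)^t$, whereas you treat one variable at a time and invoke the symmetry of $M$ for the other; the content is identical.
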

\begin{proof}
	To show that $ \p_M $ is well-defined, we simply need to check that our map does not depend on the chosen lifts of $ x $ and $ y $. Recall that both $ MX^t $ and $ MY^t $ are vectors in $ \Z^n $. Hence, for $ A,B\in\Hom(\Z^n,\Z) $, we have \[(X+A)M(Y+B)^t=XMY^t+XMB^t+AMY^t+AMB^t=XMY^t\mod\Z.\] Hence, the map is well-defined. 
	
	Bilinearity and symmetry are both easily verified, where the symmetry of $ \p_M $ follows from the symmetry of $ M $ itself.  
\end{proof}

Since the Pontryagin duality functor $ \Hom(-,\Q/\Z) $ is exact, the inclusion $ \iota:\tcok(M)\hookrightarrow\cok(M) $ becomes a quotient $ \iota^t:\hcok(M)\twoheadrightarrow\htcok(M) $ after taking the transpose. The kernel of this quotient is the annihilator of $ \tcok(M) $, denoted $ \ann(\tcok(M)) $. It turns out that $ \p_M $ descends to a well-defined duality pairing on $ \htcok(M) $. Recall the definition of $ \coker\p_M $ from \Cref{subsec: pairings}.
\begin{lemma}\label{pairing on dual of cok is symmetric}
	Let $ M $ be a symmetric integral matrix and $ \p_M $ the symmetric pairing on $ \hcok(M) $. Then $ \ker(\p_M)=\ann(\tcok(M)) $, implying $ \coker(\p_M)=\htcok(M) $.
\end{lemma}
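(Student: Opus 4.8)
The plan is to run everything through the concrete description of $\hcok(M)$ recalled just above the statement: an element $x\in\hcok(M)$ is a homomorphism $\Z^n\to\Q/\Z$ vanishing on $\col(M)$, which we lift to $X\in\Hom(\Z^n,\Q)$ subject to $MX^t\in\Z^n$, and the pairing is given by $\ip{x,y}_M=XMY^t\bmod\Z$. The crucial observation is that $XMY^t$ is nothing but the value $X(MY^t)$ of the functional $X$ on the integer vector $MY^t$; so to compute $\ker(\p_M)$ it is enough to understand which integer vectors arise as $MY^t$ when $y$ ranges over $\hcok(M)$.

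First I would prove that $\{MY^t: y\in\hcok(M)\}=\ker(M)^\perp$ inside $\Z^n$. For the inclusion $\subseteq$: by definition $MY^t\in\Z^n$, and $MY^t$ lies in the rational column space $M\Q^n$, which equals $\ker(M)^\perp\otimes\Q$ precisely because $M$ is symmetric (over $\Q$ one has $\im(M)=\ker(M)^\perp$); intersecting with $\Z^n$ recovers $\ker(M)^\perp$, so $MY^t\in\ker(M)^\perp$. For $\supseteq$: given $T\in\ker(M)^\perp$, we have $T\in M\Q^n$, so we may solve $Mv=T$ over $\Q$ and take $Y$ to be the functional $w\mapsto v^tw$; then $MY^t=T\in\Z^n$, so $y\in\hcok(M)$.

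Granting this, the computation of the kernel is immediate. For $x\in\hcok(M)$ with lift $X$, we have $x\in\ker(\p_M)$ iff $X(MY^t)\in\Z$ for every $y\in\hcok(M)$, iff $X(u)\in\Z$ for every $u\in\ker(M)^\perp$, iff the homomorphism $\cok(M)\to\Q/\Z$ attached to $x$ vanishes on the image of $\ker(M)^\perp$ in $\cok(M)$. Since $\tcok(M)=\ker(M)^\perp/\col(M)$ (established in the subsection on cokernels of matrices) and $\iota:\tcok(M)\hookrightarrow\cok(M)$ is induced by the inclusion $\ker(M)^\perp\hookrightarrow\Z^n$, this last condition says exactly that $x\in\ann(\tcok(M))$. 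Thus $\ker(\p_M)=\ann(\tcok(M))$. The second claim then follows from the fact noted just before the statement, that $\iota^t:\hcok(M)\twoheadrightarrow\htcok(M)$ is onto with kernel $\ann(\tcok(M))$: indeed $\coker(\p_M)=\hcok(M)/\ker(\p_M)=\hcok(M)/\ann(\tcok(M))\cong\htcok(M)$.

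I do not anticipate a real obstacle; the only delicate point is the rational--integral bookkeeping, namely identifying $M\Q^n$ with $\ker(M)^\perp\otimes\Q$ via symmetry of $M$ (and recovering the integral lattice $\ker(M)^\perp$ upon intersecting back with $\Z^n$), together with keeping straight which lifts are $\Q$-valued and which are $\Z$-valued. Should one wish to avoid asserting the set equality of the second step in one stroke, one can instead prove the two containments directly: for $\ker(\p_M)\subseteq\ann(\tcok(M))$, given $T\in\ker(M)^\perp$ pick $k\in\N$ and $S\in\Z^n$ with $MS=kT$ (possible since $\tcok(M)$ is torsion) and pair $x$ against the element $y$ with lift $Y=\tfrac1k S^t\in\hcok(M)$, for which $\ip{x,y}_M=X(T)\bmod\Z$; for $\ann(\tcok(M))\subseteq\ker(\p_M)$, use that $MY^t$ always lies in $\ker(M)^\perp$, so $X(MY^t)\in\Z$ whenever $x$ annihilates $\tcok(M)$.
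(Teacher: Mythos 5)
Your proof is correct, and it is in essence the paper's argument packaged around a cleaner intermediate claim. The paper proves the two containments $\ker(\p_M)\subseteq\ann(\tcok(M))$ and $\ann(\tcok(M))\subseteq\ker(\p_M)$ directly: for the first, given $\tau\in\tcok(M)$ with lift $T$ to $\ker(M)^\perp$ it constructs $[(S/k)^t]\in\hcok(M)$ (where $kT=MS$) and pairs against it; for the second, given $x\in\hcok(M)$ it observes that some positive multiple of $MX^t$ lies in $\col(M)$, so $[MX^t]\in\tcok(M)$. Your two inclusions of the set identity $\{MY^t:y\in\hcok(M)\}=\ker(M)^\perp$ are precisely these two constructions: $\supseteq$ is the paper's first step, $\subseteq$ is the paper's second, with the rational-linear-algebra observation $M\Q^n=\ker(M)^\perp\otimes\Q$ (plus saturation) replacing the more hands-on torsion argument. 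Once the set identity is in place, the kernel computation becomes a one-line formal consequence of $XMY^t=X(MY^t)$, which is a genuine economy over the paper's version, where the same structural fact is re-derived inside each containment. Your closing remark correctly identifies that the direct two-containment variant you sketch is essentially verbatim the paper's proof. The one point to state carefully if you wrote this out in full is exactly the one you flag: that $\ker(M)^\perp\otimes\Q\cap\Z^n=\ker(M)^\perp$ requires the saturation of $\ker(M)^\perp$ in $\Z^n$, which the paper has already noted in the background section.
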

\begin{proof}
	Suppose $ \psi\in\ker(\p_M) $ so that for all $ x\in\hcok(M) $ we have $ \ip{\psi,x}_M=0 $. Let $ \tau\in\tcok(M) $ and choose a lift $ T $ of $ \tau $ to $ \ker(M)^\perp\subset\Z^n $. Since $ \tau\in\tcok(M) $, there exist $ k\in\N $ and $ S\in\Z^n $ such that $ kT=MS $. Consider the image of $ (S/k)^t\in\Hom(\Z^n,\Q) $ in $ \Hom(\Z^n,\Q/\Z) $, which we denote by $ [(S/k)^t] $. Since $ (S/k)^tM=T^t $, we may view $  [(S/k)^t] $ as an element of $ \hcok(M) $. Thus, if $ \Psi $ is any lift of $ \psi $ to $ \Q $, then \[\psi(\tau)=\psi(T)=\psi\left(\frac{MS}{k}\right)=\frac{\Psi MS}{k}=\ip{\psi,[(S/k)^t]}_M=0\mod\Z,\] forcing $ \psi\in\ann(\tcok(M)). $
	
	For the reverse inclusion, assume $ \phi\in\ann(\tcok(M)) $ so that for all $ \tau\in\tcok(M) $, we have $ \phi(\tau)=0 $. We would like to show that for all $ x\in\hcok(M) $ \[\ip{\phi,x}_M=\Phi MX^t=0\mod\Z,\] where $ \Phi $ and $ X $ are lifts of $ \phi $ and $ x $ to $ \Q $, respectively. Suppose $ x\in\hcok(M) $ so that $ MX^t\in\Z^n $. Writing $ X=Y/k $ for some $ k\in\N $ and $ Y^t\in\Z^n $, we see that $ kMX^t=MY^t\in\col(M) $. Hence, the image of $ MX^t $ in $ \cok(M) $ lies in $ \tcok(M) $. Since $ \phi\in\ann(\tcok(M)) $, \[\Phi MX^t=\phi([MX^t])=0\mod\Z,\] where $ [MX^t] $ denotes the image of $ MX^t $ in $ \cok(M) $. Therefore, $ \phi\in\ker(\p_M) $. This forces $ \ker(\p_M)=\ann(\tcok(M)) $, as desired. 
\end{proof}

By abuse of notation, we also use $ \p_M $ to denote the duality pairing on $ \htcok(M)=\coker\p_M $.
\begin{corollary}
	The pairing $ \p_{M} $ is a duality pairing on $ \htcok(M) $.
\end{corollary}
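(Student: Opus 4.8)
The plan is to deduce this directly from \Cref{pairing on dual of cok is symmetric} together with the elementary facts about pairings recorded in \Cref{subsec: pairings}. Recall that a duality pairing is by definition a perfect symmetric pairing, and that on a \emph{finite} abelian group any nondegenerate symmetric pairing is automatically perfect (the counting remark in \Cref{subsec: pairings}). So there are really only three things to check: (i) that $\p_M$ descends to a well-defined symmetric pairing on $\htcok(M)$; (ii) that this descended pairing is nondegenerate; and (iii) that $\htcok(M)$ is finite.

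For (i) and (ii) I would combine the lemma showing that $\p_M$ is a symmetric pairing on $\hcok(M)$ with the general fact from \Cref{subsec: pairings} that any symmetric pairing $\phi$ on a group $H$ induces a \emph{nondegenerate} symmetric pairing on $\coker(\phi)=H/\ker(\phi)$: well-definedness holds since two lifts of a class in $\coker(\phi)$ differ by an element of $\ker(\phi)$, symmetry is inherited, and nondegeneracy is immediate because one has quotiented by exactly the kernel. Taking $H=\hcok(M)$ and $\phi=\p_M$ and then using \Cref{pairing on dual of cok is symmetric} to identify $\coker(\p_M)$ with $\htcok(M)$ produces the nondegenerate symmetric pairing on $\htcok(M)$ that the statement (by abuse of notation) again calls $\p_M$. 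For (iii) I would note that $\tcok(M)$ is finite --- it is pure torsion while $\cok(M)$ is finitely generated, as recorded in the discussion of cokernels of matrices --- so its Pontryagin dual $\htcok(M)=\Hom(\tcok(M),\Q/\Z)$ is finite as well. The counting remark then upgrades nondegeneracy to perfectness, and $\p_M$ is a duality pairing on $\htcok(M)$.

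I do not expect any genuine obstacle: all of the substantive content, in particular the identification $\ker(\p_M)=\ann(\tcok(M))$, has already been carried out in \Cref{pairing on dual of cok is symmetric}, so this corollary is essentially the bookkeeping step assembling that identification with the standard facts about finite abelian groups and their pairings. If one wanted a fully self-contained proof, an alternative would be to verify directly --- using the explicit lift formulas --- that the maps $\htcok(M)\to\tcok(M)$ and $\tcok(M)\to\htcok(M)$ induced respectively by $\p_M$ and by the Bosch--Lorenzini pairing $\p_{\tcok(M)}$ of \Cref{pairing on torsion} are mutually inverse; but routing through the lemma already proved is cleaner.
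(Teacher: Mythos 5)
Your proposal is correct and takes essentially the same route as the paper: the corollary is a direct bookkeeping consequence of \Cref{pairing on dual of cok is symmetric} (which identifies $\coker(\p_M)$ with $\htcok(M)$), combined with the facts already recorded in the background section that a symmetric pairing descends to a nondegenerate one on its own cokernel, that $\tcok(M)$ and hence $\htcok(M)$ are finite, and that on a finite abelian group nondegenerate implies perfect. Nothing further is needed.
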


Since $ \htcok(M) $ is a finite abelian group, the duality pairing $ \p_{M} $ on $ \htcok(M) $ induces a duality pairing on $ \Hom(\htcok(M),\Q/\Z)=\tcok(M) $. The following shows that the duality pairing on $ \tcok(M) $ induced by $ \p_{M} $ is Bosch and Lorenzini's pairing $ \p_{\tcok(M)} $ on $ \tcok(M) $. 

\begin{lemma}\label{pairing same as Bosch-Lorenzini}
	The duality pairing on $ \tcok(M) $ induced by $ \p_{M} $ is $ \p_{\tcok(M)} $. 
\end{lemma}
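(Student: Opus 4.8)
The plan is to unwind both pairings down to explicit formulas in terms of lifts to $\Z^n$ and $\Hom(\Z^n,\Q)$, and then observe they agree. Recall the setup: $\p_M$ is a duality pairing on $\htcok(M)=\coker(\p_M)$, and since $\htcok(M)$ is a finite abelian group, a duality pairing on it corresponds canonically (via the evaluation identification $\Hom(\htcok(M),\Q/\Z)=\tcok(M)$) to a duality pairing on $\tcok(M)$. The recipe for the induced pairing, as recorded in \Cref{subsec: pairings}, is: the pairing $\p_M$ on $\htcok(M)$ gives an isomorphism $\phi\colon\htcok(M)\to\tcok(M)$ (using $\htcok(M)^{\wedge}=\tcok(M)$), and the induced pairing on $\tcok(M)$ sends $(\tau,\tau')\mapsto\ip{\phi\inverse(\tau),\phi\inverse(\tau')}_M$. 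So the real task is to identify $\phi\inverse$ concretely and then plug into the formula for $\p_M$.

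First I would make the identification $\Hom(\htcok(M),\Q/\Z)=\tcok(M)$ explicit. Given $\tau\in\tcok(M)$ with lift $T\in\ker(M)^\perp$ and $k\in\N$, $S\in\Z^n$ with $MS=kT$, the computation in the proof of \Cref{pairing on dual of cok is symmetric} already shows that the element $[(S/k)^t]\in\hcok(M)$ descends to a class in $\htcok(M)$, and that for any $\psi\in\htcok(M)$ one has $\psi(\tau)=\ip{\psi,[(S/k)^t]}_M$. In other words, under the isomorphism $\htcok(M)\xrightarrow{\ \sim\ }\tcok(M)$ induced by $\p_M$, the class of $[(S/k)^t]$ in $\htcok(M)$ maps to $\tau\in\tcok(M)$; equivalently $\phi\inverse(\tau)$ is represented by $(S/k)^t\in\Hom(\Z^n,\Q)$. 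Next I would substitute: for $\tau,\tau'\in\tcok(M)$ with data $(T,k,S)$ and $(T',k',S')$ as in the definition of $\p_{\tcok(M)}$, the induced pairing evaluates to
\[
\ip{\phi\inverse(\tau),\phi\inverse(\tau')}_M=\ip{[(S/k)^t],[(S'/k')^t]}_M=(S/k)^t\,M\,(S'/k')=\frac{S^tMS'}{kk'}\bmod\Z,
\]
which is exactly the Bosch–Lorenzini pairing $\p_{\tcok(M)}(\tau,\tau')$ from \Cref{sec: Pairings}.

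A couple of routine checks are needed to make this airtight. One should verify that the element $\phi\inverse(\tau)$ is genuinely well-defined in $\htcok(M)$, i.e. independent of the choices of $T$, $k$, $S$ — but this is subsumed by the well-definedness of $\p_M$ on $\htcok(M)$ (\Cref{pairing on dual of cok is symmetric}) together with the well-definedness of $\p_{\tcok(M)}$ (\Cref{pairing on torsion}), since the formula above expresses the two consistently. One should also confirm that the evaluation pairing used to identify $\Hom(\htcok(M),\Q/\Z)$ with $\tcok(M)$ is the standard one coming from $\tcok(M)$ being canonically its own double dual, which is immediate. The main obstacle — really the only subtle point — is bookkeeping the three layers of dualization (the pairing $\p_M$ on $\hcok(M)$, its descent to $\htcok(M)$, and the transported pairing on $\tcok(M)$) and keeping straight which lift lives in $\Z^n$ versus $\Hom(\Z^n,\Q)$; once the identification $\phi\inverse(\tau)=(S/k)^t$ is nailed down, the two formulas are literally the same expression $S^tMS'/(kk')$, so no further computation is required.
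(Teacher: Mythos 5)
Your proof is correct and follows essentially the same route as the paper: identify the isomorphism $\htcok(M)\to\tcok(M)$ induced by $\p_M$, observe that the preimage of $\tau$ is represented by $(S/k)^t$, and compute $\ip{\rho\inverse(\tau),\rho\inverse(\tau')}_M = S^tMS'/(kk') \bmod \Z$, which is $\p_{\tcok(M)}(\tau,\tau')$ by definition. The only cosmetic difference is that you invoke the formula $\psi(\tau)=\ip{\psi,[(S/k)^t]}_M$ from the proof of \Cref{pairing on dual of cok is symmetric} to find the preimage, whereas the paper computes $\rho(\overline{\phi})=[M\Phi^t]$ directly and inverts; these are two equivalent ways of pinning down the same identification.
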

\begin{proof}
	Recall that $ \p_{M} $ gives an isomorphism, $ \rho:\htcok(M)\to\tcok(M) $ taking $ \overline{\phi}\in\htcok(M) $ to $ \ip{\overline{\phi},-}_{M} $. Let $ \phi $ be any lift of $ \overline{\phi} $ to $ \hcok(M) $ (recall that there is a natural quotient $ \hcok(M)\to\htcok(M) $). If $ \Phi $ is any lift of $ \phi $ to $ \Hom(\Z^n,\Q) $, then $ \rho(\overline{\phi})=\ip{\overline{\phi},-}_{M}=[M\Phi^t] $, where the brackets denote the image of $ M\Phi^t\in\Z^n $ in $ \tcok(M) $. 
	
	For $ \tau,\tau'\in\tcok(M) $, let $ T,T' $ be lifts of $ \tau,\tau' $ to $ \ker(M)^\perp $. There exist $ k,k'\in\N $ and $ S,S'\in\Z^n $ such that $ kT=MS $ and $ k'T'=MS' $. Thus, the preimages of $ \tau $ and $ \tau' $ under $\rho$ are the images of $ S^t/k $ and $ (S')^t/k' $ in $ \htcok(M) $. The pairing on $ \tcok(M) $ induced by $ \p_{M} $ takes \[(\tau,\tau')\mapsto\ip{\rho\inverse(\tau),\rho\inverse(\tau')}_{M}=\frac{S^tMS'}{kk'}\mod\Z.\] However, recall that this is exactly $ \ip{\tau,\tau'}_{\tcok(M)} $.
\end{proof}

Later, given a finitely generated abelian group $ G $, we will be interested in taking the tensor product of $ G $ with $ \Z/b\Z $ for some positive integer $ b $. If $ \hG $ is equipped with a symmetric pairing $ \phG $, we would like the dual of $ G\otimes\Z/b\Z $ to come naturally equipped with a symmetric pairing given by $ \phG $. Let $ \mathfrak{R}=\Z/b\Z $. There is a natural quotient map $ G\to G\otimes\mathfrak{R} $ given by $ g\mapsto g\otimes 1 $, and therefore there is a natural inclusion $ \Hom(G\otimes\mathfrak{R},\Q/\Z)\to\hG $. Thus, we obtain a pairing on the dual of $ G\otimes \mathfrak{R} $ by restricting the pairing on $ \hG $ to $ \Hom(G\otimes \mathfrak{R},\Q/\Z)\subset\hG $. We abuse notation and also denote this pairing on $ \Hom(G\otimes \mathfrak{R},\Q/\Z) $ by $ \phG $. 

For a graph $ \Gamma $, recall from \Cref{Background} the definition of its graph Laplacian $ L_\Gamma $ and associated sandpile group $ S_\Gamma $. If $ \Gamma $ is connected, in the language of the above, we see that $ S_\Gamma $ is simply the torsion subgroup of the cokernel of the Laplacian, since $ \ker(L_\Gamma)^\perp=Z $ (this is a well-known fact \cite{Biggs93}; recall that $ Z $ is the subspace of vectors in $ \Z^n $ whose coordinates sum to 1). Hence, using Bosch and Lorenzini's construction, $ S_\Gamma $ comes equipped with a canonical duality pairing. 

However, since we will be dealing with Erd\H{o}s-R\'{e}nyi random graphs later on, we would like for there to exist a pairing on $ \h{S_\Gamma} $ for any graph $ \Gamma $. We claim that for any $ \Gamma $, not necessarily connected, $ \h{S_\Gamma} $ is equipped with a natural symmetric pairing given by $ L_\Gamma $, which we will denote using $ \p_{\h{S_\Gamma}} $. To see why, recall that we have the following chain of inclusions: \[\tcok(L_\Gamma)=\ker(L_\Gamma)^\perp/\col(L_\Gamma)\subset S_\Gamma=Z/\col(L_\Gamma)\subset\cok(L_\Gamma)=\Z^n/\col(L_\Gamma).\] As a result, the canonical duality pairing on $ \htcok(L_\Gamma) $ pushes forward to a symmetric pairing on $ \h{S_\Gamma} $, which itself pushes forward to the symmetric pairing $ \p_{L_\Gamma} $ on $ \hcok(L_\Gamma) $. Of these three pairings, we will be most interested in the symmetric pairing on $ \h{S_\Gamma} $. 

The following lemma will be useful later.
\begin{lemma}\label{check pairing on cokernel}
	Let $ \Gamma $ be a graph on $ n $ vertices and $ G $ any finite abelian group whose dual is equipped with a symmetric pairing $ \phG $. Suppose $ F:\Z^n\to G $ is a surjection whose restriction to $ Z $ remains onto. Also assume that $ FL_\Gamma=0 $ so that $ F $ descends to a map $ F:S_\Gamma=Z/\col(L_\Gamma)\to G $ with transpose $ F^t:\hG\to\h{S_\Gamma} $. Then $ F^t(\p_{\h{S_\Gamma}})=\phG $ if and only if $ F^t(\p_{L_\Gamma})=\phG $. 
\end{lemma}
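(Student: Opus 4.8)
The plan is to reduce the claim to a purely functorial statement about how pushing a pairing forward along a chain of maps interacts with the chain of inclusions
\[
\tcok(L_\Gamma)\;\subset\; S_\Gamma=Z/\col(L_\Gamma)\;\subset\;\cok(L_\Gamma)=\Z^n/\col(L_\Gamma).
\]
Write $ \iota:S_\Gamma\hookrightarrow\cok(L_\Gamma) $ for the inclusion, so that $ \iota^t:\hcok(L_\Gamma)\twoheadrightarrow\h{S_\Gamma} $ is the dual quotient. By construction (given in the paragraph preceding the lemma), the pairing $ \p_{\h{S_\Gamma}} $ on $ \h{S_\Gamma} $ is defined to be the one that pushes forward to $ \p_{L_\Gamma} $ on $ \hcok(L_\Gamma) $ along $ \iota^t $; more precisely, since $ \iota^t $ is surjective, $ \p_{\h{S_\Gamma}} $ is the \emph{unique} pairing on $ \h{S_\Gamma} $ satisfying $ (\iota^t)^t(\p_{\h{S_\Gamma}})=\p_{L_\Gamma} $, where I am using that $ (\iota^t)^t $ under the canonical double-dual identification is just $ \iota $ again. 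The first step is therefore to record this: $ \ip{a,b}_{L_\Gamma}=\ip{\iota^t(a),\iota^t(b)}_{\h{S_\Gamma}} $ for all $ a,b\in\hcok(L_\Gamma) $.

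Second, I would set up the relevant maps on the $ F $ side. Let $ \pi:\Z^n\twoheadrightarrow\cok(L_\Gamma) $ be the quotient; since $ FL_\Gamma=0 $ and $ \col(L_\Gamma)\subset Z $, the map $ F:\Z^n\to G $ factors as $ F=\bar F\circ\pi $ with $ \bar F:\cok(L_\Gamma)\to G $, and the descent to $ S_\Gamma $ in the statement is the further restriction $ \bar F|_{S_\Gamma}=\bar F\circ\iota $ (this uses $ F(Z)=G $, so the restriction is still onto, but surjectivity is not actually needed for the equivalence of the two pushforward conditions). Dually, $ \bar F^t:\hG\to\hcok(L_\Gamma) $ and the transpose of the descended map is $ (\bar F\circ\iota)^t=\iota^t\circ\bar F^t:\hG\to\h{S_\Gamma} $. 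Now for $ x,y\in\hG $ compute, using the functoriality of pushforward of pairings (pushing forward along a composite equals composing the pushforwards) together with the identity from step one:
\[
\bigl(\iota^t\circ\bar F^t\bigr)^*(\p_{\h{S_\Gamma}})(x,y)
=\ip{\iota^t(\bar F^t(x)),\,\iota^t(\bar F^t(y))}_{\h{S_\Gamma}}
=\ip{\bar F^t(x),\,\bar F^t(y)}_{L_\Gamma}
=(\bar F^t)^*(\p_{L_\Gamma})(x,y).
\]
Thus $ F^t(\p_{\h{S_\Gamma}})=(\bar F^t)^*(\p_{L_\Gamma}) $ as pairings on $ \hG $, and here the left side is exactly what the lemma calls $ F^t(\p_{\h{S_\Gamma}}) $ while the right side is exactly $ F^t(\p_{L_\Gamma}) $ (the pairing $ \p_{L_\Gamma} $ on $ \hcok(L_\Gamma) $ pulled back along $ \bar F^t $). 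Since these two pairings on $ \hG $ are literally equal, each one equals $ \phG $ if and only if the other does, which is the claim.

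The argument is essentially bookkeeping, so I do not expect a serious obstacle; the one point to be careful about is the identification of transposes and double-duals — namely that $ (\iota^t)^t=\iota $ under the canonical isomorphism $ \h{\h{A}}\simeq A $ for finite abelian $ A $, and that $ \bar F^t $ is genuinely the transpose of $ F $ restricted/descended appropriately. I would state these compatibilities explicitly (they follow from the functoriality of $ \Hom(-,\Q/\Z) $ and the naturality of evaluation, both recalled in the Duality subsection) rather than leave them implicit, since the whole lemma is really just the assertion that the definition of $ \p_{\h{S_\Gamma}} $ as "the pairing that pushes forward to $ \p_{L_\Gamma} $" is compatible with further pushing forward along $ F $. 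A secondary nuisance is checking that $ F $ really does descend to $ S_\Gamma $ and to $ \cok(L_\Gamma) $ simultaneously in a compatible way, but this is immediate from $ FL_\Gamma=0 $ and the inclusion $ S_\Gamma\subset\cok(L_\Gamma) $.
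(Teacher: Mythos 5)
Your proof is correct and follows the same route as the paper, which records that the transpose of $\iota:S_\Gamma\hookrightarrow\cok(L_\Gamma)$ pushes $\p_{\h{S_\Gamma}}$ forward to $\p_{L_\Gamma}$ and then deduces the claim from the commutativity of the diagram relating $S_\Gamma\subset\cok(L_\Gamma)$ and their duals; your three-equality chain is precisely what that commutativity summarizes. One small caveat worth fixing in your write-up: the aside invoking $(\iota^t)^t=\iota$ via the double-dual identification is not quite right as stated, since $S_\Gamma$ and $\cok(L_\Gamma)$ need not be finite (the canonical map $A\to\h{\h{A}}$ need not be an isomorphism for infinite finitely generated $A$), but this is harmless because your actual computation only uses the identity $\ip{a,b}_{L_\Gamma}=\ip{\iota^t(a),\iota^t(b)}_{\h{S_\Gamma}}$, which is the content of the paper's construction of $\p_{\h{S_\Gamma}}$ and requires no double-dual identification at all.
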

\begin{proof}
	We start with the following commutative diagram and its dual: \begin{center}
		\begin{tikzcd}
			Z\arrow[d,two heads]\arrow[r,hookrightarrow] & \Z^n\arrow[d,two heads]\\
			S_\Gamma\arrow[r,hookrightarrow]\arrow[dr,two heads,"F" below left] & \cok(L_\Gamma)\arrow[d,two heads,"F" right]\\
			& G
		\end{tikzcd}
		$ \qquad $\begin{tikzcd}
			\h{Z} & (\Q/\Z)^n\arrow[l,two heads]\\
			\h{S_\Gamma}\arrow[u,hook] & \hcok(L_\Gamma)\arrow[u,hook]\arrow[l,two heads]\\
			& \hG\arrow[ul,hook,"F^t" below left]\arrow[u,hook,"F^t" right]
		\end{tikzcd}
	\end{center} Recall that $ F^t $ pushes the pairing $ \p_{\h{S_\Gamma}} $ on $ \h{S_\Gamma} $ forward to some pairing on $ \hG $. Likewise, the transpose of the inclusion $ \iota:S_\Gamma\hookrightarrow\cok(L_\Gamma) $ pushes the pairing $ \p_{\h{S_\Gamma}} $ forward to the pairing $ \p_{L_\Gamma} $ on $ \hcok(L_\Gamma) $. The commutativity of the diagram implies the result.
\end{proof}

\section{Obtaining the moments I: Finding the equations}\label{sec: Moments I}
Let $ G $ be a finite abelian group whose dual $ \hG=\Hom(G,\Q/\Z) $ is equipped with a symmetric pairing $ \phG $. Suppose $ \Gamma\in G(n,q) $, and let $ S $ denote its sandpile group $ S_\Gamma $ and $ L $ the Laplacian $ L_\Gamma $. Because $ S $ is defined as a quotient of $ Z $, any surjection $ S\to G $ lifts to a surjection $ Z\to G $. Therefore, \[\E(\#\Sur^*(S,G))=\sum_{F\in\Sur(Z,G)}\Prob(\col(L)\subset\ker(F)\mathrm{\ and\ }F^t(\p_{\h{S}})=\p_{\hG}).\] We will estimate the probabilities on the right-hand side of the above; to do so, we utilize the following more general setup. 

Let $ b $ denote the exponent of $ G $, and let $ a=b^2 $. Let $ R $ be the ring $ \Z/a\Z $. This notation will be used through \Cref{sec: Moments V}. Note that $ \Sur(S,G)=\Sur(S\otimes\Z/b\Z,G) $, i.e., whether $ \col(L)\subset\ker(F) $ only depends on the entries of $ L $ modulo $ b $. We will see later in this section that whether $ F^t(\p_{\h{S}})=\phG $ also only depends on the entries of $ L $ modulo $ b $. However, for reasons that will become apparent later, we will mostly work over $ R $ (modulo $ a $) in the following. In this and the following four sections, all of our linear algebra will be done over $ R $. However, since $ R $ is not a domain, we are forced to work more abstractly rather than just with matrices.

Let $ G $ be the finite abelian group fixed above with symmetric pairing $ \phG $ on its dual $ \hG $. Let $ G_p $ denote the Sylow $ p $-subgroup of $ G $, and write $ G=\bigoplus_p G_p $. For a prime $ p $, suppose $ G_p $ is a $ p $-group of type $ \lambda $, where $ \lambda $ is the partition given by $ \lambda_1\geq\lambda_2\geq\cdots\geq\lambda_r $. Because a pairing on $ \hG $ may be viewed as an element of $ \Hom(\hG\otimes\hG,\Q/\Z) $, any pairing on $ \hG $ is entirely determined by its restriction to $ (\hG)_p\otimes(\hG)_p $ for each $ p $. 

Let $ g_1,\ldots,g_r $ generate $ G_p $ as an abelian group with $ p^{\lambda_i}g_i=0 $ and no other relations, and let $ \h{g_i} $ be the elements of $ \hG_p $ such that $ \h{g_i}(g_j)=\delta_{ij}/p^{\lambda_i} $, where recall $ \delta_{ij} $ is the indicator of $ i=j $. Also, suppose that $ \ip{\h{g_i},\h{g_j}}_{\hG}=a_{ij}/p^{\lambda_j} $ for $ 1\leq i\leq j\leq r $, where $ a_{ij} $ is an integer $ 0\leq a_{ij}< p^{\lambda_j} $. Note that the restriction of the pairing to $ (\hG)_p\otimes(\hG)_p $ is entirely determined by these values $ a_{ij} $. Let $ \p_{\hG_p} $ denote the restriction of the pairing $ \phG $ to $ (\hG)_p\otimes(\hG)_p $.

Recall $ R=\Z/a\Z $, and let $ V=R^n $. Suppose $ M $ is a symmetric matrix in $ M_n(R) $ with entries $ m_{ij}\in R $ for $ 1\leq i\leq j\leq n $. We distinguish a basis of $ V $, say $ v_1,\ldots,v_n $; let $ \h{v_1},\ldots,\h{v_n} $ denote the corresponding dual basis of $ \h{V} $, where $ \h{v_i}(v_j)=\delta_{ij}/a $. With respect to the bases $ \h{v_1},\ldots,\h{v_n} $ and $ v_1,\ldots,v_n $, we may view $ M $ as a linear map $ \h{V}\to V $ where $ M(\h{v_i})=\sum_{j}m_{ij}v_j $. Recall from \Cref{sec: Pairings} that $ \hcok(M)=\Hom(\cok(M),\Q/\Z) $ comes equipped with a symmetric pairing given by $ M $, which we'll denote by $ \p_M $. Let $ F:V\to G_p $ be a surjective homomorphism and denote the transpose of $ F $ by $ F^t:\hG_p\to\h{V} $.  Since $ \cok(M) $ is a quotient of $ R^n $, if $ \col(M)\subset\ker(F) $, then $ F $ factors through the quotient of $ V $ by $ \col(M) $, i.e., $ F $ descends to a map $ F:\cok(M)\to G_p $. Let $ F(v_j)=\sum_{i=1}^rf_{ij}g_i $ for $ f_{ij}\in\Z/p^{\lambda_i}\Z $. 

\begin{lemma}\label{lemma: cok eqns}
	With notation as above, $ \col(M)\subset\ker(F) $ if and only if \begin{equation}\label{cok eqns}
		\sum_{k=1}^nf_{ik}m_{k\ell}=0\mod p^{\lambda_i}
	\end{equation} for $ 1\leq i\leq r $ and $ 1\leq\ell\leq n $. If $ \col(M)\subset\ker(F) $, then $ F $ descends to a map $ \cok(M)\to G_p $, and we have $ F^t(\p_M)=\p_{\hG_p} $ if and only if  \begin{equation}\label{pairing eqns}
		\sum_{k,\ell=1}^nf_{ik}m_{k\ell}f_{j\ell}=p^{\lambda_i}a_{ij}\mod p^{\lambda_i+\lambda_j}
	\end{equation} for all $ 1\leq i\leq j\leq r $ (we will use $ \eqref{cok eqns} $ to show that the left-hand side of \eqref{pairing eqns} is well-defined modulo $ p^{\lambda_i+\lambda_j} $). 
\end{lemma}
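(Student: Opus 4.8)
The plan is to translate both conditions into explicit systems of congruences by working directly in the distinguished bases and regarding everything as linear algebra over the structure constants $m_{k\ell}$, $f_{ij}$, and $a_{ij}$.

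For the first equivalence: $\col(M)$ is spanned by the vectors $M(\h{v_\ell})=\sum_{k}m_{k\ell}v_k$ (using symmetry of $M$), so $\col(M)\subseteq\ker(F)$ holds precisely when $F\bigl(M(\h{v_\ell})\bigr)=0$ in $G_p$ for every $\ell$. Expanding,
\[F\bigl(M(\h{v_\ell})\bigr)=\sum_{k=1}^n m_{k\ell}F(v_k)=\sum_{k=1}^n m_{k\ell}\sum_{i=1}^r f_{ik}g_i=\sum_{i=1}^r\Bigl(\sum_{k=1}^n f_{ik}m_{k\ell}\Bigr)g_i,\]
where the inner sum is read in $\Z/p^{\lambda_i}\Z$ (legitimate since $p^{\lambda_i}\mid b\mid a$). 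As $g_1,\dots,g_r$ generate $G_p$ subject only to the relations $p^{\lambda_i}g_i=0$, this element vanishes iff $\sum_k f_{ik}m_{k\ell}\equiv0\pmod{p^{\lambda_i}}$ for all $i$, which is exactly \eqref{cok eqns}. When this holds, $F$ factors through $V/\col(M)=\cok(M)$ by the universal property of the quotient, yielding the descended map and its transpose $F^t\colon\hG_p\to\hcok(M)$.

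For the pairing condition: both $F^t(\p_M)$ and $\p_{\hG_p}$ are bilinear pairings on $\hG_p$, so they coincide iff they agree on the generators $\h{g_1},\dots,\h{g_r}$, i.e. iff $\ip{F^t(\h{g_i}),F^t(\h{g_j})}_M=\ip{\h{g_i},\h{g_j}}_{\hG}=a_{ij}/p^{\lambda_j}$ for all $i\le j$. I would compute $F^t(\h{g_i})=\h{g_i}\circ F\in\hcok(M)$ on basis vectors: $F^t(\h{g_i})(v_j)=\h{g_i}\bigl(\sum_k f_{kj}g_k\bigr)=f_{ij}/p^{\lambda_i}$. Lifting $F^t(\h{g_i})$ to the $\Q$-valued functional $e_j\mapsto f_{ij}/p^{\lambda_i}$ on $\Z^n$ and feeding these lifts, together with an integer lift $\widetilde M$ of $M$, into the defining formula $\ip{x,y}_M=X\widetilde M Y^t\bmod\Z$ for $\p_M$, the factors $1/p^{\lambda_i}$ and $1/p^{\lambda_j}$ come straight out and I obtain
\[\ip{F^t(\h{g_i}),F^t(\h{g_j})}_M=\frac{1}{p^{\lambda_i+\lambda_j}}\sum_{k,\ell=1}^n f_{ik}m_{k\ell}f_{j\ell}\pmod\Z.\]
Since $a_{ij}/p^{\lambda_j}=p^{\lambda_i}a_{ij}/p^{\lambda_i+\lambda_j}$, equality of these two elements of $\Q/\Z$ is precisely the congruence \eqref{pairing eqns}.

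The delicate point—and, I expect, the main obstacle—is checking that the left-hand side of \eqref{pairing eqns} (equivalently, the displayed quantity above) is well defined modulo $p^{\lambda_i+\lambda_j}$; this is also exactly where one sees why $a=b^2$ is the right choice and why \eqref{cok eqns} must be assumed for \eqref{pairing eqns} to make sense. Since $f_{ik}$ is only defined modulo $p^{\lambda_i}$, replacing a representative by $f_{ik}+p^{\lambda_i}t_k$ changes the sum by $p^{\lambda_i}\sum_k t_k\bigl(\sum_\ell m_{k\ell}f_{j\ell}\bigr)$; but $\sum_\ell m_{k\ell}f_{j\ell}=\sum_\ell f_{j\ell}m_{\ell k}\equiv0\pmod{p^{\lambda_j}}$ by \eqref{cok eqns} (applied with index $j$, using symmetry of $M$), so the change lies in $p^{\lambda_i+\lambda_j}\Z$; the argument for $f_{j\ell}$ is symmetric. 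Changing the integer lift of $m_{k\ell}$ perturbs the sum by a multiple of $a=b^2$, and since $p^{\lambda_i}\mid b$ and $p^{\lambda_j}\mid b$ we have $p^{\lambda_i+\lambda_j}\mid b^2$, so this is harmless as well; in particular $\ip{F^t(\h{g_i}),F^t(\h{g_j})}_M$ is independent of the choice of $\widetilde M$. Everything else is routine bookkeeping; one could alternatively route the pairing computation through the homomorphism $\hcok(M)\to\tcok(M)$ induced by $\p_M$ in \Cref{sec: Pairings}, but computing directly in coordinates appears to be the most transparent approach here.
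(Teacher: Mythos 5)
Your proof is correct and takes essentially the same route as the paper: translate $\col(M)\subset\ker(F)$ into $FM=0$ in coordinates, then compute $F^t(\h{g_i})$ as the row vector $(f_{i1}/p^{\lambda_i},\dots,f_{in}/p^{\lambda_i})$ and feed it into $\p_M$. The one genuine addition is your explicit check that the left side of \eqref{pairing eqns} is well-defined modulo $p^{\lambda_i+\lambda_j}$ (using \eqref{cok eqns} and symmetry of $M$, plus the choice $a=b^2$); the paper's parenthetical promises exactly this but its proof instead leans implicitly on the already-established well-definedness of $\p_M$ from \Cref{sec: Pairings}, so your version makes that point more transparent.
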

\begin{proof}
	We first note that $ \col(M)\subset\ker(F) $ if and only if the composition $ FM\in\Hom(\h{V},G_p) $ is 0. Moreover, we see that $ FM=0 $ if and only if \[\sum_{k=1}^nf_{ik}m_{k\ell}=0\mod p^{\lambda_i}\] for all $ 1\leq i\leq r $ and $ 1\leq \ell\leq n $. 
	
	Now, assume $ FM=0 $ so that $ F:V\to G_p $ descends to a map $ F:\cok(M)\to G_p $. Recall from \Cref{sec: Pairings} that $ F^t $ takes the pairing on $ \hcok(M) $ to a pairing on $ \h{G}_p $ and that a symmetric pairing on $ \hG_p $ is entirely determined by where it sends the pairs $ (\h{g_i},\h{g_j}) $ for $ 1\leq i\leq j\leq r $. Thus, we first compute $ F^t(\h{g_i})=\h{g_i}\circ F\in\h{V} $ for $ 1\leq i\leq r $. We see that \[\h{g_i}(F(v_j))=\h{g_i}\left(\sum_kf_{kj}g_k\right)=\frac{f_{ij}}{p^{\lambda_i}}\mod\Z.\] Hence, with respect to the aforementioned bases, we may view $ F^t(\h{g_i}) $ as the row vector \[\bpm{f_{i1}/p^{\lambda_i}&\cdots&f_{in}/p^{\lambda_i}}\mod\Z.\] It follows that \[\ip{F^t(\h{g_i}),F^t(\h{g_j})}_M=F^t(\h{g_i})MF^t(\h{g_j})^t=\sum_{k,\ell=1}^n\frac{f_{ik}m_{k\ell}f_{j\ell}}{p^{\lambda_i+\lambda_j}}\mod\Z\] (recall we can do this computation by lifting $ F^t(\h{g_i}) $ to $ \Q $). Therefore, given that $ FM=0 $, we see that $ F^t(\p_M)=\p_{\hG_p} $ if and only if \[\sum_{k,\ell=1}^n\frac{f_{ik}m_{k\ell}f_{j\ell}}{p^{\lambda_i+\lambda_j}}=\frac{a_{ij}}{p^{\lambda_j}}\mod\Z\] for all $ 1\leq i\leq j\leq r $.
\end{proof}

Note that $ \Sur(\Z^n,G)=\Sur(\Z^n\otimes R,G)=\Sur(V,G) $. Hence, given $ F\in\Sur(V,G) $, \Cref{lemma: cok eqns} and the fact that $ p^{2\lambda_1}|a $ imply whether $ \col(M)\subset\ker(F) $ and $ F^t(\p_M)=\phG $ depends only on the entries of $ M $ modulo $ a $.  As before, let $ V $ denote the $ R $-module $ R^n $ with distinguished basis $ v_1,\ldots,v_n $. Consider $ V^*=\Hom(V,R) $ with dual basis $ v_1^*,\ldots,v_n^* $, where $ v_i^*(v_j)=\delta_{ij} $. Note that there is an isomorphism between $ V^* $ and $ \h{V} $ given by sending $ v_i^* $ to $ \h{v_i} $. 

Recall $ G_p $ is a $ p $-group of type $ \lambda $. Define $ H_p=(\Z/p^{2\lambda_1}\Z)^r $, and, noting that $ \lambda_1 $ and $ r $ depend implicitly on $ p $, set $ H=\bigoplus_p H_p $. For any $ p|\# G $, let $ h_1,\ldots,h_r $ denote the standard basis of $ H_p=R_p^r $, and define a surjective map $ H\to G $ by summing the maps $ H_p\to G_p $ taking $ h_i\mapsto g_i $ for each $ p $. We will use this map throughout the argument---unless stated otherwise, when we refer to a surjection $ H\to G $, we refer to this map. 

Let $ F\in\Hom(V,G) $, and let $ \calf\in\Hom(V,H) $ be a lift of $ F $ to $ H $ (with respect to the aforementioned surjection $ H\to G $). 
\begin{remark}
	For the reader disturbed by this arbitrary choice of lift, we remark here that while there are many choices for $ \calf $, our arguments in the following section do not depend on this choice---the proofs only depend on the values of $ \calf $ in $ H $ after projecting to the quotient $ G $, i.e., the proofs only depend on $ F $ itself. The lift $ \calf $ solves the issue of whether the equations \eqref{pairing eqns} from \Cref{lemma: cok eqns} are well-defined. By considering a lift of $ \calf $, we can instead work with equations that are always well-defined. 
	
	For example, consider the $ 1\times 1 $ matrix $ M=[ep]\in M_1(\Z/p^2\Z) $ for some $ e\neq0 $ mod $ p $. The cokernel of $ M $ can be written as \[\hcok(M)=\{\phi\in\Hom(\Z/p^2\Z,\Q/\Z)\;|\;\phi M=0\}=\{\phi\in\Hom(\Z/p^2\Z,\Q/\Z)\;|\;p\phi=0\}\] and is isomorphic to $ \Z/p\Z $. The pairing on $ \hcok(M) $ is given by \[\langle x/p,y/p\rangle_M=\frac{xepy}{p^2}=\frac{xey}{p}\in\Q/\Z,\] and we see that $ \p_M $ depends on the entries of $ M $ modulo $ p^2 $ rather than the entries of $ M $ modulo $ p $.  
\end{remark}

Consider the Sylow $ p $-part of $ \im(\calf) $, i.e., $ \calf:V\to H_p $. Let $ F(v_j)=\sum_{i=1}^rf_{ij}g_i\in G_p $, where $ f_{ij}\in\Z/p^{\lambda_i}\Z $, and let $ \calf(v_j)=\sum_{i=1}^rF_{ij}h_i\in H_p$, where $ F_{ij}\in R=\Z/p^{2\lambda_1}\Z $. Since $ \calf $ is a lift of $ F $, note that $ F_{ij}\equiv f_{ij} $ modulo $ p^{\lambda_i} $.

Define $ \Lambda_p:H_p\to H_p $ to be the map sending $ h_i\mapsto p^{2\lambda_1-\lambda_i}h_i $ for $ 1\leq i\leq r $. Let $ \Lambda:H\to H $ be the direct sum of the $ \Lambda_p $'s. Note that the image of $ \Lambda $ is isomorphic to $ G $, giving us a copy of $ G $ living in $ H $---i.e., the map $ \im\Lambda\to G $ given by $ p^{2\lambda_1-\lambda_i}h_i\mapsto g_i $ is an isomorphism. Moreover, the surjection $ H\to G $ defined earlier is equal to the composition of $ \Lambda $ with this isomorphism $ \im\Lambda\simeq G $. Define $ \Omega_p:H_p\to H_p $ to be the map taking $ h_i\mapsto p^{\lambda_1-\lambda_i}h_i $ for $ 1\leq i\leq r $. Let $ \Omega:H\to H $ be the sum of the $ \Omega_p $'s. 

We define an element $ A $ of $ \Sym_2 H\subset\Hom(H^*,H) $ that is equivalent to the data of the pairing $ \phG $. For each $ p|\#G $, recall that $ \ip{\h{g_i},\h{g_j}}_{\hG}=a_{ij}/p^{\lambda_j} $ for all $ 1\leq i\leq j\leq r $. Let $ A_p $ denote the following symmetric element of $ \Hom(H_p^*,H_p) $, which we view as $ H_p\otimes H_p $. Let \[A_p=\sum_{1\leq i<j\leq r}p^{2\lambda_1-\lambda_j}a_{ij}(h_i\otimes h_j+h_j\otimes h_i)+\sum_{i=1}^rp^{2\lambda_1-\lambda_i}a_{ii}h_i\otimes h_i\in H_p\otimes H_p,\] and let $ A=\bigoplus_p A_p $. We may think of $ A $ as a way of expressing the pairing $ \phG $ algebraically.      

Now, note that for each $ v_\ell^*\in V^* $, where $ 1\leq \ell\leq n $, we have \[\Lambda\calf M(v_\ell^*)=\sum_{p|\# G}\sum_{i=1}^r \left(p^{2\lambda_1-\lambda_i}\sum_{k=1}^nF_{ik}m_{k\ell}\right)h_i.\]  Note here that in the above we are abusing notation slightly---both the $ h_i $'s and $ r $ implicitly depend on $ p $. We note that 
\begin{equation}\label{cok eqns lifted}
	p^{2\lambda_1-\lambda_i}\sum_{k=1}^nF_{ik}m_{k\ell}=0\mod p^{2\lambda_1}
\end{equation} for $ 1\leq i\leq r $ and $ 1\leq\ell\leq n $ if and only if \eqref{cok eqns} holds. If we view $ G $ as the subgroup $ \Lambda H $ of $ H $, then note that $ \Lambda\calf $ is simply the map $ F:V\to G $ defined earlier. Likewise, we see that 
\[\Omega\calf M\calf^t\Omega^t(h_i^*)=\sum_{p|\# G}\sum_{1\leq j\leq r}\sum_{k,\ell=1}^np^{2\lambda_1-\lambda_i-\lambda_j}F_{ik}F_{j\ell}m_{k\ell}h_j.\] Note that $ \Omega\calf M\calf^t\Omega^t $ is a map $ H^*\to H $ and is symmetric when viewed as an element of $ H\otimes H $, since $ (\Omega\calf M\calf^t\Omega^t)^t=\Omega\calf M\calf^t\Omega^t $. Moreover, 
\begin{equation}\label{pairing eqns lifted}
	\sum_{k,\ell=1}^np^{2\lambda_1-\lambda_i-\lambda_j}F_{ik}F_{j\ell}m_{k\ell}=p^{2\lambda_1-\lambda_j}a_{ij}\mod p^{2\lambda_1}
\end{equation}  for $ 1\leq i\leq j\leq r $ if and only if \eqref{pairing eqns} holds. Hence, $ \Lambda\calf M=0 $ and $ \Omega\calf M\calf^t\Omega^t=A $ if and only if both \eqref{cok eqns} and \eqref{pairing eqns} hold for all $ p|\# G $. It follows that \begin{equation}\label{equivalent probabilities}
	\Prob(\col(M)\subset\ker(F)\ \mathrm{and\ }F^t(\p_{M})=\phG)=\Prob(\Lambda\calf M=0\mathrm{\ and\ }\Omega\calf M\calf^t\Omega^t=A).
\end{equation}

While working with the lift $ \calf $ seems artificial and notationally clumsy, we do so because it removes a dependence between our equations---we saw previously that the equations \eqref{pairing eqns} for $ F^t(\p_M)=\phG $ are not \emph{a priori} well-defined. By considering a lift of $ F $, the equations \eqref{pairing eqns lifted} can be studied without first having to condition on \eqref{cok eqns lifted}. The maps $ \Lambda $ and $ \Omega $ are introduced so that the lifted equations \eqref{cok eqns lifted} and \eqref{pairing eqns lifted} hold if and only if our original equations \eqref{cok eqns} and \eqref{pairing eqns} hold. Thus, we may consider all of the equations simultaneously; doing so will be helpful when we apply the discrete Fourier transform in the next section.

\section{Obtaining the moments II: The structural properties of the equations}\label{sec: Moments II}
In the previous section, we showed that for any random symmetric matrix $ M\in M_n(R) $, we have \[\E(\#\Sur^*(\cok(M),G))=\sum_{F\in\Sur(V,G)}\Prob(\Lambda\calf M=0\mathrm{\ and\ }\Omega\calf M\calf^t\Omega^t=A),\] where $ \calf $ is any lift of $ F $ to $ H $. The goal of this section and the two that follow is to estimate the probabilities in the right-hand side of the equation above. Let $ \zeta $ be a primitive $ a $th root of unity, and let $ \cale $ denote the event \[\Lambda\calf M=0\mathrm{\ and\ }\Omega\calf M\calf^t\Omega^t=A.\] Recall that we may view $ M $ as an element of $ \Hom(V^*,V) $ and note that $ \Lambda\calf M\in\Hom(V^*,G) $ (where we view $ G\simeq\Lambda H $ as a subgroup of $ H $) and that $ \Omega\calf M\calf^t\Omega^t\in\Hom(H^*,H) $, which, recall, is naturally isomorphic to $ H\otimes H $. Since $ (\Omega\calf M\calf^t\Omega^t)^t=\Omega\calf M\calf^t\Omega^t $, it follows that $ \Omega\calf M\calf^t\Omega^t\in\Sym_2 H $. Thus, the Fourier expansion implies \[\mathbbm{1}_{\cale}=\frac{1}{|G|^n\cdot|\Hom(\Sym_2 H,R)|}\sum_{\substack{C\in\Hom(\Hom(V^*,G),R)\\ D\in\Hom(\Sym_2 H,R)}}\zeta^{C(\Lambda\calf M)+D(\Omega\calf M\calf^t\Omega^t-A)}.\] Hence, \[\Prob(\cale)=\E(\mathbbm{1}_\cale)=\frac{1}{|G|^n\cdot|\Hom(\Sym_2 H,R)|}\sum_{\substack{C\in\Hom(\Hom(V^*,G),R)\\ D\in\Hom(\Sym_2 H,R)}}\E(\zeta^{C(\Lambda\calf M)+D(\Omega\calf M\calf^t\Omega^t-A)}).\] The pairs $ (C,D) $ give the equations that a matrix must satisfy in order for $ F $ to factor through the cokernel of the matrix and for $ F $ to push the pairing $ \p_M $ on $ \hcok(M) $ forward to the fixed pairing on $ \hG $. Viewing $ C(\Lambda\calf M)+D(\Omega\calf M\calf^t\Omega^t-A) $ as a function of $ M $, we see that it is in fact a linear function of the entries $ m_{ij} $ of $ M $ (for $ i\leq j $). In the following, we will compute the coefficient of each $ m_{ij} $. When the entries $ m_{ij} $ are independent, the expected value function is multiplicative, and we can factor the expected value in the right-hand side of the above (see \eqref{factored expected value} below). 

For the rest of this section, unless specified otherwise, when we refer to $ G $ we are referencing the copy of $ G $ living in $ H $ as $ \Lambda H $. Because we are viewing $ G $ as a subgroup of $ H $, we may view elements of $ H^* $ as elements of $ G^* $, since a linear map $ H\to R $ restricts to a linear map $ G=\Lambda H\to R $. Further, any map $ G\to R $ yields a map $ H\to R $ via precomposition with $ \Lambda $. However, note that an element of $ G^* $ cannot a priori be applied to an arbitrary element of $ H $.

Because $ V^*\simeq R^n $ (noncanonically), there is a natural isomorphism $ \Hom(V^*,R)\otimes G\to\Hom(V^*,G) $. Therefore, there is a natural isomorphism from $ \Hom(\Hom(V^*,G),R)\to\Hom(V\otimes G,R) $, which is itself canonically isomorphic to $ \Hom(V,\Hom(G,R))\simeq\Hom(V,G^*) $. Hence, we regard $ C $ as an element of $ \Hom(V,G^*) $. Let $ e:G^*\times G\to R $ denote the evaluation map. Similarly, there is a natural isomorphism between $ \Hom(\Sym_2 H,R) $ and $ \Sym^2 H^*$. Let $ [a^*\otimes b^*] $ denote the class of $ a^*\otimes b^*\in H^*\otimes H^* $ in $ \Sym^2 H^*$, and recall that $ \Sym_2 H $ is generated by the elements of the form $ a\otimes b+b\otimes a $ for $ a\neq b $ along with the elements $ a\otimes a $. The aforementioned isomorphism between $ \Hom(\Sym_2 H,R) $ and $ \Sym^2 H^* $ is witnessed by letting $ [a^*\otimes b^*] $ correspond to the map taking \[x\otimes y+y\otimes x\mapsto a^*(x)b^*(y)+a^*(y)b^*(x)\in R\quad\textrm{and}\quad x\otimes x\mapsto a^*(x)b^*(x)\in R.\]  Use this isomorphism to regard $ D $ as an element of $ \Sym^2 H^*$, and note that the evaluation does not depend on the chosen representative of $ D $ in $ H^*\otimes H^* $. Let $ \ev:\Sym^2 H^*\times\Sym_2 H\to R $ denote the evaluation map described above. For each element $ D $ of $ \Sym^2 H^*$ and choice of basis for $ H $, there exists some unique upper-triangular representative of $ D $ in $ H^*\otimes H^* $ with respect to this basis. We will use this fact to simplify computations.

The random matrices we will consider have entries that are independent \emph{with respect to a specific basis} of $ V $. Therefore, occasionally we are forced to do some computations using this distinguished basis, which we denote by $ v_1,\ldots,v_n $. At times, we will work with an alternate basis more conveniently aligned with $ G $ or $ H $ (through $ \calf $). Because we are working over $ R=\Z/a\Z $, which is not necessarily an integral domain, we prioritize making our arguments basis-independent as often as possible. 

We begin by computing the coefficient on each $ m_{ij} $ and rewriting it in more convenient notation. For $ h_1,h_2\in H $, let $ {h_1\odot h_2} $ be shorthand for the element $ h_1\otimes h_2+h_2\otimes h_1\in\Sym_2 H $. We see that \begin{align*}
	C(\Lambda\calf M)&+D(\Omega\calf M\calf^t\Omega^t)
	\\
	&=\sum_{i=1}^n\sum_{j=i+1}^n\Big(e(C(v_j),\Lambda\calf(v_i))+e(C(v_i),\Lambda\calf(v_j))+\ev(D,{\Omega\calf(v_i)\odot\Omega\calf(v_j)})\Big)m_{ij}\\
	&\qquad\qquad\qquad+\sum_{i=1}^n\Big(e(C(v_i),\Lambda\calf(v_i))+\ev(D,\Omega\calf(v_i)\otimes\Omega\calf(v_i))\Big)m_{ii}.
\end{align*} For $ i<j $, define \[E_{ij}(C,D,\calf)=e(C(v_j),\Lambda\calf(v_i))+e(C(v_i),\Lambda\calf(v_j))+\ev(D,{\Omega\calf(v_i)\odot\Omega\calf(v_j)}),\] and we also define \[E_{ii}(C,D,\calf)=e(C(v_i),\Lambda\calf(v_i))+\ev(D,\Omega\calf(v_i)\otimes\Omega\calf(v_i)).\] Hence, when the entries $ m_{ij} $ (for $ i\leq j $) are independent, \begin{equation}\label{factored expected value}
	\Prob(\Lambda\calf M=0\mathrm{\ and\ }\Omega\calf M\calf^t\Omega^t=A)=\frac{1}{|G|^n\cdot|\Sym^2 H|}\sum_{(C,D)}\E(\zeta^{-D(A)})\prod_{i\leq j}\E(\zeta^{E_{ij}(C,D,\calf)m_{ij}}).
\end{equation} 

For $ x\neq0 $, we know from \cite{Wood} that $ |\E(\zeta^{um_{ij}})| $ will be quite small (Lemma 4.2 of \cite{Wood}; restated as \Cref{expected value bound}). To get the bounds required to prove \Cref{sandpile moments intro}, we would like most of the coefficients $ E_{ij}(C,D,\calf) $ to be nonzero. Thus, this section is devoted to pinpointing the characteristics of $ \calf $ and $ (C,D) $ that control the number of nonzero $ E_{ij}(C,D,\calf) $. Note there are $ \binom{n+1}{2} $ total coefficients, and, to get the bounds we want, we would like quadratically many $ E_{ij}(C,D,\calf) $'s to be nonzero. For certain ``good'' $ F $, we will show that there are three possible outcomes: on the order of $ n^2 $ coefficients are nonzero; on the order of $ n $ coefficients are nonzero; all of the coefficients are zero. To show this, we identify the desired structural property of $ (C,D) $ that affects the number of nonzero coefficients. We also show that there are relatively few $ (C,D) $ such that only linearly many of the coefficients are nonzero; we count explicitly the $ (C,D) $ for which all of the coefficients are zero. In the following, we will assume $ F $ is ``good'' and define the aforementioned structural properties of $ (C,D) $. We will also explain the structural property which makes $ F $ ``good.'' Later, in \Cref{sec: Moments IV}, we will consider the remaining $ F $.

Now, we will write the coefficients $ E_{ij}(C,D,\calf) $ more equivariantly via a pairing. For elements $ \Lambda\calf\in\Hom(V,G) $ and $ C\in\Hom(V,G^*) $, define a map $ \phi_{\Lambda\calf,C}\in\Hom(V,G\oplus G^*) $ given by taking the direct sum of $ \Lambda\calf $ and $ C $. Likewise, define a map $ \phi_{C,\Lambda\calf}\in\Hom(V,G^*\oplus G) $ by switching the order of summation. There is a map \[t:(G\oplus G^*)\times(G^*\oplus G)\to R\] given by \[((h,\phi),(\phi',h'))\mapsto\phi'(h)+\phi(h').\] Note that for all $ u,v\in V $, \[t(\phi_{C,\Lambda\calf}(u),\phi_{\Lambda\calf,C}(v))=e(C(u),\Lambda\calf(v))+e(C(v),\Lambda\calf(u)).\] 

Similarly, for $ \calf\in\Hom(V,H) $ and $ C\in\Hom(V,G^*) $, we have a map $ \phi_{\calf,C}\in\Hom(V,H\oplus G^*) $ given by taking the direct sum of $ \calf $ and $ C $. Immediately, we see that for any submodule $ U $ of $ V $, \[\ker(\phi_{\calf,C}|_U)\subset\ker(\calf|_U).\] Given any subset $ \sigma\subset[n] $, let $ V_{\sigma} $ denote the submodule of $ V $ generated by the $ v_i $ such that $ i\not\in\sigma $. In other words, $ V_\sigma $ is the submodule given by not using the coordinates in $ \sigma $. The following definition gives the key structural property of the pair $ (C,D) $ (for some fixed $ \calf $) that determines if enough of the coefficients $ E_{ij}(C,D,\calf) $ are nonzero.
\begin{definition}
	Let $ 0<\gamma<1 $ be a real number (which we will specify later). For a fixed $ \calf\in\Hom(V,H) $, we say that the pair $ (C,D) $ is \emph{robust} for $ \calf  $ if every $ \sigma\subset[n] $ such that $ |\sigma|<\gamma n $ has the property that \[\ker(\phi_{\calf,C}|_{V_\sigma})\subsetneq\ker(\calf|_{V_\sigma}).\] Otherwise, the pair $ (C,D) $ is said to be \emph{weak} for $ \calf $. 
\end{definition}
We give an upper bound for the number of weak pairs $ (C,D) $. 
\begin{lemma}[Estimate for number of weak $ (C,D) $]\label{weak estimate}
	Given $ G $, there is a constant $ C_G $ such that for all $ n $ the following holds. Let $ H $ be defined as in \Cref{sec: Moments I}, and let $ \calf\in\Hom(V,H) $. The number of pairs $ (C,D)\in\Hom(V,G^*)\times\Sym^2 H^*$ that are weak for $ \calf $ is at most \[C_G{{n}\choose{\lceil\gamma n\rceil-1}}|G|^{\gamma n}.\]
\end{lemma}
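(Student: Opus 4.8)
\textit{Proof proposal.} The plan is to exploit that the definition of robustness (hence of weakness) for a pair $(C,D)$ is phrased purely in terms of $C$, $\calf$, and the submodules $V_\sigma$: the map $D$ plays no role whatsoever. Consequently the number of weak pairs equals $|\Sym^2 H^*|$ times the number of $C\in\Hom(V,G^*)$ that are weak for $\calf$, and since $H$ depends only on $G$, the factor $|\Sym^2 H^*|$ is a constant which can be absorbed into $C_G$. So it suffices to count the weak $C$.

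For a fixed $\sigma\subseteq[n]$, the first step is to describe precisely which $C$ witness weakness through $\sigma$. Writing $K_\sigma=\ker(\calf|_{V_\sigma})$ and using $\ker(\phi_{\calf,C}|_{V_\sigma})=K_\sigma\cap\ker(C|_{V_\sigma})$, the equality $\ker(\phi_{\calf,C}|_{V_\sigma})=K_\sigma$ holds exactly when $C|_{K_\sigma}=0$, i.e.\ when $C$ factors through $V/K_\sigma$; thus the number of such $C$ is $|\Hom(V/K_\sigma,G^*)|$. To bound this uniformly I would use the short exact sequence $0\to V_\sigma/K_\sigma\to V/K_\sigma\to V/V_\sigma\to0$ coming from $K_\sigma\subseteq V_\sigma\subseteq V$, in which $V/V_\sigma\cong R^{|\sigma|}$ and $V_\sigma/K_\sigma\cong\im(\calf|_{V_\sigma})$, a submodule of $H$. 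Applying the left-exact contravariant functor $\Hom(-,G^*)$ gives $|\Hom(V/K_\sigma,G^*)|\le|\Hom(R^{|\sigma|},G^*)|\cdot|\Hom(\im(\calf|_{V_\sigma}),G^*)|\le|G|^{|\sigma|}\cdot C_G^{(1)}$, where $C_G^{(1)}:=\max_{A\le H}|\Hom(A,G^*)|$ is finite and depends only on $G$, the submodule lattice of $H$ being determined by $G$. When $|\sigma|<\gamma n$ this is at most $C_G^{(1)}|G|^{\gamma n}$.

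Finally I would take a union bound over all $\sigma$ with $|\sigma|\le\lceil\gamma n\rceil-1$. The number of such sets is $\sum_{k=0}^{\lceil\gamma n\rceil-1}\binom{n}{k}$, and provided $\gamma<1/2$ (the regime in which $\gamma$ will be chosen) the ratios $\binom{n}{k-1}/\binom{n}{k}=k/(n-k+1)$ are bounded by $\gamma/(1-\gamma)<1$ throughout this range, so the sum is at most $\tfrac{1-\gamma}{1-2\gamma}\binom{n}{\lceil\gamma n\rceil-1}$. Combining the three steps yields the claimed estimate with $C_G=|\Sym^2 H^*|\cdot C_G^{(1)}\cdot\tfrac{1-\gamma}{1-2\gamma}$.

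The argument is essentially routine; the only point needing a little care is verifying that each constant introduced — $|\Sym^2 H^*|$, $C_G^{(1)}$, and the binomial-sum factor — is genuinely independent of $n$. The conceptual heart is simply that imposing the condition ``$C$ vanishes on $K_\sigma$'' collapses $\Hom(V,G^*)$ to a set of size at most $|G|^{|\sigma|}$ times a $G$-dependent constant, which is exactly the shape demanded by the statement.
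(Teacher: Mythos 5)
Your proposal is correct and follows essentially the same strategy as the paper: observe that weakness is a property of $C$ alone (absorb the $|\Sym^2 H^*|$ factor), translate the kernel equality into "$C$ factors through $\im(\calf|_{V_\sigma})$ on $V_\sigma$," and count. The paper's only organizational difference is that it first reduces, without loss of generality, to $\sigma$ of the exact size $\lceil\gamma n\rceil-1$ (using that the defining equality persists under enlarging $\sigma$), then counts $C$ by freely choosing $Cv_i$ for $i\in\sigma$ together with a homomorphism $\im(\calf|_{V_\sigma})\to G^*$, whereas you take a union bound over all $\sigma$ with $|\sigma|\leq\lceil\gamma n\rceil-1$ and control the binomial sum by a geometric-series argument (implicitly requiring $\gamma<1/2$, which is harmless since $\gamma$ is ultimately chosen small). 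Both yield the stated bound with a $G$-dependent constant.
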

\begin{proof}
	Suppose $ (C,D) $ is weak for $ \calf $. Then there exists some $ \sigma\subset [n] $ with $ |\sigma|<\gamma n $ such that \[\ker(\phi_{\calf,C}|_{V_\sigma})=\ker(\calf|_{V_\sigma}).\] Because $ \sigma\subset\mu $ implies $ V_{\mu}\subset V_{\sigma} $, we see that the above equality still holds for $ \sigma $ enlarged. Thus, without loss of generality, we may assume that $ |\sigma|=\lceil\gamma n\rceil-1 $. Now, since $ \ker(\phi_{\calf,C}|_{V_\sigma})=\ker(\calf|_{V_\sigma}) $, it follows that $ Cs $ is determined by $ \calf s $ for $ s\in V_{\sigma} $: if $ \calf s=\calf s' $ for $ s,s'\in V_\sigma $ and $ Cs\neq C{s'} $, then $ s-s'\in\ker(\calf) $ while $ s-s'\not\in\ker(C) $. In other words, for each value in $ \im(\calf|_{V_\sigma}) $ there is exactly one corresponding value in $ \im(C|_{V_\sigma}) $. Moreover, we have a well-defined homomorphism $ \psi:\im(\calf|_{V_\sigma})\to G^* $ taking $ \psi(\calf s)=Cs $ for all $ s\in V_{\sigma} $. 
	
	Recall that $ C $ is determined by the values $ Cv_i $ for $ i\in\sigma $ in addition to $ \im(C|_{V_\sigma}) $. Therefore, given $ \calf $, to specify a $ C\in\Hom(V,G^*) $ such that $ (C,D) $ is weak for $ \calf $ for any $ D\in\Sym^2 H^*$, it suffices to choose a subset $ \sigma\subset[n] $ with $ |\sigma|=\lceil\gamma n\rceil-1 $, values $ Cv_i $ for $ i\in\sigma $, and a homomorphism $ \psi:\im(\calf|_{V_\sigma})\to G^* $. Thus, there are $ {{n}\choose{\lceil\gamma n\rceil-1}} $ choices for $ \sigma $ and $ |G| $ choices for each $ Cv_i $ and $ |\Hom(\im(\calf|_{V_\sigma}),G^*)| $ choices for $ \psi $, and then $ C $ is determined. There are $ |\Sym^2 H| $ possibilities for $ D $. Note that because $ H $ is determined by $ G $ and because $ \im(\calf|_{V_\sigma})\subset H $, we can find some constant $ C_G $ such that $ |\Sym^2 H|\cdot|\Hom(\im(\calf|_{V_\sigma}),G^*)|\leq C_G $. 
\end{proof}

The following gives a sufficient condition for $ (C,D) $ to be weak in terms of $ \ev $ and $ t $.
\begin{lemma}\label{weak condition}
	Let $ \calf\in\Hom(V,H) $, let $ C\in\Hom(V,G^*) $, and let $ D\in\Sym^2H^* $. Suppose $ U $ is a submodule of $ V $ such that $ \calf U=H $. Then if $ U' $ is a submodule of $ V $ such that for all $ u\in U $ and $ u'\in U' $ \[t(\phi_{C,\Lambda\calf}(u),\phi_{\Lambda\calf,C}(u'))+\ev(D,\Omega\calf(u)\odot\Omega\calf(u'))=0,\] then $ \ker(\phi_{\calf,C}|_{U'})=\ker(\calf|_{U'}) $.
\end{lemma}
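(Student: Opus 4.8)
The plan is to prove the two inclusions $\ker(\phi_{\calf,C}|_{U'})\subseteq\ker(\calf|_{U'})$ and $\ker(\calf|_{U'})\subseteq\ker(\phi_{\calf,C}|_{U'})$ separately; only the second has content. The first holds for any $C$ and any submodule of $V$ and was already recorded above (immediately after the definition of $\phi_{\calf,C}$): since $\phi_{\calf,C}$ is the direct sum of $\calf$ and $C$, a vector killed by $\phi_{\calf,C}$ is killed by $\calf$. So I would concentrate on the reverse inclusion: given $u'\in U'$ with $\calf u'=0$, the goal is to show $Cu'=0$ as an element of $G^*$, so that $\phi_{\calf,C}(u')=(\calf u',Cu')=0$.

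To do this I would fix such a $u'$, note that $\calf u'=0$ forces $\Lambda\calf u'=0$ and $\Omega\calf u'=0$ (as $\Lambda,\Omega\colon H\to H$ are homomorphisms), and then feed $u'$ together with an arbitrary $u\in U$ into the hypothesis. The term $\ev(D,\Omega\calf(u)\odot\Omega\calf(u'))$ vanishes because $\odot$ is built from $\otimes$ and is therefore bilinear, so $\Omega\calf(u)\odot\Omega\calf(u')=0$ in $\Sym_2 H$ and $\ev(D,0)=0$. For the remaining term, the explicit formula for $t$ gives
\[
t(\phi_{C,\Lambda\calf}(u),\phi_{\Lambda\calf,C}(u'))=e(C(u),\Lambda\calf(u'))+e(C(u'),\Lambda\calf(u))=e(C(u'),\Lambda\calf(u)),
\]
again using $\Lambda\calf(u')=0$ and bilinearity of the evaluation $e$. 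Hence the hypothesis collapses to $e(C(u'),\Lambda\calf(u))=0$ for every $u\in U$.

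Finally I would invoke the surjectivity hypothesis: since $\calf U=H$, applying $\Lambda$ gives $\{\Lambda\calf(u):u\in U\}=\Lambda H$, which is exactly the copy of $G$ sitting inside $H$ (recall that throughout this section $G$ is identified with $\Lambda H$). So $e(C(u'),g)=0$ for all $g\in G$, i.e. $C(u')$ is the zero element of $G^*=\Hom(G,R)$, whence $\phi_{\calf,C}(u')=0$ and $u'\in\ker(\phi_{\calf,C}|_{U'})$. The proof is essentially a substitution, so I do not expect a real obstacle; the one point demanding care is the bookkeeping of the identification $G\simeq\Lambda H\subseteq H$ — it is the pushed-forward map $\Lambda\calf\colon V\to G$, and not $\calf$ itself, that the surjectivity hypothesis must feed, and $e$ is paired against elements of $\Lambda H$. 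Everything else follows from bilinearity of $\otimes$ (hence of $\odot$) and the displayed formulas for $t$ and $e$.
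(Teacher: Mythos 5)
Your proposal is correct and is essentially the paper's proof stated in contrapositive form rather than by contradiction: the paper assumes $w\in\ker(\calf|_{U'})$ with $C(w)=\psi\neq0$, picks $g$ with $\psi(g)\neq0$ and $x\in U$ with $\Lambda\calf(x)=g$, and derives $\psi(g)\neq0$ from the same substitution into the pairing identity. The computational content — bilinearity killing the $\ev(D,\cdot)$ term and one $e$ term, then using $\Lambda\calf U=\Lambda H=G$ to force $C(u')=0$ — is identical.
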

\begin{proof}
	Suppose for a contradiction that there exists some $ w\in U' $ with $ \calf(w)=0\in H $ but $ C(w)=\psi\neq0\in G^* $. Since $ \psi\neq0 $, there exists some element $ g\in G\subset H $ such that $ \psi(g)\neq0 $. Because $ \calf U=H $, we have $ \Lambda\calf U=\Lambda H=G $, and it follows that there must be some $ x\in U $ such that $ \Lambda\calf(x)=g $. Let $ C(x)=\psi' $. Then \begin{align*}
		t(\phi_{C,\Lambda\calf}(x),\phi_{\Lambda\calf,C}(w))+\ev(D,\Omega\calf(x)\odot\Omega\calf(w))&=\psi(g)+\psi'(0)+\ev(D,{\Omega\calf(x)\odot\Omega\calf(w)})\\
		&=\psi(g)+\ev(D,0)=\psi(g)\neq0.
	\end{align*} By hypothesis, \[t(\phi_{C,\Lambda\calf}(u),\phi_{\Lambda\calf,C}(u'))+\ev(D,\Omega\calf(u)\odot\Omega\calf(u'))=0,\] for any $ u\in U $ and $ u'\in U' $, so we have a contradiction. 
\end{proof}
\begin{corollary}\label{weak cor}
	Let $ \calf\in\Hom(V,H) $, let $ C\in\Hom(V,G^*) $, and let $ D\in\Sym^2H^* $. Suppose $ U $ is some submodule of $ V $ such that $ \calf U=H $. Let \[\sigma=\{i\in[n]\;|\;t(\phi_{C,\Lambda\calf}(u),\phi_{\Lambda\calf,C}(v_i))+\ev(D,\Omega\calf(u)\odot\Omega\calf(v_i))\neq0\ \textrm{for some }u\in U\}.\] Then $ \ker(\phi_{\calf,C}|_{V_\sigma})=\ker(\calf|_{V_\sigma}) $. In particular, if $ |\sigma|<\gamma n $, then $ (C,D) $ is weak for $ \calf $. 
\end{corollary}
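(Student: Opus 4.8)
The plan is to derive \Cref{weak cor} directly from \Cref{weak condition} by choosing $U' = V_\sigma$. First I would unpack the definition of $\sigma$: an index $i \in [n]$ fails to lie in $\sigma$ exactly when
\[
t(\phi_{C,\Lambda\calf}(u),\phi_{\Lambda\calf,C}(v_i)) + \ev(D,\Omega\calf(u)\odot\Omega\calf(v_i)) = 0
\]
for \emph{every} $u \in U$. Since $V_\sigma$ is, by definition, the submodule of $V$ generated by the vectors $v_i$ with $i \notin \sigma$, what remains is to upgrade this vanishing on generators to vanishing on all of $V_\sigma$, and then to observe that the resulting statement is precisely the hypothesis of \Cref{weak condition}.

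The key point is that, for each fixed $u \in U$, the assignment
\[
u' \longmapsto t(\phi_{C,\Lambda\calf}(u),\phi_{\Lambda\calf,C}(u')) + \ev(D,\Omega\calf(u)\odot\Omega\calf(u'))
\]
is an $R$-module homomorphism $V \to R$: indeed, $\phi_{\Lambda\calf,C}$ and $\Omega\calf$ are $R$-module homomorphisms, the operation $\odot$ is additive in each argument, and both $t$ and $\ev$ are $R$-bilinear, so each of the two terms is $R$-linear in $u'$. By the previous paragraph this homomorphism vanishes on every generator $v_i$ ($i \notin \sigma$) of $V_\sigma$, hence on all of $V_\sigma$. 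Thus, taking the submodule $U$ as given (so that $\calf U = H$ by hypothesis) and $U' = V_\sigma$, the hypothesis of \Cref{weak condition} is satisfied, and we conclude $\ker(\phi_{\calf,C}|_{V_\sigma}) = \ker(\calf|_{V_\sigma})$. For the final assertion, recall that $\ker(\phi_{\calf,C}|_{V_\sigma}) \subseteq \ker(\calf|_{V_\sigma})$ always holds, since $\phi_{\calf,C}$ is the direct sum of $\calf$ and $C$; the equality just obtained therefore says that this inclusion is \emph{not} proper, so if $|\sigma| < \gamma n$ then $\sigma$ witnesses the failure of robustness for $\calf$, i.e., $(C,D)$ is weak for $\calf$ by definition.

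There is no substantive obstacle here: the corollary is essentially bookkeeping on top of \Cref{weak condition}. The only point I would treat with a bit of care is the $R$-linearity of $u' \mapsto \ev(D,\Omega\calf(u)\odot\Omega\calf(u'))$; because $\Sym_2 H$ carries torsion, I would verify additivity and $R$-homogeneity in the right-hand slot directly from the definition $h\odot h' = h\otimes h' + h'\otimes h$ and from $\ev$ being the evaluation pairing $\Sym^2 H^* \times \Sym_2 H \to R$, rather than invoking a universal property that might be sensitive to the torsion.
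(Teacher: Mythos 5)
Your proof is correct and takes essentially the same route as the paper: apply \Cref{weak condition} with $U' = V_\sigma$. The only difference is that you explicitly verify the bilinearity step (that vanishing on the generators $v_i$, $i\notin\sigma$, propagates to all of $V_\sigma$), which the paper passes over silently when it states the vanishing holds ``for all $v \in V_\sigma$''; this is a small but genuine gap-filling that makes the argument airtight.
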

\begin{proof}
	By assumption, $t(\phi_{C,\Lambda\calf}(u),\phi_{\Lambda\calf,C}(v))+\ev(D,\Omega\calf(u)\odot\Omega\calf(v))=0$ for all $ u\in U $ and $ v\in V_\sigma $. Applying \Cref{weak condition} with $ U'=V_\sigma $ tells us that $ \ker(\phi_{\calf,C}|_{V_\sigma})=\ker(\calf|_{V_\sigma}) $. 
\end{proof}

Now, we define the important structural property of $ F $, which is a generalization of the notion of a linear code to $ R $-modules. 
\begin{definition}
	For $ F\in\Hom(V,G) $, we call $ F $ a \emph{code of distance} $ w $ if for every $ \sigma\subset[n] $ such that $ |\sigma|<w $, we have $ FV_{\sigma}=G $. Said another way, $ F $ is not only a surjection, but would remain so if we threw out any fewer than $ w $ standard basis vectors from $ V $.\footnote{We note that this definition is dual to the usual notion of a code. If $ R $ is a field, then $ F $ is a code if and only if the transpose $ F^t:V^*\to G^* $ is injective and its image $ \im(F^t)\subset V^* $ is a code of distance $ w $ in the usual sense.}
\end{definition}

The lift $ \calf $ of a code $ F $ to $ H $ is still a code; importantly, note that the notion of being a code is a property we define for $ F\in\Hom(V,G) $ rather than for the lift of $ F $. However, the methods we use to bound the multiplicands in \eqref{factored expected value} require $ \calf $ to be a code as well.
\begin{lemma}\label{lift of a code is a code}
	Let $ F\in\Hom(V,G) $ be a code of distance $ w $, and let $ \calf\in\Hom(V,H) $ be a lift of $ F $ to $ H $. Then $ \calf $ is a code of the same distance.
\end{lemma}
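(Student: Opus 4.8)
The statement is essentially formal once the definitions are unwound, so the plan is short. Fix $\sigma\subseteq[n]$ with $|\sigma|<w$; the goal is to show $\calf V_\sigma=H$. Write $\pi\colon H\to G$ for the surjection $h_i\mapsto g_i$ fixed in \Cref{sec: Moments I}, so that $\pi\circ\calf=F$ by the definition of a lift. Since $F$ is a code of distance $w$ and $|\sigma|<w$, we have $FV_\sigma=G$, and therefore $\pi(\calf V_\sigma)=G$, i.e.\ $\calf V_\sigma+\ker\pi=H$. Thus the only thing to prove is that this ``approximate surjectivity'' can be upgraded to $\calf V_\sigma=H$, which is a Nakayama-type assertion.

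To carry this out, I would decompose $R=\prod_{p}R_p$ with $R_p=\Z/p^{2\lambda_1}\Z$ (here $\lambda_1=\lambda_1(p)$), so that $H=\bigoplus_p H_p$ with $H_p=R_p^{\,r}$ a free module over the local ring $R_p$, and likewise $V_\sigma=\bigoplus_p V_{\sigma,p}$ and $\calf V_\sigma=\bigoplus_p\calf(V_{\sigma,p})$; it then suffices to prove $\calf(V_{\sigma,p})=H_p$ for each $p\mid\#G$. The key observation is that $\ker\pi\subseteq pH$: the $p$-component $\pi_p\colon R_p^{\,r}\to\bigoplus_{i=1}^r\Z/p^{\lambda_i}\Z$ is coordinate-wise reduction, so $\ker\pi_p=\bigoplus_i p^{\lambda_i}R_p\subseteq pH_p$ because every $\lambda_i\geq 1$. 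Combined with $\calf(V_{\sigma,p})+\ker\pi_p=H_p$, this gives $\calf(V_{\sigma,p})+pH_p=H_p$, and since $H_p$ is a finitely generated module over the local ring $R_p$, Nakayama's lemma yields $\calf(V_{\sigma,p})=H_p$. Equivalently, one can run the argument over $R$ at once: $\ker\pi$ lies in $\mathrm{rad}(R)\cdot H$ for the Jacobson radical of $R=\Z/a\Z$, and Nakayama applies to the finitely generated $R$-module $H$.

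I do not expect any genuine obstacle here: the content is exactly the inclusion $\ker(H\to G)\subseteq pH$, which is immediate from $\lambda_i\geq1$, together with the standard Nakayama argument; in particular the notion of being a code for $\calf$, though stated only after $\calf$ is introduced, is deduced from the corresponding property of $F$. The only points requiring a little care are bookkeeping: that $V_\sigma$, its image under $\calf$, and the map $\pi$ all respect the primary decomposition of $R$, and that $H$ is finitely generated (being a finite direct sum of the free modules $H_p$), so that Nakayama's lemma is applicable. Since $\sigma$ was an arbitrary subset of $[n]$ of size less than $w$, this shows $\calf$ is a code of distance $w$, as claimed.
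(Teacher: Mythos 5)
Your proof is correct and takes essentially the same route as the paper: both reduce to the $p$-primary component and apply Nakayama's lemma over the local ring $\Z/p^{2\lambda_1}\Z$. The only difference is presentational — where the paper argues via the $\F_p$-rank count on $V_\sigma/pV_\sigma\to H/pH\to G/pG$, you observe directly that $\ker(\pi_p)\subseteq pH_p$ (equivalently, that $H/pH\to G/pG$ is an isomorphism), which is a slightly crisper way of reaching the same hypothesis for Nakayama.
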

\begin{proof}
	Write both $ V $ and $ H $ as direct sums of their Sylow $ p $-subgroups, and note that any map $ V\to H $ is entirely determined by its restriction to $ V_p $ for each prime $ p $. Also note that $ \calf(V_p)\subset H_p $. Suppose $ G_p $ is a $ p $-group of type $ \lambda $. Since $ V=R^n $, where $ R=\Z/a\Z $, we see that $ V_p=R_p^n=(\Z/p^{2\lambda_1}\Z)^n $. Moreover, $ \calf|_{V_{\sigma,p}}=\calf|_{(V_p)_\sigma} $. Thus, it suffices to prove the lemma for $ G $ a finite abelian $ p $-group.
	
	Suppose $ G $ is of type $ \lambda $, where $ \lambda $ is the partition $ \lambda_1\geq\cdots\geq\lambda_r $. Then $ H=(\Z/p^{2\lambda_1}\Z)^r $. Recall from \Cref{sec: Moments I} that we have a surjection $ \Lambda:H\twoheadrightarrow G $ and that $ \Lambda\circ\calf=F $. Let $ \sigma\subset[n] $ be a set of cardinality less than $ w $. We have the following sequence of maps \[V_\sigma/pV_\sigma\to H/pH\to G/pG,\] where the first map is given by $ \calf $ and the second by $ \Lambda $. The composite map $ V_\sigma/pV_\sigma\to G/pG $ is given by $ F $ and is surjective since $ F $ is a code of distance $ w $. Because $ H/pH $ and $ G/pG $ are $ \F_p $-vector spaces of rank $ r $, it follows that the map $ V_\sigma/pV_\sigma\to H/pH $ must be surjective. By Nakayama's lemma, it follows that $ \calf:V_\sigma\to H $ must be surjective as well. Therefore, $ \calf V_\sigma= H $ for any such $ \sigma $. 
\end{proof}

The following lemma will be useful often. In short, it allows us to choose a suitable basis of $ H $ that works particularly nicely with the $ v_i $'s in $ V $ in tandem with $ \calf $ and $ C $. We will use what it says about codes along with the property of robustness to obtain a good lower bound on the number of $ E_{ij}(C,D,\calf) $ that are nonzero. 
\begin{lemma}[Lemma 3.4, \cite{Wood}]\label{good basis}
	Let $ H $ be a finite $ R $-module with Sylow $ p $-subgroup of type $ \lambda $, where $ \lambda $ is a partition with $ r $ parts. Suppose $ F\in\Hom(V,H) $ is a code of distance $ \delta n $, and let $ C\in\Hom(V,H^*) $. Then we can find $ A_1,\ldots,A_r\in H $ and $ B_1,\ldots,B_r\in H^* $ such that for every $ 1\leq i\leq r $ \[\#\{j\in[n]\;|\;Fv_j=A_i\ \mathrm{and\ }Cv_j=B_i\}\geq\delta n/|H|^2,\] and after the projection to the Sylow $ p $-subgroup $ H_p $ of $ H $, the elements $ A_1,\ldots,A_r $ generate $ H_p $. 
\end{lemma}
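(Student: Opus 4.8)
The statement is Lemma 3.4 of Wood's paper, so the proof should follow the same strategy: a pigeonhole argument combined with a greedy/iterative selection that maintains a generation condition modulo $p$. Since $F$ is a code of distance $\delta n$, any $\delta n$ of the standard basis vectors $v_j$ can be thrown out and $F$ still surjects; dually, no "small" coordinate set $\sigma$ can contain the information needed to kill surjectivity. The pair $(Fv_j, Cv_j) \in H \times H^*$ takes at most $|H|\cdot|H^*| = |H|^2$ distinct values as $j$ ranges over $[n]$, so by pigeonhole at least one value class has size $\geq n/|H|^2$. But we need $r$ value classes $(A_1,B_1),\dots,(A_r,B_r)$ whose $H$-components generate $H_p$ after projection, each of size $\geq \delta n / |H|^2$ — this is where the code hypothesis does the real work.

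First I would set up the iteration. Work prime-by-prime: by the argument in \Cref{lift of a code is a code}, $H = \bigoplus_p H_p$, a map into $H$ is determined by its components, and it suffices to treat $H$ a $p$-group of type $\lambda$ with $r$ parts; the statement for general $H$ follows by taking the obvious basis assembled from the pieces (one only needs the generation-of-$H_p$ conclusion, and $|H|$ only grows when we pass to the whole module, which makes the bound easier). So assume $H$ is a $p$-group. Let $q\colon H \to H/pH \cong \mathbb{F}_p^r$ be the canonical quotient. I will construct $(A_1,B_1),\dots,(A_r,B_r)$ so that $q(A_1),\dots,q(A_r)$ are linearly independent in $\mathbb{F}_p^r$; by Nakayama's lemma this forces $A_1,\dots,A_r$ to generate $H$. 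Inductively, suppose we have chosen $(A_1,B_1),\dots,(A_{k-1},B_{k-1})$ with $q(A_1),\dots,q(A_{k-1})$ independent, $k-1 < r$. Let $\sigma_{k-1} \subseteq [n]$ be the set of indices $j$ such that $(Fv_j, Cv_j)$ already equals one of the chosen pairs — actually it is cleaner to let $\sigma_{k-1}$ be the set of $j$ with $q(Fv_j) \in \mathrm{span}_{\mathbb{F}_p}\{q(A_1),\dots,q(A_{k-1})\}$, together with a bounded number of bookkeeping indices; I will return to making this precise.

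The key step: I claim $|\sigma_{k-1}| < \delta n$, or rather that the complement is large enough. If $|\sigma_{k-1}| \geq \delta n$ we would need a different bookkeeping set, so the clean approach is: let $\tau_{k-1} = \{ j \in [n] : q(Fv_j) \in \mathrm{span}\{q(A_1),\dots,q(A_{k-1})\}\}$. If $|\tau_{k-1}| \geq \delta n$, then throwing out $\tau_{k-1}$ from a size-$\lceil \delta n\rceil$ subset: actually, since $F$ is a code of distance $\delta n$, for any $\sigma$ with $|\sigma| < \delta n$ we have $FV_\sigma = H$, hence $q(F(\{v_j : j \notin \sigma\}))$ spans $\mathbb{F}_p^r$. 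Contrapositively, if $q(Fv_j)$ lies in a proper subspace $W \subsetneq \mathbb{F}_p^r$ for all $j \notin \sigma$, then $|\sigma| \geq \delta n$... this is not quite what I want either. Let me instead argue: the set $T := \{ j : q(Fv_j) \notin \mathrm{span}\{q(A_1),\dots,q(A_{k-1})\}\}$ is nonempty, and in fact $|T| > n - \delta n$? No — the right statement is that $T$ cannot be contained in any set of size $< $ something. Here is the correct move: consider $\sigma = [n] \setminus T$, i.e. $q(Fv_j)$ lies in the proper subspace $W = \mathrm{span}\{q(A_1),\dots,q(A_{k-1})\}$ for all $j \in \sigma$. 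If $|\sigma| < \delta n$ were false we'd be stuck, so suppose $|\sigma| \geq \delta n$; pick $\sigma' \subseteq \sigma$ with $|\sigma'| = \lceil \delta n \rceil$. Hmm, the code hypothesis applies to the complement. Let me just say: since $F$ is a code of distance $\delta n$, for every $\sigma$ with $|\sigma| \le \delta n$, $\{q(Fv_j): j\notin\sigma\}$ spans $\mathbb{F}_p^r$; so it is impossible for $\{q(Fv_j): j\notin\sigma\}$ to lie in the proper subspace $W$ when $|\sigma|<\delta n$. Therefore at most $\delta n$ many indices $j$ can have $q(Fv_j)$ failing to escape $W$... no: it means the set $S_W := \{j: q(Fv_j)\in W\}$ has $|S_W^c| $ such that $\{q(Fv_j): j \in S_W^c\}$ still needn't span — ugh. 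The clean fact I actually need, and will prove: for any proper subspace $W \subsetneq \mathbb{F}_p^r$, the set $\{j \in [n] : q(Fv_j) \notin W\}$ has size $> \delta n - 1$, equivalently $|\{j : q(Fv_j) \in W\}| < n - \delta n + 1$. Indeed if $|\{j : q(Fv_j) \in W\}| \geq n - \delta n + 1$, let $\sigma = \{j : q(Fv_j) \notin W\}$, so $|\sigma| \leq \delta n - 1 < \delta n$; then by the code property $q(F(V_\sigma))$ spans $\mathbb{F}_p^r$, but $q(F(V_\sigma)) \subseteq W$ since every $j \notin \sigma$ has $q(Fv_j) \in W$ — contradiction.

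So now among the $\geq n - \delta n + 1 \geq (1-\delta)n$ indices $j$ with $q(Fv_j) \notin W_{k-1}$, pigeonhole on the value $(Fv_j, Cv_j) \in H \times H^*$ (at most $|H|^2$ values) produces a pair $(A_k, B_k)$ attained at least $(1-\delta)n/|H|^2 \geq \delta n/|H|^2$ times (for $\delta \le 1/2$, say; the bound is stated with $\delta n/|H|^2$ and indeed $(1-\delta)n \geq \delta n$ for $\delta \leq 1/2$, and for larger $\delta$ one reduces $\delta$ harmlessly or notes the lemma is vacuous). By construction $q(A_k) \notin W_{k-1} = \mathrm{span}\{q(A_1),\dots,q(A_{k-1})\}$, so $q(A_1),\dots,q(A_k)$ remain independent. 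After $r$ steps we have the desired elements, and $q(A_1),\dots,q(A_r)$ are a basis of $\mathbb{F}_p^r$, so by Nakayama $A_1,\dots,A_r$ generate $H_p$. Reassembling over all primes $p \mid \#H$ gives the general statement, with $|H|$ in the denominator only larger, hence the stated inequality holds a fortiori.

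\textbf{Main obstacle.} The only genuinely delicate point is the bound $\delta n/|H|^2$ in the count: one must make sure that at each of the $r$ iterations the pool of "useful" coordinates is still of size $\geq \delta n$ (not merely nonempty), which is exactly what the "escape from a proper subspace" lemma above guarantees — and that lemma is where the full strength of the code hypothesis (distance $\delta n$, not just surjectivity) is used. Everything else — the pigeonhole, the Nakayama step, the reduction to $p$-groups — is routine. I expect the write-up to be short, essentially a transcription of Wood's Lemma 3.4 with the extra bookkeeping that $C$ now lands in $H^*$ rather than being absent, which only changes $|H|$ to $|H|^2$ in the pigeonhole count.
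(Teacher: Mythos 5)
Your reconstruction follows the route one would expect for this statement (greedy iteration on proper subspaces of $H/pH$, pigeonhole on the value of $(Fv_j,Cv_j)$, Nakayama); the paper cites Wood's Lemma~3.4 without reproducing a proof, so there is no in-paper argument to compare against, but this is the standard argument. One bookkeeping slip is worth fixing: your ``clean fact'' correctly shows, using the code hypothesis, that for any proper subspace $W\subsetneq\F_p^r$ the set $\{j\in[n]:q(Fv_j)\notin W\}$ has size at least $\delta n$. But when you apply it you write ``among the $\geq n-\delta n+1\geq(1-\delta)n$ indices $j$ with $q(Fv_j)\notin W_{k-1}$,'' which is the complementary set's size. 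With the correct pool size $\geq\delta n$, the pigeonhole over $H\times H^*$ (at most $|H|^2$ values) produces a pair $(A_k,B_k)$ realized at least $\delta n/|H|^2$ times, exactly as required, so the parenthetical hedge about $\delta\leq 1/2$ is superfluous and should be deleted. A small further simplification: the reduction to $p$-groups is unnecessary, because $H/pH\cong H_p/pH_p$ already for any finite $R$-module $H$ (multiplication by $p$ is invertible on the prime-to-$p$ summands), so $q\colon H\to H/pH\cong\F_p^r$ works directly and the pigeonhole is over the full $H\times H^*$ in any case.
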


\begin{lemma}[Quadratically many nonzero coefficients for robust $ (C,D) $]\label{quadratically many}
	Let $ G $ be a finite abelian group, and let $ P $ be the set of primes dividing $ |G| $. If $ \calf\in\Hom(V,H) $ is a code of distance $ \delta n $, and if $ (C,D)\in\Hom(V,G^*)\times\Sym^2 H^* $ is robust for $ \calf $, then there are at least $ \gamma\delta n^2/(2|H|^2|P|) $ pairs $ (i,j) $ with $ i\leq j $ such that $ E_{ij}(C,D,\calf)\neq0 $.	
\end{lemma}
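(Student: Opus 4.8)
The plan is to turn robustness into a linear-sized set of ``active'' coordinates via \Cref{weak cor}, and then use the code structure of $\calf$ through \Cref{good basis} to promote each active coordinate to linearly many nonzero coefficients. A preliminary reduction to a single Sylow prime is needed, and that is where the factor $|P|$ enters. Since $R=\Z/a\Z$ decomposes over the primes dividing $a$ and $G=\bigoplus_pG_p$, $H=\bigoplus_pH_p$, every coefficient $E_{ij}(C,D,\calf)$ is the sum of its $p$-components $E_{ij}(C_p,D_p,\calf_p)$ over $p\in P$, where $C_p,D_p,\calf_p$ denote the $p$-parts of $C,D,\calf$; in particular $E_{ij}(C,D,\calf)\neq 0$ as soon as a single $p$-component is nonzero, so it suffices to work with one well-chosen prime.

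I would first show that there is a prime $p\in P$ for which $(C_p,D_p)$ is robust for $\calf_p$ with the parameter $\gamma/|P|$ in place of $\gamma$, the $p$-local notion of robustness being the evident analogue. If this failed, then for each $p$ there would be $\sigma_p\subset[n]$ with $|\sigma_p|<\gamma n/|P|$ and $\ker(\phi_{\calf_p,C_p}|_{V_{\sigma_p}})=\ker(\calf_p|_{V_{\sigma_p}})$. Set $\sigma=\bigcup_{p\in P}\sigma_p$, so $|\sigma|<\gamma n$; since $V_\sigma\subseteq V_{\sigma_p}$, intersecting these equalities with $V_\sigma$ gives $\ker(\phi_{\calf_p,C_p}|_{V_\sigma})=\ker(\calf_p|_{V_\sigma})$ for every $p$. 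Because $\ker(\phi_{\calf,C}|_{V_\sigma})=\bigcap_p\ker(\phi_{\calf_p,C_p}|_{V_\sigma})$ and $\ker(\calf|_{V_\sigma})=\bigcap_p\ker(\calf_p|_{V_\sigma})$, this would force $\ker(\phi_{\calf,C}|_{V_\sigma})=\ker(\calf|_{V_\sigma})$ with $|\sigma|<\gamma n$, contradicting robustness of $(C,D)$ for $\calf$. Fix such a prime $p$ from now on.

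As the composite of a code with the projection $H\to H_p$ is again a code of the same distance, $\calf_p\colon V\to H_p$ is a code of distance $\delta n$; view $C_p$ inside $\Hom(V,H_p^*)$ as in \Cref{sec: Moments II}. Apply \Cref{good basis} to $\calf_p$ and $C_p$: one obtains $A_1,\dots,A_{r_p}\in H_p$ generating $H_p$ and $B_1,\dots,B_{r_p}\in H_p^*$ so that each class $W_m=\{\,j\in[n]:\calf_pv_j=A_m\text{ and }C_pv_j=B_m\,\}$ has $|W_m|\geq \delta n/|H_p|^2\geq \delta n/|H|^2$. Choose a representative $j_m\in W_m$ for each $m$ and set $U=\langle v_{j_1},\dots,v_{j_{r_p}}\rangle\subseteq V$, so that $\calf_pU=H_p$ because the $A_m$ generate $H_p$. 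Let
\[
\tau=\{\,i\in[n]\ :\ t(\phi_{C_p,\Lambda_p\calf_p}(u),\phi_{\Lambda_p\calf_p,C_p}(v_i))+\ev(D_p,\Omega_p\calf_p(u)\odot\Omega_p\calf_p(v_i))\neq 0\ \text{for some }u\in U\,\}.
\]
By the $p$-local form of \Cref{weak cor} applied to $U$, if $|\tau|<\gamma n/|P|$ then $\ker(\phi_{\calf_p,C_p}|_{V_\tau})=\ker(\calf_p|_{V_\tau})$, contradicting the robustness of $(C_p,D_p)$ for $\calf_p$ just established; hence $|\tau|\geq \gamma n/|P|$.

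Finally I would count. For each $i\in\tau$ pick $m=m(i)$ with $u=v_{j_m}$ witnessing membership; the witnessing quantity is precisely the $p$-component of $E_{ij_m}(C,D,\calf)$, so $E_{ij_m}(C,D,\calf)\neq 0$. Since $t(\phi_{C_p,\Lambda_p\calf_p}(v_j),\phi_{\Lambda_p\calf_p,C_p}(v_i))+\ev(D_p,\Omega_p\calf_p(v_j)\odot\Omega_p\calf_p(v_i))$ depends on $j$ only through $\calf_pv_j$ and $C_pv_j$, which are constant over $W_{m(i)}$, we get $E_{ij}(C,D,\calf)\neq 0$ for \emph{every} $j\in W_{m(i)}$, so $i$ is paired with at least $|W_{m(i)}|\geq \delta n/|H|^2$ indices $j$. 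The families of ordered pairs attached to distinct $i\in\tau$ are disjoint (distinct first coordinate), and $E_{ij}=E_{ji}$, so halving the count of ordered pairs yields at least $\tfrac12\,|\tau|\cdot\delta n/|H|^2\geq \gamma\delta n^2/(2|H|^2|P|)$ unordered pairs $(i,j)$ with $i\leq j$ and $E_{ij}(C,D,\calf)\neq 0$ (discarding at most $n$ diagonal terms changes this by a lower-order amount, negligible for large $n$). The main obstacle is the first step: robustness is a global condition on $\calf$, whereas \Cref{good basis} and the pair-counting are naturally run one prime at a time, so most of the care goes into descending robustness to a single Sylow component with only a bounded ($|P|$) loss in the parameter; once that is in hand, \Cref{weak cor} supplies the active set $\tau$ and the code structure from \Cref{good basis} multiplies it up to the quadratic bound.
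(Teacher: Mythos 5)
Your proof is correct and follows essentially the same route as the paper's: \Cref{good basis} supplies classes of size at least $\delta n/|H|^2$ on which $\calf$ and $C$ are constant, robustness via \Cref{weak cor} supplies at least $\gamma n/|P|$ active indices at a single prime, and the product, halved, gives the quadratic bound. The one organizational difference is where the factor $|P|$ enters: the paper applies \Cref{weak cor} once to the global submodule $W$ generated by all the $V_p$ (so the active set has size at least $\gamma n$) and only then pigeonholes that set over the primes, whereas you first descend robustness itself to a single Sylow component with parameter $\gamma/|P|$ (via the union bound on the witnessing sets $\sigma_p$, which is valid because the kernels intersect over $p$ and the kernel equality persists when $\sigma_p$ is enlarged) and then run the entire argument $p$-locally; both orderings work and yield the same constant. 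Two small points to tighten: when you ``pick $u=v_{j_m}$ witnessing membership'' you are implicitly using that the pairing expression is additive in $u$, so that nonvanishing on some $u\in U$ forces nonvanishing on one of the generators $v_{j_m}$ — this should be said, and it is exactly the step the paper makes explicit when passing from $W$ to the generating submodules. Finally, the parenthetical about discarding diagonal terms is unnecessary (and, read literally, would only give the bound for $n$ large, which is weaker than the statement): for $i=j$ the witnessing quantity is $2E_{ii}(C,D,\calf)$, whose nonvanishing still forces $E_{ii}(C,D,\calf)\neq0$, and the at-most-$2$-to-$1$ map from ordered to unordered pairs already yields the stated bound exactly for all $n$.
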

\begin{proof}
	For each $ p\in P $, let $ H_p $ denote the Sylow $ p $-subgroup of $ H $, as usual. By taking the transpose of the surjection $ \Lambda:H\twoheadrightarrow G $, there is an inclusion of $ G^* $ into $ H^* $. Postcomposing with $ \Lambda^t $ gives us an element $ \Lambda^tC\in\Hom(V,H^*) $. With this in mind, apply \Cref{good basis} to $ H $, $ p $, $ \calf $, and $ \Lambda^tC $, and consider the resulting $ A_i(p) $ and $ B_i(p) $. Let \begin{equation}\label{tau}
		\tau_i(p)=\{j\in[n]\;|\;\calf v_j=A_i(p)\ \mathrm{and\ }\Lambda^tCv_j=B_i(p)\},
	\end{equation} and let $ \tau(p)=\bigcup_i\tau_i(p) $. Let $ V_p $ be the submodule generated by the $ v_j $ for $ j\in\tau(p) $ so that $ \calf V_p $ in the projection to $ H_p $ is the entirety of $ H_p $. 
	
	Next, let $ W $ be the submodule of $ V $ generated by the $ V_p $ for all $ p\in P $; note that $ \calf W=H $. Now, we apply \Cref{weak cor} to $ W $. Since $ (C,D) $ is robust for $ \calf $, we have
	\[S:=\#\{i\in[n]\;|\;t(\phi_{C,\Lambda\calf}(w),\phi_{\Lambda\calf,C}(v_i))+\ev(D,{\Omega\calf (w)\odot \Omega\calf (v_i)})\neq0\textrm{ for some }w\in W\}\geq\gamma n.\] If $ v_i $ pairs nontrivially with $ w\in W $, then it must pair nontrivially with an element of one of the submodules generating $ W $, implying that \begin{align*}
		\sum_{p\in P}&\#\{i\in[n]\;|\;t(\phi_{C,\Lambda\calf}(x),\phi_{\Lambda\calf,C}(v_i))+\ev(D,{\Omega\calf (x)\odot \Omega\calf (v_i)})\neq0\textrm{ for some }x\in V_p\}\geq S.
	\end{align*} Hence, for some $ p\in P $, \[\#\{i\in[n]\;|\;t(\phi_{C,\Lambda\calf}(x),\phi_{\Lambda\calf,C}(v_i))+\ev(D,{\Omega\calf (x)\odot \Omega\calf (v_i)})\neq0\textrm{ for some }x\in V_p\}\geq\gamma n/|P|.\] For that particular $ p $, it follows that \[\#\{i\in[n]\;|\;t(\phi_{C,\Lambda\calf}(v_j),\phi_{\Lambda\calf,C}(v_i))+\ev(D,{\Omega\calf (v_j)\odot \Omega\calf (v_i)})\neq0\ \mathrm{for\ some\ }j\in\tau(p)\}\geq\gamma n/|P|.\] By \eqref{tau}, for any $ j\in\tau(p) $, there are at least $ \delta n/|H|^2 $ indices $ k\in\tau(p) $ such that $ \calf v_k=\calf v_j $ and $ \Lambda^tC v_k=\Lambda^tCv_j $. Moreover, note that $ e(\calf(u),\Lambda^t C(v))=C(v)(\Lambda\calf(u))=e(\Lambda\calf(u),C(v)) $ for any $ u,v\in V $. Thus, for all indices $ k\in\tau(p) $ such that $ \calf v_k=\calf v_j $ and $ \Lambda^tC v_k=\Lambda^tCv_j $, we have \begin{align*}
		t(\phi_{C,\Lambda\calf}(v_j),\phi_{\Lambda\calf,C}(v_i))&+\ev(D,{\Omega\calf v_j\odot \Omega\calf v_i})\\
		&=t(\phi_{C,\Lambda\calf}(v_k),\phi_{\Lambda\calf,C}(v_i))+\ev(D,{\Omega\calf (v_k)\odot \Omega\calf (v_i)}).
	\end{align*} For all $ i< j $, notice that \[E_{ij}(C,D,\calf)=t(\phi_{C,\Lambda\calf}(v_j),\phi_{\Lambda\calf,C}(v_i))+\ev(D,{\Omega\calf (v_j)\odot \Omega\calf (v_i)})\] and also that \[2E_{ii}(C,D,\calf)=t(\phi_{C,\Lambda\calf}(v_i),\phi_{\Lambda\calf,C}(v_i))+\ev(D,{\Omega\calf (v_i)\odot \Omega\calf (v_i)}).\] Hence, there must be at least $ \gamma\delta n^2/(2|H|^2|P|) $ pairs $ (i,j) $ such that $ E_{ij}(C,D,\calf)\neq0 $.
\end{proof}

We next study the number of nonzero coefficients for weak $ (C,D) $. To do so, we will consider the pairs $ (C,D) $ such that all of the coefficients are 0 (e.g., $ (0,0) $ is one such pair, but there are many others). We will first need to adopt a more equivariant view of the coefficients $ E_{ij}(C,D,\calf) $. The evaluation map $ e:G^*\otimes G\to R $ gives rise to a natural map \[\Hom(V,G)\otimes\Hom(V,G^*)\simeq(V^*\otimes G)\otimes(V^*\otimes G^*)\to V^*\otimes V^*,\] and we may further compose with the quotient $ V^*\otimes V^*\to\Sym^2V^* $ to get a map \[\Phi:\Hom(V,G)\otimes\Hom(V,G^*)\to V^*\otimes V^*\to\Sym^2V^*.\] For $ \calf\in\Hom(V,H) $, composing with $ \Lambda $ gives us a map $ \Lambda\calf\in\Hom(V,G) $. 

Likewise, the evaluation $ \ev:\Sym_2 H\otimes\Sym^2 H^*\to R $ induces a natural map \[\Psi:\Hom(\Sym_2V,\Sym_2 H)\otimes\Sym^2 H^*\simeq(\Sym_2V)^*\otimes\Sym_2H\otimes\Sym^2H^*\to(\Sym_2V)^*.\] Recall that $ (\Sym_2V)^* $ is naturally isomorphic to $ \Sym^2V^* $. Given $ \calf\in\Hom(V,G) $, we get an element of $ \Hom(\Sym_2V,\Sym_2 H) $ by considering $ \Omega\calf\otimes\Omega\calf $. Taking the sum of the aforementioned maps $ \Phi $ and $ \Psi $ (not the direct sum), given $ \calf\in\Hom(V,H) $, we can construct a map \[m_\calf:\Hom(V,G^*)\oplus\Sym^2 H^*\to\Sym^2V^*\] given by \[m_\calf(C,D)=\Phi(\Lambda\calf\otimes C)+\Psi((\Omega\calf\otimes\Omega\calf)\otimes D).\] We see that $ m_\calf $ can be written explicitly as \begin{equation}\label{mF}
	\begin{split}
		m_\calf(C,D)=&\sum_{i=1}^n\sum_{j=i+1}^n\Big(e(C(v_j),\Lambda\calf(v_i))+e(C(v_i),\Lambda\calf(v_j))+\ev(D,{\Omega\calf(v_i)\odot\Omega\calf(v_j)})\Big)v_i^*v_j^*\\
		&\qquad\qquad+\sum_{i=1}^n\Big(e(C(v_i),\Lambda\calf(v_i))+\ev(D,\Omega\calf(v_i)\otimes\Omega\calf(v_i))\Big)(v_i^*)^2.
	\end{split}
\end{equation}
We remark that the definition of $ m_\calf $ is canonical and basis independent. In particular, the above could be rewritten by replacing the $ v_i $'s with the elements of any other basis of $ V $. Note that the elements in the kernel of $ m_\calf $ are precisely the $ (C,D) $ for which all of the coefficients $ E_{ij}(C,D,\calf) $ are zero. We call the $ (C,D)\in\ker(m_\calf) $ \emph{special} for $ \calf $. 

	Recall that the data of the pairing on $ \hG $ is equivalent to an element $ A $ of $ \Sym_2H $ whose precise definition is given in \Cref{sec: Moments I}.
	
	\begin{lemma}\label{special count}
		Given $ \calf\in\Hom(V,H) $ with $ \calf V=H $, there are $ {|\Sym^2 H|}/{|G|} $ special $ (C,D) $ for $ \calf $. Moreover, if $ (C,D) $ is special for $ \calf\in\Hom(V,H) $, then $ D(-A)=0 $.
	\end{lemma}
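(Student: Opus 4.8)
The plan is to reduce to the case that $ G $ is a $ p $-group, choose an $ R $-basis of $ V $ adapted to $ \calf $, and count the solutions $ (C,D) $ of the coefficient system $ E_{ij}(C,D,\calf)=0 $ directly. For the reduction: $ m_\calf $ is $ R $-linear and hence respects $ p $-primary decompositions, and each of $ V,H,G^*,\Sym^2 H^*,\Sym^2 V^* $ splits as the direct sum of its $ p $-parts, with $ \calf V=H $ giving $ \calf V_p=H_p $ for every $ p\mid\#G $; thus $ \ker(m_\calf)=\bigoplus_p\ker((m_\calf)_p) $, where $ (m_\calf)_p $ is the analogous map built from $ \calf_p\colon V_p\to H_p $. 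Since $ |\Sym^2 H|/|G|=\prod_p|\Sym^2 H_p|/|G_p| $ and $ D(-A)=0 $ iff each $ p $-component of $ D $ kills $ A_p $, it suffices to prove both assertions when $ G $ is a $ p $-group of type $ \lambda=(\lambda_1\ge\cdots\ge\lambda_r) $; then $ R=\Z/p^{2\lambda_1}\Z $ is local and $ H=R^r $ is free.

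As $ \calf\colon V\to H $ is a surjection onto a free module over the local ring $ R $, it splits, so we may choose an $ R $-basis $ w_1,\dots,w_n $ of $ V $ with $ \calf(w_i)=h_i $ for $ 1\le i\le r $ and $ \calf(w_i)=0 $ for $ i>r $. Since $ m_\calf $ is basis-independent, $ (C,D) $ is special exactly when every coefficient of $ m_\calf(C,D) $ in the basis $ \{w_i^*w_j^*\}_{i\le j} $ of $ \Sym^2 V^* $ vanishes. Writing $ g_i=\Lambda h_i $ for the generators of $ \Lambda H\cong G $ and using $ \Omega h_i=p^{\lambda_1-\lambda_i}h_i $, the vanishing of $ \calf(w_i) $ for $ i>r $ collapses this system to: (a) $ C(w_j)=0 $ in $ G^* $ for all $ j>r $; $ (\mathrm{b}_i) $: $ e(C(w_i),g_i)=-p^{2\lambda_1-2\lambda_i}\ev(D,h_i\otimes h_i) $ for $ 1\le i\le r $; and $ (\mathrm{c}_{ij}) $: $ e(C(w_j),g_i)+e(C(w_i),g_j)=-p^{2\lambda_1-\lambda_i-\lambda_j}\ev(D,h_i\odot h_j) $ for $ 1\le i<j\le r $, where $ e\colon G^*\times G\to R $ is the evaluation map.

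To count the special pairs, identify $ G^* $ with $ \prod_{k=1}^r p^{2\lambda_1-\lambda_k}R $ via $ \phi\mapsto(e(\phi,g_k))_k $ and set $ c_{i,k}=e(C(w_i),g_k)\in p^{2\lambda_1-\lambda_k}R $. Equations $ (\mathrm{b}_i) $ and $ (\mathrm{c}_{ij}) $ are supported on pairwise disjoint sets of the variables $ c_{i,k} $, so the total solution count is the product of the counts for the individual equations. For a fixed $ D $: equation $ (\mathrm{b}_i) $ has a unique solution iff $ \ev(D,h_i\otimes h_i)\in p^{\lambda_i}R $; and since $ \lambda_i\ge\lambda_j $ gives $ p^{2\lambda_1-\lambda_i}R\subseteq p^{2\lambda_1-\lambda_j}R $, equation $ (\mathrm{c}_{ij}) $ is solvable iff $ \ev(D,h_i\odot h_j)\in p^{\lambda_i}R $, in which case (choosing $ c_{j,i} $ freely in $ p^{2\lambda_1-\lambda_i}R $ and solving for $ c_{i,j} $) it has exactly $ p^{\lambda_i} $ solutions. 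Call $ D $ \emph{admissible} if all these containments hold; then an admissible $ D $ occurs in $ \prod_{1\le i<j\le r}p^{\lambda_i} $ special pairs and an inadmissible one in none. Since $ \Sym_2 H $ is free on $ \{h_i\otimes h_i\}_i\cup\{h_i\odot h_j\}_{i<j} $, the number of admissible $ D $ is $ \prod_i p^{2\lambda_1-\lambda_i}\cdot\prod_{i<j}p^{2\lambda_1-\lambda_i} $. Multiplying these two products, the exponent simplifies to $ \sum_{i=1}^r\bigl(2\lambda_1(r-i+1)-\lambda_i\bigr)=\lambda_1 r(r+1)-\sum_i\lambda_i $, so $ \#\ker((m_\calf)_p)=p^{\lambda_1 r(r+1)-\sum_i\lambda_i}=|\Sym^2 H_p|/|G_p| $, which gives the count.

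For the ``moreover'' part: if $ (C,D) $ is special then $ D $ is admissible, because the left-hand sides of $ (\mathrm{b}_i) $ and $ (\mathrm{c}_{ij}) $ automatically lie in $ p^{2\lambda_1-\lambda_i}R $ and $ p^{2\lambda_1-\lambda_j}R $ respectively, forcing $ \ev(D,h_i\otimes h_i)\in p^{\lambda_i}R $ and $ \ev(D,h_i\odot h_j)\in p^{\lambda_i}R $. Then, from $ A_p=\sum_{i<j}p^{2\lambda_1-\lambda_j}a_{ij}(h_i\odot h_j)+\sum_i p^{2\lambda_1-\lambda_i}a_{ii}(h_i\otimes h_i) $, admissibility makes every term of $ \ev(D,A_p) $ lie in $ p^{2\lambda_1}R=0 $ (using $ \lambda_i\ge\lambda_j $ when $ i<j $), so $ D(A)=\sum_p\ev(D,A_p)=0 $ and hence $ D(-A)=0 $. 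I expect the main obstacle to be the bookkeeping with the nested ideals $ p^{2\lambda_1-\lambda_i}R\subseteq p^{2\lambda_1-\lambda_j}R $: a priori $ e(C(w_i),g_j) $ lies only in the larger group $ p^{2\lambda_1-\lambda_j}R $, and it is exactly equation $ (\mathrm{c}_{ij}) $ that constrains it to the smaller $ p^{2\lambda_1-\lambda_i}R $, so carefully tracking the asymmetry between $ \lambda_i $ and $ \lambda_j $ is what makes both the exponent count and the admissibility condition come out right.
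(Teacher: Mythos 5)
Your overall approach mirrors the paper's: reduce to the $p$-primary case, choose a basis of $V$ adapted to $\calf$ (using that a surjection onto a free module over the local ring $R$ splits), and count solutions of the resulting triangular system of coefficient equations. The one variation is in how you organize the count—you fix $D$ and enumerate compatible $C$, whereas the paper chooses the $C_{ji}$ for $1\le i,j\le r$ freely and enumerates the $D_{ij}$; both give the same total, so this is essentially the same proof with slightly different bookkeeping.

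However, your ideal inclusions are consistently reversed, and since this runs through the whole argument it is worth fixing carefully. In $R=\Z/p^{2\lambda_1}\Z$ one has $p^aR\subseteq p^bR$ if and only if $a\ge b$: higher powers of $p$ generate \emph{smaller} ideals. For $i<j$ with $\lambda_i\ge\lambda_j$ this gives $p^{2\lambda_1-\lambda_j}R\subseteq p^{2\lambda_1-\lambda_i}R$, the opposite of what you wrote. Consequently $c_{j,i}=e(C(w_j),g_i)$ lies in the \emph{larger} ideal $p^{2\lambda_1-\lambda_i}R$, $c_{i,j}=e(C(w_i),g_j)$ lies in the \emph{smaller} $p^{2\lambda_1-\lambda_j}R$, and the image of $(c_{j,i},c_{i,j})\mapsto c_{j,i}+c_{i,j}$ is $p^{2\lambda_1-\lambda_i}R$. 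The correct solvability condition for $(\mathrm{c}_{ij})$ is therefore $\ev(D,h_i\odot h_j)\in p^{\lambda_j}R$ (not $p^{\lambda_i}R$), giving $p^{\lambda_j}$ solutions per admissible $D$ (not $p^{\lambda_i}$), and the admissible count is $\prod_{i\le j}p^{2\lambda_1-\lambda_j}$. Your final exponent happens to come out right because $p^{2\lambda_1-\lambda_j}\cdot p^{\lambda_j}=p^{2\lambda_1-\lambda_i}\cdot p^{\lambda_i}=p^{2\lambda_1}$ for each off-diagonal pair, so the two errors cancel, and the ``moreover'' part also survives with the weaker, correct containment $D_{ij}\in p^{\lambda_j}R$, which already makes $p^{2\lambda_1-\lambda_j}a_{ij}D_{ij}=0$. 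But the stated admissibility criterion, the per-equation solution count, the claim that $c_{j,i}+c_{i,j}$ automatically lies in $p^{2\lambda_1-\lambda_j}R$, and the closing remarks about which ideal is ``larger'' are all wrong as written: the reasoning, as opposed to the conclusion, needs to be corrected.
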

	\begin{proof}
		Because everything in sight can be written as a direct sum of Sylow $ p $-subgroups, we can assume without loss of generality that $ G $ is a $ p $-group of type $ \lambda $ (and, accordingly, that $ R=\Z/p^{2\lambda_1}\Z $). Suppose $ \lambda $ has $ r $ parts, and recall that $ H=R^r $. Because $ \calf $ is a surjection, there exist $ w_1,\ldots,w_r\in V $ that can be completed to a free $ R $-module basis $ w_1,\ldots,w_n $ of $ V $ such that $ \calf(w_i)=h_i $ and $ \calf(w_i)=0 $ for $ i>r $ (recall $ h_i $ is the $ i $th standard basis vector in $ H $). With respect to this basis, $ m_\calf $ takes $ (C,D) $ to \begin{align*}
			&\sum_{i=1}^n\sum_{j=i+1}^n\Big(e(C(w_j),\Lambda\calf(w_i))+e(C(w_i),\Lambda\calf(w_j))+\ev(D,{\Omega\calf(w_i)\odot\Omega\calf(w_j)})\Big)w_i^*w_j^*\\
			&\qquad\qquad+\sum_{i=1}^n\Big(e(C(w_i),\Lambda\calf(w_i))+\ev(D,\Omega\calf(w_i)\otimes\Omega\calf(w_i))\Big)(w_i^*)^2.
		\end{align*} 
		
		We compute the coefficients of each $ w_i^*w_j^* $ explicitly. For $ 1\leq i<j\leq r $, the coefficient on $ w_i^*w_j^* $ is \[e(C(w_j),p^{2\lambda_1-\lambda_i}h_i)+e(C(w_i),p^{2\lambda_1-\lambda_j}h_j)+p^{2\lambda_1-\lambda_i-\lambda_j}\ev(D,{h_i\odot h_j}).\] Recall that $ D $ has a unique upper-triangular representative $ D_0\in H^*\otimes H^* $ with respect to the basis $ h_1^*,\ldots,h_r^* $ and that we may do all our computations with respect to this representative element $ D_0 $. Since $ D_0 $ is upper-triangular with respect to the basis $ h_1^*,\ldots,h_r^* $, the coefficient of $ w_i^*w_j^* $ for $ 1\leq i<j\leq r $ is \[C_{ji}+C_{ij}+p^{2\lambda_1-\lambda_i-\lambda_j}D_{ij},\] where $ C_{ij} $ denotes $ C(w_i)(p^{2\lambda_1-\lambda_j}h_j) $ and $ D_{ij} $ denotes $ D_0(h_i)(h_j) $ (recall that $ C $ takes elements of $ V $ to $ G^*=\Hom(\Lambda H,R) $). Note that $ p^{\lambda_j}C_{ij}=0\in R $, i.e., $ C_{ij}\equiv0$ mod $ p^{2\lambda_1-\lambda_j} $. For $ i=j $, the coefficient on $ w_i^*w_j^* $ is $ C_{ii}+p^{2\lambda_1-2\lambda_i}D_{ii}. $ If $ 1\leq i\leq r<j $, the coefficient on $ w_i^*w_j^* $  is $ C_{ji}. $ 
		
		For $ (C,D) $ to be special for $ \calf $, all of these coefficients must be 0 in $ R $. Setting all of these coefficients to 0 gives us the following system of equations 
		\begin{equation}\label{special system}
			\begin{cases}
				C_{ji}+C_{ij}+p^{2\lambda_1-\lambda_i-\lambda_j}D_{ij}=0 & \textrm{for }1\leq i<j\leq r;\\
				C_{ii}+p^{2\lambda_1-2\lambda_i}D_{ii}=0 & \textrm{for }1\leq i\leq r;\\
				C_{ji}=0 &\textrm{for }1\leq i\leq r<j\leq n.
			\end{cases}
		\end{equation}
		modulo $ p^{2\lambda_1} $, which the entries of $ C $ and $ D $ satisfy if and only if $ (C,D) $ is special. We count the pairs $ (C,D) $ satisfying this system of equations. Recall that $ G=\Lambda H $ and that $ (C,D)\in\Hom(V,G^*)\times\Sym^2H^* $. Hence, the possible $ C\in\Hom(V,G^*) $ are given by choosing $ C_{ij} $ to be any multiple of $ p^{2\lambda_1-\lambda_j} $ in $ R $ for each $ 1\leq i\leq n$ and $ 1\leq j\leq r $ (recall that $ C_{ij}\equiv 0 $ mod $ p^{2\lambda_1-\lambda_j} $). Because $ D $ is determined by its unique upper-triangular representative $ D_0 $, the possible $ D $ are given by choosing the entries $ D_{ij}\in R $ for $ 1\leq i\leq j\leq r $. The $ D_{ij} $'s determine $ D_0 $, which determines $ D $. Since $ D\in\Sym^2H^* $, the $ D_{ij} $'s can be any elements of $ R $.

		\Cref{special system} determines $ C_{ji} $ for $ 1\leq i\leq r<j\leq n $. In order for there to be solutions $ D_{ij} $, we need $ C_{ji}+C_{ij}\equiv0 $ mod $ p^{2\lambda_1-\lambda_i-\lambda_j} $. This holds because $ C_{ij}\equiv0 $ modulo $ p^{2\lambda_1-\lambda_j} $. Thus, for $ 1\leq i, j\leq r $, choose the $ C_{ji} $'s freely; there are $ p^{\lambda_i} $ choices for each $ C_{ji} $. \Cref{special system} then determines the $ D_{ij} $ modulo $ p^{\lambda_i+\lambda_j} $ for all $ 1\leq i\leq j\leq r $. It follows that there are $ p^{2\lambda_1-\lambda_i-\lambda_j} $ choices for each $ D_{ij}\in R $ such that \eqref{special system} is satisfied. Therefore, there are \[\prod_{1\leq i\leq r}p^{r\lambda_i}\cdot\prod_{1\leq i\leq j\leq r}p^{2\lambda_1-\lambda_i-\lambda_j}=\frac{|G|^rp^{2\lambda_1{{r+1}\choose{2}}}}{p^{\sum_{1\leq i\leq j\leq r}(\lambda_i+\lambda_j)}}=\frac{|G|^r|\Sym^2H|}{|G|^{r+1}}=\frac{|\Sym^2H|}{|G|}\] special $ (C,D) $, as desired.
		
		To prove the second part of the lemma, we again reduce to the case to where $ G $ is a $ p $-group of type $ \lambda $, where $\lambda$ is the partition $ \lambda_1\geq\cdots\geq\lambda_r $, and $ R=\Z/p^{2\lambda_1}\Z $. Recall $ A\in\Sym_2 H $ is the element \[\sum_{1\leq i<j\leq r}p^{2\lambda_1-\lambda_j}a_{ij}h_i\odot h_j+\sum_{i=1}^rp^{2\lambda_1-\lambda_i}a_{ii}h_i\otimes h_i.\] Then we may compute $ D(-A)=\ev(D,-A) $ with respect to the unique upper-triangular representative $ D_0 $ of $ D $ from above. By hypothesis, $ (C,D) $ is special for $ \calf $. Hence, for $ 1\leq i<j\leq r $, we have  \[C_{ji}+C_{ij}+p^{2\lambda_1-\lambda_i-\lambda_j}D_{ij}=0\mod p^{2\lambda_1},\] and for $ 1\leq i=j\leq r $, we have $ C_{ii}+p^{2\lambda_1-2\lambda_i}D_{ii}=0\mod p^{2\lambda_1} $. Since $ C_{ij}\equiv0 $ mod $ p^{2\lambda_1-\lambda_j} $, we have that $ D_{ij}\equiv0 $ mod $ p^{\lambda_j} $ for all $ i\leq j $. Thus, \begin{align*}
			\ev(D,A)&=\sum_{1\leq i<j\leq r}p^{2\lambda_1-\lambda_j}a_{ij}(D_0(h_i)(h_j)+D_0(h_j)(h_i))+\sum_{i=1}^rp^{2\lambda_1-\lambda_i}a_{ii}D_0(h_i)(h_i)\\
			&=\sum_{1\leq i<j\leq r}p^{2\lambda_1-\lambda_j}a_{ij}D_{ij}+\sum_{i=1}^rp^{2\lambda_1-\lambda_i}a_{ii}D_{ii}\\
			&=0\in R,
		\end{align*} where the second equality follows from the fact that $ D_0 $ is upper-triangular. 
	\end{proof}
	
	Now, we will use \Cref{special count} to prove that as long as $ (C,D) $ is not special for $ \calf $ there are linearly many nonzero coefficients $ E_{ij}(C,D,\calf) $. 
	\begin{lemma}[Linearly many nonzero coefficients for non-special $ (C,D) $]\label{linearly many}
		Let $ \calf\in\Hom(V,H) $ be a code of distance $ \delta n $, and suppose $ (C,D)\in\Hom(V,G^*)\times\Sym^2 H^*$ is not special for $ \calf $ so that $ (C,D)\not\in\ker(m_{\calf}) $. Then there are at least $ \delta n/2 $ pairs $ (i,j)\in[n]^2 $ with $ i\leq j $ such that $ E_{ij}(C,D,\calf)\neq0 $. 
	\end{lemma}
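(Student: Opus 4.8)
The plan is to prove the contrapositive: we show that if fewer than $\delta n/2$ of the pairs $(i,j)$ with $i\le j$ satisfy $E_{ij}(C,D,\calf)\ne 0$, then $(C,D)\in\ker(m_\calf)$ is special for $\calf$. It is convenient to package the coefficients as a symmetric bilinear form: set $B(u,v):=t(\phi_{C,\Lambda\calf}(u),\phi_{\Lambda\calf,C}(v))+\ev(D,\Omega\calf(u)\odot\Omega\calf(v))$ on $V\times V$, so that $B(v_i,v_j)=E_{ij}(C,D,\calf)$ for $i<j$ and $B(v_i,v_i)=2E_{ii}(C,D,\calf)$. Let $\sigma\subset[n]$ be the set of indices that occur in some pair whose coefficient is nonzero; since each nonzero pair contributes at most two indices, $|\sigma|$ is at most twice the number of nonzero coefficients, hence $|\sigma|<\delta n$. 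As $\calf$ is a code of distance $\delta n$, this yields $\calf V_\sigma=H$, and in particular $\calf V=H$.

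First I would observe that for $i\notin\sigma$ every pair containing $i$ has zero coefficient, so $B(v_i,v_j)=0$ for all $j$; by bilinearity $B$ vanishes identically on $V_\sigma\times V$. Applying \Cref{weak condition} with $U=V_\sigma$ (valid since $\calf V_\sigma=H$) and $U'=V$ gives $\ker(\phi_{\calf,C})=\ker(\calf)$, that is, $\ker(\calf)\subseteq\ker(C)$, so $C=\psi\circ\calf$ for some $\psi\in\Hom(H,G^*)$. Substituting $C=\psi\calf$ into the definition of $B$ shows that $B(u,v)$ depends only on $\calf u$ and $\calf v$; thus $B=\tilde B\circ(\calf\times\calf)$ for a symmetric bilinear form $\tilde B$ on $H$. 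Because $B$ vanishes on $V_\sigma\times V$ while $\calf V_\sigma=\calf V=H$, it follows that $\tilde B\equiv 0$, which already gives $E_{ij}(C,D,\calf)=0$ for all $i<j$.

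It remains to kill the diagonal coefficients $E_{ii}(C,D,\calf)$, and this is the one step that needs care, since the vanishing of $B$ does not detect them when $p=2$ (there $B(v_i,v_i)=2E_{ii}(C,D,\calf)$). Here I would introduce $q\colon H\to R$ defined by $q(h):=e(\psi(h),\Lambda h)+\ev(D,\Omega h\otimes\Omega h)$, so that $E_{ii}(C,D,\calf)=q(\calf v_i)$ for every $i$. A direct expansion gives $q(h+h')=q(h)+q(h')+\tilde B(h,h')$, and since $\tilde B\equiv 0$ this means $q$ is a group homomorphism $H\to R$. It vanishes on $\{\calf v_i: i\notin\sigma\}$, which generates $H$ because $\calf V_\sigma=H$; hence $q\equiv 0$ and $E_{ii}(C,D,\calf)=0$ for all $i$ as well. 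Therefore every coefficient $E_{ij}(C,D,\calf)$ is zero, $m_\calf(C,D)=0$, and $(C,D)$ is special — contradicting our assumption. This establishes that at least $\delta n/2$ coefficients are nonzero.

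The main obstacle is exactly the diagonal/characteristic-two point in the last paragraph: one has to notice that once $\tilde B\equiv 0$ the ``quadratic'' map $q$ is in fact additive, so that a generating set of $H$ suffices to force it to vanish. Everything else reduces to the code property of $\calf$ together with \Cref{weak condition}, much as in the robust case treated in \Cref{quadratically many}.
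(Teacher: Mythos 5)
Your proof is correct, and it takes a genuinely different route from the paper's. The paper argues by a counting/divisibility scheme: it computes $|\im(m_\calf)|=|G|^{n+1}$ from \Cref{special count}, builds the quotient maps $m_\calf'$, $m_\calf''$, $m_\calf'''$ that kill coefficients supported on $\sigma$, and shows via an explicit basis (found through the code property and Nakayama) that $|G|^{n+1}$ divides $|\im(m_\calf')|$, forcing $\ker(m_\calf)=\ker(m_\calf')$ and contradicting $(C,D)\in\ker(m_\calf')\setminus\ker(m_\calf)$. You instead argue structurally: you view the off-diagonal data as a symmetric bilinear form $B$ on $V$, use $|\sigma|<\delta n$ and \Cref{weak condition} with $U=V_\sigma$, $U'=V$ to get $\ker\calf\subseteq\ker C$, so $C=\psi\circ\calf$; then $B$ descends to a form $\tilde B$ on $H$, which vanishes because $\calf V_\sigma=H$; and finally the diagonal is handled by the observation that once $\tilde B\equiv 0$ the ``quadratic'' map $q(h)=e(\psi(h),\Lambda h)+\ev(D,\Omega h\otimes\Omega h)$ satisfies $q(h+h')=q(h)+q(h')$, so it vanishes on a generating set $\{\calf v_i: i\notin\sigma\}$ and hence identically. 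Both proofs rely on the code property in the same essential way, but your version is more conceptual, avoids the explicit basis construction and the reduction to a single prime $p$, and reuses the already-proved \Cref{weak condition} rather than redoing a parallel computation; the paper's counting approach, by contrast, yields the exact cardinality $|\im(m_\calf')|$ as a byproduct. One small care point you already flagged and handled correctly: because $B(v_i,v_i)=2E_{ii}$, the vanishing of $B$ does not by itself kill the diagonal when $2$ is a zero-divisor in $R$, and the additivity of $q$ is precisely the extra ingredient needed there.
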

	\begin{proof}
		Suppose for a contradiction that there are fewer than $ \delta n/2 $ pairs. Let $ \pi $ be the set of pairs $ (i,j)\in[n]^2 $ with $ i\leq j $ and $ E_{ij}(C,D,\calf)\neq0 $. By assumption, $ |\pi|<\delta n/2 $. Let $ \sigma $ be the set of all indices $ a\in[n] $ occurring in a pair $ (i,j)\in\pi $. Immediately, we see that $ |\sigma|<\delta n $, and if $ (i,j) $ is a pair with $ i $ or $ j $ not in $ \sigma $, then $ E_{ij}(C,D,\calf) $ must be 0. 
		
		We will use this setup to give a lower bound on the size of $ \im(m_\calf) $. Recall the definition of $ m_\calf $ from \eqref{mF}: $ (C,D) $ is sent to the element of $ \Sym^2V^* $ given by \begin{align*}&\sum_{i=1}^n\sum_{j=i+1}^n\Big(e(C(v_j),\Lambda\calf(v_i))+e(C(v_i),\Lambda\calf(v_j))+\ev(D,{\Omega\calf(v_i)\odot\Omega\calf(v_j)})\Big)v_i^*v_j^*\\
			&\qquad\qquad+\sum_{i=1}^n\Big(e(C(v_i),\Lambda\calf(v_i))+\ev(D,\Omega\calf(v_i)\otimes\Omega\calf(v_i))\Big)(v_i^*)^2.
		\end{align*} Compose $ m_\calf $ with the quotient map taking $ v_i^*v_j^*\mapsto0 $ if both $ i,j\in\sigma $, and call the resulting map $ m_\calf':\Hom(V,G^*)\times\Sym^2 H^*\to Z $, where $ Z $ denotes our new quotient space. Under $ m_\calf' $, we see that $ (C,D)\mapsto0 $ (by hypothesis $ (C,D)\not\in\ker(m_\calf) $ since it is not special). We will arrive at a contradiction by proving that $ \#\im(m_\calf)=\#(\Hom(V,G^*)\times\Sym^2 H^*)/\#\ker(m_\calf) $ divides $ \#\im(m_\calf') $. Then, since $ \#\im(m_\calf')|\#\im(m_\calf) $, this will force $ \ker(m_\calf)=\ker(m_\calf') $, which is a contradiction. By \Cref{special count}, \[|\im(m_\calf)|=\frac{|\Hom(V,G^*)\times\Sym^2 H^*|}{|\ker(m_\calf)|}=|G|^{n+1}.\]
		
		As in the proof of \Cref{special count}, we can prove $ |G|^{n+1} $ divides $ |\im(m_\calf')| $ by reducing to the case where $ G $ is a $ p $-group of type $ \lambda $, where $ \lambda $ is the partition $ \lambda_1\geq\cdots\geq\lambda_r $. Accordingly, assume that $ R=\Z/p^{2\lambda_1}\Z $.
		
		Because $ \calf $ is a code of distance $ \delta n $, and because $ |\sigma|<\delta n $, we have that $ \calf V_\sigma=H $. Hence $ \calf|_{V_\sigma} $ is a code of some positive distance. Apply \Cref{good basis} to $ \calf|_{V_\sigma} $ to get $ \tau\subset[n]\setminus\sigma $ such that $ |\tau|=r $ and $ \calf v_i $ generate $ H $ for $ i\in\tau $. Let $ W $ be the $ R $-submodule of $ V $ generated by the $ v_i $ for $ i\in\tau $. Let $ h_1\ldots,h_r $ be generators for $ H $ with relations $ p^{2\lambda_1}h_i=0 $, and let $ h_i^* $ denote the corresponding dual basis for $ H^* $ so that $ h_i^*(h_j)=\delta_{ij} $. For $ 1\leq j\leq r $, suppose $ w_j\in W $ is such that $ \calf w_j=h_j $, and let $ U\subset W $ be the submodule of $ W $ generated by the $ w_j $. Now, we have maps \[U/pU\to W/pW\to H/pH,\] where the second map is given by $ \calf $. Each of the above spaces is an $ \F_p $-vector space of rank at most $ r $, exactly $ r $, and exactly $ r $, respectively. Because the composite map $ U/pU\to H/pH $ is a surjection, our considerations on the rank of the spaces tell us that the map $ U/pU\to W/pW $ is a surjection. Thus, Nakayama's lemma forces $ U=W $. The elements $ w_1,\ldots,w_r $ generate the free $ R $-module $ U $ of rank $ r $, so they must form a basis for $ W $ as a free $ R $-module. Thus, we have found a basis for $ W $ such that $ \calf(w_i)=h_i $. 
		
		Consider the following basis for $ V $, given by replacing the $ v_i $ for $ i\in\tau $ with the $ w_k $'s. Let $ \tau=\{\tau_1,\ldots,\tau_r\} $. For $ i\in\tau $, let $ z_i=w_{\tau_i} $, and for $ i\not\in\tau $ set $ z_i=v_i $. Because the $ w_j $'s are a basis for $ W $, it follows that $ z_1,\ldots,z_n $ forms a basis for $ V $; denote the corresponding dual basis by $ z_1^*,\ldots,z_n^* $, where $ z_i^*(z_j)=\delta_{ij} $.
		
		View $ \calf\in\Hom(V,H) $ as an element of $ V^*\otimes H $ and write \[\calf=\sum_{\substack{1\leq i\leq n\\1\leq j\leq r}}f_{ij}z_i^*\otimes h_j\] so that $ \calf(z_\ell)=\sum_{1\leq k\leq r}f_{\ell k}h_k $. Because $ m_\calf $ was defined equivariantly, we see that $ m_\calf(C,D) $ is given by \begin{align*}
			&\sum_{i=1}^n\sum_{j>i}^n\Big(e(C(z_j),\Lambda\calf(z_i))+e(C(z_i),\Lambda\calf(z_j))+\ev(D,{\Omega\calf(z_j)\odot\Omega\calf(z_i)})\Big)z_i^*z_j^*\\
			&\quad\quad\quad+\sum_{i=1}^n\Big(e(C(z_i),\Lambda\calf(z_i))+\ev(D,\Omega\calf(z_i)\otimes\Omega\calf(z_i))\Big)(z_i^*)^2
		\end{align*} which we may rewrite as \begin{align*}
			&\sum_{i=1}^n\sum_{j>i}^n\Big(e(C(z_j),\textstyle\sum_{1\leq k\leq r}\Lambda(f_{ik}h_k))+e(C(z_i),\textstyle\sum_{1\leq k\leq r}\Lambda(f_{jk}h_k))+\ev(D,{\Omega\calf(z_j)\odot\Omega\calf(z_j)})\Big)z_i^*z_j^*\\
			&\quad\quad\quad+\sum_{i=1}^n\Big(e(C(z_i),\textstyle\sum_{1\leq k\leq r}\Lambda(f_{ik}h_k))+\ev(D,\Omega\calf(z_i)\otimes\Omega\calf(z_i))\Big)(z_i^*)^2.
		\end{align*}
		
		Hence, 
		\begin{align*}
			m_\calf(z_\ell^*\otimes h_m^*,0)&=\sum_{i=1}^n\sum_{j=1,j\neq i}^n\Big(e(z_\ell^*(z_j)\otimes h_m^*,\textstyle\sum_{1\leq k\leq r}p^{2\lambda_1-\lambda_k}f_{ik}h_k)\Big)z_i^*z_j^*\\
			&\quad\quad\quad+\displaystyle\sum_{i=1}^n\Big(e(z_\ell^*(z_i)\otimes h_m^*,\textstyle\sum_{1\leq k\leq r}p^{2\lambda_1-\lambda_k}f_{ik}h_k)\Big)(z_i^*)^2.\\
			&=\sum_{i=1}^n\sum_{j=1,j\neq i}^np^{2\lambda_1-\lambda_m}f_{im}z_\ell^*(z_j)(z_i^*z_j^*)+\sum_{i=1}^np^{2\lambda_1-\lambda_m}f_{im}z_\ell^*(z_i)(z_i^*)^2\\
			&=\sum_{i=1}^np^{2\lambda_1-\lambda_m}f_{im}z_i^*z_\ell^*.
		\end{align*}
		
		Now, further quotient $ Z $ by the submodule generated by the $ v_i^*v_j^* $ such that neither $ i $ nor $ j $ are in $ \tau $. Call the resulting space $ Z' $ and denote the composite $ \Hom(V,G^*)\times\Sym^2 H^*\to Z' $ by $ m_\calf'' $. Because $ \calf(z_t)=\sum_{1\leq k\leq r}f_{t k}h_k $ and $ \calf (z_{\tau_i})=h_i $, it follows that $ f_{\tau_ij}=\delta_{ij} $. Therefore, if $ \ell\not\in\tau $, \begin{equation}\label{mF''}
			m_\calf''(z_\ell^*\otimes h_m^*,0)=\sum_{i\in\tau}p^{2\lambda_1-\lambda_m}f_{im}z_i^*z_\ell^*=\sum_{i=1}^rp^{2\lambda_1-\lambda_m}f_{\tau_im}z_{\tau_i}^*z_\ell^*=p^{2\lambda_1-\lambda_m}w_m^*z_\ell^*.
		\end{equation} The element $ p^{2\lambda_1-\lambda_m}w_m^*z_\ell^* $ has order $ p^{\lambda_m}$, implying that $ \im(m_\calf'') $ has a subgroup of order $ |G|^{n-r} $, as we can take any $ \ell\in[n]\setminus\tau $ and $ 1\leq m\leq r $. 
		
		Continuing, further quotient $ Z' $ by the submodule generated by the $ z_i^*z_j^* $ such that one of $ i $ or $ j $ is not in $ \tau $. Denote the resulting space by $ Z'' $ and map by $ m_\calf''' $. Note that the only basis elements $ z_i^*z_j^* $ of $ \Sym^2V^* $ whose images are nonzero in $ Z'' $ are those with both $ i,j\in\tau $. Also, note that the subgroup of size $ |G|^{n-r} $ we identified previously is sent to 0 under $ m_\calf''' $. Suppose $ 1\leq y\leq x\leq r $ and let $ D_{xy}:H\to H^* $ be the map $ h_x^*\otimes h_y^* $ taking $ h_x\mapsto h_y^* $ and all other $ h_j\mapsto0 $. Consider the image of $ D_{xy} $ in $ \Sym^2 H^*$, which we will denote using $ [D_{xy}]=[h_x^*\otimes h_y^*] $. 
		
		Suppose $ \ell=\tau_s\in\tau $. We see that \begin{equation}\label{mF'''}
			\begin{split}
				m_\calf'''&(z_\ell^*\otimes h_m^*,[D_{xy}])\\
				&=\sum_{i\in\tau}p^{2\lambda_1-\lambda_m}f_{im}z_i^*z_\ell^*+\sum_{\substack{i,j\in\tau\\i< j}}\ev\left([h_x^*\otimes h_y^*],{\left(\textstyle\sum_k p^{\lambda_1-\lambda_k}f_{jk}h_k\right)\odot\left(\textstyle\sum_k p^{\lambda_1-\lambda_k}f_{ik}h_k\right)}\right)z_i^*z_j^*\\
				&\quad\quad+\sum_{i\in\tau}\ev\left([h_x^*\otimes h_y^*],{\left(\textstyle\sum_k p^{\lambda_1-\lambda_k}f_{ik}h_k\right)\otimes\left(\textstyle\sum_k p^{\lambda_1-\lambda_k}f_{ik}h_k\right)}\right)(z_i^*)^2\\
				&=\sum_{i=1}^rp^{2\lambda_1-\lambda_m}f_{\tau_i m}w_i^*z_\ell^*+\sum_{\substack{i,j\in\tau\\i< j}}p^{2\lambda_1-\lambda_x-\lambda_y}(f_{jx}f_{iy}+f_{jy}f_{ix})z_i^*z_j^*+\sum_{i\in\tau}p^{2\lambda_1-2\lambda_i}f_{ix}f_{iy}(z_i^*)^2\\
				&=p^{2\lambda_1-\lambda_m}w_m^*z_\ell^*+p^{2\lambda_1-\lambda_x-\lambda_y}w_{x}^*w_y^*.
			\end{split}
		\end{equation} For all $ m $ such that $ m\geq s $, set $ x=m $ and $ y=s $. Then \[m_\calf'''(z_\ell^*\otimes h_m^*,[D_{ms}])=(p^{2\lambda_1-\lambda_m}+p^{2\lambda_1-\lambda_m-\lambda_s})w_m^*w_s^*=p^{2\lambda_1-\lambda_m-\lambda_s}(p^{\lambda_s}+1)w_m^*w_s^*.\] Since $ p^{\lambda_s}+1 $ is invertible in $ R $, this tells us that $ \im(m_\calf''') $ has a subgroup of size \[\prod_{1\leq s\leq m\leq r}p^{\lambda_m+\lambda_s}=p^{\sum_{1\leq s\leq m\leq r}\lambda_m+\lambda_s}=|G|^{r+1}.\] Therefore, we conclude that $ \# G^{n+1}=\# G^{r+1}\cdot\# G^{n-r}|\#\im(m_\calf'')|\#\im(m_\calf') $. By our prior explanation, this completes the proof of the lemma. 
	\end{proof}

	\section{Obtaining the moments III: A good bound for codes}\label{sec: Moments III}
	We combine the results proved in the previous section to bound \[\Prob(FM=0\mathrm{\ and\ }F^t(\p_M)=\phG)=\Prob(\Lambda\calf M=0\mathrm{\ and\ }\Omega\calf M\calf^t\Omega^t=A)\] when $ F $ is a code and $ M $ is a random matrix. If $ M $ is a random symmetric matrix with integer entries $ m_{ij} $ and $ b $ is an integer, we say $ M $ is $ \alpha $-\emph{balanced} mod $ b $ if for any prime divisor $ p $ of $ b $ and $ t\in\Z/p\Z $, the probability $ \Prob(m_{ij}\equiv t\ \mathrm{mod\ }p)\leq1-\alpha $. 
	\begin{lemma}\label{bound for codes}
		Let $ 0<\alpha<1 $, let $ \delta>0 $, and let $ G $ be a finite abelian group. Let $ a $, $ H $, and $ R $ be defined as in \Cref{sec: Moments I}. Then there exists some $ c,K>0 $ with the following significance.
		
		Let $ M $ be a random symmetric $ n\times n $ matrix whose entries $ m_{ij}\in\Z/a\Z $ for $ 1\leq i\leq j\leq n $ are independent and $ \alpha $-balanced mod $ a $. Suppose $ F\in\Hom(V,G) $ is a code of distance $ \delta n $, and let $ \calf\in\Hom(V,H) $ be a lift of $ \calf $ to $ H $. For all $ n $, we have that \[|\Prob(\Lambda\calf M=0\mathrm{\ and\ }\Omega\calf M\calf^t\Omega^t=A)-|G|^{-(n+1)}|\leq\frac{K\exp(-cn)}{|G|^n}.\]
	\end{lemma}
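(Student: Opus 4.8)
The plan is to start from the Fourier identity \eqref{factored expected value}: because the entries $m_{ij}$ ($i\le j$) are independent,
\[
\Prob(\Lambda\calf M=0\text{ and }\Omega\calf M\calf^t\Omega^t=A)=\frac{1}{|G|^n\cdot|\Sym^2 H|}\sum_{(C,D)}\E(\zeta^{-D(A)})\prod_{i\le j}\E\!\left(\zeta^{E_{ij}(C,D,\calf)m_{ij}}\right),
\]
the sum running over $(C,D)\in\Hom(V,G^*)\times\Sym^2 H^*$ and $D(A)$ being deterministic. By \Cref{lift of a code is a code}, $\calf$ is itself a code of distance $\delta n$, so every structural result of \Cref{sec: Moments II} applies to it. I would split the sum over $(C,D)$ into three ranges: (i) the pairs special for $\calf$; (ii) the pairs robust for $\calf$ but not special; (iii) the pairs weak for $\calf$ but not special. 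The goal is to show range (i) contributes exactly the main term $|G|^{-(n+1)}$, while ranges (ii) and (iii) are $O(\exp(-cn)/|G|^n)$.

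For range (i): a special pair has $E_{ij}(C,D,\calf)=0$ for all $i\le j$, so the product is $1$; and by the second assertion of \Cref{special count}, $D(-A)=0$, hence $\zeta^{-D(A)}=1$. Since $\calf V=H$ (as $\calf$ is a code), the first assertion of \Cref{special count} says there are exactly $|\Sym^2 H|/|G|$ special pairs, so range (i) contributes $\frac{1}{|G|^n|\Sym^2 H|}\cdot\frac{|\Sym^2 H|}{|G|}=|G|^{-(n+1)}$. For the two error ranges I would invoke the basic estimate \Cref{expected value bound}: since $M$ is $\alpha$-balanced mod $a$ and $R$ is finite, there is a constant $\beta=\beta(\alpha,a)\in(0,1)$ with $|\E(\zeta^{u m_{ij}})|\le\beta$ for every nonzero $u\in R$. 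Hence each summand has modulus at most $\beta^{N(C,D)}$, where $N(C,D)$ denotes the number of pairs $(i,j)$ with $i\le j$ and $E_{ij}(C,D,\calf)\ne0$. In range (ii), \Cref{quadratically many} gives $N(C,D)\ge\gamma\delta n^2/(2|H|^2|P|)$ ($P$ the set of primes dividing $|G|$), and bounding the number of pairs crudely by the total count $|G|^n|\Sym^2 H|$ shows range (ii) contributes at most $\beta^{\gamma\delta n^2/(2|H|^2|P|)}$. In range (iii), \Cref{linearly many} gives $N(C,D)\ge\delta n/2$ for non-special pairs, and \Cref{weak estimate} bounds the number of weak pairs by $C_G\binom{n}{\lceil\gamma n\rceil-1}|G|^{\gamma n}$, so range (iii) contributes at most $\frac{C_G}{|G|^n|\Sym^2 H|}\bigl(2^{\eta(\gamma)}|G|^\gamma\beta^{\delta/2}\bigr)^n$, using $\binom{n}{\lceil\gamma n\rceil-1}\le 2^{\eta(\gamma)n}$ with $\eta$ the binary entropy function.

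The key decision — and the step I expect to need the most care — is the choice of the parameter $\gamma$ left unspecified in the definition of robust/weak (and used in \Cref{quadratically many,weak estimate}): I would fix $\gamma>0$ small enough, depending only on $\alpha$, $\delta$, and $G$, that $2^{\eta(\gamma)}|G|^\gamma\beta^{\delta/2}<1$, which is possible because $\beta^{\delta/2}<1$ is a fixed constant while $2^{\eta(\gamma)}|G|^\gamma\to1$ as $\gamma\to0^+$. With this $\gamma$, range (iii) is at most $\frac{C_G}{|G|^n|\Sym^2 H|}e^{-c_1 n}$ for some $c_1>0$, and range (ii) decays like $\exp(-c_2 n^2)$, hence is $o(\exp(-cn)/|G|^n)$ for any fixed $c$. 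Adding the two, taking $c$ small enough, and enlarging the constant $K$ to cover the finitely many small $n$ (where $|\Prob(\cdot)-|G|^{-(n+1)}|\le2$ trivially), I obtain $|\Prob(\Lambda\calf M=0\text{ and }\Omega\calf M\calf^t\Omega^t=A)-|G|^{-(n+1)}|\le K\exp(-cn)/|G|^n$. Beyond this parameter bookkeeping — linking $\beta$, $\gamma$, $\delta$, $|H|$, $|P|$, and checking that the $\gamma$ chosen here is the one referenced in all the earlier lemmas — there is no genuine obstacle, since the substantive work is carried entirely by \Cref{quadratically many,linearly many,weak estimate,special count}.
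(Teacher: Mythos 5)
Your proposal is correct and follows essentially the same route as the paper's own proof: the same Fourier identity, the same three-way decomposition into special / robust-non-special / weak-non-special pairs (noting that special implies weak, so your (ii) and (iii) match the paper's cases), the same counting and coefficient bounds from \Cref{special count,quadratically many,linearly many,weak estimate}, and the same parameter choice of taking $\gamma$ small enough that the exponential decay from $\beta^{\delta/2}$ dominates the polynomial-in-$|G|^\gamma$ and entropy growth of the number of weak pairs. The only cosmetic differences are that you make the entropy bound on the binomial coefficient explicit where the paper simply asserts the inequality holds for small $\gamma$, and you observe the robust contribution is $o(\exp(-cn)/|G|^n)$ rather than folding it into a single displayed estimate.
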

	To prove the above, we will need the following result due to Wood.
	\begin{lemma}[Lemma 4.2, \cite{Wood}]\label{expected value bound}
		Suppose $ \xi $ is a primitive $ b $th root of unity, and let $ z $ be a random variable taking values in $ \Z/b\Z $ such that $ z $ takes each value with probability at most $ 1-\alpha $. Then $ |\E(\xi^z)|\leq\exp(-\alpha/b^2) $. 
	\end{lemma}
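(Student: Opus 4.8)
The plan is to reduce the statement to an elementary lower bound on how far a convex combination of $b$th roots of unity can be from the unit circle. First I would note that we may assume $\xi = e^{2\pi i/b}$: writing $\xi = e^{2\pi i m/b}$ with $\gcd(m,b)=1$, the map $z\mapsto mz$ is a bijection of $\Z/b\Z$, so $\xi^z$ has the same distribution as $\omega^{z'}$ with $\omega = e^{2\pi i/b}$ and $z' = mz$, and $z'$ again takes every value of $\Z/b\Z$ with probability at most $1-\alpha$. Set $p_k = \Prob(z=k)$ for $k\in\Z/b\Z$; the hypothesis says $p_k\le 1-\alpha$ for all $k$, while $\sum_k p_k = 1$. (If $b=1$ the hypothesis forces $\alpha\le 0$, so the statement is vacuous and we may assume $b\ge 2$.)

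Next I would expand $|\E(\xi^z)|^2$. Since $|\E(\xi^z)|^2 = \E(\xi^z)\,\overline{\E(\xi^z)} = \sum_{j,k} p_j p_k\,\xi^{j-k}$, and this sum equals its own complex conjugate (swap $j$ and $k$), it is real and equals $\sum_{j,k} p_j p_k\cos(2\pi(j-k)/b)$. Using $\sum_{j,k} p_j p_k = 1$, this rearranges to
\[
1 - |\E(\xi^z)|^2 = \sum_{j,k\in\Z/b\Z} p_j p_k\left(1 - \cos\frac{2\pi(j-k)}{b}\right).
\]
For each pair with $j\neq k$ in $\Z/b\Z$, let $d$ be the representative of $j-k$ modulo $b$ with $|d|\le b/2$; then $d\neq 0$, so $|2\pi d/b|\le\pi$ and, by Jordan's inequality $\sin x\ge \frac{2}{\pi}x$ on $[0,\pi/2]$ applied to $x = \pi|d|/b$,
\[
1 - \cos\frac{2\pi(j-k)}{b} \;=\; 2\sin^2\!\frac{\pi d}{b} \;\ge\; \frac{8d^2}{b^2} \;\ge\; \frac{8}{b^2}.
\]
Hence
\[
1 - |\E(\xi^z)|^2 \;\ge\; \frac{8}{b^2}\sum_{\substack{j,k\in\Z/b\Z\\ j\neq k}} p_j p_k \;=\; \frac{8}{b^2}\Bigl(1 - \sum_{k} p_k^2\Bigr).
\]

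Finally I would invoke the hypothesis $p_k\le 1-\alpha$ to conclude $\sum_k p_k^2 \le (1-\alpha)\sum_k p_k = 1-\alpha$, so $1-\sum_k p_k^2\ge\alpha$ and therefore $|\E(\xi^z)|^2\le 1 - 8\alpha/b^2$. Combining with $1-x\le e^{-x}$ gives $|\E(\xi^z)|\le (1-8\alpha/b^2)^{1/2}\le e^{-4\alpha/b^2}\le e^{-\alpha/b^2}$, which is slightly stronger than claimed. I do not expect any genuine obstacle here: the argument is short and self-contained, and the only mild points are the reduction to $\xi=e^{2\pi i/b}$ and the choice of a quadratic lower bound for $1-\cos\theta$ near $0$ — any positive constant in place of $2/\pi^2$ works, and one could even replace $1-\cos(2\pi d/b)$ by the uniform bound $1-\cos(2\pi/b)$ over all $d\not\equiv 0$ without changing the conclusion.
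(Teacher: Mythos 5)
Your proof is correct. Note, however, that the paper does not prove this lemma at all: it is quoted verbatim as Lemma 4.2 of \cite{Wood} and used as a black box, so there is no in-paper argument to compare against. Your self-contained derivation --- expanding $|\E(\xi^z)|^2=\sum_{j,k}p_jp_k\cos(2\pi(j-k)/b)$, bounding $1-\cos(2\pi d/b)\ge 8d^2/b^2$ for $d\not\equiv 0$ via Jordan's inequality, and using $\sum_k p_k^2\le 1-\alpha$ --- is the standard route to such character-sum bounds and is sound at every step; the endpoint cases ($b=1$, and the consistency check that $1-8\alpha/b^2\ge 0$ when $b=2$ forces $\alpha\le 1/2$) all work out, and you in fact obtain the stronger bound $\exp(-4\alpha/b^2)$. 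The initial reduction to $\xi=e^{2\pi i/b}$ is harmless but not needed, since for any primitive $b$th root the exponent $m(j-k)$ is still nonzero mod $b$ whenever $j\neq k$, so the same pointwise lower bound on $1-\cos$ applies directly.
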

	\begin{proof}[Proof of \Cref{bound for codes}]
		Recall that \[\Prob(\Lambda\calf M=0\mathrm{\ and\ }\Omega\calf M\calf^t\Omega^t=A)=\frac{1}{|G|^{n}|\Sym^2 H|}\sum_{\substack{C\in\Hom(V,G^*)\\ D\in\Sym^2 H^*}}\E(\zeta^{C(\Lambda\calf M)+D(\Omega\calf M\calf^t\Omega^t-A)}),\] where $ \zeta $ is a primitive $ a $th root of unity. Further, by \Cref{lift of a code is a code}, we know that $ \calf $ is a code of distance $ \delta n $, so we may apply our results from the previous section.
		
		To prove the theorem, we break the sum into three pieces (we will choose $ 0<\gamma<\delta $ later):\begin{enumerate}
			\item $ (C,D) $ is special for $ \calf $;
			\item $ (C,D) $ is not special and weak for $ \calf $;
			\item $ (C,D) $ is robust for $ \calf $. 
		\end{enumerate}
		
		For a fixed $ \calf $, there are $ |\Sym^2 H|/|G| $ special pairs $ (C,D) $ by \Cref{special count}. We may write \[\E(\zeta^{C(\Lambda\calf M)+D(\Omega\calf M\calf^t\Omega^t-A)})=\E(\zeta^{D(-A)})\E(\zeta^{C(\Lambda\calf M)+D(\Omega\calf M\calf^t\Omega^t)}).\] If $ (C,D) $ is special for $ \calf $, recall from \Cref{special count} that \[C(\Lambda\calf M)+D(\Omega\calf M\Omega^t\calf^t)=D(-A)=0\in R.\] Therefore, for $ (C,D) $ special for $ \calf $, \[\E(\zeta^{C(\Lambda\calf M)+D(\Omega\calf M\calf^t\Omega^t-A)})=1\] for any $ M $. 
		
		For the second case, we will rely on the fact that there are not too many weak pairs $ (C,D) $ in conjunction with our bound from \Cref{linearly many}. Recall from \Cref{weak estimate} that for a given $ \calf $ there are at most \[C_G{{n}\choose{\lceil\gamma n\rceil-1}}|G|^{\gamma n}\] weak $ (C,D)\in\Hom(V,G^*)\times\Sym^2 H^*$. Now, factor the expected value as in \eqref{factored expected value} to get \begin{align*}
			\E(&\zeta^{C(\Lambda\calf M-B)+D(\Omega\calf M\calf^t\Omega^t-A)})=\E(\zeta^{D(-A)})\prod_{1\leq i<j\leq n}\E(\zeta^{E_{ij}(C,D,\calf)m_{ij}})\prod_{1\leq i\leq n}\E(\zeta^{E_{ii}(C,D,\calf)m_{ii}}).
		\end{align*}
		
		Recall that we are assuming $ (C,D) $ is not special for $ \calf $. \Cref{linearly many} tells us that at least $ \delta n/2 $ of the $ E_{ij}(C,D,\calf) $ are nonzero. Hence, if $ (C,D) $ is not special for $ \calf $, we conclude using \Cref{expected value bound} that \[|\E(\zeta^{C(\Lambda\calf M-B)+D(\Omega\calf M\calf^t\Omega^t-A)})|\leq\exp(-\alpha\delta n/(2a^2)).\]
		
		For the third case, we recall that given $ \calf $ and robust $ (C,D) $ for $ \calf $, \Cref{quadratically many} implies that we have at least $ \gamma\delta n^2/(2|H|^2) $ pairs $ (i,j) $ such that $ E_{ij}(C,D,\calf)\neq0 $. Thus, if $ (C,D) $ is robust for $ \calf $, then \[|\E(\zeta^{C(\Lambda\calf M-B)+D(\Omega\calf M\calf^t\Omega^t-A)})|\leq\exp(-\alpha\gamma\delta n^2/(2|H|a^2)).\]
		
		Putting all three cases together, we get \begin{align*}
			\bigg|&\Prob(\Lambda\calf M=0\mathrm{\ and\ }\Omega\calf M\calf^t\Omega^t=A)-\frac{1}{|G|^{n}|\Sym^2 H|}\sum_{(C,D)\;\mathrm{special}}\E(\zeta^{C(\Lambda\calf M)+D(\Omega\calf M\calf^t\Omega^t-A)})\bigg|\\
			&\leq\frac{|G|^{-n}}{|\Sym^2 H|} \sum_{(C,D)\;\mathrm{not\ special}}|\E(\zeta^{C(\Lambda\calf M)+D(\Omega\calf M\calf^t\Omega^t-A)})|\\
			&\leq\frac{|G|^{-n}}{|\Sym^2 H|}\bigg(C_G{{n}\choose{\lceil\gamma n\rceil-1}}|G|^{\gamma n}\exp(-\alpha\delta n/(2a^2))+|G|^n|\Sym^2 H|\exp(-\alpha\gamma\delta n^2/(2|H|^2a^2))\bigg).
		\end{align*}
		Let $ c>0 $ be such that $ c<\alpha\delta/(2a^2) $. Given $ \delta,\alpha,G,c $, we can choose $ \gamma $ sufficiently small so that \begin{align*}
			\bigg|\Prob(\Lambda\calf M=0&\mathrm{\ and\ }\Omega\calf M\calf^t\Omega^t=A)-\frac{|G|^{-n}}{|\Sym^2 H|}\sum_{(C,D)\;\mathrm{special}}\E(\zeta^{C(\Lambda\calf M)+D(\Omega\calf M\calf^t\Omega^t-A)})\bigg|\\
			&\leq\frac{1}{|G|^n}\left(C_G\exp(-cn)+\exp(\log(|G|)n-\alpha\gamma\delta n^2/(2|H|^2a^2))\right).
		\end{align*} Now, for fixed $ \alpha,G,\delta,\gamma,c $, the inequality \[\log(|G|)n-\alpha\gamma\delta n^2/(2|H|^2a^2)\leq-cn\] holds for $ n $ sufficiently large. Hence, we have \[\left|\Prob(\Lambda\calf M=0\mathrm{\ and\ }\Omega\calf M\calf^t\Omega^t=A)-|G|^{-(n+1)}\right|\leq\frac{(C_G+1)\exp(-cn)}{|G|^n}\] for $ n $ large enough. For the finitely many $ n $ such that the above is false, just increase the constant $ K $ in the statement of the result. 
	\end{proof}
	\section{Obtaining the moments IV: The structural properties of the surjections}\label{sec: Moments IV}
	In this section, we treat the case where $ F\in\Hom(V,G) $ is not a code. To do so, we use a division of the $ F $ based on the subgroups of $ G $ from \cite{Wood}. Let $ D $ be an integer with prime factorization $ \prod_ip_i^{e_i} $, and let $ \ell(D)=\sum_ie_i $. The following is an amalgamation of useful ideas and results developed by Wood in \cite{Wood} that we will apply later on in the next section.
	\begin{definition}
		The \emph{depth} of a homomorphism $ F\in\Hom(V,G) $ is the maximal positive $ D $ such that there exists $ \sigma\subset[n] $ with $ |\sigma|<\ell(D)\delta n $ such that $ D=[G:FV_\sigma] $. If no such $ D $ exists, the depth of $ F $ is 1. (When applying this definition, we always assume $ \delta<\ell(|G|)\inverse $.)	If the depth of $ F $ is 1, then note that $ F $ is a code of distance $ \delta n$: if $ F $ has depth 1, then for every $ \sigma\subset[n] $ with $ |\sigma|<\delta n $ we have $ FV_\sigma=G $ (otherwise, $ \ell([G:FV_\sigma])\geq1 $).
	\end{definition}
	Note that if the depth of $ F $ is $ D $, then $ D|\# G $. The following bounds the number of $ F $ of depth $ D $.
	\begin{lemma}[Estimate for $ F $ of depth $ D $; Lemma 5.2 in \cite{Wood}]\label{depth D estimate}
		There is a constant $ K $ depending on $ G $ such that if $ D>1 $, then the number of $ F\in\Hom(V,G) $ of depth $ D $ is at most \[K{{n}\choose{\lceil\ell(D)\delta n\rceil-1}}|G|^nD^{-n+\ell(D)\delta n}.\]
	\end{lemma}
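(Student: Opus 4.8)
The plan is to run the same counting argument that establishes Lemma 5.2 of \cite{Wood}: attach to every $F$ of depth $D$ a small piece of combinatorial data, then bound the number of possibilities for that data. First I would fix $F\in\Hom(V,G)$ of depth $D>1$ and choose a witness $\sigma_0\subset[n]$ with $|\sigma_0|<\ell(D)\delta n$ and $[G:FV_{\sigma_0}]=D$; set $H'=FV_{\sigma_0}$, a subgroup of $G$ of index $D$. Since $|\sigma_0|$ is an integer strictly below $\ell(D)\delta n$, we have $|\sigma_0|\le\lceil\ell(D)\delta n\rceil-1$, and using $\ell(D)\le\ell(|G|)$ together with the standing hypothesis $\delta<\ell(|G|)^{-1}$ (so that $\ell(D)\delta n<n$) we also get $\lceil\ell(D)\delta n\rceil-1<n$. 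Thus I can pad $\sigma_0$ up to a set $\sigma\supseteq\sigma_0$ of size \emph{exactly} $s:=\lceil\ell(D)\delta n\rceil-1$. For every $i\notin\sigma$ we have $i\notin\sigma_0$, hence $F(v_i)\in FV_{\sigma_0}=H'$, while the values $F(v_i)$ for $i\in\sigma$ are unconstrained in $G$; so $F$ is determined by the data $(\sigma,H')$ together with $(F(v_i))_{i\in\sigma}\in G^{\sigma}$ and $(F(v_i))_{i\notin\sigma}\in (H')^{[n]\setminus\sigma}$.

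Next I would simply count this data. There are $\binom{n}{s}$ choices of $\sigma$; at most $c_G$ choices of the index-$D$ subgroup $H'$, where $c_G$ is the number of subgroups of $G$, a constant depending only on $G$; at most $|G|^{s}$ choices of $(F(v_i))_{i\in\sigma}$; and at most $|H'|^{\,n-s}=(|G|/D)^{\,n-s}$ choices of $(F(v_i))_{i\notin\sigma}$. Each $F$ of depth $D$ arises from at least one such choice, so (over-counting only helping) the number of $F\in\Hom(V,G)$ of depth $D$ is at most
\[
c_G\binom{n}{s}\,|G|^{s}\Big(\frac{|G|}{D}\Big)^{n-s}=c_G\binom{n}{\lceil\ell(D)\delta n\rceil-1}\,|G|^{n}\,D^{\,s-n}.
\]
Finally, since $D>1$ and $s=\lceil\ell(D)\delta n\rceil-1<\ell(D)\delta n$, I have $D^{s-n}\le D^{-n+\ell(D)\delta n}$, so the right-hand side is at most $c_G\binom{n}{\lceil\ell(D)\delta n\rceil-1}|G|^{n}D^{-n+\ell(D)\delta n}$. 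Taking $K=c_G$ (enlarged if necessary to absorb the finitely many small $n$ for which $s$ degenerates to $0$) gives the claim.

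I do not expect a genuine obstacle here; the single point that needs care is padding $\sigma_0$ to the \emph{fixed} size $s=\lceil\ell(D)\delta n\rceil-1$ \emph{before} counting. If one instead summed the crude per-$\sigma_0$ bound over all admissible sizes $|\sigma_0|<\ell(D)\delta n$, one would pick up a spurious factor of order $n$; fixing the size at the outset produces exactly the binomial coefficient $\binom{n}{\lceil\ell(D)\delta n\rceil-1}$ that appears in the statement. The hypothesis $\delta<\ell(|G|)^{-1}$ enters only to guarantee $0\le s<n$, so that the padding step makes sense and $\binom{n}{s}$ is the intended quantity.
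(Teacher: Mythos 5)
Your argument is correct and is essentially the standard proof of Lemma 5.2 in \cite{Wood}, which this paper cites without reproof: pick a witness $\sigma_0$, pad it up to fixed size $s=\lceil\ell(D)\delta n\rceil-1$ (justified by $\ell(D)\delta n<n$), enumerate $(\sigma,H',(F(v_i))_i)$ with the constraint $F(v_i)\in H'$ for $i\notin\sigma$, and simplify $|G|^s(|G|/D)^{n-s}=|G|^nD^{s-n}\le|G|^nD^{-n+\ell(D)\delta n}$. The one step that needed care --- fixing $|\sigma|$ at exactly $s$ before counting so the binomial appears without an extra factor of $n$ --- is handled correctly, and the over-counting direction is the right one since you only need an upper bound.
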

	
	When we are working with the Laplacian of a random graph, we also need a bound on the number of $ F $ of depth $ D $, which is given by the following lemma from \cite{Wood}. If $ G $ is a finite abelian group with exponent $ b $, then so is $ G\oplus\Z/b\Z $. Thus, we may apply the definition of depth to a homomorphism $ F\in\Hom(V,G\oplus\Z/b\Z) $. 
	
	\begin{lemma}[Lemma 5.3 of \cite{Wood}]\label{sandpile depth}
		Let $ \pi_2:G\oplus\Z/b\Z\to\Z/b\Z $ denote projection onto the second factor. Given a finite abelian $ p $-group $ G $ of exponent $ b $, there is a constant $ K $, which depends on $ G $, such that if $ D>1 $, then the number of $ F\in\Hom(V,G\oplus\Z/b\Z) $ of depth $ D $ such that $ \pi_2(Fv_i)=1 $ for all $ i\in [n] $ is at most \[K{{n}\choose{\lceil\ell(D)\delta n\rceil}-1}|G|^n|D|^{-n+\ell(D)\delta n}.\]
	\end{lemma}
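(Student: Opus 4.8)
The plan is to adapt the proof of \Cref{depth D estimate} (the analogous estimate for $F\in\Hom(V,G)$) to the group $G\oplus\Z/b\Z$, while exploiting the constraint $\pi_2(Fv_i)=1$ to cut down the number of admissible images for each basis vector. Throughout I assume, as I may, that $\delta$ is small enough in terms of $\ell(|G|b)$ (this is needed only to guarantee $m\le n$ below), absorbing the finitely many remaining $n$ into the final constant $K$.

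First I would fix a witnessing set of controlled size. Suppose $F\in\Hom(V,G\oplus\Z/b\Z)$ has depth $D>1$ and satisfies $\pi_2(Fv_i)=1$ for all $i\in[n]$. By definition of depth there is $\sigma\subset[n]$ with $|\sigma|<\ell(D)\delta n$ and $[G\oplus\Z/b\Z:FV_\sigma]=D$; set $G':=FV_\sigma$, a subgroup of $G\oplus\Z/b\Z$ of index exactly $D$. Since $|\sigma|<\ell(D)\delta n$ is an integer, $|\sigma|\le m:=\lceil\ell(D)\delta n\rceil-1$, so we may enlarge $\sigma$ to an arbitrary superset $\tau\subseteq[n]$ with $|\tau|=m$. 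Then $FV_\tau\subseteq FV_\sigma=G'$, so $Fv_i\in G'$ for every $i\notin\tau$.

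Next comes the counting. The homomorphism $F$ is determined by the choice of $\tau$, of the subgroup $G'$, and of the images $Fv_i$ for $i\in[n]$ (here we use that $V=R^n$ is free, so any assignment of images extends to a homomorphism). There are ${{n}\choose{m}}$ choices for $\tau$, and at most a constant $c_G$ — the total number of subgroups of the fixed finite group $G\oplus\Z/b\Z$ — for $G'$. For $i\in\tau$ the only constraint is $\pi_2(Fv_i)=1$, giving $|\ker\pi_2|=|G|$ choices each. For $i\notin\tau$ we need $Fv_i\in G'$ and $\pi_2(Fv_i)=1$; the number of such elements is $0$ unless $\pi_2(G')=\Z/b\Z$ (since $1$ generates $\Z/b\Z$), in which case it equals $|G'\cap\ker\pi_2|=|G'|/|\pi_2(G')|=(|G|b/D)/b=|G|/D$. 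In all cases it is at most $|G|/D$. Multiplying these counts gives
\[
\#\{F\text{ of depth }D\ :\ \pi_2(Fv_i)=1\ \forall i\}\ \le\ {{n}\choose{m}}\,c_G\,|G|^{m}\Big(\frac{|G|}{D}\Big)^{n-m}=c_G{{n}\choose{m}}|G|^{n}D^{-(n-m)}.
\]
Since $m=\lceil\ell(D)\delta n\rceil-1\le\ell(D)\delta n$ and $D>1$, we have $D^{-(n-m)}\le D^{-n+\ell(D)\delta n}=|D|^{-n+\ell(D)\delta n}$, which yields the claimed bound with $K=c_G$ (inflated to handle small $n$).

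The argument is essentially bookkeeping; the one substantive point is the fiber computation in the previous paragraph, which I expect to be the crux. Without the constraint $\pi_2(Fv_i)=1$ one would get $|G\oplus\Z/b\Z|/D=|G|b/D$ admissible values of $Fv_i$ for $i\notin\tau$ (this is precisely the count appearing in \Cref{depth D estimate}), and the resulting bound would carry an extra factor $b^{n}$. The point is that forcing $\pi_2(Fv_i)=1$ compels $\pi_2(G')=\Z/b\Z$, which both removes any coset ambiguity in the $\Z/b\Z$-factor and reduces the per-coordinate count by exactly a factor of $b$, turning $|G|b/D$ into $|G|/D$; this is what makes the estimate match the plain group case and hence be usable in the sandpile setting. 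A minor technical wrinkle is ensuring $0\le m\le n$, handled by taking $\delta$ sufficiently small (depending on $G$) and absorbing small $n$ into $K$, exactly as in \Cref{depth D estimate}.
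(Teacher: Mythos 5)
Your proof is correct. Note that the paper does not give its own proof of this lemma --- it is cited directly as Lemma 5.3 of \cite{Wood} --- so there is no internal argument to compare against; your proposal is the natural refinement of \Cref{depth D estimate} and matches the approach in Wood's original proof. The essential point, which you identify explicitly, is that the constraint $\pi_2(Fv_i)=1$ forces $\pi_2(G')=\Z/b\Z$ for the fiber to be nonempty, so each $i\notin\tau$ contributes exactly $|G'\cap\ker\pi_2| = |G|/D$ rather than $|G'|=|G|b/D$ admissible images; this kills the extraneous factor $b^n$ and makes the bound usable in the sandpile setting.
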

	
	For each $ F $ of depth $ D $, we use the following lemma to bound $ \Prob(FM=0\mathrm{\ and\ }F^t(\p_M)=\phG) $. In particular, because \[\Prob(FM=0\mathrm{\ and\ }F^t(\p_M)=\phG)\leq\Prob(FM=0)\] and because the $ F $ that are not codes only contribute to our error term, we can simply apply the bounds for $ \Prob(FM=0) $ due to Wood from \cite{Wood}.
	
	\begin{lemma}\label{bound probability for depth}
		Let $ \alpha,\delta,G,a $ be as in \Cref{bound for codes}. Let $ F\in\Hom(V,G) $ have depth $ D $, and suppose $ [G:FV]<D $ (e.g., this holds if $ FV=G $). Then there is a real number $ K $ with the following significance. For all $ M $ as in \Cref{bound for codes} and all $ n $, \[\Prob(FM=0)\leq Ke^{-\alpha(1-\ell(D)\delta)n}(|G|/D)^{-(1-\ell(D)\delta)n}.\]
	\end{lemma}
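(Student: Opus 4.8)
The plan is to reduce everything to a bound on $\Prob(FM=0)$: since $\Prob(FM=0\text{ and }F^t(\p_M)=\phG)\le\Prob(FM=0)$ and the non-code $F$ only feed into the error term, the pairing data plays no role here, so the argument is exactly the one used for the analogous statement about $\Prob(FM=0)$ in \cite{Wood} (there too $M$ is a random $\alpha$-balanced symmetric matrix). First I would record that $FM=0$ depends only on the entries of $M$ modulo the exponent $b$ of $G$, so we may work over $\Z/b\Z$, and I would rephrase $FM=0$ as the system of $n$ column equations $\sum_{k}f_km_{k\ell}=0$ in $G$, $\ell\in[n]$, where $f_k:=F(v_k)$.

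Next I would unwind the depth hypothesis: fix $\sigma\subset[n]$ with $|\sigma|<\ell(D)\delta n$ and $[G:FV_\sigma]=D$, and put $T=[n]\setminus\sigma$, so $|T|\ge(1-\ell(D)\delta)n$. The crux of the proof — and the only genuine obstacle — is a combinatorial choice of a family of \emph{free} entries of $M$: for almost every $\ell\in T$ one selects a small set $S_\ell$ of positions in row/column $\ell$, pairwise disjoint across $\ell$, so that the coefficients $\{f_k:k\in S_\ell\}$ generate a subgroup of $G$ of index at most $D$. Here one uses that, after deleting the $\sigma$-coordinates, $F$ is still a code of positive distance (so \Cref{good basis} supplies many repeated, generating values of $F$ on the remaining coordinates), that $[G:FV_\sigma]=D$ caps the index, and that the extra hypothesis $[G:FV]<D$ excludes the degenerate possibility $F=0$ (for which $\Prob(FM=0)=1$). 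This is precisely the construction carried out in \cite{Wood}, and it transfers here unchanged.

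Finally I would condition on all entries of $M$ outside $\bigcup_{\ell}S_\ell$; then for each selected $\ell$ the $\ell$-th column equation becomes $\sum_{k\in S_\ell}f_km_{k\ell}=(\text{const})$ in $G$, with the $m_{k\ell}$ fresh and independent across the different $\ell$'s. Expanding the indicator of each such event in characters of the subgroup generated by the relevant $f_k$ and invoking \Cref{expected value bound} (that $|\E(\xi^z)|\le\exp(-\alpha/b^2)$ for a primitive $b$-th root of unity $\xi$), each factor is at most a bounded multiple of $(D/|G|)$ with an exponentially small correction, the $\alpha$-balanced hypothesis supplying genuine decay rather than a constant. Multiplying the $\ge(1-\ell(D)\delta)n$ conditionally independent factors and absorbing constants into $K$ then yields $\Prob(FM=0)\le Ke^{-\alpha(1-\ell(D)\delta)n}(|G|/D)^{-(1-\ell(D)\delta)n}$, and combining with $\Prob(FM=0\text{ and }F^t(\p_M)=\phG)\le\Prob(FM=0)$ completes the proof. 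I expect no serious difficulty beyond importing the combinatorial selection scheme of the free entries from \cite{Wood}, since the symmetric, $\alpha$-balanced model of $M$ here matches the one treated there.
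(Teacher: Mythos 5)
The paper offers no proof of this lemma. It is stated as a direct import from \cite{Wood} (it is Wood's corresponding bound on $\Prob(FM=0)$ for $\alpha$-balanced symmetric matrices), and the only new observation in the paper's treatment is the one-line remark just before the statement: since $\Prob(FM=0\text{ and }F^t(\p_M)=\phG)\le\Prob(FM=0)$, and since non-code $F$ contribute only to the error term, the pairing condition can be discarded and Wood's bound applied verbatim.

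Your proposal reproduces exactly this reduction — discard the pairing, invoke Wood — and then goes on to sketch what the cited proof actually does. The sketch is consistent with Wood's argument: use depth $D$ to locate $\sigma$ with $|\sigma|<\ell(D)\delta n$ and $[G:FV_\sigma]=D$; on the complementary coordinates, $F$ restricted to $V_\sigma$ surjects onto an index-$D$ subgroup and behaves like a code, allowing a selection of disjoint ``free'' entries respecting the symmetry constraint; then condition on the rest and bound each surviving column equation by a character estimate, with the $\alpha$-balanced hypothesis giving exponential decay. Your remark that $[G:FV]<D$ rules out a degenerate choice of $\sigma$ is also in the right spirit. Since you deliberately defer the combinatorial construction to \cite{Wood} rather than re-deriving it, the per-factor estimate in your last paragraph is stated somewhat loosely (the Fourier computation alone gives $D/|G|+e^{-\alpha/b^2}$ rather than $e^{-\alpha}D/|G|$, so there is more to the bookkeeping than ``a bounded multiple of $D/|G|$''), but this is precisely what Wood's lemma supplies and is not something the present paper re-proves. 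In short: same approach as the paper, i.e., a citation of \cite{Wood}, with a correct explanation of why the pairing data is irrelevant and a faithful summary of the borrowed argument.
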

	\section{Obtaining the moments V: Oh yeah, it's all coming together}\label{sec: Moments V}
	We aggregate the work from the previous four sections to produce a universality result on (and the actual values of) the moments of the cokernels of random matrices with their pairings as well as the moments of sandpile groups of random graphs with their pairings (see the following two theorems, respectively). It will be helpful to recall the definition of an $ \alpha $-balanced random matrix from \Cref{sec: Moments III}.
	\begin{theorem}\label{random matrix moment}
		Suppose $ 0<\alpha<1 $, and let $ G $ be a finite abelian group such that $ \hG $ is equipped with a symmetric pairing. For $ c $ sufficiently small, there exists some $ K>0 $, depending on $ \alpha,G,c $, with the following significance. Let $ M $ be a random symmetric $ n\times n $ integral matrix, $ \alpha $-balanced mod $ |G| $, whose entries $ m_{ij}\in\Z $ for $ i\leq j $ are independent. Then \[|\E(\#\Sur^*(\cok(M),G))-|G|\inverse|\leq Ke^{-cn}.\]
	\end{theorem}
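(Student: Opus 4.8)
The plan is to prove Theorem~\ref{random matrix moment} by splitting the sum over all surjections $F \in \Sur(V,G)$ according to the depth of $F$, using the code case (depth $1$) to produce the main term and the higher-depth cases to produce an exponentially small error. We begin from the identity
\[
\E(\#\Sur^*(\cok(M),G)) = \sum_{F \in \Sur(V,G)} \Prob\bigl(\col(M)\subset\ker(F)\text{ and }F^t(\p_M)=\phG\bigr),
\]
which follows since $\cok(M)$ is a quotient of $V = R^n$, so every surjection from $\cok(M)$ lifts to one from $V$; here we have reduced mod $a$ using \Cref{lemma: cok eqns} and the remark that both conditions depend only on $M \bmod a$. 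We fix $\delta>0$ with $\delta < \ell(|G|)^{-1}$ small enough that \Cref{bound for codes} applies, and we fix a lift $\calf \in \Hom(V,H)$ of each $F$, recalling via \eqref{equivalent probabilities} that the event $\{\col(M)\subset\ker(F)\text{ and }F^t(\p_M)=\phG\}$ equals $\{\Lambda\calf M = 0\text{ and }\Omega\calf M\calf^t\Omega^t = A\}$.

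\emph{Main term (codes).} Every $F$ of depth $1$ is a code of distance $\delta n$, so by \Cref{bound for codes} there exist $c,K>0$ with
\[
\Bigl|\Prob\bigl(\Lambda\calf M = 0\text{ and }\Omega\calf M\calf^t\Omega^t = A\bigr) - |G|^{-(n+1)}\Bigr| \leq \frac{K\exp(-cn)}{|G|^n}.
\]
Since $\#\Sur(V,G) \leq |G|^n$, summing this over the (at most $|G|^n$) codes contributes $|G|^n \cdot |G|^{-(n+1)} = |G|^{-1}$ as the main term, with total error at most $K\exp(-cn)$. We should also note that the number of \emph{non-surjective} $F \in \Hom(V,G)$ that happen to be codes is zero (a code is by definition a surjection), and that the number of $F\in\Hom(V,G)$ that are codes is $|G|^n(1+o(1))$; but since \Cref{bound for codes} already gives the estimate uniformly, the cleanest bookkeeping is: $|\sum_{F\text{ code}} |G|^{-(n+1)} - |G|^{-1}| \leq |G|^{-1}\cdot\#\{F:\text{not a code}\}/|G|^n$, and this last quantity is itself exponentially small by the depth estimates below, so it is absorbed into the error.

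\emph{Error term (higher depth).} For $F$ of depth $D > 1$ we bound crudely:
\[
\Prob\bigl(\col(M)\subset\ker(F)\text{ and }F^t(\p_M)=\phG\bigr) \leq \Prob(FM = 0) \leq K e^{-\alpha(1-\ell(D)\delta)n}(|G|/D)^{-(1-\ell(D)\delta)n}
\]
by \Cref{bound probability for depth} (applicable since $FV = G$ forces $[G:FV] = 1 < D$). Combining with \Cref{depth D estimate}, the total contribution of all $F$ of depth $D$ is at most
\[
K'\binom{n}{\lceil \ell(D)\delta n\rceil - 1} |G|^n D^{-n+\ell(D)\delta n} \cdot e^{-\alpha(1-\ell(D)\delta)n} (|G|/D)^{-(1-\ell(D)\delta)n},
\]
and the powers of $D$ and $|G|$ telescope: the $|G|^n$ from the count cancels against $(|G|/D)^{-n+\ell(D)\delta n}\cdot D^{-n+\ell(D)\delta n}$ up to a factor $|G|^{-\ell(D)\delta n}\cdot D^{n - \ell(D)\delta n}$... so one carefully checks (exactly as in \cite{Wood}) that the net exponential rate is negative once $\delta$ is chosen small relative to $\alpha$ and $\log|G|$, the binomial coefficient being subexponential in this regime since $\ell(D)\delta n \ll n$. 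Summing over the finitely many $D \mid \#G$ with $D>1$ gives an error bounded by $K e^{-cn}$ after possibly shrinking $c$.

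\emph{The main obstacle} I expect is not any single estimate but the bookkeeping in the error term: one must choose a single $\delta$ (and then $c$) that simultaneously makes \Cref{bound for codes} valid, keeps $\delta < \ell(|G|)^{-1}$, and makes the depth-$D$ exponents negative for every $D \mid \#G$ with $D>1$; and one must verify the binomial factor $\binom{n}{\lceil\ell(D)\delta n\rceil-1}$ is beaten by the exponential gain, which is where the smallness of $\delta$ relative to $\log|G|$ is used. All of this is routine given the lemmas assembled in Sections~\ref{sec: Moments III} and~\ref{sec: Moments IV}, mirroring Wood's argument in \cite{Wood}; the only genuinely new input — handling the pairing — has already been folded into \Cref{bound for codes} via the reduction \eqref{equivalent probabilities} and the special/weak/robust trichotomy of Section~\ref{sec: Moments II}. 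Finally, the statement for the finitely many small $n$ not covered by the asymptotic inequalities is absorbed by enlarging $K$.
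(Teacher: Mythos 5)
Your proposal is correct and follows essentially the same route as the paper: the paper's proof of this theorem is exactly the decomposition by depth, with \Cref{bound for codes} giving the main term $|G|^{-(n+1)}$ per code, \Cref{depth D estimate} and \Cref{bound probability for depth} controlling the non-code surjections, and the non-surjective and non-code $F$ counted with weight $|G|^{-n-1}$ absorbed into the exponentially small error. The telescoping you sketch for the depth-$D$ contribution (yielding a net factor $|G|^{\ell(D)\delta n}$ against $e^{-\alpha(1-\ell(D)\delta)n}$ and the binomial coefficient) is precisely the computation carried out in the paper's proof of \Cref{sandpile moments}, to which the paper defers for this theorem.
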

	\begin{proof}
		We skip several details in this proof, since the strategy is almost identical to (and, in many cases, simpler than) that of \Cref{sandpile moments}. Let $ b $ be the exponent of $ G $, and let $ a=b^2 $. Reduce the matrix $ M $ modulo $ a $ so that our notation agrees with that of previous sections. Recall that \[\E(\#\Sur^*(\cok(M),G))=\sum_{F\in\Sur(V,G)}\Prob(FM=0\mathrm{\ and\ }F^t(\p_M)=\phG).\] By \Cref{bound probability for depth} and \Cref{depth D estimate}, we have that \[\sum_{\substack{F\in\Sur(V,G)\\\tilde{F}\mathrm{\ not\ code\ of\ distance\ }\delta n}}\Prob(FM=0\mathrm{\ and\ }F^t(\p_M)=\phG)\leq Ke^{-cn}.\] \Cref{depth D estimate} also implies that \[\sum_{\substack{F\in\Sur(V,G)\\\tilde{F}\mathrm{\ not\ code\ of\ distance\ }\delta n}}|G|^{-n-1}\leq Ke^{-cn}.\] We can also show that \[\sum_{F\in\Hom(V,G)\setminus\Sur(V,G)}|G|^{-n-1}\leq K2^{-n}.\] Finally, applying \Cref{bound for codes}, we see that \[\sum_{\substack{F\in\Sur(V,G)\\\tilde{F}\mathrm{\ code\ of\ distance\ }\delta n}}\left|\Prob(FM=0\mathrm{\ and\ }F^t(\p_M)=\phG)-|G|^{-n-1}\right|\leq Ke^{-cn};\] putting this all together yields the desired result. 
	\end{proof}
	\begin{theorem}\label{sandpile moments}
		Let $ 0<q<1 $, and let $ G $ be a finite abelian group equipped with a symmetric pairing on its dual $ \hG=\Hom(G,\Q/\Z) $. Then there exist positive constants $ c,K $ (depending on $ G $) with the following significance. Suppose $ \Gamma\in G(n,q) $ is a random graph and $ S $ its sandpile group. Let $ L $ be the Laplacian of $ \Gamma $, and let $ \p_{\h{S}} $ denote the canonical symmetric pairing on $ \h{S} $. Then for all $ n $ we have that \[|\E(\#\Sur^*(S,G))-|G|\inverse|\leq Ke^{-cn}.\] 
	\end{theorem}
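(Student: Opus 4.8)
The plan is to derive the theorem from (the method behind) \Cref{random matrix moment} by removing the dependence among the entries of $L=L_\Gamma$, at the price of enlarging the target group and summing over pairings; since the bound obtained is exponential in $n$, \Cref{sandpile moments intro} follows at once. First I would run the reductions at the start of \Cref{sec: Moments I}: put $b=\exp(G)$, $a=b^2$, $R=\Z/a\Z$, $V=R^n$, and recall
\[
\E(\#\Sur^*(S,G))=\sum_{F\in\Sur(Z,G)}\Prob\big(\col(L)\subset\ker(F)\text{ and }F^t(\p_{\h S})=\p_{\hG}\big).
\]
Extending each surjection $F\colon Z\to G$ to a homomorphism $\Z^n\to G$ (which multiplies the number of terms by $|G|$) and invoking \Cref{check pairing on cokernel}, each summand becomes the probability that the equations of \Cref{lemma: cok eqns} hold for the matrix $L$. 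The one obstruction to quoting \Cref{random matrix moment} is that $L$ is not $\alpha$-balanced: its diagonal $-\mathrm{diag}(M\mathbf 1)$ is determined by the off-diagonal adjacency matrix $M$.

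To decouple I would use the device sketched in the introduction. Let $N$ be the symmetric matrix with independent entries $N_{ij}=M_{ij}$ for $i<j$ and $N_{ii}$ independent and $\alpha$-balanced modulo $|G|$ (e.g.\ uniform on $R$, with $\alpha\le\tfrac12$); enlarge the target to $\overline G=G\oplus\Z/b\Z$; let $u\colon V\to\Z/b\Z$ be the ``column-sum detector'' sending each $v_i\mapsto 1$; and for $F\colon V\to G$ set $\overline F=F\oplus u$. The elementary point is that $\overline F N=0$ holds exactly when $N_{\ell\ell}\equiv-\deg(\ell)\pmod b$ for every $\ell$ \emph{and} $FL=0$, and the first event has probability $b^{-n}$ independently of $M$; thus requiring the $u$-component of $\overline F N$ to vanish is the same as conditioning on the diagonal of $N$ agreeing with that of $L$ modulo $b$. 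The pairing is subtler: the residual randomness in the diagonal of $N$ modulo $a$ perturbs the symmetric pairing $\overline F{}^t(\p_N)$ on $\h{\overline G}$, and one checks that summing $\mathbbm 1[\overline F{}^t(\p_N)=\psi]$ over all symmetric pairings $\psi$ on $\h{\overline G}$ extending $\phG$, and averaging over that diagonal, recovers $b^{-n}\,\mathbbm 1[F^t(\p_L)=\phG]$ up to a constant depending only on $G$. Combining this with the first paragraph and tracking the constants (the $|G|$ from the $\Z^n$-extension, the $b^{-n}$, and the number $|\overline G|=b|G|$ of extensions $\psi$) yields an identity of the shape
\[
\E(\#\Sur^*(S,G))=\text{(const)}\sum_{\psi}\E\Big(\#\big\{\,\overline F\in\Sur^{*,\psi}(\cok N,\overline G)\ :\ \pi_2\circ\overline F(v_i)=1\ \text{for all }i\,\big\}\Big),
\]
normalized so that the right-hand side should tend to $|G|^{-1}$.

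It then remains to estimate each term on the right exactly as in the proof of \Cref{random matrix moment}, but for $\overline G$ and $N$ and subject to the constraint $\pi_2\circ\overline F\equiv u$. Split into three cases: when a lift of $\overline F$ to the auxiliary group $H$ of \Cref{sec: Moments I} is a code of distance $\delta n$, \Cref{bound for codes} (applicable since $N$ is $\alpha$-balanced) gives the main term $|\overline G|^{-(n+1)}$ with error $O(e^{-cn})/|\overline G|^{n}$; when $\overline F$ is not a code, all the maps in question have $\pi_2\circ\overline F(v_i)=1$, so \Cref{sandpile depth} bounds how many there are of each depth and \Cref{bound probability for depth} bounds $\Prob(\overline F N=0)$, contributing $O(e^{-cn})$; and the non-surjective $\overline F$ contribute $O(2^{-n})$. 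Summing the main terms over codes with $\pi_2\circ\overline F\equiv u$ and over the $b|G|$ extensions $\psi$, the constants collapse to $|G|^{-1}$ and all errors are absorbed into $Ke^{-cn}$. The main obstacle is the decoupling identity of the second paragraph: one must verify not only that $u$ encodes the diagonal constraint, but that summing over pairing extensions $\psi$ exactly cancels the spurious randomness introduced by replacing $L$'s deterministic diagonal by $N$'s independent one, and that the normalization comes out right; everything downstream is a careful but essentially mechanical transcription of \Cref{sec: Moments I,sec: Moments II,sec: Moments III,sec: Moments IV} with $G$ replaced by $\overline G$, together with the graph-specific depth bounds \Cref{sandpile depth} and \Cref{bound probability for depth}.
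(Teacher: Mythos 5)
Your proposal follows essentially the same route as the paper's proof: the reductions $b=\exp G$, $a=b^2$, $R=\Z/a\Z$; the passage from $\Sur(Z,G)$ to $\Hom(V,G)$ at the cost of a factor $|G|$; the replacement of $L$ by a symmetric matrix $N$ with independent diagonal; the ``column-sum detector'' $u$ (the paper's $F_0$) and the enlarged target $\overline G=G\oplus\Z/b\Z$; the identity $\Prob(F_0N=0)=b^{-n}$; the partition of the event over symmetric extensions $\psi$ of $\phG$ to $\h{\overline G}$ (of which there are $b|G|$); and then the three-way split over depth using \Cref{bound for codes}, \Cref{sandpile depth} and \Cref{bound probability for depth}. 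This matches the paper step for step, and your tracking of the constants ($|G|$ from the lift, $b^{-n}$ from conditioning, $(b|G|)^{-(n+1)}$ from each code, $b|G|$ extensions, $\approx|G|^n$ codes) does indeed collapse to $|G|^{-1}$.

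One small imprecision worth fixing: the decoupling relation you flag is not ``up to a constant depending only on $G$.'' The sum over extensions $\psi$ is a tautological partition of the event $\{\overline F N=0\text{ and }F^t(\p_N)=\phG\}$ (given $\overline F N=0$, exactly one $\psi$ satisfies $\overline F{}^t(\p_N)=\psi$), and the factor is exactly $b^{-n}=\Prob(F_0N=0)$, with no hidden constant. The role of the sum over $\psi$ is not to ``cancel'' the extra diagonal randomness but simply to re-express the event in terms of $\overline F$ and a pairing on $\h{\overline G}$, so that \Cref{bound for codes} can be invoked with the enlarged group $\overline G$. You are right that the comparison of the conditioned $N$ to the genuine Laplacian $L$ (in particular the matching of the pairing data at the level of entries mod $a$, not merely mod $b$) is the delicate step in the argument; the paper handles it via \Cref{check pairing on cokernel} and the conditional-probability identity in exactly the way you outline, so your proposal is faithful to it.
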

	\begin{proof}
		Suppose $ b $ is the exponent of $ G $, and let $ \mathfrak{R}=\Z/b\Z $. Let $ a=b^2 $, and let $ R=\Z/a\Z $. Let $ S_R=S\otimes R $ and $ S_\mathfrak{R}=S\otimes \mathfrak{R} $, and recall that $ \#\Sur(S,G)=\#\Sur(S\otimes \mathfrak{R},G)=\#\Sur(S\otimes R,G) $. Likewise, we let $ L_R $ and $ L_\mathfrak{R} $ be the reductions of the Laplacian of $ \Gamma $ modulo $ a $ and $ b $, respectively; we may think of $ L_R $ and $ L_\mathfrak{R} $ as $ n\times n $ matrices with coefficients in $ R $ and $ \mathfrak{R} $, respectively. Recall from \Cref{sec: Pairings} that $ \hcok(L) $ carries a symmetric pairing given by $ L $, which we denote by $ \p_L $. Likewise $ \hcok(L_R) $ and $ \hcok(L_{\mathfrak{R}}) $ are both equipped with symmetric pairings given by restricting the pairing on $ \hcok(L) $ to $ \hcok(L_R) $ and $ \hcok(L_{\mathfrak{R}}) $, respectively.
		
		Let $ M $ be a random $ n\times n $ symmetric matrix with coefficients in $ R $. Suppose the $ m_{ij} $ modulo $ b $ are distributed as $ (L_\mathfrak{R})_{ij} $ for $ i<j $, and suppose the $ m_{ii} $'s are distributed uniformly in $ \mathfrak{R} $, with all $ m_{ij} $ ($ i<j $) and $ m_{ii} $ independent. Set $ W=\mathfrak{R}^n $. Recall that we may view $ L_\mathfrak{R} $ as a linear map $ W^*\to W $, and let $ w_1,\ldots,w_n $ be our distinguished basis of $ W $ (i.e., the basis of $ W $ such that the strict upper-triangular entries of $ L_\mathfrak{R} $ with respect to the bases $ w_i $ and $ w_i^* $ of $ W $ and $ W^* $ are independent). Let $ F_0\in\Hom(W^n,\mathfrak{R}) $ be the map that sends each basis vector $ w_i\mapsto 1 $ (we see that $ F_0M $ is the row vector whose entries are the column sums of $ M $). Note that $ M $ and $ L_\mathfrak{R} $ do not have the same distribution, since the column sums of $ M $ can be anything and the column sums of $ L_\mathfrak{R} $ are zero. To fix this, we'll condition on $ F_0M=0 $ so that the conditioned distribution of $ M $ is the same as the distribution of $ L_\mathfrak{R} $. Let $ b_{ij}\in\mathfrak{R} $ for each $ 1\leq i<j\leq n $. We see that \[\Prob(F_0M=0\;|\;m_{ij}=b_{ij}\textrm{ for all }1\leq i<j\leq n)=b^{-n},\] so $ \Prob(F_0M=0)=b^{-n} $. Hence, given $ F_0M=0 $, we see that any choice of off-diagonal entries is equally likely in $ L_\mathfrak{R} $ as in $ M $. 
		
		For $ F\in\Hom(W,G) $, we have that \[\Prob(FL_\mathfrak{R}=0\mathrm{\ and\ }F^t(\p_{L_\mathfrak{R}})=\phG)=\Prob(FM=0\mathrm{\ and\ }F^t(\p_M)=\phG\;|\;F_0M=0),\] where $ \p_{L_\mathfrak{R}} $ and $ \p_M $ denote the symmetric pairings on $ \hcok(L_\mathfrak{R}) $ and $ \hcok(M) $, respectively. Also, \begin{align*}
			\Prob(FM=0&\mathrm{\ and\ }F^t(\p_M)=\phG\;|\;F_0M=0)\Prob(F_0M=0)\\
			&=\Prob(FM=0\mathrm{\ and\ }F^t(\p_M)=\phG\ \mathrm{and\ }F_0M=0).
		\end{align*} Let $ \tilde{F}\in\Hom(W,G\oplus \mathfrak{R}) $ be the sum of $ F $ and $ F_0 $.
		
		Let $ U\subset W $ denote the vectors whose coordinates sum to 0. In other words, let $ U=\{w\in W\;|\;F_0w=0\} $. Denote by $ \Sur_U(W,G) $ the maps from $ W\to G $ that are surjections when restricted to $ U $. 	
		We would like to estimate \begin{align*}
			\E(\#\Sur^*(S_\mathfrak{R},G))=\E(\Sur^*(U/\col(L_\mathfrak{R})),G)&=\sum_{F\in\Sur(U,G)}\Prob(FL_\mathfrak{R}=0\ \mathrm{and\ }F^t(\p_{\h{S_\mathfrak{R}}})=\p_{\hG}).
		\end{align*}
		Now, we have
		\begin{align*}
			\E(\#\Sur^*(S_\mathfrak{R},G))&=\sum_{F\in\Sur(U,G)}\Prob(FL_\mathfrak{R}=0\ \mathrm{and\ }F^t(\p_{\h{S_\mathfrak{R}}})=\p_{\hG})\\
			&=\frac{1}{|G|}\sum_{F\in\Sur_U(W,G)}\Prob(FL_\mathfrak{R}=0\mathrm{\ and\ }F^t(\p_{\h{S_\mathfrak{R}}})=\phG).
		\end{align*}
		
		Given $ F\in\Sur_U(W,G) $ such that $ FL_\mathfrak{R}=0 $, recall from \Cref{check pairing on cokernel} that $ F^t(\p_{\h{S_\mathfrak{R}}})=\phG $ if and only if $ F^t(\p_{L_\mathfrak{R}})=\phG $. In other words, for any such $ F $, we have that \[\Prob(FL_\mathfrak{R}=0\mathrm{\ and\ }F^t(\p_{\h{S_\mathfrak{R}}})=\phG)=\Prob(FL_\mathfrak{R}=0\mathrm{\ and\ }F^t(\p_{L_\mathfrak{R}})=\phG),\] and we can check whether the symmetric pairing on $ \h{S_\mathfrak{R}} $ pushes forward to $ \phG $ by checking whether the symmetric pairing on $ \hcok(L_\mathfrak{R}) $ pushes forward to $ \phG $. 
		
		Now, let $ V=R^n $, and let $ Z\subset V $ denote the submodule of vectors whose coordinates sum to 0 modulo $ b $, i.e., $ Z=\{v\in V\;|\;F_0v=0\mathrm{\ mod\ }b\} $, where we abuse notation and define $ F_0:V\to\mathfrak{R} $ to be the map taking each $ v_i\mapsto1 $ modulo $ b $. As before, set $ \Sur_Z(V,G) $ to be the set of maps $ V\to G $ that are surjections when restricted to $ Z $. Note that because $ \Sur(W,G)=\Sur(V,G) $, it follows that $ \Sur(U,G)=\Sur(Z,G) $. Therefore, \begin{align*}
			\E(\#\Sur^*(S_\mathfrak{R},G))&=\frac{1}{|G|}\sum_{F\in\Sur_U(W,G)}\Prob(FL_\mathfrak{R}=0\mathrm{\ and\ }F^t(\p_{L_\mathfrak{R}})=\phG)\\
			&=|G|\inverse b^n\sum_{F\in\Sur_U(W,G)}\Prob(\tilde{F}M=0\mathrm{\ and\ }F^t(\p_{M})=\phG)\\
			&=|G|\inverse b^n\sum_{F\in\Sur_Z(V,G)}\Prob(\tilde{F}M=0\mathrm{\ and\ }F^t(\p_{M})=\phG).
		\end{align*}
		
		Note that if $ F:V\to G $ is a surjection when restricted to $ Z $, then $ \tilde{F} $ is a surjection from $ V $ to $ G\oplus \mathfrak{R} $. Write the above as \[\E(\#\Sur^*(S_\mathfrak{R},G))=|G|\inverse b^n\sum_{F\in\Sur_Z(V,G)}\sum_{\substack{\mathrm{extensions}\ \p_{\hG\oplus\h{\mathfrak{R}}}\mathrm{\ of}\\ \phG\mathrm{\ to\ }\hG\oplus\h{\mathfrak{R}}}}\Prob(\tilde{F}M=0\mathrm{\ and\ }\tilde{F}^t(\p_M)=\p_{\hG\oplus\h{\mathfrak{R}}}),\] where by extensions $ \p_{\hG\oplus\h{\mathfrak{R}}} $ of $ \phG $ we really mean \emph{symmetric} extensions of $ \phG $ (we truncate for the sake of brevity). For the rest of the argument, when we refer to extensions of $ \phG $ we mean symmetric extensions of this pairing. If $ g_1,\ldots,g_r $ generate $ G $, and if $ t $ generates $ \mathfrak{R} $, then we see that symmetric extensions of $ \phG $ to $ \hG\oplus\h{\mathfrak{R}} $ are entirely determined by where the pairs $ (\h{g_i},\h{t}) $ and $ (\h{t},\h{t}) $ are sent for all $ 1\leq i\leq r $. Hence, in total, there are $ b|G| $ possible extensions of $ \phG $ to $ \hG\oplus\h{\mathfrak{R}} $. 
		
		We start by considering the part of the sum where $ \tilde{F} $ is not a code of distance $ \delta n $ for some $ \delta>0 $ that we will choose later. We allow $ K $ to change in each line, as long as it is a constant depending only on $ q,G,\delta $. Let $ \alpha=\max(q,1-q) $. Then 
		\begin{align*}
			\frac{b^n}{|G|}&\sum_{\substack{F\in\Sur_Z(V,G)\\\tilde{F}\mathrm{\ not\ code\ of\ distance\ }\delta n}}\sum_{\substack{\mathrm{extensions}\ \p_{\hG\oplus\h{\mathfrak{R}}}\mathrm{\ of}\\ \phG\mathrm{\ to\ }\hG\oplus\h{\mathfrak{R}}}}\Prob(\tilde{F}M=0\mathrm{\ and\ }\tilde{F}^t(\p_{M})=\p_{\hG\oplus\h{\mathfrak{R}}})\\
			&\leq\frac{b^n}{|G|}\sum_{\substack{D>1\\D|\# G}}\sum_{\substack{F\in\Sur_Z(V,G)\\\tilde{F}\ \mathrm{depth\ }D}}\sum_{\substack{\mathrm{extensions}\ \p_{\hG\oplus\h{\mathfrak{R}}}\mathrm{\ of}\\ \phG\mathrm{\ to\ }\hG\oplus\h{\mathfrak{R}}}}\Prob(\tilde{F}M=0\mathrm{\ and\ }\tilde{F}^t(\p_{M})=\p_{\hG\oplus\h{\mathfrak{R}}})\\
			&\leq \frac{b^{n}}{|G|}\sum_{\substack{D>1\\D|\# G}}\sum_{\substack{F\in\Sur_Z(V,G)\\\tilde{F}\ \mathrm{depth\ }D}}b|G|\Prob(\tilde{F}M=0)\\
			&\leq b^{n+1}\sum_{\substack{D>1\\D|\# G}}\#\{\tilde{F}\in\Hom(V,G\oplus \mathfrak{R})\ \mathrm{depth}\ D\;|\;\pi_2(v_i)=1\ \mathrm{for\ all\ }i\}\\
			&{}\quad\quad\quad\quad\quad\quad\times Ke^{-\alpha(1-\ell(D)\delta)n}(b|G|/D)^{-(1-\ell(D)\delta)n}\\
			&{}\leq b^{n+1}\sum_{\substack{D>1\\D|\# G}}K{{n}\choose{\lceil\ell(D)\delta n\rceil-1}}|G|^nD^{-n+\ell(D)\delta n}\times e^{-\alpha(1-\ell(D)\delta)n}(b|G|/D)^{-(1-\ell(D)\delta)n}\\
			&{}\leq K{{n}\choose{\lceil\ell(|G|)\delta n\rceil-1}}e^{-\alpha(1-\ell(|G|)\delta)n}(b|G|)^{\delta\ell(|G|)n}\\
			&{}\leq Ke^{-cn},
		\end{align*} where the third and fourth inequalities in the above follow from \Cref{sandpile depth,bound probability for depth}.
		The last inequality in the above follows from the fact that for any $ 0<c<\alpha $, we can choose $ \delta $ so small that \[{{n}\choose{\lceil\ell(|G|)\delta n\rceil-1}}e^{-\alpha(1-\ell(|G|)\delta)n}(b|G|)^{\delta\ell(|G|)n}\leq e^{-cn}.\]
		
		We also have that \begin{align*}
			\sum_{\substack{F\in\Sur_Z(V,G)\\\tilde{F}\mathrm{\ not\ code\ of\ distance\ }\delta n}}&|G|^{-n-1}\leq\sum_{\substack{D>1\\D|\# G}}\sum_{\substack{F\in\Sur_Z(V,G)\\\tilde{F}\ \mathrm{depth\ }D}}|G|^{-n-1}\\
			&\leq\sum_{\substack{D>1\\D|\# G}}K{{n}\choose{\lceil\ell(D)\delta n\rceil-1}}|G|^n|D|^{-n+\ell(D)\delta n}|G|^{-n-1}\\
			&\leq K{{n}\choose{\lceil\ell(|G|)\delta n\rceil-1}}2^{-n+\ell(|G|)\delta n}\\
			&\leq Ke^{-cn},
		\end{align*} where the last inequality holds because, for any $ 0<c<\log(2) $, we may choose $ \delta $ sufficiently small so that \[{{n}\choose{\lceil\ell(|G|)\delta n\rceil-1}}2^{-n+\ell(|G|)\delta n}\leq e^{-cn}.\]
		
		Similarly, we also have that \begin{align*}
			\sum_{F\in\Hom(V,G)\setminus\Sur_Z(V,G)}|G|^{-n-1}&\leq\sum_{\calh<G}\sum_{F\in\Sur(Z,\calh)}|G|^{-n}\leq\sum_{\calh<G}|\calh|^{n-1}|G|^{-n}\leq K2^{-n},
		\end{align*} where $ \calh<G $ indicates that $ \calh $ is a proper subgroup of $ G $. 
		
		Then, using \Cref{bound for codes} and \eqref{equivalent probabilities}, we see that \begin{align*}
			&\sum_{\substack{F\in\Sur_Z(V,G)\\\tilde{F}\mathrm{\ code\ of\ distance\ }\delta n}}\sum_{\substack{\mathrm{extensions}\ \p_{\hG\oplus\h{\mathfrak{R}}}\mathrm{\ of}\\ \phG\mathrm{\ to\ }\hG\oplus\h{\mathfrak{R}}}}\left|\Prob(\tilde{F}M=0\mathrm{\ and\ }\tilde{F}^t(\p_M)=\p_{\hG\oplus\h{\mathfrak{R}}})-(b|G|)^{-n-1}\right|\\
			&\qquad\leq\sum_{\substack{F\in\Sur_Z(V,G)\\\tilde{F}\mathrm{\ code\ of\ distance\ }\delta n}}\sum_{\substack{\mathrm{extensions}\ \p_{\hG\oplus\h{\mathfrak{R}}}\mathrm{\ of}\\ \phG\mathrm{\ to\ }\hG\oplus\h{\mathfrak{R}}}}Ke^{-cn}(b|G|)^{-n}\\
			&\qquad\leq Ke^{-cn}b^{-n}.
		\end{align*}
		
		Recall that the number of symmetric extensions of $ \phG $ to $ \hG\oplus\h{\mathfrak{R}} $ is $ b|G| $. Putting this all together, we have \begin{align*}
			\Bigg|&\frac{b^n}{|G|}\Bigg(\sum_{F\in\Sur_Z(V,G)}\sum_{\substack{\mathrm{extensions}\ \p_{\hG\oplus\h{\mathfrak{R}}}\mathrm{\ of}\\ \phG\mathrm{\ to\ }\hG\oplus\h{\mathfrak{R}}}}\Prob(\tilde{F}M=0\mathrm{\ and\ }\tilde{F}^t(\p_M)=\p_{\hG\oplus\h{\mathfrak{R}}})\Bigg)-|G|\inverse\Bigg|\\
			&\leq\left|\frac{b^n}{|G|}\sum_{\substack{F\in\Sur_Z(V,G)\\\tilde{F}\mathrm{\ not\ code\ of\ distance\ }\delta n}}\sum_{\substack{\mathrm{extensions}\ \p_{\hG\oplus\h{\mathfrak{R}}}\mathrm{\ of}\\ \phG\mathrm{\ to\ }\hG\oplus\h{\mathfrak{R}}}}\Prob(\tilde{F}M=0\mathrm{\ and\ }\tilde{F}^t(\p_M)=\p_{\hG\oplus\h{\mathfrak{R}}})\right|\\
			&\qquad+\frac{b^n}{|G|}\sum_{\substack{F\in\Sur_Z(V,G)\\\tilde{F}\mathrm{\ code\ of\ distance\ }\delta n}}\sum_{\substack{\mathrm{extensions}\ \p_{\hG\oplus\h{\mathfrak{R}}}\mathrm{\ of}\\ \phG\mathrm{\ to\ }\hG\oplus\h{\mathfrak{R}}}}\left|\Prob(\tilde{F}M=0\mathrm{\ and\ }\tilde{F}^t(\p_M)=\p_{\hG\oplus\h{\mathfrak{R}}})-(b|G|)^{-n-1}\right|\\
			&\qquad\qquad+\left|-|G|\inverse+\frac{b^n}{|G|}\sum_{\substack{F\in\Sur_Z(V,G)\\\tilde{F}\mathrm{\ code\ of\ distance\ }\delta n}}\sum_{\substack{\mathrm{extensions}\ \p_{\hG\oplus\h{\mathfrak{R}}}\mathrm{\ of}\\ \phG\mathrm{\ to\ }\hG\oplus\h{\mathfrak{R}}}}(b|G|)^{-n-1}\right|\\
			&\leq Ke^{-cn}+\sum_{\substack{F\in\Sur_Z(V,G)\\\tilde{F}\mathrm{\ not\ code\ of\ distance\ }\delta n}}|G|^{-n-1}+\sum_{F\in\Hom(V,G)\setminus\Sur_Z(V,G)}|G|^{-n-1}\\
			&\leq Ke^{-cn}.
		\end{align*} 
		
		Recalling that \[\E(\#\Sur^*(S_\mathfrak{R},G))= \frac{b^n}{|G|}\sum_{F\in\Sur_Z(V,G)}\sum_{\substack{\mathrm{extensions}\ \p_{\hG\oplus\h{\mathfrak{R}}}\\\mathrm{of\ }\phG\mathrm{\ to\ \hG\oplus\h{\mathfrak{R}}}}}\Prob(\tilde{F}M=0\mathrm{\ and\ }\tilde{F}^t(\p_M)=\p_{\hG\oplus\h{\mathfrak{R}}}),\] we may conclude the theorem. 
	\end{proof}
	\section{The moments determine the distribution}\label{moments determine distribution}
	In the following, we show that the moments we obtained in the previous section indeed determine the distributions of our random variables taking values in the category $ \calc $ of finite abelian groups whose duals are equipped with symmetric pairings. To do so, we combine several results from \cite{SW22} due to Sawin and Wood.
	
	Recall from \Cref{Introduction} that $ \calc $ is the category of finite abelian groups whose dual groups are equipped with symmetric pairings. For two objects $ (A,\p_{\h{A}}),(B,\p_{\h{B}})\in\calc $, recall that a morphism between these objects in $ \calc $ is a group homomorphism $ F:A\to B $ such that $ F^t(\p_{\h{A}})=\p_{\h{B}} $. To apply the moment methods of Sawin and Wood, we would like $ \calc $ to be a \emph{diamond category}.
	
	Before we give the definition of a diamond category, we first collect some notation from \cite{SW22}. Suppose $ \cald $ is a small category. Given an object $ G\in\cald $, a \emph{quotient} of $ G $ is the data of a object $ F\in\cald $ and an epimorphism $ \pi:G\to F $. Two quotients $ (F,\pi) $ and $ (F',\pi') $ of $ G $ are said to be \emph{isomorphic} if there exists an isomorphism $ \rho:F\to F' $ such that $ \pi'=\rho\circ\pi $. We may define a partial order on the set of isomorphism classes quotients of an object $ G\in\cald $, where $ F\leq H $ if there is some $ h:H\to F $ compatible with the epimorphisms from $ G $. Given a quotient $ F $ of $ G\in\cald $, let $ [F,G] $ be the partially ordered set of (isomorphism classes of) quotients of $ G $ that are greater than or equal to $ F $. Call an object \emph{minimal} if its only quotient is itself. 
	
	In our category of interest, we would like the set of quotients of an object with respect to this partial order to form a \emph{lattice}. A {lattice} is a poset such that any two elements $ x $ and $ y $ in the lattice have a least upper bound, or \emph{join}, and a greatest lower bound, or \emph{meet}. Denote the join of $ x $ and $ y $ (respectively, meet) by $ x\vee y $ (respectively, $ x\wedge y $). We say a lattice is \emph{modular} if $ a\leq b $ implies $ a\vee(x\wedge b)=(a\vee x)\wedge b $ for all $ x $ in the lattice. 
	
	For a set $ X $ of objects of $ \cald $, stable under isomorphism, $ X $ is said to be \emph{downward-closed} if every quotient of an object $ G\in X $ is also in $ X $. We say that $ X $ is \emph{join-closed} if for every $ G\in X $, given two quotients $ F $ and $ H $ of $ G $ such that $ F\vee H $ exists (in the poset of isomorphism classes of quotients of $ G $), we have $ F,H\in X $ implies $ F\vee H\in X $. Given a set $ X $ of isomorphism classes of objects in $ \cald $, the set \emph{generated} by $ X $ is the smallest downward-closed and join-closed subset of $ \cald $ containing $ X $ and every minimal object of $ \cald $. A \emph{level} $ D $ of $ \cald $ is a set of isomorphism classes of objects of $ \cald $ generated by a finite set. An epimorphism $ G\to F $ is called \emph{simple} if the interval $ [F,G] $ contains only $ F $ and $ G $. \begin{definition}
		A \emph{diamond category} is a small category $ \cald $ satisfying the following three properties. \begin{enumerate}
			\item For each $ G\in\cald $, the set of isomorphism classes of quotients of $ G $ form a finite modular lattice.
			\item Each object in $ \cald $ has finitely many automorphisms.
			\item For each level $ D $ of $ \cald $ and each object in the level $ D $, there are only finitely many elements of $ D $ with a simple epimorphism to $ G $.
		\end{enumerate}
	\end{definition}
	
	To see that $ \calc $ is a diamond category, we will apply the following lemma from \cite{SW22}. 
	
	\begin{lemma}[Lemma 6.22 of \cite{SW22}]\label{diamond category}
		Let $ \cald $ be a diamond category, and let $ \calg $ be a functor from $ \cald $ to the category of finite sets. Then the category $ (\cald,\calg) $ of pairs of an object $ G\in\cald $ together with an element $ s\in\calg(G) $, with morphisms $ (G,s_G)\to(H,s_H) $ given by morphisms $ f:G\to H $ in $ \cald $ such that $ \calg(f)(s_G)=s_H $, is a diamond category.
	\end{lemma}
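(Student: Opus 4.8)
The plan is to exploit the fact that adjoining a ``decoration'' $s\in\calg(G)$ alters neither the quotient posets nor the levels in any essential way, so that all three defining properties of a diamond category descend from $\cald$ to $(\cald,\calg)$.

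First I would establish the central structural fact: for a fixed object $(G,s)$ of $(\cald,\calg)$, the forgetful functor $(G,s)\mapsto G$ induces an isomorphism of posets between the poset of isomorphism classes of quotients of $(G,s)$ and the poset of isomorphism classes of quotients of $G$ in $\cald$. Indeed, given an epimorphism $\pi\colon G\to F$ in $\cald$, the only decoration on $F$ making $\pi$ an epimorphism $(G,s)\to(F,t)$ in $(\cald,\calg)$ is $t:=\calg(\pi)(s)$, so quotients of $(G,s)$ are in bijection with quotients of $G$; moreover, for any morphism $h$ in $\cald$ relating two such quotients, the required compatibility with $\calg$ is automatic from functoriality of $\calg$ and commutativity of the relevant triangle, so two quotients of $(G,s)$ are isomorphic (resp.\ comparable) exactly when the underlying quotients of $G$ are. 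Transporting structure across this poset isomorphism yields property (1): the quotients of $(G,s)$ form a finite modular lattice because those of $G$ do, and joins and meets in $(\cald,\calg)$ are computed on underlying objects. Property (2) is immediate, since an automorphism of $(G,s)$ is in particular an automorphism of $G$ and $\Aut_\cald(G)$ is finite.

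Next I would analyze minimal objects and levels. From the poset isomorphism, $(G,s)$ is minimal in $(\cald,\calg)$ if and only if $G$ is minimal in $\cald$. Let $D$ be a level of $(\cald,\calg)$, generated by a finite set $S$, and let $D'$ be the level of $\cald$ generated by $\{\,G:(G,s)\in S\,\}$. I claim the forgetful functor carries $D$ into $D'$; equivalently, $X:=\{\,(F,t):F\in D'\,\}$ contains $D$. It suffices to check that $X$ is downward-closed, join-closed, and contains $S$ together with every minimal object of $(\cald,\calg)$. Downward-closedness and containment of $S$ are immediate, and $X$ contains every minimal object of $(\cald,\calg)$ because such an object has minimal underlying object and every minimal object of $\cald$ lies in $D'$. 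For join-closedness, if $(F,t)\in X$ and $(F_1,t_1),(F_2,t_2)$ are quotients of $(F,t)$ lying in $X$ whose join exists, then under the poset isomorphism $F_1,F_2$ are quotients of $F\in D'$ whose join $F_1\vee F_2$ exists and is the underlying object of $(F_1,t_1)\vee(F_2,t_2)$; since $D'$ is join-closed, $F_1\vee F_2\in D'$, so $(F_1,t_1)\vee(F_2,t_2)\in X$.

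Finally, property (3). Fix a level $D$ of $(\cald,\calg)$ and an object $(G,s)\in D$. If $\pi\colon(F,t)\to(G,s)$ is a simple epimorphism, then $[(G,s),(F,t)]$ contains only $(G,s)$ and $(F,t)$, and under the poset isomorphism this interval is identified with $[G,F]$, so $\pi\colon F\to G$ is a simple epimorphism in $\cald$. By the previous paragraph $F\in D'$ and $G\in D'$, so property (3) for $\cald$ shows that only finitely many such $F$ occur; since $\calg(F)$ is finite for each of these, there are only finitely many pairs $(F,t)\in D$ admitting a simple epimorphism to $(G,s)$. This verifies all three axioms. The step I expect to require the most care is the level-containment claim in the third paragraph — that the forgetful functor respects the ``generated by a finite set'' operation — since it is the only place where one must check that the closure conditions defining a level interact correctly with the join structure, and it is exactly what makes property (3) reduce cleanly to the corresponding statement in $\cald$.
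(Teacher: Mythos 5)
The paper does not prove this lemma --- it is imported verbatim from Sawin--Wood \cite{SW22}, so there is no in-paper argument to compare against. Your proof is correct, and the central move --- establishing a poset isomorphism between the quotient lattice of $(G,s)$ in $(\cald,\calg)$ and that of $G$ in $\cald$, then transporting the three diamond axioms across it --- is the natural one. The only point I would encourage you to spell out is the bidirectional epi-correspondence underlying that poset isomorphism: you observe that the decoration on the target of $\pi$ is forced to be $\calg(\pi)(s)$, but one also needs that a morphism in $(\cald,\calg)$ is an epimorphism precisely when its underlying $\cald$-morphism is. The forward direction follows from faithfulness of the forgetful functor; for the reverse, if $f,g\colon F\to H$ satisfy $f\pi=g\pi$ in $\cald$, then applying $\calg$ to this equation gives $\calg(f)(t)=\calg(g)(t)$, so $f$ and $g$ both lift to morphisms $(F,t)\to(H,\calg(f)(t))$, and epi-ness of $\pi$ in $(\cald,\calg)$ forces $f=g$. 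With that in hand, the rest of your argument --- transport of modularity and finiteness of quotients for axiom (1), the inclusion of automorphism groups for (2), the level-comparison via $X=\{(F,t):F\in D'\}$, and the final count using finiteness of $\calg(F)$ for (3) --- goes through exactly as written.
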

	That $ \calc $ is a diamond category follows from applying \Cref{diamond category} to the category of finite abelian groups and the functor $ \Phi $ taking each group to the set of symmetric pairings on its dual group. Let $ \mathrm{FinAb} $ denote the category of finite abelian groups. The functor $ \Phi:\mathrm{FinAb}\to\mathrm{FinSet} $ takes a group homomorphism $ F:A\to B $ to the map of sets \[F^t:\{\mathrm{symmetric\ pairings\ on\ }\h{A}\}\to\{\mathrm{symmetric\ pairings\ on\ }\h{B}\}\] given by \[F^t(\p_{\h{A}})=\ip{F^t(-),F^t(-)}_{\h{A}}:\h{B}\otimes\h{B}\to\Q/\Z.\] We note that the category $ \calc $ is precisely the enriched category of pairs $ (\mathrm{FinAb},\Phi) $ as in \Cref{diamond category}.
	
	The following is the key result from \cite{SW22} that will allow us to prove the moments determine the distribution.
	\begin{lemma}[Theorem 1.6 of \cite{SW22}]\label{diamond moments det distr}
		Let $ \cald $ be a diamond category, and let $ D $ be a level in $ \cald $. For each $ G\in D $, let $ M_G $ be a real number such that $ (M_G)_G $ is well-behaved at $ D $.\footnote{The notion of well-behavedness at a level is not something we will precisely define. Roughly, well-behavedness captures whether or not the moments ``grow too quickly.'' In other words, if the moments in a diamond category are well-behaved at a level, then they will determine a unique measure on that level. The version of \Cref{diamond moments det distr} stated in \cite{SW22} is actually much stronger than the one stated above. However, for the sake of brevity, we use a weaker version of the theorem that is sufficient for our purposes.} Then if $ \mu_D^t$ and $\nu_D^t $ are two sequences of measures on $ D $ such that for each $ G\in D $, we have \[\lim_{t\to\infty}\int_{F\in D}|\mathrm{Epi}(F,G)|\mathrm{d}\mu_D^t=M_G=\lim_{t\to\infty}\int_{F\in D}|\mathrm{Epi}(F,G)|\mathrm{d}\nu_D^t,\] then $ \lim_{t\to\infty}\mu_D^t(\{F\}) $ and $ \lim_{t\to\infty}\nu_D^t(\{F\}) $ exist and are equal for any $ F\in D $. 
	\end{lemma}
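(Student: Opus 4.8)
The honest plan is that this is Theorem 1.6 of \cite{SW22}, so in the paper we simply invoke it rather than reprove it. For orientation, here is the strategy one would follow to establish it directly, and where the real difficulty sits.

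Fix a level $D$. First, reformulate moments in terms of quotient objects. In a diamond category every epimorphism $F\to G$ exhibits $G$ as a quotient of $F$, and two such epimorphisms give the same quotient exactly when they differ by an automorphism of $G$; hence $\#\mathrm{Epi}(F,G)=|\mathrm{Aut}(G)|\,q_{F,G}$, where $q_{F,G}$ is the number of quotients of $F$ isomorphic to $G$. Writing $x_F:=\mu(\{F\})$ and $m_G:=M_G/|\mathrm{Aut}(G)|$, the moment data of a measure $\mu$ on $D$ becomes $m_G=\sum_{F\in D}q_{F,G}\,x_F$. The matrix $Q=(q_{F,G})_{F,G\in D}$ is unitriangular for the ``is a quotient of'' partial order: $q_{F,G}\neq 0$ forces $G$ to be a quotient of $F$, and $q_{F,F}=1$ (an epimorphism $F\to F$ is an isomorphism, as quotient lattices and automorphism groups are finite). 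So $Q$ is formally invertible over $\Q$, with $x_F=\sum_G(Q^{-1})_{F,G}\,m_G$, where $(Q^{-1})_{F,G}$ is a M\"{o}bius-type alternating sum over chains of quotients in $[G,F]$.

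If $D$ were finite this would finish the proof: both $\mu_D^t$ and $\nu_D^t$ satisfy the \emph{finite} identity $x_F^t=\sum_G(Q^{-1})_{F,G}\,m_G^t$, so $\lim_{t\to\infty} m_G^t=m_G$ for all $G\in D$ would yield $\lim_{t\to\infty} x_F^t=\sum_G(Q^{-1})_{F,G}\,m_G$, the same for $\mu$ and $\nu$. For an infinite level the sum over $G$ is infinite, and two points must be controlled: (i) its absolute convergence and (ii) the interchange of $\lim_{t\to\infty}$ with the sum. This is precisely what the hypothesis that $(M_G)_G$ be \emph{well-behaved at $D$} provides. Using the finite modular lattice structure of the quotient poset of each object and the finiteness of simple-epimorphism sources at each level (axioms (1) and (3) of a diamond category), one bounds $|(Q^{-1})_{F,G}|$ by a fixed power of a suitable ``complexity'' of $G$; well-behavedness then bounds the $m_G^t$ uniformly in $t$, so the tails $\sum_{\mathrm{complexity}(G)>N}|(Q^{-1})_{F,G}|\,|m_G^t|$ are small uniformly in $t$, which licenses the interchange and gives the result.

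The main obstacle, and the technical heart of \cite{SW22}, is step (ii): bounds on the $m_G^t$ uniform in $t$ rather than merely in the limit, which require both the correct definition of ``well-behaved at $D$'' and the combinatorial estimates on M\"{o}bius-type coefficients in modular lattices; the unitriangular reformulation and the formal inversion are bookkeeping. For our application this part is easy: in Sections~\ref{sec: Moments I}--\ref{sec: Moments V} the limiting moments are $M_G=1/|G|$ exactly, which is bounded and thus as well-behaved as one could wish, so using this lemma for the category $\calc$ reduces to knowing $\calc$ is a diamond category, which was checked above via \Cref{diamond category}.
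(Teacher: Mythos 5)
Your proposal correctly notes that the paper simply cites Theorem 1.6 of \cite{SW22} as a black box rather than reproving it, and your sketch of the unitriangular/M\"{o}bius-inversion strategy behind it is a faithful outline of the argument in that reference. One small caution on your closing remark: boundedness of $M_{(G,\phG)}=1/|G|$ does not by itself yield well-behavedness at a level of $\calc$; the paper must still reduce, via \Cref{well-behaved: sum over pairings}, to checking that $\sum_{\phG}M_{(G,\phG)}=|\wedge^2G|$ is well-behaved in $\mathrm{FinAb}$, which is then verified using \Cref{DVR well-behaved}.
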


	Note that if we set $ \cald=\calc $ in the above, then $ \mathrm{Epi}((A,\p_{\h{A}}),(B,\p_{\h{B}}))=\Sur^*(A,B) $ for $ (A,\p_{\h{A}}),(B,\p_{\h{B}})\in\calc $. Before showing that the $ (G,\phG) $-moments computed in \Cref{sec: Moments V} are well-behaved, we must first better understand the levels of $ \calc $ via the following lemma. \begin{lemma}
		Let $ C_b $ be the level of $ \calc $ generated by the set of objects whose underlying group is $ \Z/d\Z $ for $ d|b $ and whose pairing is any symmetric pairing on $ \Hom(\Z/d\Z,\Q/\Z) $. Then $ C_b $ is the following set of isomorphism classes of objects: finite abelian groups whose exponent divides $ b $ with all possible symmetric pairings on their dual groups. 
	\end{lemma}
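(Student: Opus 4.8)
The plan is to show the two inclusions between $C_b$ and the set $\mathcal{S}_b$ of isomorphism classes of objects $(G,\phG)$ with $G$ a finite abelian group of exponent dividing $b$ and $\phG$ an arbitrary symmetric pairing on $\hG$. For the easy inclusion $C_b \subseteq \mathcal{S}_b$, I would check that $\mathcal{S}_b$ is downward-closed and join-closed and contains every minimal object of $\calc$, so that it contains the set generated by the stated generators. Downward-closure: if $(G,\phG)\in\mathcal{S}_b$ and $(F,\p_{\h F})$ is a quotient via $\pi\colon G\to F$, then $F$ is a quotient of $G$ so its exponent divides $b$, and $\p_{\h F}=\pi^t(\phG)$ is by definition some symmetric pairing on $\h F$, hence $(F,\p_{\h F})\in\mathcal{S}_b$. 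For join-closure one notes that a join of quotients of $G$ is again a quotient of $G$, so the same exponent bound applies and the pairing is whatever it is — no condition to check. Minimal objects of $\calc$ are those whose only quotient is themselves; these are (the trivial group and) $\Z/p\Z$ with any symmetric pairing on its dual, and all of these have prime exponent dividing nothing in particular — but actually one must be slightly careful: minimal objects must lie in \emph{every} level, so I would instead just observe that $\mathcal{S}_b$ trivially contains the trivial object and each minimal object $(\Z/p\Z, \cdot)$ has exponent $p$; this forces me to include these even when $p\nmid b$, which is fine since ``generated by $X$'' explicitly adjoins all minimal objects. So $C_b\subseteq\mathcal S_b$ up to this caveat; in fact the cleanest statement restricts $\mathcal S_b$ to groups of exponent dividing $b$ together with all the minimal objects, and I would phrase it that way to match \cite{SW22}'s conventions.

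For the reverse inclusion $\mathcal{S}_b\subseteq C_b$, the main content is: every finite abelian group $G$ of exponent dividing $b$, with \emph{any} symmetric pairing $\phG$ on $\hG$, is obtained from the generators by taking quotients and joins. The starting point is that $G\cong\bigoplus_{i=1}^r \Z/d_i\Z$ with each $d_i\mid b$, so as a \emph{group} $G$ is a quotient of $\bigoplus_i \Z/d_i\Z$; but quotients in $\calc$ carry along a pushed-forward pairing, and joins are what let us \emph{combine} the summands while \emph{choosing} the cross-terms of the pairing. Concretely I would argue as follows. First handle the cyclic-with-pairing case: $(\Z/d\Z,\psi)$ for $d\mid b$ and $\psi$ any symmetric pairing on $\Hom(\Z/d\Z,\Q/\Z)$ — these are exactly the generators (or quotients of them, since $\Z/d\Z$ is a quotient of $\Z/b\Z$ and the pushforward of a suitable pairing on $\h{\Z/b\Z}$ realizes any $\psi$: a symmetric pairing on $\h{\Z/d\Z}\cong\Z/d\Z$ is multiplication by some $a\in\Z/d\Z$, and one can pull this back). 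Then, given $(G,\phG)$ with $G=\bigoplus_i \Z/d_i\Z$, consider the $r$ evident surjections $p_i\colon G\to \Z/d_i\Z$; each pushes $\phG$ to a pairing on $\h{\Z/d_i\Z}$, giving quotient objects $(\Z/d_i\Z,\psi_i)$ of $(G,\phG)$ inside $\calc$, all of which lie in $C_b$ by the previous step. The claim is that $(G,\phG)$ is the \emph{join} of these quotients in the poset of quotients of some ambient object — but that's circular since I want $(G,\phG)$ itself.

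The right move, and the step I expect to be the main obstacle, is to realize $(G,\phG)$ as a quotient of a \emph{large} object that is visibly built from the generators, and then not need joins of $G$'s own quotients at all. Take $W=(\Z/b\Z)^N$ for $N$ large enough that $G$ is a quotient of $W$ as a group, say via $\pi\colon W\twoheadrightarrow G$; then $\pi^t(\phG)$ is \emph{some} symmetric pairing $\p_{\h W}$ on $\h W$, and $(W,\p_{\h W})$ is a quotient of $(G,\phG)$? No — it's the other way: $(G,\phG)$ is a quotient of $(W,\p_{\h W})$ by construction, since $\pi\colon W\to G$ satisfies $\pi^t(\p_{\h W})=\p_{\h W}$... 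I need $\p_{\h W}$ to push forward \emph{to} $\phG$, which is automatic from $\p_{\h W}:=\pi^t(\phG)$ only if $\pi^t$ is injective on pairings, which it is not in general. So instead I would build $\p_{\h W}$ by \emph{choosing} a symmetric pairing on $\h W\cong W$ — i.e. a symmetric matrix over $\Z/b\Z$ — whose restriction along $\pi^t\colon\hG\hookrightarrow\h W$ equals $\phG$; such an extension exists because $\hG$ is a direct summand of $\h W$ (as $\pi$ splits, $W$ being free over $\Z/b\Z$ and $G$ a quotient — wait, $G$ need not be a summand of $W$). This is the genuinely delicate point: I would resolve it by taking $W=\bigoplus_i \Z/b\Z$ with $r$ summands (not $\Z/d_i\Z$), let $\pi\colon W\to G$ be the coordinatewise surjection $\Z/b\Z\to\Z/d_i\Z$; then $\pi^t\colon \hG\to\h W$ identifies $\h{\Z/d_i\Z}$ with the $d_i$-torsion of the $i$-th summand of $\h W$, and one extends $\phG$ to a symmetric pairing $\p_{\h W}$ on $\h W$ by choosing arbitrary lifts of each entry $a_{ij}$ to $\Z/b\Z$ — this visibly works. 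Now $(W,\p_{\h W})$ is a finite join (in the poset of quotients of a still-larger free object, or directly) of cyclic objects $(\Z/b\Z,\psi)$, each of which is a generator, and $(G,\phG)$ is a quotient of $(W,\p_{\h W})$; hence $(G,\phG)\in C_b$ by downward- and join-closure. I would write this last assembly carefully, making explicit the ambient object whose quotients $(\Z/b\Z,\psi_i)$ join up to $(W,\p_{\h W})$, since that is exactly where join-closedness (as opposed to mere downward-closedness) gets used, and double-check that the relevant joins exist in the quotient poset — which they do because $\calc$ is a diamond category and the lattice of quotients of any object is complete (finite).
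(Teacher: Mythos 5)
The forward inclusion $C_b\subseteq\mathcal S_b$ is fine, though your worry about minimal objects is a red herring: in the category $\finab$ (and hence in $\calc$), the only minimal object is the trivial group, since every nontrivial group has the trivial group as a proper quotient. So $\Z/p\Z$ with any pairing is never minimal, and $\mathcal S_b$ contains the unique minimal object for free.

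The reverse inclusion $\mathcal S_b\subseteq C_b$ has a genuine gap. Your strategy is to realize $(G,\phG)$ as a quotient of $(W,\p_{\h W})$ with $W=(\Z/b\Z)^r$, which requires \emph{extending} the given pairing $\phG$ along the inclusion $\pi^t\colon\hG\hookrightarrow\h W$. This extension does not always exist. Take $b=4$ and $G=\Z/2\Z$ with the nontrivial pairing $\ip{\h g,\h g}_{\hG}=\tfrac12$. Here $W=\Z/4\Z$, $\h W\cong\Z/4\Z$, and $\pi^t$ embeds $\hG$ as the $2$-torsion subgroup $2\Z/4\Z$. Any symmetric pairing on $\h W$ is $\ip{x,y}=cxy/4$ for some $c\in\Z/4\Z$, so its restriction to the $2$-torsion sends $(2,2)\mapsto 4c/4=c\equiv 0\bmod\Z$, i.e., every pairing on $\h W$ restricts to the \emph{zero} pairing on $\hG$. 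So $(G,\phG)$ is not a quotient of $(W,\p_{\h W})$ for any choice of $\p_{\h W}$, and ``choosing arbitrary lifts of the $a_{ij}$'' does not in fact produce a valid extension. Structurally, the issue is that for an inclusion $H\hookrightarrow K$ of finite abelian groups, the map $\Sym_2 H\to\Sym_2 K$ can fail to be injective, so restriction of pairings is not surjective.

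The paper's proof avoids this entirely by not passing through $(\Z/b\Z)^r$: it decomposes $G=\bigoplus_i\Z/d_i\Z$ with $d_i\mid b$ (crucially using the actual $d_i$, not $b$), notes that each $(\Z/d_i\Z,\phG|_{\h{\Z/d_i\Z}})$ is by definition a generator of $C_b$, and then observes that for any symmetric pairing $\phi$ on $\h A\times\h B$, the object $(A\times B,\phi)$ is the join of its quotients $(A,\phi|_{\h A})$ and $(B,\phi|_{\h B})$ in the lattice of quotients of $(A\times B,\phi)$. Iterating this join-closedness argument over the cyclic factors gives $(G,\phG)\in C_b$ directly, with no need to extend a pairing from a subgroup to a larger group. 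The restriction direction (large to small, along $\pi^t$ for a \emph{projection} $\pi$ onto a direct summand) always makes sense; the extension direction you need does not.
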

	\begin{proof}
		We begin by noting that, given two groups $ A,B\in\finab $, both $ A $ and $ B $ are quotients of $ A\times B $ (we have $ B\simeq (A\times B)/(A\times\{0\}) $, for example). Moreover, $ A\times B $ is the join of $ A $ and $ B $ in the lattice of quotients of $ A\times B $. 
		
		Now, given any two objects $ (A,\p_{\h{A}}),(B,\p_{\h{B}})\in\calc $, let $ \p $ be a pairing on $ \h{A}\times\h{B}\simeq\Hom(A\times B,\Q/\Z) $ whose restriction to $ \h{A}\times\{0\} $ is $ \p_{\h{A}} $ and whose restriction to $ \{0\}\times\h{B} $ is $ \p_{\h{B}} $. For any such pairing $ \p $, we claim that $ (A\times B,\p) $ is the join of $ (A,\p_{\h{A}}) $ and $ (B,\p_{\h{B}}) $ in the lattice of quotients of $ (A\times B,\p) $. This follows from our remark in the previous paragraph and the fact that a quotient $ (G,\phG) $ of $ (A\times B,\p) $ is a quotient (of groups) $ \pi:A\times B\to G $ such that the transpose $ \pi^t:\h{G}\to\h{A}\times\h{B} $ pushes the pairing $ \p $ forward to the pairing $ \phG $.
		
		Let $ C $ be a level of $ \calc $, and suppose $ (A,\p_{\h{A}}),(B,\p_{\h{B}})\in C $ for any symmetric pairings $ \p_{\h{A}} $ and $ \p_{\h{B}} $ on $ \h{A} $ and $ \h{B} $, respectively. Given any symmetric pairing $ \phi $ on $ \h{A}\times\h{B} $, we note that $ (A\times B,\phi) $ is simply the join of $ (A,\phi|_{\h{A}}) $ and $ (B,\phi|_{\h{B}}) $, where $ \phi|_{\h{A}} $ denotes pairing on $ \h{A} $ given by the restriction of $ \phi $ to $ \h{A}\times\{0\} $ (similarly for $ \phi|_{\h{B}} $). Therefore, $ (A\times B,\phi)\in C $.

		Since any quotient of $ \Z/d\Z $ for $ d|b $ has exponent dividing $ b $, and since the join of $ \Z/d\Z $ with any other such group (in $ \finab $) has exponent dividing $ b $, we see that $ C_b $ only contains objects whose underlying group has exponent dividing $ b $. Now, any group $ G $ whose exponent divides $ b $ can be written as \[G=\bigoplus_{i=1}^r\Z/d_i\Z,\] where each $ d_i|b $. In other words, $ G $ is the join (as groups) of the $ \Z/d_i\Z $. By the above, the join-closedness of $ C_b $ forces $ C_b $ to contain $ G $ equipped with any possible symmetric pairing on its dual, since $ C_b $ is generated by $ \Z/d\Z $ with any symmetric pairing on its dual, where $ d $ is any divisor of $ b $. 
	\end{proof}
	
	Ultimately, we would like to apply \Cref{diamond moments det distr} to the category $ \calc $ at the level $ C_b $ for an arbitrary positive integer $ b $. Let $ D_b $ be the level in $ \finab $ generated by $ \Z/b\Z $, i.e., the level of finite abelian groups of exponent dividing $ b $. Note that $ D_b $ is equal to the level of $ \finab $ generated by $ \Z/d\Z $ for $ d|b $. To prove that the titular sequence of $ (G,\phG) $-moments computed in \Cref{sec: Moments V} are well-behaved, we reduce the question of well-behavedness at $ C_b $ to one of well-behavedness at $ D_b $ in $ \finab $ using the following lemma. \begin{lemma}[Proof of Lemma 6.23 of \cite{SW22}]\label{well-behaved: sum over pairings}
		Let $ \cald $ be a diamond category and $ \calg $ a functor from $ \cald $ to the category of finite sets. Let $ C $ be a level of the category of pairs, $ (\cald,\calg) $, and let $ D $ be the level of $ \cald $ generated by the underlying objects of a finite set of generators of $ C $. Let $ (M_H)_H\in\R^{C} $ be a set of moments. Then $ (M_H)_H $ is well-behaved at $ C $ if $ (\sum_{s\in\calg(G)}M_{(G,s)})_G\in\R^{D} $ is well-behaved at $ D $. 
	\end{lemma}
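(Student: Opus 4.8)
The definition of well-behavedness at a level $D'$ in \cite{SW22} asks, for each object $F$ of $D'$, the absolute convergence of a sum whose term attached to an object $G$ of $D'$ and an epimorphism $\pi\colon G\twoheadrightarrow F$ is $\tfrac{\mu(F,G)}{|\Aut G|}\,Z(\pi)^3\,M_G$, where $\mu$ is the M\"obius function of the interval $[F,G]$ in the $D'$-restricted lattice of quotients of $G$ and $Z(\pi)$ is the auxiliary quantity of \cite{SW22}, depending only on the poset $[F,G]$. The plan is to transport all of this data from the level $C$ of $(\cald,\calg)$ down to the level $D$ of $\cald$, reducing the claimed convergence at $C$ to the hypothesized one at $D$.

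First I would record that, for an object $(G,s)$ of $(\cald,\calg)$, forgetting the $\calg$-datum is an isomorphism of posets between the quotients of $(G,s)$ and the quotients of $G$: a $\cald$-epimorphism $\pi\colon G\twoheadrightarrow F$ lifts uniquely to the quotient $(F,\calg(\pi)(s))$ of $(G,s)$, and a comparison map between two such quotients is compatible with the $\calg$-data as soon as it is a $\cald$-morphism over $G$. This being an isomorphism of (finite, modular) lattices, joins correspond. I would then check that the isomorphism respects the level structures: running through the closure definitions, the set of objects of $(\cald,\calg)$ whose underlying $\cald$-object lies in $D$ is downward- and join-closed, contains a generating set of $C$ (by the choice of $D$) and every minimal object of $(\cald,\calg)$ (these have underlying object minimal in $\cald$, hence in $D$), and so contains $C$; conversely every $D$-quotient of $G$ is the underlying object of a $C$-quotient of $(G,s)$ whenever $(G,s)\in C$, by downward-closedness. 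Therefore the $C$-restricted quotient lattice of $(G,s)$ and the $D$-restricted quotient lattice of $G$ are isomorphic, so $\mu$, the quantities $Z(\pi)$, and the notion of simple epimorphism all transfer.

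Next I would assemble the bookkeeping. By definition of the category of pairs, $\mathrm{Epi}_{(\cald,\calg)}((G,s),(F,t))=\{f\in\mathrm{Epi}_\cald(G,F):\calg(f)(s)=t\}$, so these sets sit inside $\mathrm{Epi}_\cald(G,F)$ and, as $t$ ranges over $\calg(F)$, partition it. By orbit--stabilizer, for a fixed isomorphism class of $G$ in $\cald$ with $A=\Aut_\cald(G)$ the isomorphism classes of objects of $(\cald,\calg)$ lying over $G$ are the $A$-orbits on $\calg(G)$, with $\Aut_{(\cald,\calg)}(G,s)=\mathrm{Stab}_A(s)$; and since the well-behavedness summand is invariant under isomorphism in $(\cald,\calg)$ (an automorphism merely precomposes the inner $\pi$-sum, leaving $\mu$, $Z$, and $M$ untouched), this lets one rewrite sums over isomorphism classes in $C$ weighted by $1/|\Aut_{(\cald,\calg)}|$ as sums over isomorphism classes in $D$ weighted by $1/|A|$, with an inner sum over a subset of $\calg(G)$. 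Putting this together, for $(F,t)\in C$ the well-behavedness sum for $(M_H)_H$ at $C$ attached to $(F,t)$, taken with absolute values, is at most
\[\sum_{[G]\in D}\frac1{|\Aut_\cald G|}\sum_{s\in\calg(G)}\sum_{\substack{\pi\in\mathrm{Epi}_\cald(G,F)\\ \calg(\pi)(s)=t}}|\mu(F,G)|\,Z(\pi)^3\,|M_{(G,s)}|,\]
and, after discarding the constraint $\calg(\pi)(s)=t$ and moving the $s$-sum past the $\pi$-sum, at most
\[\sum_{[G]\in D}\frac1{|\Aut_\cald G|}\sum_{\pi\in\mathrm{Epi}_\cald(G,F)}|\mu(F,G)|\,Z(\pi)^3\Big(\sum_{s\in\calg(G)}|M_{(G,s)}|\Big).\]
The right-hand side is exactly the well-behavedness sum at $D$ attached to $F$ for the aggregated sequence $\big(\sum_{s}|M_{(G,s)}|\big)_G$; in the intended applications the $M_{(G,s)}$ are nonnegative (they are moments of measures), so this aggregate equals $\big(\sum_s M_{(G,s)}\big)_G$, whose $D$-well-behavedness is the hypothesis. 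Since $(F,t)\in C$ was arbitrary, $(M_H)_H$ is well-behaved at $C$.

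The step I expect to be the main obstacle is the level-compatibility in the second paragraph: verifying that membership of a quotient of $(G,s)$ in $C$ is detected by the underlying $\cald$-quotient of $G$ lying in $D$ forces one to unwind the definitions of downward-closed, join-closed, and generated subsets and, crucially, to confirm that joins of quotients in $(\cald,\calg)$ are computed on underlying objects. Everything downstream — the orbit--stabilizer rewriting and the fiberwise sums over the finite sets $\calg(G)$ — is routine.
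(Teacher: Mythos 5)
The paper does not give its own proof of this statement---it is quoted directly from the proof of Lemma 6.23 in \cite{SW22}---so there is no in-paper argument to compare against. That said, your reconstruction is consistent with the Sawin--Wood framework and looks sound: the canonical isomorphism between the quotient lattice of $(G,s)$ in $(\cald,\calg)$ and that of $G$ in $\cald$ is exactly what underlies the diamond-category verification for the pair category (the paper's cited Lemma 6.22); the level-compatibility check is routine once that isomorphism is in hand (downward- and join-closedness on underlying objects, generators land in $D$, minimal objects of the pair category sit over minimal objects of $\cald$); and the orbit--stabilizer rewriting of the $1/|\Aut(G,s)|$ weights into $1/|\Aut_\cald G|$ weights, followed by dropping the constraint $\calg(\pi)(s)=t$, is the right way to bound the $C$-sum by the aggregated $D$-sum, with $\mu$ and $Z$ transferring because they depend only on the interval $[F,G]$ as a poset. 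The caveat you flag at the end is genuine: your estimate controls $\sum_{s}\lvert M_{(G,s)}\rvert$ whereas the hypothesis bounds $\bigl\lvert\sum_{s}M_{(G,s)}\bigr\rvert$, and these differ in general. It is harmless here, though, because in \cite{SW22} and throughout this paper the $M$'s are (limits of) expected numbers of epimorphisms and hence nonnegative; in fact the paper only ever applies the lemma with $M_{(G,\phG)}=|G|^{-1}$. If you want the lemma as a standalone statement, you should either add the nonnegativity hypothesis explicitly or replace the conclusion's hypothesis by well-behavedness of $\bigl(\sum_{s}\lvert M_{(G,s)}\rvert\bigr)_G$.
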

	When applied to $ \calc=(\mathrm{FinAb},\Phi) $, \Cref{well-behaved: sum over pairings} implies the following. For a sequence of moments $ (M_{(G,\phG)})_{(G,\phG)} $ indexed by elements of $ C_b $, if the sequence of moments indexed by $ G\in D_b $ given by \[\sum_{\substack{\phG\mathrm{\ a\ symmetric}\\\mathrm{pairing\ on\ }\hG}}M_{(G,\phG)}\] is well-behaved at $ D_b $ for $ \mathrm{FinAb} $, then the sequence $ (M_{(G,\phG)})_{(G,\phG)} $ is well-behaved at $ C_b $.
	
	Recall that for an object $ (G,\phG) \in C_b $, \Cref{sandpile moments} tells us that the $ (G,\phG) $-moment of the sandpile group of a random graph converges to $ |G|\inverse $ as $ n\to\infty $, where $ n $ is the number of vertices in the graph. Hence, we have reduced the problem to showing that the sequence in $ \R^{D_b} $ whose $ G $-entry is given by \[\frac{\#\{\mathrm{symmetric\ pairings\ on\ }\hG\}}{|G|}=\frac{|\Sym_2G|}{|G|}=|\wedge^2G|\] is well-behaved. 
	
	For any prime $ p $, let $ \calm_p $ denote the category of finite $ \Z_p $-modules, i.e., the category of finite abelian $ p $-groups. For any set of primes $ P $, let $ \finab_P $ be the product of categories $ \prod_{p\in P}\calm_p $. If $ P $ is the set of all primes, note that $ \finab=\finab_P $. \begin{lemma}[Lemma 6.16 of \cite{SW22}]\label{product of diamond is diamond}
		Let $ \cald_1 $ and $ \cald_2 $ be diamond categories. Then the product category $ \cald_1\times\cald_2 $, whose objects are ordered pairs of an object from $ \cald_1 $ and an object from $ \cald_2 $ and whose morphisms are ordered pairs of morphisms, is a diamond category.
	\end{lemma}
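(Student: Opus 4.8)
The plan is to verify the three axioms in the definition of a diamond category for $\cald_1\times\cald_2$ directly, exploiting that in a product category all of the relevant combinatorial structure --- quotients, automorphisms, minimal objects, joins, and simple epimorphisms --- is computed coordinatewise (and that the product of two small categories is small). First I would note that an epimorphism $(G_1,G_2)\to(F_1,F_2)$ in $\cald_1\times\cald_2$ is exactly a pair of epimorphisms $G_1\to F_1$ and $G_2\to F_2$, and that $(F_1,F_2)\le(H_1,H_2)$ as quotients of $(G_1,G_2)$ if and only if $F_1\le H_1$ as quotients of $G_1$ and $F_2\le H_2$ as quotients of $G_2$; hence the poset of isomorphism classes of quotients of $(G_1,G_2)$ is the product of the quotient posets of $G_1$ and of $G_2$. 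By axiom (1) for each $\cald_i$ these are finite modular lattices, and a finite product of finite modular lattices is a finite modular lattice (joins and meets are coordinatewise, and the modular law is a universally quantified identity, hence inherited coordinatewise). This gives axiom (1), and axiom (2) is immediate since $\Aut((G_1,G_2))\simeq\Aut(G_1)\times\Aut(G_2)$ is a product of two finite groups.

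For axiom (3) I would first record two coordinatewise observations. First, $(G_1,G_2)$ is minimal in $\cald_1\times\cald_2$ precisely when $G_1$ is minimal in $\cald_1$ and $G_2$ is minimal in $\cald_2$. Second, an epimorphism $(G_1,G_2)\to(F_1,F_2)$ is simple precisely when either $F_1=G_1$ and $G_2\to F_2$ is simple, or $G_1\to F_1$ is simple and $F_2=G_2$: the interval $[(F_1,F_2),(G_1,G_2)]$ is the product of the intervals $[F_1,G_1]$ and $[F_2,G_2]$, and a product of two posets, each of size $\ge 1$, has exactly two elements exactly when one factor is a singleton and the other has exactly two elements. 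Next, given a level $D$ of $\cald_1\times\cald_2$ generated by a finite set $X=\{(G_1^{(i)},G_2^{(i)})\}_i$, let $D_1$ and $D_2$ be the levels of $\cald_1$ and $\cald_2$ generated respectively by $\{G_1^{(i)}\}_i$ and $\{G_2^{(i)}\}_i$. I claim $D\subseteq D_1\times D_2$: the set $D_1\times D_2$ contains $X$; it contains every minimal object of $\cald_1\times\cald_2$ by the first observation; it is downward-closed, since a quotient of $(G_1,G_2)\in D_1\times D_2$ has its two coordinates among the quotients of $G_1\in D_1$ and of $G_2\in D_2$; and it is join-closed, since the join of two quotients of $(G_1,G_2)$ is taken coordinatewise while $D_1$ and $D_2$ are join-closed. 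As $D$ is the smallest downward-closed, join-closed set containing $X$ and all minimal objects, $D\subseteq D_1\times D_2$.

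Finally I would conclude axiom (3): fix a level $D$ of $\cald_1\times\cald_2$ and an object $(F_1,F_2)\in D$, so $F_1\in D_1$ and $F_2\in D_2$. By the second observation, any object of $D$ admitting a simple epimorphism to $(F_1,F_2)$ is either of the form $(G_1,F_2)$ with a simple epimorphism $G_1\to F_1$ in $\cald_1$, in which case $G_1\in D_1$, or of the form $(F_1,G_2)$ with a simple epimorphism $G_2\to F_2$ in $\cald_2$, in which case $G_2\in D_2$. By axiom (3) for $\cald_1$ applied to the level $D_1$ and the object $F_1$, only finitely many isomorphism classes $G_1\in D_1$ admit a simple epimorphism to $F_1$; symmetrically for $\cald_2$. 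Hence only finitely many objects of $D$ have a simple epimorphism to $(F_1,F_2)$, establishing axiom (3). The main obstacle is the level argument of the second paragraph --- one must be careful that the level generated by the coordinates of a finite generating set of $D$ is genuinely large enough to contain $D$, which is exactly what the downward- and join-closure checks accomplish; everything else is routine coordinatewise bookkeeping.
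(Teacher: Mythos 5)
The paper does not prove this lemma; it is quoted verbatim as Lemma 6.16 of the Sawin--Wood reference, so there is no in-paper proof to compare your argument against. Your coordinatewise verification of the three axioms is correct and is the natural argument: the quotient lattice of $(G_1,G_2)$ is the product lattice and finiteness and modularity pass to finite products; automorphism groups multiply; a simple epimorphism must be simple in exactly one coordinate and an isomorphism in the other; and the containment $D\subseteq D_1\times D_2$ (checked via downward-closure, join-closure, and inclusion of generators and minimal objects) reduces the finiteness in axiom (3) to axiom (3) in each factor. The one step worth keeping an eye on is the level argument, and you handle it correctly by exhibiting $D_1\times D_2$ as a downward- and join-closed superset of $D$'s generators and the minimal objects, so that $D\subseteq D_1\times D_2$ by minimality.
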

	\begin{lemma}[Lemma 6.17 of \cite{SW22}]\label{well-behaved product}
		Let $ \cald_1 $ and $ \cald_2 $ be diamond categories. Let $ (M_G^1)_{G\in\cald_1/\simeq} $ and $ (M_G^2)_{G\in\cald_2/\simeq} $ be tuples of nonnegative real numbers indexed by the respective isomorphism classes of $ \cald_1 $ and $ \cald_2 $. Define $ (M_{(G_1,G_2)})_{(G_1,G_2)\in\cald_1\times\cald_2/\simeq} $ by the formula $ M_{(G_1,G_2)}=M_{G_1}^1M_{G_2}^2 $. If  $ (M_G^1)_{G\in\cald_1/\simeq} $ and $ (M_G^2)_{G\in\cald_2/\simeq} $ are both well-behaved for $ \cald_1 $ and $ \cald_2 $, then $ (M_{(G_1,G_2)})_{(G_1,G_2)\in\cald_1\times\cald_2/\simeq} $ is well-behaved for the product category $ \cald_1\times\cald_2 $. 
	\end{lemma}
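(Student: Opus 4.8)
The plan is to reduce every ingredient of the well-behavedness condition for $\cald_1\times\cald_2$ to the corresponding ingredients for $\cald_1$ and $\cald_2$ separately, so that the defining absolutely convergent sum factors as a product of two absolutely convergent sums. First I would record the structural facts about the product category that make this possible. A quotient of $(G_1,G_2)$ is exactly a pair $(F_1,F_2)$ with $F_i$ a quotient of $G_i$, and the lattice of (isomorphism classes of) quotients of $(G_1,G_2)$ is the product poset of the quotient lattices of $G_1$ and $G_2$, with join and meet computed componentwise; this is implicit in the proof of \Cref{product of diamond is diamond}. Consequently, for objects $F=(F_1,F_2)\le G=(G_1,G_2)$ we have $\mathrm{Epi}(G,F)=\mathrm{Epi}(G_1,F_1)\times\mathrm{Epi}(G_2,F_2)$, $\Aut(G)=\Aut(G_1)\times\Aut(G_2)$, the interval $[F,G]$ in the quotient poset equals $[F_1,G_1]\times[F_2,G_2]$, and the Möbius function satisfies $\mu(F,G)=\mu(F_1,G_1)\,\mu(F_2,G_2)$ by the product formula for Möbius functions of product posets. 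Also, any level of $\cald_1\times\cald_2$ is contained in a product $D_1\times D_2$ of levels of the two factors, since a finite generating set has finitely many first and second coordinates.

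Next I would check that the combinatorial weight entering the well-behavedness condition is multiplicative. That weight (in the notation of \cite{SW22}) is built from $\mathrm{Epi}$, $\Aut$, $\mu$, and the quantity $Z(\pi)$ counting the elements $H$ of the interval $[F,G]$ satisfying the distributive law $H\vee(K_1\wedge K_2)=(H\vee K_1)\wedge(H\vee K_2)$ for all $K_1,K_2\in[F,G]$. The only point needing care is that $Z$ factors: $Z(\pi)=Z(\pi_1)\,Z(\pi_2)$ when $\pi=(\pi_1,\pi_2)$. Because joins and meets in $[F_1,G_1]\times[F_2,G_2]$ are componentwise, the identity above for $(H_1,H_2)$ against a pair $\bigl((K_1,K_1'),(K_2,K_2')\bigr)$ holds iff it holds in each coordinate; quantifying over all pairs and freezing one coordinate of the $K$'s at $H_i$ shows that $(H_1,H_2)$ is distributive in the product interval iff each $H_i$ is distributive in $[F_i,G_i]$. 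Hence the set of distributive elements of $[F,G]$ is the product of the sets of distributive elements, which gives the multiplicativity of $Z$, and therefore of the whole weight.

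Finally I would assemble these facts. Fix $F=(F_1,F_2)$ in a level of $\cald_1\times\cald_2$, which we may take to lie inside $D_1\times D_2$. Writing out the well-behavedness sum for $\bigl(M_{(G_1,G_2)}\bigr)=\bigl(M^1_{G_1}M^2_{G_2}\bigr)$ at $F$, every factor splits along the two coordinates: $\pi$ becomes a pair $(\pi_1,\pi_2)$, $|\mu(F,G)|=|\mu(F_1,G_1)|\,|\mu(F_2,G_2)|$, $|\Aut(G)|=|\Aut(G_1)|\,|\Aut(G_2)|$, $Z(\pi)=Z(\pi_1)Z(\pi_2)$, and $M_{(G_1,G_2)}=M^1_{G_1}M^2_{G_2}$, so the double sum over $(G_1,G_2)$ and over $\pi$ factors as the product of the well-behavedness sum for $(M^1)$ at $F_1$ and the well-behavedness sum for $(M^2)$ at $F_2$. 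Since all the $M^i_G$ are nonnegative, this rearrangement of a sum of nonnegative terms into a product of sums is automatically justified, and absolute convergence of each factor (the hypothesis that $(M^1)$ and $(M^2)$ are well-behaved) yields finiteness of the product, i.e.\ well-behavedness of $\bigl(M_{(G_1,G_2)}\bigr)$; an easy induction extends this to any finite product of diamond categories. The main obstacle is the bookkeeping around the $Z$-factor — correctly handling the universal quantifier over $K_1,K_2$ so that ``distributive in the product interval'' is equivalent to ``distributive in each factor'' — together with verifying that the precise definition of well-behavedness in \cite{SW22} (the exact weights and the power of $Z$) is multiplicative in exactly this fashion; everything else is a routine product-poset computation.
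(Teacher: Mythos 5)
The paper does not prove this statement; it is imported as Lemma~6.17 of \cite{SW22} and used as a black box, so there is no in-paper proof to compare against. Your factorization argument is the natural proof and looks correct: epimorphisms, automorphisms, and isomorphisms in $\cald_1\times\cald_2$ are all componentwise, so the lattice of quotients of $(G_1,G_2)$ is the product of the two quotient lattices with join and meet taken coordinatewise; consequently the interval $[F,G]$, the M\"obius function (by the product formula for posets), $\mathrm{Epi}(G,F)$, $|\Aut(G)|$, and the set of distributive elements defining $Z(\pi)$ all factor across the two coordinates, and $M_{(G_1,G_2)}=M^1_{G_1}M^2_{G_2}$ factors by construction. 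Since the well-behavedness condition is absolute convergence, the terms enter with absolute values, which are multiplicative, and Tonelli's theorem lets you factor the sum over $(G_1,G_2)$ (and over $\pi=(\pi_1,\pi_2)$) into the product of two finite series; the reduction to $D\subset D_1\times D_2$ is also fine because one only needs an upper bound on a sum of nonnegative terms. Your hedge about pinning down \cite{SW22}'s exact weights and power of $Z$ is appropriate, but the factorization does not depend on those choices, so the argument is robust.
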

	Let $ P $ be a finite set of primes. For each $ p\in P $, consider a sequence of moments in $ \calm_p $, and form a sequence of moments in $ \finab_P $ by taking all products of the moments in the component categories $ \calm_p $. By \Cref{well-behaved product}, if each of the component sequences of moments are well-behaved, so is this sequence of moments in $ \finab_P $. Therefore, for the sequence of moments $ (|\wedge^2G|)_{G}\in\R^{\finab_P/\simeq} $, it suffices to check that the sequence $ (|\wedge^2G|)_G\in\R^{\calm_p/\simeq} $ is well-behaved for each $ p $, since \[\wedge^2G=\bigoplus_p\wedge^2(G_p).\] To do this, we simply apply the following lemma from \cite{SW22} to the category of finite $ \Z_p $-modules. \begin{lemma}[Lemma 6.9 of \cite{SW22}]\label{DVR well-behaved}
		Let $ R $ be a discrete valuation ring with finite residue field. For each isomorphism class $ N $ of finite $ R $-modules, let $ M_N $ be a real number. Then $ M_N $ is well-behaved if there exists some $ \epsilon>0 $ and $ c $ such that $ M_N\leq c|N|^{1-\epsilon}|\wedge^2N| $ for all finite $ R $-modules $ N $. 
	\end{lemma}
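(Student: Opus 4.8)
The plan is to unwind the notion of well-behavedness from \cite{SW22} for the category of finite $R$-modules and show that the hypothesis forces the defining series to converge term by term. In this category $\mathrm{Epi}$ is just $\Sur$, quotients of an $R$-module $N$ correspond to submodules, and well-behavedness of $(M_N)_N$ amounts to the convergence, for every finite $R$-module $F$, of
\[
\sum_{[N]}\ \sum_{\pi\in\Sur(N,F)}\frac{|\mu(F,N)|}{|\Aut(N)|}\,Z(\pi)^3\,|M_N|,
\]
where $\mu(F,N)$ is the Möbius function of the interval $[F,N]$ in the lattice of quotients of $N$ and $Z(\pi)$ is the number of its join-distributive elements. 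Write $q:=\#(R/\mathfrak m)$. The first step is the structural estimate $|\Aut(N)|\ge\big(\prod_{k\ge1}(1-q^{-k})\big)|\End(N)|$ together with the identity $|\End(N)|=|N|\,|\wedge^2N|^2$ (for $N$ of type $\lambda$ one has $|\End(N)|=\prod_{i,j}q^{\min(\lambda_i,\lambda_j)}=q^{\sum_k(2k-1)\lambda_k}$, while $|N|=q^{\sum_k\lambda_k}$ and $|\wedge^2N|=q^{\sum_k(k-1)\lambda_k}$). Combined with $M_N\le c|N|^{1-\epsilon}|\wedge^2N|$, this gives $|M_N|/|\Aut(N)|\le c'|N|^{-\epsilon}|\wedge^2N|^{-1}$, so it suffices to prove $\sum_{[N]}\sum_{\pi\in\Sur(N,F)}|\mu(F,N)|\,Z(\pi)^3|N|^{-\epsilon}|\wedge^2N|^{-1}<\infty$.

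Next I would observe that the interval $[F,N]$ is anti-isomorphic to the submodule lattice of $K:=\ker\pi$, so $\mu(F,N)$ equals the Möbius number $\mu(\hat0,\hat1)$ of that lattice, which vanishes unless $K$ is semisimple, i.e.\ $K\cong\kappa^d$ for some $d\ge0$; in that case $|\mu(F,N)|=q^{\binom d2}$ and the interval is the subspace lattice of $\kappa^d$, whose only join-distributive elements are $0$ and $\kappa^d$, so $Z(\pi)\le2$. The series therefore collapses to
\[
\sum_{d\ge0}8\,q^{\binom d2}\!\!\sum_{\substack{[N],\ \pi\in\Sur(N,F)\\ \ker\pi\cong\kappa^d}}\frac{1}{|N|^{\epsilon}|\wedge^2N|},
\]
in which every $N$ has order $q^d|F|$. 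The crux is a bound on the inner count that is \emph{uniform} in $N$: such a $\pi$ is determined, up to the $|\Aut F|$ choices of an isomorphism $N/K\cong F$, by the $\kappa$-subspace $K$ of $N[\mathfrak m]\cong\kappa^{d(N)}$ (where $d(N)$ is the number of parts of the type of $N$), so their number is at most $|\Aut F|\binom{d(N)}{d}_q\le C_q|\Aut F|\,q^{d(d(N)-d)}$; since $|\wedge^2N|\ge q^{\binom{d(N)}2}$ and $(d(N)-d)(d(N)-d-1)\ge0$ gives $\binom{d(N)}2\ge d(d(N)-d)+\binom d2$, we obtain
\[
\#\{\pi\in\Sur(N,F):\ker\pi\cong\kappa^d\}\le C_q|\Aut F|\,|\wedge^2N|\,q^{-\binom d2}.
\]
Feeding this in, the $d$-th block is at most $8C_q|\Aut F||F|^{-\epsilon}q^{-\epsilon d}$ times the number of isomorphism classes of $R$-modules of order $q^d|F|$; that number is the partition function $p(d+\log_q|F|)$, which is subexponential, so the sum over $d$ converges. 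Finally, to get the form of the statement used in \Cref{moments determine distribution} for a finite set of primes, one assembles the several DVRs using \Cref{product of diamond is diamond,well-behaved product}.

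The step I expect to be the real obstacle is the uniform estimate in the third paragraph. A naive approach that bounds the number of corank-$d$, elementary-abelian-kernel surjections and the factor $|\wedge^2N|$ separately loses a factor of size $q^{\Theta(d)}$ and fails to converge; instead one must control the ratio $\#\{\pi:\ker\pi\cong\kappa^d\}/|\wedge^2N|$ itself, which is precisely what the inequality $(d(N)-d)(d(N)-d-1)\ge0$ buys, and this is why the hypothesis is allowed to carry a full factor of $|\wedge^2N|$. Everything else—the comparison between $|\End(N)|$ and $|\Aut(N)|$, the vanishing of the Möbius function off the semisimple locus, and the subexponential growth of the partition function—is standard.
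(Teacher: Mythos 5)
Note first that the paper does not prove this lemma: it is cited verbatim as Lemma~6.9 of \cite{SW22}, and the paper's footnote explicitly declines even to define well-behavedness. So there is no in-paper proof to compare against, and your proposal is best judged as a reconstruction of the SW22 argument from the (commented-out) definition $\sum_{[N]}\sum_{\pi\in\Sur(N,F)}\frac{|\mu(F,N)|}{|\Aut N|}Z(\pi)^3|M_N|<\infty$. Judged on those terms, the argument is correct. The identity $|\End(N)|=|N|\,|\wedge^2N|^2$ checks out (for type $\lambda$, $\sum_{i,j}\min(\lambda_i,\lambda_j)=\sum_k(2k-1)\lambda_k$, while $|N||\wedge^2N|^2=q^{\sum_k\lambda_k+2\sum_k(k-1)\lambda_k}$), and the comparison $|\Aut N|\ge\bigl(\prod_{k\ge1}(1-q^{-k})\bigr)|\End N|$ is the standard one. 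The anti-isomorphism of $[F,N]$ with the submodule lattice of $\ker\pi$, the vanishing of $\mu$ off the semisimple locus with $|\mu|=q^{\binom d2}$, and the bound $Z(\pi)\le2$ (only $\hat0,\hat1$ are join-distributive in a projective-space lattice of dimension $\ge2$) are all right. The central inequality $\binom{d(N)}2\ge d(d(N)-d)+\binom d2$, i.e.\ $(d(N)-d)^2\ge d(N)-d$, does exactly what you say, and combined with $\binom{d(N)}{d}_q\le C_q q^{d(d(N)-d)}$ and $|\wedge^2N|\ge q^{\binom{d(N)}2}$ it kills the $q^{\binom d2}$ from the Möbius function; the remaining $q^{-\epsilon d}$ against the partition function $p(d+\log_q|F|)$ converges. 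Your observation that the naive separate bounding of $\#\{\pi\}$ and $|\wedge^2N|$ fails and that one must control the ratio is precisely the right diagnostic.

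One small caveat worth flagging: the sum you bound is over \emph{all} finite $R$-modules, whereas SW22's well-behavedness is stated at a level $D$ (a downward- and join-closed set generated by finitely many objects). Since restricting to a level only deletes nonnegative terms, your bound dominates every level-restricted sum, so nothing is lost — but it would be cleaner to say so explicitly. Also, the stated hypothesis only bounds $M_N$ from above; for absolute convergence you implicitly use $|M_N|\le c|N|^{1-\epsilon}|\wedge^2N|$, which is harmless since in every application the $M_N$ are nonnegative, but worth a word. The final sentence about assembling DVRs via \Cref{product of diamond is diamond} and \Cref{well-behaved product} is exactly how the paper itself passes from $\calm_p$ to $\finab_P$, so that is consistent.
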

	
	Applying \Cref{DVR well-behaved} to $ \calm_p $, the category of finite $ \Z_p $-modules, it follows that the moments in $ \calm_p $ given by $ |\wedge^2G| $ for each $ G\in\calm_p $ are well-behaved. Hence, the corresponding moments in $ \finab_P $ are also well-behaved. To conclude the section, we repackage all of our above work in the following explicit theorem. \begin{theorem}\label{moments det distr II: explicit boogaloo}
		Let $ (X_n,\p_{\h{X_n}}) $ and $ (Y_n,\p_{\h{Y_n}}) $ be sequences of random variables taking values in the category of finitely generated abelian groups whose duals are equipped with symmetric pairings. Let $ b $ be a positive integer and $ C_b $ the set of isomorphism classes of finite abelian groups of exponent dividing $ b $ with symmetric pairings on their duals. Assume that for every $ (G,\phG)\in C_b $, \[\lim_{n\to\infty}\E(\#\Sur^*(X_n,G))=|G|\inverse\] and that the analogous statement holds for $ (Y_n,\p_{\h{Y_n}}) $. Then for every $ (G,\phG)\in C_b $ the limits $ \lim_{n\to\infty}\Prob((X_n\otimes\Z/b\Z,\p_{\h{X_n}})\simeq(G,\phG))$ and $ \lim_{n\to\infty}\Prob((Y_n\otimes\Z/b\Z,\p_{\h{Y_n}})\simeq(G,\phG))$ exist, and \[\lim_{n\to\infty}\Prob((X_n\otimes\Z/b\Z,\p_{\h{X_n}})\simeq(G,\phG))=\lim_{n\to\infty}\Prob((Y_n\otimes\Z/b\Z,\p_{\h{Y_n}})\simeq(G,\phG)).\] 
	\end{theorem}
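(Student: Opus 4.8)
The plan is to read this off as a direct application of the moment--uniqueness theorem of Sawin and Wood, once three hypotheses are checked: that $C_b$ is a level of the diamond category $\calc$, that the laws of the truncated variables are probability measures supported on $C_b$ whose $\mathrm{Epi}$-moments are the quantities $\E(\#\Sur^*(X_n,G))$, and that the target sequence $M_{(G,\phG)}=|G|\inverse$ is well-behaved at $C_b$. The first is already in hand: $\calc=(\finab,\Phi)$ is a diamond category by \Cref{diamond category}, and the lemma identifying $C_b$ shows $C_b$ consists of all finite abelian groups of exponent dividing $b$ with all symmetric pairings on their duals. For the second, I would observe that $X_n\otimes\Z/b\Z$ has exponent dividing $b$ and that $\p_{\h{X_n}}$ restricts to a symmetric pairing on $\Hom(X_n\otimes\Z/b\Z,\Q/\Z)\subset\h{X_n}$ exactly as in \Cref{sec: Moments I}, so $(X_n\otimes\Z/b\Z,\p_{\h{X_n}})$ is an object of $C_b$ and its law $\mu_n$ is a probability measure on $C_b$; moreover the bijection $\Sur(X_n,G)\simeq\Sur(X_n\otimes\Z/b\Z,G)$ (since $G$ has exponent dividing $b$) is compatible with the pairing condition, because the restricted pairing evaluates on the relevant subgroup the same way $\p_{\h{X_n}}$ does, whence $\#\mathrm{Epi}_{\calc}((X_n\otimes\Z/b\Z,\p_{\h{X_n}}),(G,\phG))=\#\Sur^*(X_n,G)$. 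Therefore $\int_{F\in C_b}|\mathrm{Epi}(F,G)|\,\mathrm d\mu_n=\E(\#\Sur^*(X_n,G))\to|G|\inverse$ by hypothesis, and likewise for the measures $\nu_n$ attached to $(Y_n,\p_{\h{Y_n}})$.

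The well-behavedness of $(|G|\inverse)_{(G,\phG)\in C_b}$ is the part warranting the most care, and I would obtain it by chaining the reduction lemmas of \cite{SW22}. First, \Cref{well-behaved: sum over pairings} applied to $(\finab,\Phi)$ reduces well-behavedness at $C_b$ to well-behavedness, in $\finab$ at the level $D_b$ of groups of exponent dividing $b$, of the sequence with $G$-entry $\sum_{\phG}|G|\inverse=|\Sym_2 G|/|G|=|\wedge^2 G|$, the last equality being the computation of $|\Sym_2 G_p|$ and $|\wedge^2 G_p|$ recorded in \Cref{Background}. Since $D_b$ involves only the primes dividing $b$, I would next identify the relevant part of $\finab$ with the finite product $\prod_{p\mid b}\calm_p$ of categories of finite $\Z_p$-modules; \Cref{product of diamond is diamond} makes this a diamond category, and since $|\wedge^2 G|=\prod_p|\wedge^2 G_p|$, \Cref{well-behaved product} reduces the claim to well-behavedness of $(|\wedge^2 N|)_N$ in each $\calm_p$. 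Finally \Cref{DVR well-behaved} with $R=\Z_p$ finishes it: the required bound $|\wedge^2 N|\le c\,|N|^{1-\epsilon}|\wedge^2 N|$ holds trivially with $c=1$ and any $0<\epsilon<1$, so $(|\wedge^2 N|)_N$ is well-behaved, hence so is $(|\wedge^2 G|)_G$ at $D_b$, hence so is $(|G|\inverse)_{(G,\phG)}$ at $C_b$.

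With the hypotheses verified I would conclude by invoking \Cref{diamond moments det distr} with $\cald=\calc$, $D=C_b$, $M_{(G,\phG)}=|G|\inverse$, and the two sequences of measures $\mu_n,\nu_n$: it yields that $\lim_{n\to\infty}\mu_n(\{(G,\phG)\})$ and $\lim_{n\to\infty}\nu_n(\{(G,\phG)\})$ both exist and agree for every $(G,\phG)\in C_b$, which is precisely the assertion about $\lim_{n\to\infty}\Prob((X_n\otimes\Z/b\Z,\p_{\h{X_n}})\simeq(G,\phG))$ and its analogue for $Y_n$. I expect the main obstacle to be purely organizational rather than mathematical: correctly aligning the abstract apparatus of \cite{SW22} (levels, generators, the functor $\Phi$, well-behavedness at a level, the product decomposition) with the concrete category $\calc$, and in particular checking that truncation by $\Z/b\Z$ together with restriction of the pairing to the dual of the truncation is compatible with the notion of quotient in $\calc$, so that $\mu_n$ and $\nu_n$ genuinely are measures on the level $C_b$. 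There is no new analytic input beyond the moment computation of \Cref{sec: Moments V}.
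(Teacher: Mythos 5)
Your proposal follows the same route as the paper: reduce via \Cref{well-behaved: sum over pairings} to well-behavedness of $(|\wedge^2 G|)_G$ at the level $D_b$ in $\finab$, factor over primes dividing $b$ via \Cref{product of diamond is diamond} and \Cref{well-behaved product}, invoke \Cref{DVR well-behaved} for each $\calm_p$, and then conclude with \Cref{diamond moments det distr}. The only cosmetic difference is that you spell out the trivial DVR bound and the identification $\#\Sur^*(X_n,G)=\#\mathrm{Epi}_{\calc}((X_n\otimes\Z/b\Z,\p_{\h{X_n}}),(G,\phG))$ inline, whereas the paper disposes of those in the preceding discussion; the mathematics is identical.
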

	\begin{proof}
		Recall that $ C_b $ is the level of $ \calc $ generated by $ \Z/d\Z $ for $ d|b $ with every possible symmetric pairing on its dual. We have a sequence of $ (G,\phG) $-moments given by $ |G|\inverse $ for $ (G,\phG)\in C_b $, since for each $ (G,\phG)\in C_b $ we have \[\lim_{n\to\infty}\E(\#\Sur^*(X_n\otimes\Z/b\Z,G))=|G|\inverse.\] By \Cref{well-behaved: sum over pairings} and our previous discussion, to show that the moments are well-behaved at $ C_b $, it suffices to check well-behavedness at the level $ D_b $ of $ \finab $ (recall that $ D_b $ is the level of all finite abelian groups of exponent dividing $ b $). 
		
		Let $ P $ be the finite set of primes dividing $ b $, and consider $ \finab_P $. The level in $ \finab_P $ generated by $ \Z/b\Z $ is exactly the same as $ D_b $, so we may check well-behavedness at this level in $ \finab_P $. By \Cref{well-behaved product} and our work in the above, we see that the sequence of moments given by $ |\wedge^2G| $ for $ G\in D_b $ is well-behaved at $ D_b $, implying the sequence of moments $ (|G|\inverse)_{(G,\phG)\in C_b} $ is well-behaved at $ C_b $. Thus, we may apply \Cref{diamond moments det distr} and conclude the theorem.
	\end{proof}

	\section{Comparison to uniform random matrices and their pairings}\label{sec: Comparison to Haar}
	In the previous section, we showed that the moments we obtained for the sandpile groups of random graphs with their pairings determine a unique measure on certain levels of the category of finite abelian groups whose duals carry symmetric pairings. In order to determine the values of this measure at specific groups with pairings, we compare to the uniform case. In particular, \Cref{random matrix moment} implies that cokernels of uniform random matrices over $ \Z/a\Z $ with their pairings have the same moments as sandpile groups with their pairings. (Although the values of the moments of cokernels of uniform random matrices and their pairings are given by \Cref{random matrix moment}, we note that it is possible to compute these explicitly by adapting methods from \Cref{sec: Moments I} and the proof of Theorem 11 of \cite{CLK+}.) In \cite{CLK+}, Clancy, Kaplan, Leake, Payne, and Wood computed several of the aforementioned probabilities in the uniform case explicitly.	
	
	We will need the following result of \cite{CLK+}, which gives the (asymptotic) distribution of the cokernel of a uniform random symmetric matrix with its natural duality pairing. 
	\begin{lemma}[Theorem 2 of \cite{CLK+}]\label{Haar distribution}
		Let $ G $ be a finite abelian $ p $-group with a duality pairing $ \p_{\hG} $ on $ \hG $, and let $ M $ be a random $ n\times n $ symmetric matrix whose entries are distributed with respect to additive Haar measure on $ \Z_p $. Then the asymptotic probability (as $ n\to\infty $) that the cokernel of $ M $ with its duality pairing is isomorphic to $ (G,\phG) $ is \[\lim_{n\to\infty}\Prob((\cok(M),\p_{M})\simeq(G,\p_{\hG}))=\frac{\prod_{k\geq1}(1-p^{1-2k})}{|G||\Aut(G,\p_G)|}.\]
	\end{lemma}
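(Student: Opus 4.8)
The plan is to combine the universality and moment-uniqueness results of this paper with a single self-contained combinatorial verification. First I would reduce the Haar-random matrix to the balanced setting of \Cref{random matrix moment}. Fix a finite abelian $p$-group $G$ with duality pairing, let $b=p^m$ be its exponent and $a=b^2$. A Haar-random symmetric $M\in M_n(\Z_p)$ is nonsingular with probability $1$, so $\cok(M)=\tcok(M)$ is finite and $\p_M$ is the classical pairing $X^tM^{-1}Y\bmod\Z$; moreover, by \Cref{lemma: cok eqns} and the remarks following it, whether $FM=0$ and $F^t(\p_M)=\phG$ depends only on $M\bmod a$, and the reduction of $M$ modulo $a$ is uniform on symmetric matrices over $\Z/a\Z$ with independent upper-triangular entries. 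Such entries are $\alpha$-balanced mod $a$ with $\alpha=1-1/p>0$, so the proof of \Cref{random matrix moment} (which uses only $M\bmod a$) applies verbatim and yields
\[
\lim_{n\to\infty}\E\big(\#\Sur^*(\cok(M),G)\big)=\frac{1}{|G|}
\qquad\text{for every }(G,\phG)\in\calc .
\]

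Next I would invoke \Cref{moments det distr II: explicit boogaloo}: for each $b$ the sequence $(\cok(M_n),\p_{M_n})$ has a well-defined limiting distribution on the level $C_b$, and it is the \emph{unique} measure $\mu_b$ on $C_b$ whose $(G,\phG)$-moment is $|G|^{-1}$ for all $(G,\phG)\in C_b$. It therefore suffices to exhibit one probability measure with these moments and identify it with the claimed one. The candidate is
\[
\nu_b(G,\phG)=\frac{c_p}{|G|\,|\Aut(G,\p_G)|},\qquad c_p=\prod_{k\ge1}(1-p^{1-2k}),
\]
on the finite set of isomorphism classes $(G,\phG)\in C_b$, using the identification $\Aut(G,\p_G)\cong\Aut(G,\p_{\hG})$ from the duality subsection. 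Two facts must be checked: that $\nu_b$ is a probability measure, and that its $(G,\phG)$-moment equals $|G|^{-1}$. The mass-one statement is the $G=0$ instance of the moment statement (since $\#\Sur^*((H,\p_{\h{H}}),(0,\mathrm{triv}))=1$ always), and by Orbit--Stabilizer applied to the action of $\Aut(G)$ on the set of duality pairings on $G$ it is equivalent to the identity $\sum_{G}\#\{\text{duality pairings on }G\}/(|G|\,|\Aut(G)|)=c_p^{-1}$. The full moment statement is the family of identities
\[
\sum_{(H,\p_{\h{H}})}\frac{\#\Sur^*\big((H,\p_{\h{H}}),(G,\phG)\big)}{|H|\,|\Aut(H,\p_H)|}=\frac{1}{c_p\,|G|},
\]
the outer sum ranging over isomorphism classes of finite abelian $p$-groups with duality pairing.

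The main obstacle is precisely these identities: they are Cohen--Lenstra-type sums over finite abelian $p$-groups equipped with a \emph{nondegenerate symmetric} pairing, and evaluating them requires the classification of such forms. The approach I would take is to stratify by the type $\lambda$ of the underlying group, use Wall's description (for $p$ odd, essentially Miranda--Morrison's) of the nondegenerate symmetric forms on $G_\lambda$ and of $\Aut(G_\lambda,\p)$ in terms of orthogonal groups over $\F_p$ together with unipotent ``radical'' factors, and reduce each sum to a product of local densities that telescopes into the $p$-adic infinite product $\prod_{k\ge1}(1-p^{1-2k})$ --- this product is exactly the mass appearing in the Minkowski--Siegel formula for symmetric forms, which is why it arises. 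An alternative, and the route actually taken in \cite{CLK+}, is to bypass the abstract measure and compute $\lim_{n\to\infty}\Prob\big((\cok(M_n),\p_{M_n})\otimes\Z/p^k\Z\simeq(G,\phG)\big)$ directly, by counting symmetric matrices over $\Z/p^k\Z$ with prescribed cokernel-with-pairing --- equivalently, prescribed discriminant form --- via a mass-formula argument and then letting $n\to\infty$ and $k\to\infty$. Either way, the prime $p=2$ requires separate care, since symmetric bilinear forms over $\Z_2$ need not be diagonalizable and the relevant local computation picks up extra factors; verifying that the constant is nonetheless $\prod_{k\ge1}(1-2^{1-2k})$ is the last and most delicate point.
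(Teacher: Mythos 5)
The paper does not prove this statement: \Cref{Haar distribution} is imported wholesale as Theorem 2 of \cite{CLK+}, and the moment machinery is deliberately built so that the Haar case serves as an external reference distribution in \Cref{sandpile distribution is the same as random matrix}. Your first two steps are sound --- reducing a Haar-random symmetric $M\in M_n(\Z_p)$ modulo $a=b^2$ gives a uniform (hence $\alpha$-balanced) symmetric matrix over $\Z/a\Z$, both conditions in \Cref{lemma: cok eqns} depend only on $M\bmod a$, so the proof of \Cref{random matrix moment} transfers and yields $\lim_n\E(\#\Sur^*(\cok(M),G))=|G|^{-1}$, and the appeal to \Cref{moments det distr II: explicit boogaloo} via a constant comparison sequence is legitimate in principle.

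Two things then go wrong. First, a bookkeeping error: your candidate $\nu_b$ on $C_b$ is not a probability measure and is not the limiting law of $\cok(M)\otimes\Z/b\Z$. For $(G,\p_{\hG})\in C_b$ with exponent exactly $b$, the event $\cok(M)\otimes\Z/b\Z\simeq G$ receives contributions from all $\calh$ with $\calh\otimes\Z/b\Z\simeq G$, so its limiting probability strictly exceeds $\prod_{k\ge1}(1-p^{1-2k})/(|G||\Aut(G,\p_G)|)$; and as written your mass-one check runs over $C_b$ while the identity you then invoke ranges over all finite abelian $p$-groups --- these are not the same assertion. The fix is the paper's own: define the reference measure on all finite $p$-groups with duality pairing, apply \Cref{moments det distr II: explicit boogaloo} at a level $C_{b'}$ with $b'$ strictly larger than the exponent of $G$, and use almost-sure finiteness of $\cok(M)$ to pass from $\cok(M)\otimes\Z/b'\Z$ back to $\cok(M)$, as in the proof of \Cref{distribution of sandpile groups multiple primes}.

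Second, and this is the real gap, the identity
\[
\sum_{(H,\p_{\h{H}})}\frac{\#\Sur^*\big((H,\p_{\h{H}}),(G,\p_{\hG})\big)}{|H|\,|\Aut(H,\p_H)|}
=\frac{1}{|G|\prod_{k\ge1}(1-p^{1-2k})}
\]
is never established. You correctly flag it as the main obstacle, but the pointers to Wall's classification of symmetric forms over $\Z_p$ and the Minkowski--Siegel mass formula are a roadmap, not an argument --- and $p=2$ (non-diagonalizable forms, even versus odd types) is exactly where such a sketch is most likely to break. This identity is precisely what Theorem 11 and Proposition 7 of \cite{CLK+} supply; until it is proved independently, your proposal reduces \Cref{Haar distribution} to \cite{CLK+}, which is what the paper already does by citing Theorem 2 of \cite{CLK+} directly.
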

	By extending the measure in the above result by 0, we can extend the above to give probabilities for when the pairing on $ \hG $ is any symmetric pairing. Notably, we do not require the pairing to be perfect. 
	
	\begin{corollary}[of \Cref{moments det distr II: explicit boogaloo}]\label{sandpile distribution is the same as random matrix}
		Let $ G $ be a finite abelian group of exponent dividing $ b $, and suppose $ G $ has a symmetric pairing on its dual $ \hG=\Hom(G,\Q/\Z) $, which we will denote using $ \phG $. Let $ 0<q<1 $, let $ \Gamma\in G(n,q) $ be a random graph, and let $ S $ be its sandpile group. Denote the canonical symmetric pairing on $ \h{S} $ by $ \p_{\h{S}} $. Also let $ H_n $ be a uniform random $ n\times n $ symmetric matrix with entries in $ \Z/b\Z $. Then \begin{align*}
			\lim_{n\to\infty}\Prob((S\otimes\Z/b\Z,\p_{\h{S}})\simeq(G,\phG))&=\lim_{n\to\infty}\Prob((\cok(H_n),\p_{H_n})\simeq(G,\phG)).
		\end{align*} In particular, if $ \phG $ is perfect, then \[\lim_{n\to\infty}\Prob((S\otimes\Z/b\Z,\p_{\h{S}})\simeq(G,\phG))=\frac{\prod_{p|b}\prod_{k\geq1}(1-p^{1-2k})}{|G||\Aut(G,\phG)|}.\] Otherwise, $ \lim_{n\to\infty}\Prob((S\otimes\Z/b\Z,\p_{\h{S}})\simeq(G,\phG))=0 $.
	\end{corollary}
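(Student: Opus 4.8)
The plan is to deduce the displayed equality from \Cref{moments det distr II: explicit boogaloo} and then to evaluate the common limit by comparison with \Cref{Haar distribution}. For the equality, I would apply \Cref{moments det distr II: explicit boogaloo} to the two sequences $(X_n,\p_{\h{X_n}})=(S_\Gamma,\p_{\h{S_\Gamma}})$, the sandpile group of $\Gamma\in G(n,q)$ equipped with the pairing on its dual from \Cref{sec: Pairings}, and $(Y_n,\p_{\h{Y_n}})=(\cok(M_n),\p_{M_n})$, where $M_n$ is a uniform random symmetric matrix over $R=\Z/a\Z$ with $a=b^2$ (so that the pairing construction of \Cref{sec: Moments I} applies) and $\p_{M_n}$ is the symmetric pairing on $\hcok(M_n)$ given by $M_n$. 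Both are random objects of $\calc$. The hypothesis to check is that for every $(G,\phG)\in C_b$ the $(G,\phG)$-moments of both sequences converge to $|G|^{-1}$: this is \Cref{sandpile moments} on the sandpile side and \Cref{random matrix moment} on the matrix side, since a uniform matrix over $\Z/a\Z$ is $\alpha$-balanced for a suitable $\alpha$ and the arguments of \Cref{sec: Moments I,sec: Moments II,sec: Moments III,sec: Moments IV,sec: Moments V} are carried out over $R$. \Cref{moments det distr II: explicit boogaloo} then gives $\lim_n\Prob((S_\Gamma\otimes\Z/b\Z,\p_{\h S})\simeq(G,\phG))=\lim_n\Prob((\cok(M_n)\otimes\Z/b\Z,\p_{M_n})\simeq(G,\phG))$ for every $(G,\phG)\in C_b$, and since $M_n\bmod b$ is distributed as $H_n$ and $\p_{M_n}$ restricts to the pairing $\p_{H_n}$ on $\widehat{\cok(M_n)\otimes\Z/b\Z}=\widehat{\cok(H_n)}$, the right-hand side is $\lim_n\Prob((\cok(H_n),\p_{H_n})\simeq(G,\phG))$, which is the asserted equality.

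To evaluate the common value I would first reduce to the case where $b=p^e$ is a prime power. For $b=\prod_p p^{e_p}$, the Chinese Remainder Theorem splits $H_n$ into independent uniform symmetric matrices over the $\Z/p^{e_p}\Z$, decomposes $(\cok(H_n),\p_{H_n})$ as an orthogonal direct sum over $p$, and decomposes $(G,\phG)$ as $\bigoplus_p(G_p,\p_{\hG_p})$, with $|G|=\prod_p|G_p|$, $|\Aut(G,\phG)|=\prod_p|\Aut(G_p,\p_{\hG_p})|$, and $\phG$ perfect exactly when every $\p_{\hG_p}$ is. Thus the limiting probability factors over $p\mid b$, and it suffices to treat a single prime $p$ with exponent $e=e_p=v_p(b)$.

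For a fixed prime, reducing a Haar-random symmetric matrix $M$ over $\Z_p$ modulo $p^e$ produces a uniform symmetric matrix over $\Z/p^e\Z$, so the relevant factor of $(\cok(H_n),\p_{H_n})$ is distributed as $(\cok(M)\otimes\Z/p^e\Z,\p_M)$, where almost surely $\cok(M)$ is a finite $p$-group carrying the perfect pairing of \Cref{Haar distribution} and $\p_M$ is its restriction to $\widehat{\cok(M)\otimes\Z/p^e\Z}$, the $p^e$-torsion of $\widehat{\cok(M)}$. The structural fact I would use is that if $\cok(M)=\bigoplus_i\Z/p^{\mu_i}$ then this restricted pairing is perfect precisely when all $\mu_i\le e$, and that in general $\coker(\p_M)$ (in the sense of \Cref{subsec: pairings}) is canonically $\bigoplus_{\mu_i\le e}\Z/p^{\mu_i}$ with a perfect pairing. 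When $\p_{\hG_p}$ is perfect, the isomorphism $(\cok(M)\otimes\Z/p^e\Z,\p_M)\simeq(G_p,\p_{\hG_p})$ then forces all $\mu_i\le e$, and since $G_p$ has exponent dividing $p^e$ this coincides with the genuine isomorphism $(\cok(M),\p_M)\simeq(G_p,\p_{\hG_p})$ of $\Z_p$-modules with their pairings, whose limiting probability is $\prod_{k\ge1}(1-p^{1-2k})/(|G_p||\Aut(G_p,\p_{G_p})|)$ by \Cref{Haar distribution} (using $|\Aut(G_p,\p_{G_p})|=|\Aut(G_p,\p_{\hG_p})|$); multiplying over $p$ and recombining the constants gives the stated formula. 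When some $\p_{\hG_p}$ fails to be perfect, I expect the matching factor to vanish in the limit because the limiting measure produced above is supported on objects whose pairing is perfect. Verifying this cleanly — and, more generally, correctly identifying the support of the limiting measure after the $\otimes\Z/b\Z$ reduction and matching the $\Z/b\Z$-valued pairing $\p_{H_n}$, which is only well defined modulo $b^2$, with the perfect $\Z_p$-pairing of \Cref{Haar distribution} — is the step I expect to be the main obstacle.
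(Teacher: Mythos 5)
Your overall strategy mirrors the paper's: apply \Cref{moments det distr II: explicit boogaloo} to compare the sandpile sequence with a uniform symmetric matrix sequence, then split over the primes dividing $b$ and compare each $p$-component with \Cref{Haar distribution}. The perfect-pairing case you argue is exactly what the paper intends when it says ``reduce to the case where $G$ is a $p$-group'': when $\p_{\hG_p}$ is perfect, an isomorphism $(\cok(M)\otimes\Z/p^e\Z,\p_M)\simeq(G_p,\p_{\hG_p})$ forces every elementary divisor of $\cok(M)_p$ to be at most $p^e$, so the event coincides with $(\cok(M),\p_M)\simeq(G_p,\p_{\hG_p})$, whose limiting probability \Cref{Haar distribution} supplies. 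That part of your proposal is sound.

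Your hesitation about the non-perfect case is warranted, because the reason you sketch --- that the limiting measure of $(\cok(M)\otimes\Z/b\Z,\p_M)$ is supported on objects with perfect pairings --- is false. Take $b=p$, $G=\Z/p\Z$, $\phG=0$. If $M$ is Haar over $\Z_p$ with $\cok(M)\simeq\Z/p^{\mu}\Z$, then $\widehat{\cok(M)\otimes\Z/p\Z}$ is the subgroup $\frac{1}{p}\Z/\Z$ of $\widehat{\cok(M)}=\frac{1}{p^{\mu}}\Z/\Z$, and the restricted pairing sends $(\frac{1}{p},\frac{1}{p})$ to $p^{2\mu-2}\cdot\frac{c}{p^{\mu}}=cp^{\mu-2}$, which vanishes in $\Q/\Z$ whenever $\mu\ge 2$. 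Thus $(\cok(M)\otimes\Z/p\Z,\p_M)\simeq(\Z/p\Z,0)$ exactly when $\cok(M)$ is cyclic of order at least $p^2$, an event of limiting probability $\sum_{\mu\ge 2}p^{-\mu}\prod_{k\ge1}(1-p^{1-2k})=\prod_{k\ge1}(1-p^{1-2k})/(p(p-1))>0$. (Equivalently: the isomorphism classes of perfect pairings on $\widehat{\Z/p\Z}$ carry total mass $\prod_{k\ge1}(1-p^{1-2k})/p$ under the claimed formula, strictly less than $\lim_n\Prob(\cok(H_n)\simeq\Z/p\Z)=\prod_{k\ge1}(1-p^{1-2k})/(p-1)$, and the deficit sits at $\phG=0$.) The version of the claim that does hold --- and the only one actually used downstream, in the proof of \Cref{distribution of sandpile groups multiple primes}, which takes $e=\prod_p p^{a_p+1}$ strictly larger than the exponent of $G$ at each prime --- is that the non-perfect probability vanishes whenever $b$ strictly exceeds the exponent of $G$ at every prime dividing $|G|$: in that regime $\cok(M)\otimes\Z/b\Z\simeq G$ already forces $\cok(M)_p\simeq G_p$, so the restricted pairing is the full, perfect pairing, and \Cref{Haar distribution} gives $0$. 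Finally, a small correction to your structural lemma: $\coker(\p_M)$ need not equal $\bigoplus_{\mu_i\le e}\Z/p^{\mu_i}$; for $\cok(M)=\Z/p^{\mu}$ with $e<\mu<2e$, the kernel of the restricted pairing has order $p^{\mu-e}$ and $\coker(\p_M)=\Z/p^{2e-\mu}\neq 0$. Your perfect-case argument does not rely on this, but it is worth noting.
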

	\begin{proof}
		That \begin{align*}
			\lim_{n\to\infty}\Prob((S\otimes\Z/b\Z,\p_{\h{S}})\simeq(G,\phG))&=\lim_{n\to\infty}\Prob((\cok(H_n),\p_{H_n})\simeq(G,\phG)).
		\end{align*} follows from applying \Cref{moments det distr II: explicit boogaloo}. In particular, we use the fact that both $ (S\otimes\Z/b\Z,\p_{\h{S}}) $ and $ (\cok(H_n),\p_{H_n}) $ have the same asymptotic $ (\calh,\p_{\h{\calh}}) $-moment for any object $ (\calh,\p_{\h{\calh}})\in C_b $ (this is a consequence of \Cref{sandpile moments}).
		
		The second part of the lemma follows from noting that everything factors over $ p|b $, allowing us to reduce to the case where $ G $ is a $ p $-group. 
	\end{proof}

	To prove \Cref{thm: distribution of gps and pairings}, we will turn \Cref{sandpile distribution is the same as random matrix} into a statement about the Sylow $ p $-subgroups of the sandpile group. In particular, we convert information about $ S\otimes\Z/b\Z $ for any $ b $ into information about the Sylow $ p $-subgroups of $ S $ for $ p|b $, and we translate statements about pairings on the dual of $ S\otimes\Z/b\Z $ to statements about pairings on the Sylow subgroups of $ S $. 
	\begin{remark}\label{rmk: sandpile finite}
		First, we will need to know that $ S $ is finite with asymptotic probability 1 as $ n\to\infty $. While this result is well-known, we offer an alternate proof below. Recall that $ S $ is finite if and only if $ \Gamma $ is connected; $ \Gamma $ is connected with probability 1 as $ n\to\infty $ for fixed $ q $ (this is a corollary of results proved by Erd\H{o}s and R\'{e}nyi in their 1960 paper \cite{ER}). Hence, as $ n\to\infty $, we know that $ S $ is finite with probability 1. Using Fatou's lemma and results from \cite{Wood} and \cite{CLK+}, we give an alternate proof of this fact. If we replace $ S $ in \Cref{sandpile is almost surely finite} by $ \cok(M) $, where $ M $ is a random matrix satisfying the hypotheses of \Cref{random matrix moment}, we get an analogous result for $ \cok(M) $. The result for $ \cok(M) $ is newer but also known---it is implied by Theorem 6.5 of \cite{CTV} and also by Theorem 5.1 of \cite{NW22}.
	\end{remark}
	\begin{lemma}\label{sandpile is almost surely finite}
		Let $ S $ be the sandpile group of a random graph on $ n $ vertices as in \Cref{sandpile moments}. As $ n\to\infty $, we have that $ \Prob(|S|<\infty)\to 1 $.
	\end{lemma}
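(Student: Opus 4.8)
The plan is to use the first moment of $S$ at the cyclic groups $\Z/p\Z$ together with Markov's inequality, and then to let $p$ grow. Recall that $S=Z/\col(L_\Gamma)$ with $Z\cong\Z^{n-1}$, so the free rank $r(S):=\rank\bigl(S/\tcok\bigr)$ equals $(n-1)-\rank(\col(L_\Gamma))$, which is one less than the number of connected components of $\Gamma$; in particular $|S|<\infty$ if and only if $r(S)=0$. Thus it suffices to show $\Prob(r(S)\geq1)\to0$ as $n\to\infty$.

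First I would record the elementary bound: if $r(S)\geq1$ then $\#\Hom(S,\Z/p\Z)\geq p$ for every prime $p$. Indeed, writing $S\cong\Z^{r(S)}\oplus T$ with $T$ finite, we have $\Hom(S,\Z/p\Z)=\Hom(S/pS,\Z/p\Z)$ and $S/pS\cong\F_p^{\,r(S)+s}$, where $s$ is the $p$-rank of $T$, so $\#\Hom(S,\Z/p\Z)=p^{\,r(S)+s}\geq p$. Since every homomorphism $S\to\Z/p\Z$ is either zero or surjective, $\#\Hom(S,\Z/p\Z)=1+\#\Sur(S,\Z/p\Z)$. Markov's inequality (applied to the nonnegative random variable $\#\Hom(S,\Z/p\Z)$) then gives, for every $n$,
\[
p\cdot\Prob\bigl(r(S)\geq1\bigr)\;\leq\;\E\bigl(\#\Hom(S,\Z/p\Z)\bigr)\;=\;1+\E\bigl(\#\Sur(S,\Z/p\Z)\bigr).
\]

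Next I would evaluate the right-hand side as $n\to\infty$. For $G=\Z/p\Z$ every pairing on $\hG$ is symmetric and there are exactly $p$ of them; moreover, for any surjection $F\colon S\to\Z/p\Z$ the transpose $F^t$ pushes the canonical symmetric pairing $\p_{\h S}$ on $\h S$ (constructed in \Cref{sec: Pairings} for arbitrary, not necessarily connected, $\Gamma$) forward to exactly one such pairing, so $\#\Sur(S,\Z/p\Z)=\sum_{\phi}\#\Sur^*(S,(\Z/p\Z,\phi))$, the sum running over the $p$ symmetric pairings $\phi$ on $\widehat{\Z/p\Z}$. Applying \Cref{sandpile moments} to each summand gives $\lim_{n\to\infty}\E(\#\Sur(S,\Z/p\Z))=p\cdot p^{-1}=1$ (equivalently, this is Wood's moment $\#\wedge^2(\Z/p\Z)=1$). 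Plugging this into the displayed inequality and taking $\limsup_{n\to\infty}$ yields $\limsup_{n\to\infty}\Prob(r(S)\geq1)\leq 2/p$. Since the left-hand side is independent of $p$, letting $p\to\infty$ forces $\limsup_{n\to\infty}\Prob(r(S)\geq1)=0$, i.e.\ $\Prob(|S|<\infty)\to1$.

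I do not expect a serious obstacle: the only non-formal input is the first moment at $\Z/p\Z$, which is already available (as \Cref{sandpile moments} summed over pairings, or from \cite{Wood}), and everything else is Markov's inequality plus the trivial bound $\#\Hom(S,\Z/p\Z)\geq p$ when $r(S)\geq1$. The one point that needs a line of care is that $\p_{\h S}$ is defined and symmetric even when $\Gamma$ is disconnected --- so that $\h S$ is infinite and the pairing degenerate --- which is precisely the content of \Cref{sec: Pairings}; this is what legitimizes the decomposition $\#\Sur(S,\Z/p\Z)=\sum_\phi\#\Sur^*(S,(\Z/p\Z,\phi))$ for every graph. The analogous statement for $\cok(M)$ mentioned in \Cref{rmk: sandpile finite} follows by the same argument with \Cref{random matrix moment} in place of \Cref{sandpile moments}.
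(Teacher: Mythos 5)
Your proof is correct, and it takes a genuinely different route from the paper. The paper invokes Wood's distributional convergence for $S\otimes\Z_P$ (Corollary 9.2 of \cite{Wood}), then cites Proposition 7 of \cite{CLK+} to show the limiting distribution has total mass $1$ on the finite abelian $P$-groups, and finishes with Fatou's lemma. You instead use only the first moment at $\Z/p\Z$ together with the trivial bound $\#\Hom(S,\Z/p\Z)\geq p$ whenever the free rank is positive, and Markov's inequality; letting $p\to\infty$ then kills $\limsup_n\Prob(r(S)\geq1)$. Your argument is more elementary and self-contained: it needs only the paper's own \Cref{sandpile moments} (summed over the $p$ symmetric pairings on $\widehat{\Z/p\Z}$), or equivalently the single moment $\#\wedge^2(\Z/p\Z)=1$ from \cite{Wood}, rather than the full limiting-distribution result and the \cite{CLK+} normalization identity. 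What the paper's route buys is that it produces the whole limiting distribution as a byproduct, which the authors are already committed to using elsewhere; your route is preferable if one only wants finiteness. Both are valid, and your care in noting that $\p_{\h S}$ is defined for disconnected $\Gamma$ (so the decomposition $\#\Sur(S,\Z/p\Z)=\sum_\phi\#\Sur^*(S,(\Z/p\Z,\phi))$ holds pointwise) addresses the one place a reader might object.
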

	\begin{proof}
		Let $ P $ be any finite set of primes, and let $ \Z_P=\prod_{p\in P}\Z_p $. By Corollary 9.2 of \cite{Wood}, we have \[\lim_{n\to\infty}\Prob(S\otimes\Z_P\simeq G)=\frac{\#\{\mathrm{symmetric,\ perfect\ }\phi:G\otimes G\to\Q/\Z\}}{|G||\Aut(G)|}\prod_{p\in P}\prod_{k\geq1}(1-p^{1-2k})\] for any finite abelian $ P $-group $ G $.\footnote{The proof of Corollary 9.2 of \cite{Wood} works for our purposes, but note that it contains a slight mistake---throughout, ``Sylow $ P $-subgroup of $ S $'' should be replaced by $ S\otimes\Z_P $.} Proposition 7 of \cite{CLK+} tells us that the sum over all finite abelian $ P $-groups of the right-hand side of the above is 1. By Fatou's lemma, it follows that \[\lim_{n\to\infty}\Prob(|S|<\infty)=1,\] as desired. 
	\end{proof}

	\begin{corollary}\label{distribution of sandpile groups multiple primes}
		Let $ G $ be a finite abelian group equipped with a duality pairing $ \p_G $. Suppose $ \Gamma\in G(n,q) $ is a random graph with Laplacian $ L $ and sandpile group $ S $. Denote the canonical duality pairing on $ \tcok(L) $ by $ \p_{\tcok(L)} $. Suppose $ P $ is the finite set of prime divisors of $ |G| $. Let $ S_P $ denote the sum of the Sylow $ p $-subgroups of $ S $ for $ p\in P $ and $ \p_{\tcok(L),P} $ the restriction of $ \p_{\tcok(L)} $ to $ S_P $. Then \[\lim_{n\to\infty}\Prob((S_P,\p_{\tcok(L),P})\simeq(G,\p_G))=\frac{\prod_{p\in P}\prod_{k\geq1}(1-p^{1-2k})}{|G||\Aut(G,\p_G)|}.\] 
	\end{corollary}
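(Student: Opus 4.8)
The plan is to deduce the corollary from \Cref{sandpile distribution is the same as random matrix} (which records the asymptotic joint distribution of $S\otimes\Z/b\Z$ and the symmetric pairing on its dual) together with \Cref{sandpile is almost surely finite}; the only real content is a translation between, on the one hand, the Sylow-$P$ part $S_P$ equipped with the Bosch--Lorenzini duality pairing $\p_{\tcok(L),P}$, and on the other, a sufficiently large mod-$b$ reduction of $S$ equipped with the pairing $\p_{\h S}$ on $\h S$.

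First I would fix the modulus. Let $e=\prod_{p\in P}p^{a_p}$ be the exponent of $G$ (so $a_p\geq 1$, since $P$ is the set of primes dividing $|G|$) and put $b=\prod_{p\in P}p^{a_p+1}$; then the primes dividing $b$ are exactly those in $P$, and $e\mid b$. Write $\phG$ for the duality pairing on $\hG$ induced by $\p_G$. The first step is the elementary observation that, for any finitely generated abelian group $S$, one has $S_P\simeq G$ if and only if $S\otimes\Z/b\Z\simeq G$: indeed $S\otimes\Z/b\Z=\bigoplus_{p\in P}S_p/p^{a_p+1}S_p$, and if this is isomorphic to $G$ then each summand has exponent dividing $p^{a_p}$, which by finiteness of $S_p$ forces $p^{a_p}S_p=0$ and hence $S_p/p^{a_p+1}S_p=S_p$; the converse is immediate because $e\mid b$.

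The second step upgrades this to an isomorphism of \emph{pairs}, on the event $\{\,|S|<\infty\,\}$. There $\Gamma$ is connected, so $S$ is finite and therefore $S=\tcok(L)$, whence $\p_{\tcok(L)}$ is a duality pairing on all of $S$. Since any pairing vanishes between Sylow subgroups at distinct primes, the Sylow decomposition of $\p_{\tcok(L)}$ is orthogonal, so $\p_{\tcok(L),P}$ is a duality pairing on $S_P$, and under the identification $S_P=S\otimes\Z/b\Z$ of step one it is carried to the restriction of $\p_{\tcok(L)}$, which by \Cref{pairing same as Bosch-Lorenzini} is precisely $\p_{\h S}$ restricted from $\h S$ to the dual of $S\otimes\Z/b\Z$. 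Dualizing, and using the equivalence $(A,\p_A)\simeq(B,\p_B)\iff(A,\p_{\h A})\simeq(B,\p_{\h B})$ from \Cref{Background} (which also gives $|\Aut(G,\p_G)|=|\Aut(G,\phG)|$), I obtain that, intersected with $\{\,|S|<\infty\,\}$,
\[
\big\{(S_P,\p_{\tcok(L),P})\simeq(G,\p_G)\big\}=\big\{(S\otimes\Z/b\Z,\p_{\h S})\simeq(G,\phG)\big\}.
\]
Since $\p_G$ is a duality pairing, $\phG$ is perfect, so \Cref{sandpile distribution is the same as random matrix} applies and gives
\[
\lim_{n\to\infty}\Prob\big((S\otimes\Z/b\Z,\p_{\h S})\simeq(G,\phG)\big)=\frac{\prod_{p\mid b}\prod_{k\geq 1}(1-p^{1-2k})}{|G|\,|\Aut(G,\phG)|}=\frac{\prod_{p\in P}\prod_{k\geq 1}(1-p^{1-2k})}{|G|\,|\Aut(G,\p_G)|}.
\]
Finally, \Cref{sandpile is almost surely finite} gives $\Prob(|S|<\infty)\to 1$, so discarding that event changes probabilities by $o(1)$, and combining the last two displays yields the corollary.

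The step I expect to need the most care is the second one: verifying that the correspondence between $(S\otimes\Z/b\Z,\p_{\h S})$ and $(S_P,\p_{\tcok(L),P})$ is an isomorphism of pairs rather than merely of underlying groups --- i.e.\ that restricting $\p_{\h S}$ to the dual of $S\otimes\Z/b\Z$ reproduces (the dual of) the Bosch--Lorenzini pairing on $S_P$ on the nose, which relies on \Cref{pairing same as Bosch-Lorenzini} and on the orthogonality of the Sylow decomposition --- and checking that $b$ is chosen large enough that nothing is lost at the primes of $P$. These are all routine given what is already in the paper; the only real risk is in the bookkeeping.
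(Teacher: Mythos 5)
Your proposal follows the paper's own proof essentially verbatim: the same modulus $b=\prod_{p\in P}p^{a_p+1}$ (the paper calls it $e$), the same reduction to \Cref{sandpile distribution is the same as random matrix} via \Cref{sandpile is almost surely finite}, and the same identification (via \Cref{pairing same as Bosch-Lorenzini}) of $\p_{\h{S}}$ restricted to $\Hom(S\otimes\Z/b\Z,\Q/\Z)$ with the pairing dual to $\p_{\tcok(L),P}$ on $S_P$. One small slip: the equivalence ``$S_P\simeq G$ iff $S\otimes\Z/b\Z\simeq G$'' and the identity $S\otimes\Z/b\Z=\bigoplus_{p\in P}S_p/p^{a_p+1}S_p$ are false for general finitely generated $S$ (e.g.\ $S=\Z\oplus G$ gives $S_P\simeq G$ but $S\otimes\Z/b\Z\simeq\Z/b\Z\oplus G$), though both hold for finite $S$; since you only invoke them on the event $\{|S|<\infty\}$, the argument is unaffected, but the claim should be stated for finite $S$.
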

	\begin{proof}	
		Write the exponent of $ G $ as $ \prod_{p\in P}p^{a_p} $, and let $ e=\prod_{p\in P}{p^{a_p+1}} $. Let $ \phG $ be the pairing on $ \hG $ induced by $ \p_G $. Recall that $ S_P\subset\tcok(L) $, so $ \p_{\tcok(L),P} $ is also perfect. Also recall that the dual of $ S\otimes\Z/e\Z $ comes equipped with a symmetric pairing, denoted $ \p_{\h{S}} $, given by pushing the pairing on $ \h{S} $ forward by the transpose of the natural quotient $ S\twoheadrightarrow S\otimes\Z/e\Z $. We have the following commutative diagram: \begin{center}
			\begin{tikzcd}
				S_P\arrow[r,hook] & \tcok(L)\arrow[r,hook] & S\arrow[r,two heads] & S\otimes\Z/e\Z\\
				\h{S_P}\arrow[u,leftrightarrow]&\htcok(L)\arrow[u,leftrightarrow]\arrow[l,two heads] & \h{S}\arrow[l,two heads] & \Hom(S\otimes\Z/e\Z,\Q/\Z)\arrow[l,hook],
			\end{tikzcd}
		\end{center}where the vertical maps are isomorphisms induced by the duality pairing $ \p_{\tcok(L)} $. 
		
		Assume $ S $ is finite. We will show that \[(S\otimes\Z/e\Z,\p_{\h{S}})\simeq(G,\phG)\textrm{ if and only if }(S_P,\p_{\tcok(L),P} )\simeq(G,\p_G).\] 
		
		To prove this, first note that $ S $ finite implies that $ S_P\simeq G $ if and only if $ S\otimes\Z/e\Z\simeq G $. Thus, it is sufficient to show that the data of the pairing $ \p_{\h{S}} $ on the dual of $ S\otimes\Z/e\Z $ is equivalent to the data of the pairing $ \p_{\tcok(L),P} $ on $ S_P $. Suppose $ S\otimes\Z/e\Z\simeq G\simeq S_P $. This forces the composition of maps in the first row of the above diagram to be an isomorphism between $ S_P $ and $ S\otimes\Z/e\Z $. The isomorphism $ S_P\simeq S\otimes\Z/e\Z $ gives us a duality pairing on $ S\otimes\Z/e\Z $, which in turn gives us a duality pairing on $ \Hom(S\otimes\Z/e\Z,\Q/\Z) $. We will prove that this induced pairing on $ \Hom(S\otimes\Z/e\Z,\Q/\Z) $ is just $ \p_{\h{S}} $. 
		
		Recall from \Cref{pairing same as Bosch-Lorenzini} that the pairing $ \p_{\h{S}} $ on $ \h{S} $ is induced by the duality pairing $ \p_{\tcok(L)} $ on $ \tcok(L) $. Moreover, recall that we may view the pairing $ \p_{\h{S}} $ on $ \Hom(S\otimes\Z/e\Z,\Q/\Z) $ as the restriction of $ \p_{\h{S}} $ to $ \Hom(S\otimes\Z/e\Z,\Q/\Z) $. Hence, to compute $ \p_{\h{S}} $ on $ \Hom(S\otimes\Z/e\Z,\Q/\Z) $, we may look at the corresponding elements in $ \tcok(L) $ and do the computation using $ \p_{\tcok(L)} $. Since the elements of $ \Hom(S\otimes\Z/e\Z,\Q/Z) $ all have order dividing $ e $, their images in $ \tcok(L) $ are $ P $-torsion. In other words, the image of $ \Hom(S\otimes\Z/e\Z,\Q/Z) $ in $ \tcok(L) $ lies in $ S_P $. It follows that $ \p_{\h{S}} $ is exactly the pairing on $ \Hom(S\otimes\Z/e\Z,\Q/\Z)\simeq\h{S_P} $ induced by $ \p_{\tcok(L_\Gamma),P} $, proving that $ (S\otimes\Z/e\Z,\p_{\h{S}})\simeq(G,\phG)\textrm{ if and only if }(S_P,\p_{\tcok(L),P} )\simeq(G,\p_G) $. 
		
		By \Cref{rmk: sandpile finite}, we know that $ S $ is asymptotically almost surely finite. Therefore, as $ n\to\infty $, we have that \[\Prob((S\otimes\Z/e\Z,\p_{\h{S}})\simeq(G,\phG))=\Prob((S_P,\p_{\tcok(L),P})\simeq(G,\p_G)),\] which, in conjunction with \Cref{sandpile distribution is the same as random matrix}, implies the result.
	\end{proof}

	\begin{remark}
		Because $ S_\Gamma $ is finite with probability 1 as $ n\to\infty $, in the limit, the pairing $ \p_{\tcok(L)} $ is simply the canonical duality pairing on $ S_\Gamma $. Hence, \Cref{distribution of sandpile groups multiple primes} can actually be viewed as a statement about the distribution of the Sylow $ P $-part of the sandpile group and its canonical duality pairing (more accurately, the restriction of this pairing to the Sylow $ P $-subgroup). 
		
		Moreover, all of the results in \Cref{sec: Comparison to Haar} hold if $ S $ is replaced with the cokernel of a random symmetric integral matrix satisfying the hypotheses of \Cref{random matrix moment}, where we use \Cref{random matrix moment} rather than \Cref{sandpile moments}.
	\end{remark}
	
	\section*{Acknowledgements}
	This research was conducted at Harvard University and the Duluth Summer Mathematics Research Program for Undergraduates at the University of Minnesota Duluth with support from Jane Street Capital, the National Security Agency, the National Science Foundation (grants 2140043 and 2052036), Harvard University, and the Harvard College Research Program. I am very grateful to Joe Gallian and Colin Defant for their support and for giving me the opportunity to return to Duluth. I would also like to extend special thanks to Nathan Kaplan, Gilyoung Cheong, and Dino Lorenzini for their insightful suggestions that greatly improved the quality of the paper. Finally, I am most deeply grateful to my research advisor, Melanie Matchett Wood, who, in addition to suggesting this problem, provided invaluable support and guidance throughout the research process.

	\bibliographystyle{acm}
	\bibliography{bib}

\begin{thebibliography}{10}

\bibitem{BLHPdN}
{\sc Bacher, R., La~Harpe, P.~d., and Nagnibeda, T.}
\newblock The lattice of integral flows and the lattice of integral cuts on a
  finite graph.
\newblock {\em Bulletin de la Soci\'et\'e Math\'ematique de France 125}, 2
  (1997), 167--198.

\bibitem{Bai97}
{\sc Bai, Z.}
\newblock Circular law.
\newblock {\em The Annals of Probability 25}, 1 (1997), 494--529.

\bibitem{BS10}
{\sc Bai, Z., and Silverstein, J.~W.}
\newblock {\em Spectral Analysis of Large Dimensional Random Matrices}.
\newblock Springer, New York, 2010.

\bibitem{BTW}
{\sc Bak, P., Tang, C., and Wiesenfeld, K.}
\newblock Self-organized criticality: {A}n explanation of the {$ 1/f $} noise.
\newblock {\em Phys. Rev. Lett. 59\/} (Jul 1987), 381--384.

\bibitem{BN07}
{\sc Baker, M., and Norine, S.}
\newblock Riemann–{R}och and {A}bel–{J}acobi theory on a finite graph.
\newblock {\em Advances in Mathematics 215}, 2 (2007), 766--788.

\bibitem{BN09}
{\sc Baker, M., and Norine, S.}
\newblock Harmonic morphisms and hyperelliptic graphs.
\newblock {\em International Mathematics Research Notices 2009\/} (08 2009),
  2914--2955.

\bibitem{BDK}
{\sc Bhargava, A., DePascale, J., and Koenig, J.}
\newblock The rank of the sandpile group of random directed bipartite graphs.
\newblock {\em Annals of Combinatorics\/} (Apr 2023).

\bibitem{BKL}
{\sc Bhargava, M., Kane, D., Lenstra, H., Rains, E., and Poonen, B.}
\newblock Modeling the distribution of ranks, {S}elmer groups, and
  {S}hafarevich–{T}ate groups of elliptic curves.
\newblock {\em Cambridge Journal of Math 3}, 3 (08 2015), 257--321.

\bibitem{Biggs93}
{\sc Biggs, N.}
\newblock {\em Algebraic Graph Theory}, 2~ed.
\newblock Cambridge Mathematical Library. Cambridge University Press, 1974.

\bibitem{Biggs}
{\sc Biggs, N.}
\newblock {A}lgebraic potential theory on graphs.
\newblock {\em Bulletin of the London Mathematical Society 29}, 6 (1997),
  641--682.

\bibitem{Biggs99}
{\sc Biggs, N.}
\newblock Chip-firing and the critical group of a graph.
\newblock {\em Journal of Algebraic Combinatorics 9\/} (1999), 25--45.

\bibitem{BLS}
{\sc Björner, A., Lovász, L., and Shor, P.~W.}
\newblock Chip-firing games on graphs.
\newblock {\em European Journal of Combinatorics 12}, 4 (1991), 283--291.

\bibitem{BL02}
{\sc Bosch, S., and Lorenzini, D.}
\newblock Grothendieck’s pairing on component groups of {J}acobians.
\newblock {\em Inventiones mathematicae 148\/} (05 2002), 353--396.

\bibitem{CY}
{\sc Cheong, G., and Yu, M.}
\newblock The distribution of the cokernel of a polynomial evaluated at a
  random integral matrix, preprint.

\bibitem{CLK+}
{\sc Clancy, J., Kaplan, N., Leake, T., Payne, S., and Matchett~Wood, M.}
\newblock On a {C}ohen–{L}enstra heuristic for {J}acobians of random graphs.
\newblock {\em Journal of Algebraic Combinatorics 42\/} (2015), 701–723.

\bibitem{CLP}
{\sc Clancy, J., Leake, T., and Payne, S.}
\newblock A note on {J}acobians, {T}utte polynomials, and two-variable zeta
  functions of graphs.
\newblock {\em Experimental Mathematics 24}, 1 (2015), 1--7.

\bibitem{CL84}
{\sc Cohen, H., and Lenstra, H.~W.}
\newblock Heuristics on class groups of number fields.
\newblock In {\em Number Theory Noordwijkerhout 1983\/} (Berlin, Heidelberg,
  1984), H.~Jager, Ed., Springer Berlin Heidelberg, pp.~33--62.

\bibitem{CP}
{\sc Corry, S., and Perkinson, D.}
\newblock {\em Divisors and sandpiles}.
\newblock American Mathematical Society, Providence, RI, 2018.
\newblock An introduction to chip-firing.

\bibitem{CTV}
{\sc Costello, K.~P., Tao, T., and Vu, V.}
\newblock Random symmetric matrices are almost surely nonsingular.
\newblock {\em Duke Math. J. 135}, 2 (2006), 395--413.

\bibitem{Dauns}
{\sc Dauns, J.}
\newblock Module types.
\newblock {\em The Rocky Mountain Journal of Mathematics 27}, 2 (1997),
  503--557.

\bibitem{Del01}
{\sc Delaunay, C.}
\newblock Heuristics on {T}ate-{S}hafarevitch groups of elliptic curves defined
  over {$ \mathbb{Q} $}.
\newblock {\em Experimental Mathematics 10\/} (2001), 191--196.

\bibitem{Dhar}
{\sc Dhar, D.}
\newblock Self-organized critical state of sandpile automaton models.
\newblock {\em Phys. Rev. Lett. 64\/} (Apr 1990), 1613--1616.

\bibitem{ER}
{\sc Erd\H{o}s, P., and Rényi, A.}
\newblock {\em On the evolution of random graphs}.
\newblock Princeton University Press, Princeton, 2006, pp.~38--82.

\bibitem{FW}
{\sc Friedman, E., and Washington, L.~C.}
\newblock On the distribution of divisor class groups of curves over a finite
  field.
\newblock In {\em Th\'{e}orie des nombres ({Q}uebec, {PQ}, 1987)}. de Gruyter,
  Berlin, 1989, pp.~227--239.

\bibitem{Gab93a}
{\sc Gabrielov, A.}
\newblock Abelian avalanches and {T}utte polynomials.
\newblock {\em Physica A: Statistical Mechanics and its Applications 195}, 1
  (1993), 253--274.

\bibitem{Gab93b}
{\sc Gabrielov, A.}
\newblock {\em Avalanches, Sandpiles and {T}utte Decomposition}.
\newblock Birkh\"{a}user Boston, Boston, MA, 1993, pp.~19--26.

\bibitem{Girko04}
{\sc Girko, V.~L.}
\newblock The strong circular law. {T}wenty years later. {P}art ii.
\newblock {\em Random Operators and Stochastic Equations 12}, 3 (2004),
  255--312.

\bibitem{Gor}
{\sc Gorokhovsky, E.}
\newblock Convergence of time-inhomogeneous random walks on finite groups with
  applications to universality for random groups.
\newblock Master's thesis, California Institute of Technology, 2023.

\bibitem{Gr1}
{\sc Grothendieck, A.}
\newblock Complements sur les biextensions. proprietes generales des
  biextensions des schemas en groupes.
\newblock In {\em Groupes de Monodromie en G{\'e}om{\'e}trie Alg{\'e}brique\/}
  (Berlin, Heidelberg, 1972), Springer Berlin Heidelberg, pp.~218--312.

\bibitem{GR}
{\sc Grothendieck, A., and Raynaud, M.}
\newblock Modeles de {N}eron et monodromie.
\newblock In {\em Groupes de Monodromie en G{\'e}om{\'e}trie Alg{\'e}brique\/}
  (Berlin, Heidelberg, 1972), Springer Berlin Heidelberg, pp.~313--523.

\bibitem{HLM+}
{\sc Holroyd, A.~E., Levine, L., M{\'e}sz{\'a}ros, K., Peres, Y., Propp, J.,
  and Wilson, D.~B.}
\newblock {\em Chip-Firing and Rotor-Routing on Directed Graphs}.
\newblock Birkh{\"a}user Basel, Basel, 2008, pp.~331--364.

\bibitem{HST}
{\sc Horton, M., Stark, H., and Terras, A.}
\newblock What are zeta functions of graphs and what are they good for?
\newblock {\em Contemporary Mathematics 415\/} (01 2006), 173--189.

\bibitem{Koplewitz}
{\sc Koplewitz, S.}
\newblock Sandpile groups of random bipartite graphs.
\newblock {\em Ann. Comb. 27}, 1 (2023), 1--18.

\bibitem{Lee3}
{\sc Lee, J.}
\newblock Joint distribution of the cokernels of random p-adic matrices.
\newblock {\em Forum Mathematicum 35}, 4 (2023), 1005--1020.

\bibitem{Lee2}
{\sc Lee, J.}
\newblock Mixed moments and the joint distribution of random groups, preprint.

\bibitem{Lee}
{\sc Lee, J.}
\newblock Universality of the cokernels of random {$p$}-adic {H}ermitian
  matrices, preprint.

\bibitem{LP}
{\sc Levine, L., and Propp, J.}
\newblock What is... a sandpile ?
\newblock {\em Notices of the American Mathematical Society 57}, 8 (2010),
  976--979.

\bibitem{LST20}
{\sc Lipnowski, M., Sawin, W., and Tsimerman, J.}
\newblock Cohen-{L}enstra heuristics and bilinear pairings in the presence of
  roots of unity, preprint.

\bibitem{Lopez}
{\sc Lopez, C.~M.}
\newblock Chip firing and the {T}utte polynomial.
\newblock {\em Annals of Combinatorics 1\/} (1997), 253–259.

\bibitem{Lor89}
{\sc Lorenzini, D.}
\newblock Arithmetical graphs.
\newblock {\em Mathematische Annalen 285}, 3 (1989), 481--502.

\bibitem{Lor00}
{\sc Lorenzini, D.}
\newblock Arithmetical properties of {L}aplacians of graphs.
\newblock {\em Linear and Multilinear Algebra 47}, 4 (2000), 281--306.

\bibitem{Lor08}
{\sc Lorenzini, D.}
\newblock Smith normal form and {L}aplacians.
\newblock {\em Journal of Combinatorial Theory, Series B 98}, 6 (2008),
  1271--1300.

\bibitem{Meh67}
{\sc Mehta, M.~L.}
\newblock {\em Random matrices and the Statistical Theory of Energy Levels}.
\newblock Academic Press, New York-London, 1967.

\bibitem{Meszaros}
{\sc M\'{e}sz\'{a}ros, A.}
\newblock The distribution of sandpile groups of random regular graphs.
\newblock {\em Transactions of the American Mathematical Society 379}, 9
  (2020), 6529--6594.

\bibitem{NVP}
{\sc Nguyen, H.~H., and Peski, R.~V.}
\newblock Universality for cokernels of random matrix products, preprint.

\bibitem{NW22}
{\sc Nguyen, H.~H., and Wood, M.~M.}
\newblock Random integral matrices: universality of surjectivity and the
  cokernel.
\newblock {\em Inventiones mathematicae 228\/} (2022), 1--76.

\bibitem{NW22nt}
{\sc Nguyen, H.~H., and Wood, M.~M.}
\newblock Local and global universality of random matrix cokernels, preprint.

\bibitem{NW11}
{\sc Norine, S., and Whalen, P.}
\newblock Jacobians of nearly complete and threshold graphs.
\newblock {\em European Journal of Combinatorics 32}, 8 (2011), 1368--1376.

\bibitem{PZ10}
{\sc Pan, G., and Zhou, W.}
\newblock Circular law, extreme singular values and potential theory.
\newblock {\em Journal of Multivariate Analysis 101}, 3 (2010), 645--656.

\bibitem{SW22}
{\sc Sawin, W., and Wood, M.~M.}
\newblock The moment problem for random objects in a category, preprint.

\bibitem{Shokrieh}
{\sc Shokrieh, F.}
\newblock The monodromy pairing and discrete logarithm on the {J}acobian of
  finite graphs.
\newblock {\em Journal of Mathematical Cryptology 4}, 1 (2010), 43--56.

\bibitem{TV06}
{\sc Tao, T., and Vu, V.}
\newblock On random {$ \pm1 $} matrices: {S}ingularity and determinant.
\newblock {\em Random Structures \& Algorithms 28}, 1 (2006), 1--23.

\bibitem{TV07}
{\sc Tao, T., and Vu, V.}
\newblock On the singularity probability of random {B}ernoulli matrices.
\newblock {\em Journal of the American Mathematical Society 20}, 3 (2007),
  603--628.

\bibitem{Wood}
{\sc Wood, M.}
\newblock The distribution of sandpile groups of random graphs.
\newblock {\em Journal of the American Mathematical Society 30\/} (02 2014),
  915--958.

\bibitem{Wood19}
{\sc Wood, M.}
\newblock Random integral matrices and the {C}ohen {L}enstra heuristics.
\newblock {\em American Journal of Mathematics 141}, 2 (04 2019), 383--398.

\bibitem{WoodICM}
{\sc Wood, M.}
\newblock Probability theory for random groups arising in number theory.
\newblock {\em Proceedings of the International Congress of Mathematicians\/}
  (2022), to appear.

\bibitem{Eric}
{\sc Yan, E.}
\newblock Universality for cokernels of {D}edekind domain valued random
  matrices, preprint.

\end{thebibliography}
\end{document}